\definecolor{mycolor}{rgb}{0.11,0.6,0.41}
\definecolor{darkred}{rgb}{0.4,0,0}
\definecolor{darkgreen}{rgb}{0,0.35,0}
\definecolor{melegzold}{rgb}{0.2,0.5,0.2}
\definecolor{mzold}{rgb}{0.24,0.6,0.24}
\definecolor{darkblue}{rgb}{0,0,0.35}
\definecolor{darkyellow}{rgb}{0.8,0.8,0}
\definecolor{sand}{rgb}{1,1,0.8}
\definecolor{palegreen}{rgb}{0.9,1,0.9}
\definecolor{palered}{rgb}{1,0.9,0.9}
\definecolor{grey}{rgb}{0.5,0.5,0.5}
\definecolor{cian}{rgb}{0,0.6,0.6}
\definecolor{yellow}{rgb}{0.6,0.6,0}
\definecolor{magenta}{rgb}{0.6,0,0.6}
\definecolor{orange}{rgb}{1,0.55,0}
\definecolor{thm-fcolor}{rgb}{0.8,0,0}
\definecolor{thm-bcolor}{rgb}{1,0.9,0.9}
\definecolor{def-fcolor}{rgb}{0,0.35,0}
\definecolor{def-bcolor}{rgb}{0.9,1,0.9}
\definecolor{rmk-fcolor}{rgb}{0,0,0.35}
\definecolor{rmk-bcolor}{rgb}{0.9,0.9,1}
\definecolor{xca-fcolor}{rgb}{0.8,0.8,0}
\definecolor{xca-bcolor}{rgb}{1,1,0.8}
\numberwithin{equation}{section}
\newtheorem{thm}{Theorem}[section]
\newtheorem{pro}[thm]{Proposition}
\newtheorem{lem}[thm]{Lemma}
\newtheorem{cor}[thm]{Corollary}
\theoremstyle{definition}
\newtheorem{defi}[thm]{Definition}
\newtheorem{sch}[thm]{Scholium}
\newtheorem{exa}[thm]{Example}
\newtheorem{rmk}[thm]{Remark}
\newcommand{\ncm}{\newcommand}
\ncm{\axiomenum}[1]{

\begin{enumerate}\itemsep=0pt}
\ncm{\axiomenumend}{
\end{enumerate}

}
\ncm{\Cat}{\mathsf{Cat}}
\ncm{\Path}{\mathsf{Path}}
\ncm{\Ribb}{\mathsf{Ribb}}
\ncm{\CAT}{\mathsf{CAT}}
\ncm{\ACT}{\mathsf{ACT}}
\ncm{\ob}{\operatorname{ob}}
\ncm{\Nat}{\operatorname{Nat}}
\ncm{\Set}{\mathsf{Set}}
\ncm{\Ab}{\mathsf{Ab}}
\ncm{\Add}{\mathsf{Add}}
\ncm{\Mnd}{\mathsf{Mnd}}
\ncm{\Cmd}{\mathsf{Cmd}}
\ncm{\Fun}{\mathsf{Fun}}
\ncm{\Mon}{\mathsf{Mon}}
\ncm{\Elt}{\mathsf{Elt}\,}
\ncm{\Sub}{\mathsf{Sub}\,}
\ncm{\Flat}{\mathsf{Flat}}
\ncm{\add}{\text{\Large\textsl{a}}}
\ncm{\proj}{\mathsf{proj}}
\ncm{\Col}{\mathsf{Col}\,}
\ncm{\fgmod}[1]{\mathsf{mod}\text{-}#1}
\ncm{\UEE}{\mathsf{u\text{-}eff}}
\ncm{\EEE}{\mathsf{e\text{-}eff}}
\ncm{\HEE}{\mathsf{h\text{-}eff}}
\ncm{\EE}{\mathsf{eff}}
\ncm{\Split}{\mathsf{split}}
\ncm{\Cl}{\mathsf{Cl}}
\ncm{\ord}{\mathsf{ord}}
\ncm{\Mod}[1]{\mathbf{Mod}\text{-}#1}
\ncm{\Vect}{\mathsf{Vec}}
\ncm{\vect}{\mathsf{vec}}
\ncm{\Bimod}[2]{#1\text{-}\mathbf{Mod}\text{-}#2}
\ncm{\Comod}[1]{\mathbf{Comod}\text{-}#1}
\ncm{\Sh}{\mathbf{Sh}}
\ncm{\rMonCAT}{\textsf{r-MonCAT}}
\ncm{\Graph}{\mathbf{Graph}}
\ncm{\Rep}{\mathbf{Rep}}
\ncm{\Pol}{\mathsf{Pol}}
\ncm{\Hyp}{\mathsf{Hyp}}
\ncm{\HHyp}{\mathbf{Hyp}}
\ncm{\A}{\mathcal{A}}
\ncm{\B}{\mathcal{B}}
\ncm{\C}{\mathcal{C}}
\ncm{\D}{\mathcal{D}}
\ncm{\E}{\mathcal{E}}
\ncm{\F}{\mathcal{F}}
\ncm{\G}{\mathcal{G}}
\ncm{\Ha}{\mathcal{H}}
\ncm{\Hil}{\mathscr{H}}
\ncm{\I}{\mathcal{I}}
\ncm{\J}{\mathcal{J}}
\ncm{\K}{\mathcal{K}}
\ncm{\Ll}{\mathcal{L}}
\ncm{\M}{\mathcal{M}}
\ncm{\N}{\mathcal{N}}
\ncm{\Ou}{\mathcal{O}}
\ncm{\Pee}{\mathscr{P}}
\ncm{\R}{\mathcal{R}}
\ncm{\X}{\mathcal{X}}
\ncm{\W}{\mathcal{W}}
\ncm{\V}{\mathcal{V}}
\ncm{\U}{\mathcal{U}}
\ncm{\T}{\mathcal{T}}
\ncm{\Teven}{\mathcal{T}_\mathsf{e}}
\ncm{\Tcan}{\mathcal{T}_\mathsf{can}}
\ncm{\CV}{\operatorname{CV}}
\ncm{\one}{\mathbf{1}}
\ncm{\dom}{\operatorname{dom}}
\ncm{\cod}{\operatorname{cod}}
\ncm{\End}{\operatorname{End}}
\ncm{\Aut}{\operatorname{Aut}}
\ncm{\Hom}{\operatorname{Hom}}
\ncm{\kernel}{\operatorname{ker}}
\ncm{\Ker}{\operatorname{Ker}}
\ncm{\coker}{\operatorname{coker}}
\ncm{\Coker}{\operatorname{Coker}}
\ncm{\im}{\operatorname{im}}
\ncm{\Img}{\operatorname{Im}}
\ncm{\coim}{\operatorname{coim}}
\ncm{\id}{\operatorname{id}}
\ncm{\Center}{\operatorname{Center}}
\ncm{\colim}{\operatorname{colim}}
\ncm{\Colim}[1]{\underset{#1}{\operatorname{colim}}}
\ncm{\Lan}{\operatorname{Lan}}
\ncm{\Cone}{\operatorname{Cone}}
\ncm{\ev}{\operatorname{ev}}
\ncm{\coev}{\operatorname{coev}}
\ncm{\db}{\operatorname{db}}
\ncm{\cf}{\operatorname{cf}}
\ncm{\hgt}{\operatorname{ht}}
\ncm{\Irr}{\operatorname{Irr}}
\ncm{\Ind}{\operatorname{Ind}}
\ncm{\Bd}{\operatorname{Bd}}
\ncm{\Cb}{\operatorname{Cb}}
\ncm{\Nb}{\operatorname{Nb}}
\ncm{\Hol}{\operatorname{Hol}}
\ncm{\Ophol}{\operatorname{Hol}^\circ}
\ncm{\Cocom}{\operatorname{Cocom}}
\ncm{\nin}{\not\in}
\ncm{\Qv}{\operatorname{Qv}}
\ncm{\ci}{\,\circ\,}
\ncm{\smp}{\ast}
\ncm{\cv}{\ast}
\ncm{\del}{\partial}
\ncm{\smpq}{\smp_q}
\ncm{\bu}{\bullet}
\ncm{\bo}{\,\Box\,}
\ncm{\ot}{\otimes}
\ncm{\oV}{\odot}
\ncm{\oE}{\underset{\scriptscriptstyle E}{\odot}}
\ncm{\hot}{\,\bar{\ot}\,}
\ncm{\uot}{\,\Eob{\ot}\,} 
\ncm{\uoV}[1]{\underset{#1}\oV}
\ncm{\x}{\times}
\ncm{\ex}[1]{\underset{\scriptstyle #1}{\times}}
\ncm{\am}[1]{\underset{\scriptscriptstyle #1}{\ot}}
\ncm{\amo}[1]{\underset{\scriptstyle #1}{\ot}}
\ncm{\mash}{\Pisymbol{psy}{35}}
\ncm{\mashed}[1]{\underset{\scriptscriptstyle #1}{\Pisymbol{psy}{35}}}
\ncm{\cross}[1]{\underset{\scriptstyle #1}{\rtimes}}
\ncm{\rarr}[1]{\stackrel{#1}{\longrightarrow}}
\ncm{\isorarr}[1]{\underset{#1}{\overset{\sim}{\longrightarrow}}}
\ncm{\larr}[1]{\stackrel{#1}{\longleftarrow}}
\ncm{\mapsot}{\leftarrow\!\!\!\raisebox{1pt}{$\scriptscriptstyle |$}}
\ncm{\oR}{\am{R}}
\ncm{\oS}{\am{S}}
\ncm{\cop}{\Delta}
\ncm{\oneT}{^{(1)}}
\ncm{\twoT}{^{(2)}}
\ncm{\threeT}{^{(3)}}
\ncm{\nul}{_{(0)}}
\ncm{\p}{_{(1)}}
\ncm{\pp}{_{(2)}}
\ncm{\ppp}{_{(3)}}
\ncm{\pppp}{_{(4)}}
\ncm{\ppppp}{_{(5)}}
\ncm{\m}{_{(-1)}}
\ncm{\mm}{_{(-2)}}
\ncm{\mmm}{_{(-3)}}
\ncm{\mmmm}{_{(-4)}}
\ncm{\eps}{\varepsilon}
\DeclareBoldMathCommand\bra{\langle}
\DeclareBoldMathCommand\ket{\rangle}
\ncm{\dirim}[1]{{#1}_*}
\ncm{\invim}[1]{{#1}^*}
\ncm{\du}[1]{{\hat #1}}
\ncm{\tp}[1]{{#1}^{\mathsf{T}}}
\ncm{\duone}{\hat\one}
\ncm{\dueps}{\hat\eps}
\ncm{\duP}{\overset{\raisebox{-1pt}{\scriptsize $\sim$}}{\mathrm{P}}}
\ncm{\iduP}{\overset{\raisebox{-1pt}{\scriptsize $\backsim$}}{\mathrm{P}}}
\ncm{\xP}{\mathrm{P}^\times}
\ncm{\duduP}{\overset{\raisebox{-1pt}{\scriptsize $\sim$}}{\duP}}
\ncm{\duiduP}{\overset{\raisebox{-1pt}{\scriptsize $\sim$}}{\iduP}}
\ncm{\iduduP}{\overset{\raisebox{-1pt}{\scriptsize $\backsim$}}{\duP}}
\ncm{\iduiduP}{\overset{\raisebox{-1pt}{\scriptsize $\backsim$}}{\iduP}}
\ncm{\dual}[1]{\overset{\raisebox{-1pt}{\scriptsize $\sim$}}{#1}}
\ncm{\invdual}[1]{\overset{\raisebox{-1pt}{\scriptsize $\backsim$}}{#1}}
\ncm{\mir}{\mathbf{m}}
\ncm{\haar}{i} 
\ncm{\duhaar}{\iota} 
\ncm{\la}[1]{\underset{\scriptscriptstyle #1}{\triangleright}}
\ncm{\ra}[1]{\underset{\scriptscriptstyle #1}{\triangleleft}}
\ncm{\opla}[1]{\underset{\scriptscriptstyle #1}{\blacktriangleright}}
\ncm{\opra}[1]{\underset{\scriptscriptstyle #1}{\blacktriangleleft}}
\ncm{\Usep}{\underset{\text{U}}{\leftrightarrow}}
\ncm{\vac}{|\text{vac}\ket}
\ncm{\duvac}{\bra\text{vac}|}
\ncm{\Rbar}{\overline{R}}
\ncm{\sgn}{\operatorname{sgn}}
\ncm{\trans}{\mathsf{T}}
\ncm{\duT}{\overset{\sim}{\mathrm{T}}}
\ncm{\iduT}{\overset{\backsim}{\mathrm{T}}}
\ncm{\xT}{\mathrm{T}^\times}
\ncm{\dbSigma}{\mathcal{D}(\Sigma)}
\ncm{\dudbSigma}{\mathcal{D}(\Sigma)^*}
\ncm{\Arr}{\mathrm{Arr}}
\ncm{\FA}{\mathcal{F}}
\ncm{\asso}{a} 
\ncm{\luni}{l} 
\ncm{\runi}{r} 
\ncm{\iso}{\stackrel{\sim}{\rightarrow}}
\ncm{\iiso}{\rarr{\sim}}
\ncm{\ract}{\,\triangleleft\,}
\ncm{\lact}{\triangleright}
\ncm{\under}{\mbox{\,\rm\_}\,}
\ncm{\adj}{\dashv}
\ncm{\adjoint}{\dashv}
\ncm{\into}{\hookrightarrow}
\ncm{\onto}{\twoheadrightarrow}
\ncm{\sweedr}{\leftharpoonup}
\ncm{\sweedl}{\rightharpoonup}
\ncm{\Fin}{\mathbf{Fin}}
\ncm{\can}{\mathrm{can}}
\ncm{\fgp}{\mathrm{fgp}}
\ncm{\op}{\mathrm{op}}
\ncm{\coop}{\mathrm{coop}}
\ncm{\rev}{\mathrm{rev}}
\ncm{\inn}{\mathrm{in}}
\ncm{\out}{\mathrm{out}}
\ncm{\mini}{\mathrm{min}}
\ncm{\fl}{\mathrm{flat}}
\ncm{\sst}{\scriptstyle}
\ncm{\ssst}{\scriptscriptstyle}
\ncm{\coinv}[1]{^{\text{co-}#1}}
\ncm{\eqby}[1]{\stackrel{(\ref{#1})}{=}}
\ncm{\lef}{{\ssst <}}
\ncm{\righ}{{\ssst >}}
\ncm{\NN}{\mathbb{N}}
\ncm{\ZZ}{\mathbb{Z}}
\ncm{\QQ}{\mathbb{Q}}
\ncm{\RR}{\mathbb{R}}
\ncm{\CC}{\mathbb{C}}
\ncm{\GG}{\mathbf{G}}
\ncm{\FF}{\mathbb{F}}
\ncm{\DD}{\mathbb{D}}
\ncm{\onne}{\mathbb{1}}
\ncm{\TT}{\mathsf{T}}
\ncm{\Q}{\mathsf{Q}}
\ncm{\pisharp}{\Pisymbol{psy}{35}}
\ncm{\Pre}{\hat\C}
\ncm{\She}{\mathsf{Sh}}
\ncm{\Sig}{{\Sigma}}
\ncm{\icog}{J}
\ncm{\parallelpair}{
\parbox{43pt}{
\begin{picture}(43,8)
\put(3,6){\vector(1,0){37}}
\put(3,2){\vector(1,0){37}}
\end{picture}
}}
\ncm{\pair}[2]{\overset{#1}{\underset{#2}{\parallelpair}}}
\ncm{\shortparallelpair}{
\parbox{28pt}{
\begin{picture}(28,8)
\put(3,6){\vector(1,0){22}}
\put(3,2){\vector(1,0){22}}
\end{picture}
}}
\ncm{\shortpair}[2]{\overset{#1}{\underset{#2}{\shortparallelpair}}}
\ncm{\longparallelpair}{
\parbox{63pt}{
\begin{picture}(63,8)
\put(3,6){\vector(1,0){57}}
\put(3,2){\vector(1,0){57}}
\end{picture}
}}
\ncm{\longpair}[2]{\overset{#1}{\underset{#2}{\longparallelpair}}}
\ncm{\longerparallelpair}{
\parbox{83pt}{
\begin{picture}(83,8)
\put(3,6){\vector(1,0){77}}
\put(3,2){\vector(1,0){77}}
\end{picture}
}}
\ncm{\longerpair}[2]{\overset{#1}{\underset{#2}{\longerparallelpair}}}
\ncm{\longrightarrowtail}{
\parbox{40pt}{
\begin{picture}(40,8)
\put(6,4){\line(-1,1){1.5}}
\put(6,4){\line(-1,-1){1.5}}
\put(6,4){\vector(1,0){31}}
\end{picture}
}}
\ncm{\longerrightarrowtail}{
\parbox{60pt}{
\begin{picture}(60,8)
\put(6,4){\line(-1,1){1.5}}
\put(6,4){\line(-1,-1){1.5}}
\put(6,4){\vector(1,0){51}}
\end{picture}
}}
\ncm{\longrarr}[1]{
\overset{#1}{
\parbox{40pt}{
\begin{picture}(40,8)
\put(3,4){\vector(1,0){34}}
\end{picture}
}}}
\ncm{\longlarr}[1]{
\overset{#1}{
\parbox{40pt}{
\begin{picture}(40,8)
\put(37,4){\vector(-1,0){34}}
\end{picture}
}}}
\ncm{\longerrarr}[1]{
\overset{#1}{
\parbox{70pt}{
\begin{picture}(70,8)
\put(3,4){\vector(1,0){64}}
\end{picture}
}}}
\ncm{\longerlarr}[1]{
\overset{#1}{
\parbox{70pt}{
\begin{picture}(70,8)
\put(67,4){\vector(-1,0){64}}
\end{picture}
}}}
\ncm{\antiparallelpair}{
\parbox{23pt}{
\begin{picture}(23,4)
\put(3,3){\vector(1,0){17}}
\put(20,1){\vector(-1,0){17}}
\end{picture}
}}
\ncm{\invantiparallelpair}{
\parbox{23pt}{
\begin{picture}(23,4)
\put(3,1){\vector(1,0){17}}
\put(20,3){\vector(-1,0){17}}
\end{picture}
}}
\ncm{\dualpair}[2]{\overset{#1}{\underset{#2}{\antiparallelpair}}}
\ncm{\invdualpair}[2]{\overset{#1}{\underset{#2}{\invantiparallelpair}}}
\ncm{\coantiparallelpair}{
\parbox{23pt}{
\begin{picture}(23,4)
\put(3,1){\vector(1,0){17}}
\put(20,4){\vector(-1,0){17}}
\end{picture}
}}
\ncm{\doubleoval}{
\parbox{30pt}{
\begin{picture}(30,24)
\put(0,12){\circle*{3}}
\put(30,12){\circle*{3}}
\qbezier(0,12)(15,2)(30,12)
\qbezier(0,12)(15,8)(30,12)
\qbezier(0,12)(15,16)(30,12)
\qbezier(0,12)(15,22)(30,12)
\end{picture}
}}
\ncm{\codualpair}[2]{\overset{#1}{\underset{#2}{\coantiparallelpair}}}
\ncm{\binarydirectsum}[7]{#1\codualpair{#2}{#3}#4\dualpair{#5}{#6}#7}
\ncm{\equalizer}[1]{\overset{#1}{\longrightarrowtail}}
\ncm{\epi}[1]{\overset{#1}{\twoheadrightarrow}}
\ncm{\coequalizer}[1]{
\overset{#1}{
\parbox{40pt}{
\begin{picture}(40,8)
\put(2,4){\vector(1,0){32}} \put(37,4){\vector(1,0){0}}
\end{picture}
}}}
\ncm{\longcoequalizer}[1]{
\overset{#1}{
\parbox{60pt}{
\begin{picture}(60,8)
\put(2,4){\vector(1,0){52}} \put(57,4){\vector(1,0){0}}
\end{picture}
}}}
\ncm{\mono}[1]{\overset{#1}{\rightarrowtail}}
\ncm{\coequalizerfactorizationold}[9]{
\parbox[r]{115pt}{
\begin{picture}(115,70)(0,-5)
\put(0,48){$#1\longpair{#2}{#3}#4$} 
\end{picture}
}
\parbox[l]{80pt}{
\begin{picture}(80,70)(0,-5)
\put(2,51){\vector(1,0){70}} \put(75,51){\vector(1,0){0}}
\put(23,56){$\sst #5$}
\put(80,48){$#6$}
\put(2,47){\vector(2,-1){73}}
\put(48,30){$\sst #7$}
\dashline[+30]{3}(100,42)(100,12) \put(100,12){\vector(0,-1){0}}
\put(105,30){$\sst #8$}
\put(95,0){$#9$}
\end{picture}
}}
\ncm{\coequalizerfactorization}[9]{
\parbox[r]{143pt}{
\begin{picture}(143,70)(0,-5)
\put(0,48){$#1\longpair{#2}{#3}#4$} 
\end{picture}
}
\parbox[l]{80pt}{
\begin{picture}(80,70)(0,-5)
\put(2,51){\vector(1,0){70}} \put(75,51){\vector(1,0){0}}
\put(30,56){$\sst #5$}
\put(80,48){$#6$}
\put(2,47){\vector(2,-1){73}}
\put(41,30){$\sst #7$}
\dashline[+30]{3}(93,42)(93,12) \put(93,12){\vector(0,-1){0}}
\put(98,30){$\sst #8$}
\put(88,0){$#9$}
\end{picture}
}}
\begin{document}

\newcommand{\arXivNumber}{2302.08027}

\renewcommand{\PaperNumber}{048}

\FirstPageHeading

\ShortArticleName{Oriented Closed Polyhedral Maps and the Kitaev Model}

\ArticleName{Oriented Closed Polyhedral Maps\\ and the Kitaev Model}

\Author{Korn\'el SZLACH\'ANYI}

\AuthorNameForHeading{K.~Szlach\'anyi}
\Address{Wigner Research Centre for Physics, Budapest, Hungary}
\Email{\href{mailto:szlachanyi.kornel@wigner.hu}{szlachanyi.kornel@wigner.hu}}

\ArticleDates{Received April 07, 2023, in final form May 14, 2024; Published online June 08, 2024}

\Abstract{A kind of combinatorial map, called arrow presentation, is proposed to encode the data of the oriented closed polyhedral complexes $\Sigma$ on which the Hopf algebraic Kitaev model lives. We develop a theory of arrow presentations which underlines the role of the dual of the double $\mathcal{D}(\Sigma)^*$ of~$\Sigma$ as being the Schreier coset graph of the arrow presentation, explains the ribbon structure behind curves on $\mathcal{D}(\Sigma)^*$ and facilitates computation of holonomy with values in the algebra of the Kitaev model. In this way, we can prove ribbon operator identities for arbitrary f.d.\ C$^*$-Hopf algebras and arbitrary oriented closed polyhedral maps. By means of a combinatorial notion of homotopy designed specially for ribbon curves, we can rigorously formulate ``topological invariance'' of states created by ribbon operators.}

\Keywords{Hopf algebra; polyhedral map; quantum double; ribbon operator; topological invariance}

\Classification{05E99; 16T05; 81T25}

\section{Introduction}

The toric code and its generalizations \cite{Bombin-Delgado,BMCA, Kitaev03} belong to the class of models called topological orders which are thought to describe
physical systems in which fault tolerant quantum computations can be realized. In its most general form, the model is a quantum system defined on the surface $\Sigma$ of an abstract polyhedron having no particular symmetry and with quantum spins sitting on the edges of $\Sigma$ and taking values in a finite-dimensional C$^*$-Hopf
algebra $H$. The mostly studied case is when $H$ is the group algebra of a finite abelian group and $\Sigma$ is the square lattice. This abelian Kitaev model exhibits
charges classified by the quantum double~$\D(H)$ that are created by semiinfinite string operators with the strings having no contribution to the energy, like Dirac strings.
These charges obey anyon statistics \cite{Kitaev03,Naaijkens-loc-endo}.
Much less is known beyond the abelian case. For general Hopf algebras, even the gauge theory interpretation is a~challenge~\mbox{\cite{Meusburger,Meusburger-Wise}}.
In this paper, we define the general model starting from scratch. As for the lattice, we do not want to go beyond ribbon graphs
used by \cite{Meusburger} but wish to replace them by something truly 2-dimensional on which taking duals and doubles are straightforward operations. As a~matter of fact, the dual of the double of $\Sigma$ is as important as the dual of the double of $H$ in the holonomy theory of \cite{Meusburger}. Indeed, the dual of the double of the surface complex is but the ``fattening'' of the ribbon graph. But it is more than that.
It is a sort of Cayley graph for the arrow presentation.

The idea of arrow presentation is well-known in the literature on graph embeddings, dessins d'enfants and permutation representations of triangle groups
under various names. For embeddings of finite, connected, simple graphs these are the ``rotation schemes'' of \cite[Section 6-6]{White} and
for finite, connected graphs these are the ``combinatorial maps'' of \cite[Definition~1.3.23]{Lando-Zvonkin}. For more general maps see \cite{Jones-Singerman,Lando-Zvonkin}.
A combinatorial map consists of a finite set $A$, the elements of which can be called directed edges, half edges, edge ends, darts or arrows, and of 3 permutations
$T_0$, $T_1$, $T_2$ of $A$ such that (1)~$T_2T_1T_0=\id_A$, (2)~the group generated by the $T_i$-s acts transitively on $A$ (connectedness) and (3)~the $T_1$ is an involution
without fixed points.
Such combinatorial data encode a ``map'', i.e., a kind of compact oriented 2-dimensional cellular complex $\Sigma$ without boundary,
in such a way that the $i$-cells of $\Sigma$ correspond bijectively to the cycles of the permutation $T_i$.
The arrow presentation we propose in this paper is a slight extension but also a specialization of combinatorial maps. We relax finiteness of $A$ and connectedness
but otherwise postulate stronger axioms on the permutations $T_i$. These stronger axioms lead to a~map $\Sigma$ in which there are no loop edges, no
degree 1 vertices and no repetitions of any cell along the boundary of a face and along the coboundary of a vertex.
Such a map $\Sigma$ will be called an oriented closed polyhedral map (OCPM).
Although the cardinality of $A$ is not restricted, the connected OCPMs are always countable. By allowing infinite $A$-s
such important examples as the square lattice on the plane and many other periodic and non-periodic tilings of the plane are given place among the connected OCPMs.

In Sections~\ref{section2},~\ref{sec: dudo} and~\ref{sec: curves}, we sketch a theory of arrow presentations and of OCPMs. The emphasis is on the structure of the dual of the double, $\D(\Sigma)^*$,
and on its ribbon curves. In Section~\ref{section3}, we define the algebra $\M=\M(\Sigma,H)$ of the Kitaev model on an OCPM $\Sigma$ and for a~finite-dimensional
semisimple Hopf algebra $H$. In contrast to its representations the algebra itself can be presented without ever introducing an orientation of the edges.
In Section~\ref{sec: holonomy}, we introduce an $\M$-valued holonomy on curves of $\D(\Sigma)^*$ which is motivated by previous works~\mbox{\cite{BMCA,Meusburger}}
although not depending on any relation to the Hopf algebra gauge theory of \cite{Meusburger-Wise}. Here we find a notion of central deformation of curves which will be useful in later Sections. In Section~\ref{section7}, we launch the hard work of computing algebraic relations of ribbon operators, i.e., of the holonomies
of ribbons. The results agree with the earlier results of \cite{Bombin-Delgado, Cowtan-Majid} obtained for group algebras or square lattices.
More efficient relations exist not for the ribbon operators themselves but for their actions on vacuum states which is the topic of Sections~\ref{section9} and~\ref{section10}.
In order to prepare for them we have to introduce certain combinatorial notions of homotopy both for curves and ribbons. This is the content of Section~\ref{section8}.
In Theorem~\ref{thm: topological invariance}, we are at last able to formulate topological invariance, or rather homotopy invariance, of states
created by ribbon operators from the vacuum.
Finding evidence for string localization of the charges in Section~\ref{section10} the paper ends abruptly leaving open the question of existence of these
charges, i.e., superselection sectors of the model.

\section{Polyhedral maps}\label{section2}

The main tool of this paper is the arrow presentation of certain surface complexes but arrow presentations themselves form an interesting mathematical structure.
\begin{defi}\label{def: AP}
An arrow presentation is a triple $\bra \Arr, T_0, T_2\ket$, where $\Arr$ is a set and for $i=0,2$ the $T_i\colon \Arr\to \Arr$
are bijective functions satisfying the following axioms:
\axiomenum{AP}\setlength{\leftskip}{0.4cm}
\item The orbits $\Ou_i(a):=\bigl\{T_i^n a\mid n\in\NN\bigr\}$ are finite sets containing at least 2 elements for all $a\in \Arr$ and for $i=0,2$.
\item Introducing $T_1:=T_0T_2$ we require that $T_1T_1=\id_\Arr$.
\item The intersection of a $T_0$-orbit and a $T_2$-orbit is either empty or a singleton.
\axiomenumend
\end{defi}
The geometric meaning of the above definition will be unveiled gradually. Here is a rough picture of what is going on:
The elements of $\Arr$ will be
called arrows and will become the directed edges of a surface complex. Assuming that the complex is closed and oriented the meaning of the $T_i$ transformations is this.
Every directed edge $a$ lies on the boundary of exactly~2~faces but only one of these faces, call it $f$, has compatible
orientation with the direction of $a$.
The orientation of $f$ determines a cyclic order of its boundary edges and we define $T_2a$ to be the successor of $a$ within this cyclic order.
Similarly one can define $T_0a$ as the successor of $a$ with respect to rotating around the source vertex of $a$ in counterclockwise direction. It is now
easy to imagine what the composite operation $T_1$ does; it reverses the direction of the arrows.\vspace{-3mm}
$$
\parbox{200pt}{
\begin{picture}(200,120)

\put(30,45){\polyline(0,0)(-10,20)(-30,10)(-30,-10)(-10,-20)(0,0)}
\put(30,45){\line(1,-1){15}} \put(30,45){\line(3,1){20}}
{\color{red}\put(15,45){\circle*{5}}}

\put(100,90){\polyline(0,0)(-10,20)(-30,10)(-30,-10)(-10,-20)(0,0)}
\put(100,90){\line(1,-1){15}} \put(100,90){\line(3,1){20}}
{\color{blue}\put(100,90){\circle*{5}}}

\put(170,45){\polyline(0,0)(-10,20)(-30,10)(-30,-10)(-10,-20)(0,0)}
\put(170,45){\line(1,-1){15}} \put(170,45){\line(3,1){20}}

\thicklines
\put(20,25){\vector(1,2){10}} \put(27,30){$a$}
\put(100,90){\vector(-1,2){10}} \put(96,103){$T_2a$}
\put(170,45){\vector(-1,-2){10}} \put(163,20){$T_0T_2a$}

\end{picture}
}\vspace{-7mm}
$$

If we added the axiom $T_2^3=\id_\Arr$, then we would be dealing with simplicial complexes but not doing so has the benefit of having a self-dual notion
of complex, as we shall see later.

Now we derive some formal consequences of Definition~\ref{def: AP}. Note that the set $\Arr$ is not assumed to be finite.
\begin{lem}\label{lem: OCPM}
Let $\bra \Arr, T_0, T_2\ket$ be an arrow presentation and let $T_1=T_0T_2$. Then
\begin{enumerate}\itemsep=0pt
\item[$(i)$] $T_2^{-1}=T_1T_0$,
\item[$(ii)$] $T_0^{-1}=T_2T_1$,
\item[$(iii)$] $\Ou_i(a)\cap\Ou_j(a)=\{a\}$ for all $a\in \Arr$ and for $0\leq i< j\leq 2$,
\item[$(iv)$] $T_1(a)\neq a$ for all $a\in \Arr$.
\end{enumerate}
\end{lem}
\begin{proof}
(i) $T_1 T_0 T_2=T_1T_1=\id_\Arr$ shows that $T_2$ has left inverse $T_1T_0$. Since $T_2$ is invertible, $T_1T_0$ is the inverse of $T_2$.

(ii) Using (i), we have $T_2T_1T_0=\id_\Arr$. Therefore, $T_2T_1$ is a left inverse of $T_0$. Since $T_0$ is invertible, $T_2T_1$ is the inverse of $T_0$.

(iii) The $i=0$, $j=2$ case is trivial since $a$ belongs to both orbits and by axiom (AP-3) no other element does so.
In the $i=0$, $j=1$ case, assume $b\in \Ou_0(a)\cap\Ou_1(a)$. Then either $b=a$ or $b=T_1a=T_0^ma$ for some $m$. In the latter case,
$T_0^{-1}b=T_0^{m-1}a=T_2a$ belongs to $\Ou_0(a)\cap\Ou_2(a)=\{a\}$ hence $T_2a=a$ which contradicts (AP-1).
Similarly, in the $i=1$, $j=2$ case a $c\in\Ou_1(a)\cap\Ou_2(a)$ which is not $a$ must satisfy $c=T_1a=T_2^na$ for some $n$ and therefore
$c=T_0T_2a=T_2^{n-1}T_2a$ belongs to $\Ou_0(T_2a)\cap\Ou_2(T_2a)$ hence $c=T_2a$. Now we have $T_1a=T_2a$ which means that
$T_2a$ is a fixed point of $T_0$ which contradicts~(AP-1).

(iv) Let $T_1a=a$. Then $T_2a=T_2T_1a=T_0^{-1}a$, so $T_2a\in \Ou_0(a)\cap\Ou_2(a)$. By (iii), $T_2a=a$ is a~fixed point of $T_2$ which
contradicts (AP-1).
\end{proof}

Out of an arrow presentation, we would like to build a 2-dimensional cellular complex, a kind of abstract polyhedron.
The necessary data are the following. Let
\begin{equation}\label{Sigma of AP}
\Sigma^i:=\{O\subseteq\Arr\mid \exists a\in\Arr,\,\Ou_i(a)=O\}
\end{equation}
be the set of $T_i$-orbits for $i=0,1,2$. We call the elements $v\in\Sigma^0$ vertices, the elements $e\in \Sigma^1$ (undirected) edges and the elements $f\in\Sigma^2$ faces. For an edge $e=\{a,T_1a\}$, we say that $u=\Ou_0(a)$ and
$v=\Ou_0(T_1a)$ are the endpoints, or boundary vertices, of $e$. In this case, we write $\Bd(e)=\{u,v\}$ and call it the boundary of $e$. The coboundary
of $e$ is defined by $\Cb(e):=\{\Ou_2(a),\Ou_2(T_1a)\}$. More generally, we define
\begin{align}
\label{Bd of AP}
&\Bd(x):=\{\Ou_{i-1}(a)\mid a\in x\}\qquad\forall x\in\Sigma^i,\quad i=1,2,\\
\label{Cb of AP}
&\Cb(x):=\{\Ou_{i+1}(a)\mid a\in x\}\qquad\forall x\in\Sigma^i,\quad i=0,1,
\end{align}
and we declare the boundaries of vertices and the coboundaries of faces to be empty.
Denoting by $\Sigma$ the disjoint union $\Sigma^0\sqcup\Sigma^1\sqcup\Sigma^2$ the $\Sigma^i$-s can be recovered by setting up a dimension function
$\dim\colon\Sigma\to\{0,1,2\}$. Then $\Bd$ and $\Cb$ become functions $\Sigma\to\Fin(\Sigma)$ to the set of finite subsets of~$\Sigma$.\looseness=-1

For a while, we forget about arrow presentations because first we have to formulate the requirements for a bunch of data, such as the
$\bra \Sigma,\dim,\Bd,\Cb\ket$ above, which qualifies it as a~polyhedral complex.

\begin{defi} \label{def: CPM}
The data $\bra \Sigma,\dim,\Bd,\Cb\ket$ consisting of a set $\Sigma$ and functions $\dim\colon\Sigma\to\{0,1,2\}$, $\Bd,\Cb\colon\Sigma\to\Fin(\Sigma)$
is called a closed polyhedral map (CPM) if the following axioms hold:
\axiomenum{CPM}\setlength{\leftskip}{0.72cm}
\item For $s,t\in\Sigma$, if $s\in\Bd(t)$, then $\dim t=1+\dim s$.
\item $s\in\Bd(t)$ if and only if $t\in\Cb(s)$ for all $s,t\in\Sigma$.
\item $|{\Bd}(e)|=2$ and $|\Cb(e)|=2$ for all edges $e\in\Sigma^1$.
\item For all $v\in\Sigma^0$, the elements of the vertex neighbourhood $\Nb(v):=\Cb(v)\cup\bigcup\{\Cb(e)\mid e\in\Cb(v)\}$ can be enumerated
in such a way that it consists of edges $e_0,\dots,e_{n-1}$ and faces $f_0,\dots,f_{n-1}$ for some $n\geq 2$ with
$\Cb(e_k)=\{f_k,f_{k+1}\}$ for $0\leq k<n$, where $f_n=f_0$.

\item For all $f\in\Sigma^2$, the elements of the face neighbourhood $\Nb(f):=\Bd(f)\cup\bigcup\{\Bd(e)\mid e\in\Bd(f)\}$ can be enumerated
in such a way that it consists of edges $e_0,\dots,e_{n-1}$ and vertices $v_0,\dots,v_{n-1}$ for some $n\geq 2$ with
$\Bd(e_k)=\{v_k,v_{k+1}\}$ for $0\leq k<n$, where $v_n=v_0$.
\axiomenumend
\end{defi}

For comparison of CPM-s with the complexes one obtains from combinatorial maps see Example~\ref{exa: 3 torus maps}.

The geometric realization of a CPM $\bra \Sigma,\dim,\Bd,\Cb\ket$ is a 2-manifold $[\Sigma]$ without boundary that can be constructed from polygonal disks
$[f]$ for each face $f\in\Sigma^2$ by sewing edges together. Axiom (CPM-5) ensures that the neighbourhood $\Nb(f)$ of a face $f$ consists of $n_f$
edges and $n_f$ vertices for some $n_f\geq 2$ which form a circular chain so we can associate to the abstract cell $f$ a topological closed disk $[f]$
the boundary of which is an $n_f$-gon and the edges and vertices of this polygon are labelled by the cells of $\Nb(f)$.
Doing this for all $f\in\Sigma^2$ every edge label $e\in\Sigma^1$ occurs twice sitting on the boundary of two different polygonal disks, by axiom CPM-3.
There is only one way to identify two edges of the same label which is compatible with the labels on their boundary vertices also by axiom CPM-3.
Having done these identifications for all edges, we get a topological space $[\Sigma]$ in which every point has a neighbourhood homeomorphic to an open disk,
hence a topological closed surface. The points where this is non-trivial are the vertices but for them axiom CPM-4 ensures open disk neighbourhoods.

$[\Sigma]$ is compact if and only if $\Sigma$ is finite. The non-trivial `only if' part follows from the existence of a cover of $[\Sigma]$
by open disks, with one disk for each cell, such that no proper subset of the cover is a cover.

Although Definition~\ref{def: CPM} deals only with closed complexes, i.e., ones without boundary and without coboundary,
it is intuitively clear how to modify the definition in order to get polyhedral map (PM) satisfying the following theorem: \emph{Every PM can be obtained
from a CPM by discarding a set of faces $($holes$)$ and a set of vertices $($punctures$)$}.

\begin{lem}\label{lem: site}
For a vertex $v$ and a face $f$ of a CPM $\Sigma$ the following conditions are equivalent:
\begin{enumerate}\itemsep=0pt
\item[$({\rm i})$] $v\in\Nb(f)$,
\item[$({\rm ii})$] $f\in\Nb(v)$,
\item[$({\rm iii})$] $\Cb(v)\cap\Bd(f)\neq\varnothing$.
\end{enumerate}
Provided these conditions hold the intersection $\Cb(v)\cap\Bd(f)$ consists of precisely $2$ edges.
\end{lem}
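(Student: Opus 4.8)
The plan is to derive the three equivalences by a purely formal unwinding of the definitions of $\Nb(v)$ and $\Nb(f)$, using only axioms (CSC-1) and (CSC-2), and then to read the number ``$2$'' off the cyclic structure of a face boundary supplied by (CSC-5).

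First I would record two dimension remarks. If $v\in\Sigma^0$ and $e\in\Cb(v)$, then $v\in\Bd(e)$ by (CSC-2), hence $\dim e=1+\dim v=1$ by (CSC-1); so $\Cb(v)\subseteq\Sigma^1$. Dually $\Bd(f)\subseteq\Sigma^1$ for every face $f$. In particular $v\notin\Bd(f)$ and $f\notin\Cb(v)$. Unwinding $\Nb(f)=\Bd(f)\cup\bigcup\{\Bd(e)\mid e\in\Bd(f)\}$, membership $v\in\Nb(f)$ is then equivalent to $v\in\Bd(e)$ for some $e\in\Bd(f)$, which by (CSC-2) says $e\in\Cb(v)$ for some $e\in\Bd(f)$, i.e.\ condition (iii); this proves (i)$\Leftrightarrow$(iii). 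Symmetrically, unwinding $\Nb(v)=\Cb(v)\cup\bigcup\{\Cb(e)\mid e\in\Cb(v)\}$ and using (CSC-2) in the form $f\in\Cb(e)\Leftrightarrow e\in\Bd(f)$ shows $f\in\Nb(v)$ is equivalent to $e\in\Bd(f)$ for some $e\in\Cb(v)$, again (iii); this proves (ii)$\Leftrightarrow$(iii).

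For the final assertion, assume (iii). By (CSC-2), $\Cb(v)\cap\Bd(f)$ is exactly the set of edges $e\in\Bd(f)$ with $v\in\Bd(e)$. Now I would invoke (CSC-5): the elements of $\Nb(f)$ can be listed as pairwise distinct edges $e_0,\dots,e_{n-1}$ — necessarily the elements of $\Bd(f)$, since $\Bd(f)=\Nb(f)\cap\Sigma^1$ — and pairwise distinct vertices $v_0,\dots,v_{n-1}$ with $n\geq2$, arranged so that $\Bd(e_k)=\{v_k,v_{k+1}\}$ (indices mod $n$). By (iii) some $e_k$ has $v\in\{v_k,v_{k+1}\}$, so by distinctness of the $v_i$ there is a unique index $j_0$ with $v=v_{j_0}$; and for each $k$, one has $v\in\Bd(e_k)$ precisely when $k\equiv j_0$ or $k\equiv j_0-1\pmod n$. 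Hence $\Cb(v)\cap\Bd(f)=\{e_{j_0},e_{j_0-1}\}$, a set with exactly two elements because the $e_i$ are distinct and $n\geq2$.

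The only place real content of the axioms enters is the distinctness of the vertices $v_0,\dots,v_{n-1}$ in (CSC-5): without it a face could run through a vertex twice along its boundary and the intersection could end up with $4$ edges instead of $2$. So the main (indeed, essentially the only) delicate point is to make explicit that the phrase ``$\Nb(f)$ consists of edges $e_0,\dots,e_{n-1}$ and vertices $v_0,\dots,v_{n-1}$'' means these $2n$ cells are pairwise distinct; the same count can alternatively be extracted by applying (CSC-4) at $v$ and using distinctness of the faces around $v$. Everything else is straightforward definition-chasing.
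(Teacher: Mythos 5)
Your proof is correct and takes essentially the same route as the paper: the equivalences follow from unwinding $\Nb(v)$ and $\Nb(f)$ via (CSC-2), and the count of $2$ is read off the cyclic chain guaranteed by (CSC-5). The paper's version is much terser; your explicit remark that the count hinges on the pairwise distinctness of the vertices $v_0,\dots,v_{n-1}$ in the (CSC-5) enumeration is a worthwhile observation the paper leaves implicit.
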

\begin{proof}
The relation $v\in\Nb(f)$ is the square of the boundary relation, i.e., $\exists e\, (v\in\Bd(e)\,\wedge\,e\in\Bd(f))$. By (CPM-2), the transpose of $\Bd$
is $\Cb$ so (i) is equivalent to both (ii) and (iii).
If $v\in\Nb(f)$, then axiom (CPM-5) shows explicitly the 2 coboundary edges of $v$ that lie on the boundary of $f$.
\end{proof}

If the CPM $\Sigma$ arises from an arrow presentation as in equations (\ref{Sigma of AP}) and (\ref{Bd of AP}), then the conditions of Lemma~\ref{lem: site}
can be supplemented by a fourth one, namely,
\begin{equation}\label{site}
v\cap f\neq\varnothing\quad\Leftrightarrow\quad \Cb(v)\cap\Bd(f)\neq\varnothing.
\end{equation}
In this case, the two edges can be explicitly given by
\begin{equation}\label{2 edges of a site}
\Cb(v)\cap\Bd(f)=\{\Ou_1(a),\Ou_1(T_0a)\}\qquad\text{where}\quad\{a\}=v\cap f.
\end{equation}

\begin{defi}\label{def: site}
A pair $\bra v,f\ket\in\Sigma^0\x\Sigma^2$ satisfying the conditions of Lemma~\ref{lem: site} is called a site. Two sites $s_i=\bra v_i,f_i\ket$, $i=1,2$,
can be equal (if $v_1=v_2$ and $f_1=f_2$) or neighbour (if either $v_1=v_2$ or $f_1=f_2$ but not both) or disjoint (if none of the 2 equations holds).
\end{defi}

The following equivalent conditions define connectedness of a CPM $\Sigma$:
\begin{itemize}\itemsep=0pt
\item For every $u,v\in\Sigma^0$, there exists a sequence $(v_0,e_1,v_1,\dots,e_n,v_n)$ of edges and vertices such that $v_0=u$, $v_n=v$ and
$\Bd(e_k)=\{v_{k-1},v_k\}$ for $0<k\leq n$.
\item For every $p,q\in\Sigma^2$, there exists a sequence $(f_0,e_1,f_1,\dots,e_n,f_n)$ of edges and faces such that $f_0=p$, $f_n=q$ and
$\Cb(e_k)=\{f_{k-1},f_k\}$ for $0<k\leq n$.
\item For every pair $s$, $t$ of sites, there exists a sequence $(s_i)_{i=0}^n$ of sites such that $s_0=s$, $s_n=t$ and the $\{s_{i-1},s_i\}$ are
neighbours for $0<i\leq n$.
\end{itemize}

If $\Sigma$ is connected, then it is countable. As a matter of fact, such a $\Sigma$ can be written as a~countable union of finite subsets.

Axioms (CPM-4) and (CPM-5) offer an easy way to define orientation of CPMs. Axiom (CPM-5) allows to choose a cyclic order
$[v_0,e_0,v_1,e_1,\dots,v_{n-1},e_{n-1}]$ on every face neighbourhood $\Nb(f)$ where $[x_1,\dots,x_k]$
denotes sequences $(x_1,\dots,x_k)$ modulo cyclic permutations. Since $n\geq 2$ by assumption, this cyclically ordered set has at least 4 elements.
Therefore, there exists exactly two cyclic orders on $\Nb(f)$ satisfying (CPM-5). (If we did not include the vertices into~$\Nb(f)$, we would be in trouble
with the 2-gons.)
Similarly, (CPM-4) allows to choose among the two possible cyclic orders on every vertex neighbourhood $\Nb(v)$.

\begin{defi}\label{def: ori}
Let $\bra v,f\ket$ be a site. Cyclic orders $[f_0,e_0,f_1,e_1,\dots,f_{m-1},e_{m-1}]$ of $\Nb(v)$ and
$[v_0,e_0,v_1,e_1,\dots,v_{n-1},e_{n-1}]$ of $\Nb(f)$ are called compatible if the predecessor and successor edges $e_p<f<e_s$ of $f$
in $\Nb(v)$ and the predecessor and successor edges $e'_p<v<e'_s$ of $v$ in~$\Nb(f)$ are related by $e'_p=e_s$ and $e'_s=e_p$. Otherwise
\big(i.e., if $e'_p=e_p$ and $e'_s=e_s$\big), they are called incompatible.

A system of cyclic orders given on all vertex and face neighbourhoods of $\Sigma$ is called an orientation if for every site
$\bra v,f\ket$ on $\Sigma$ the cyclic orders of $\Nb(v)$ and $\Nb(f)$ are compatible.
A~CPM $\Sigma$ is called orientable if it has an orientation.
\end{defi}

\begin{thm}\label{thm: CPM}
Given an arrow presentation $\bra \Arr,T_0,T_2\ket$ the associated data
$\bra \Sigma,\dim,\Bd,\Cb\ket$ defined by \eqref{Sigma of AP}, \eqref{Bd of AP}
and \eqref{Cb of AP} is a closed polyhedral map in the sense of Definition~$\ref{def: CPM}$.
Furthermore, the actions of $T_0$ and $T_2$ define canonical cyclic orders
\begin{gather}
\notag[f_0,e_0,\dots,f_{m-1},e_{m-1}]\\
\qquad {} :=\bigl[\Ou_2(a),\Ou_1(T_0a),\Ou_2(T_0a),\Ou_1\bigl(T_0^2a\bigr),\dots,\Ou_2\bigl(T_0^{m-1}a\bigr),\Ou_1(a)\bigr]\label{cycord Nb(v)}
\end{gather}
on vertex neighbourhoods $\Nb(\Ou_0(a))$ and
\begin{gather}
\notag[v_0,e_0,\dots,v_{n-1},e_{n-1}]\\
\qquad {}:=\big[\Ou_0(a),\Ou_1(a),\Ou_0(T_2a),\Ou_1(T_2a),\dots,\Ou_0(T_2^{n-1}a),\Ou_1\bigl(T_2^{n-1}a\bigr)\big]\label{cycord Nb(f)}
\end{gather}
on face neighbourhoods $\Nb(\Ou_2(a))$. These cyclic orders are independent of the choice of $a$ within $v=\Ou_0(a)$ and within $f=\Ou_2(a)$,
respectively, and constitute an orientation of $\Sigma$ in the sense of Definition~$\ref{def: ori}$.
In this way every arrow presentation determines an oriented closed polyhedral map.
\end{thm}
\begin{proof}
(CPM-1) is clear. The relation $s\in\Bd(t)$ means that there is an $a\in t$ such that $s=\Ou_{i-1}(a)$ where $i=\dim t$. But this is just the symmetric
relation $s\cap t\neq\varnothing$. Inspecting also~$\Cb(s)$, we get
\begin{equation}\label{Bd cap Cb}
s\in\Bd(t)\quad\Leftrightarrow\quad s\cap t\neq\varnothing\quad\Leftrightarrow\quad t\in\Cb(s),
\end{equation}
which proves (CPM-2).

Every edge $e$ is a 2-element set $\{a,T_1a\}$ by (AP-2) and by Lemma~\ref{lem: OCPM}\,(iv). Therefore, both the boundary and the coboundary
of $e$ can contain at most 2 elements. The case $\Ou_i(a)=\Ou_i(T_1a)$ for $i=0$ or $i=2$, however, is forbidden because it implies that
$a\neq T_1a\in\Ou_1(a)\cap\Ou_i(a)$ contradicting Lemma~\ref{lem: OCPM}\,(iii). This proves (CPM-3).

Let $v\in\Sigma^0$. Then the transformation $T_0$ gives a natural cyclic order on the set $v$ of arrows, hence also on the set $\Cb(v)$ of edges.
Let an $a\in v$ be fixed and set $e_k:=\Ou_1\bigl(T_0^ka\bigr)$ for $0\leq k<n=|v|$. The coboundary of $e_k$ is the pair
$\Cb(e_k)=\bigl\{\Ou_2\bigl(T_0^ka\bigr),\Ou_2\bigl(T_1T_0^ka\bigr)\bigr\}$. Noticing that Lemma~\ref{lem: OCPM}\,(i) implies that $\Ou_2\bigl(T_1T_0^ka\bigr)=\Ou_2\bigl(T_0^{k-1}a\bigr)$, we
will denote $\Ou_2\bigl(T_0^{k-1}a\bigr)$ by $f_k$ and then get the desired formula $\Cb(e_k)=\{f_k,f_{k+1}\}$ for all $k$. It remains to show that there are
exactly $n$ different $e_k$-s and $n$ different $f_k$-s. If $e_k=e_l$, then $T_0^ka$ is equal to either $T_0^la$ or $T_1T_0^la$. In the first case
$k=l \mod n$ because $n$ is the size of $\Ou_0(a)$. In the second case $T_1T_0^la$ belongs to both the $T_1$-orbit and the $T_0$-orbit of
$b=T_0^la$. So $T_1T_0^la =b$ by Lemma~\ref{lem: OCPM}\,(iii) and $b$ is a fixed point of $T_1$ contradicting Lemma~\ref{lem: OCPM}\,(iv). Thus, all $e_k$-s
are different. In particular, $n=|\Cb(v)|$. Now assume $f_k=f_l$ for some $0\leq k<l<n$. Then, there is a power $T_2^m$ such that $T_2^mb=T_0^{l-k}b$
for $b=T_0^{k-1}a$. Again by Lemma~\ref{lem: OCPM}\,(iii), we obtain $T_0^{l-k}b=b$ hence $k=l \mod n$. So all $f_k$-s are different and the vertex
neighbourhood $\Nb(v)$ has the required circular structure. This proves (CPM-4).

The proof of (CPM-5) goes exactly as that of (CPM-4). Namely $\bigl\langle A,T_2^{-1},T_0^{-1}\bigr\rangle$
is also an arrow presentation the $T_1$-orbits of which coincide with the original because $T_2^{-1}T_0^{-1}=T_0T_2$. The $T_0$-orbits and $T_2$-orbits in turn are interchanged so (CPM-5) for $\bra \Arr,T_0,T_2\ket$
is equivalent to (CPM-4) for $\bigl\langle A,T_2^{-1},T_0^{-1}\bigr\rangle$.

It remains to show compatibility of the cyclic orders given in (\ref{cycord Nb(v)}) and (\ref{cycord Nb(f)}) whenever~$v$ and~$f$ form a site.
By (\ref{site}) and Lemma~\ref{lem: site}, this means that $v\cap f\neq\varnothing$ so we can choose the $a$ in both~(\ref{cycord Nb(v)}) and~(\ref{cycord Nb(f)}) to be the unique element of $v\cap f$.
Then the 2 common edges of~$\Nb(v)$ and~$\Nb(f)$ are~$\Ou_1(a)$ and~$\Ou_1(T_0a)=\Ou_1\bigl(T_2^{-1}a\bigr)$ the first
of which is the predecessor of $\Ou_2(a)=f$ in $\Nb(v)$ and the successor of $\Ou_0(a)=v$ in $\Nb(f)$ while the second is the successor of $f$ in $\Nb(v)$
and the predecessor of $v$ in $\Nb(f)$. Therefore, they are compatible.
\end{proof}

A CPM $\Sigma$ together with an orientation will be called an oriented closed polyhedral map (OCPM).

We will not formalize the converse of the above theorem which claims that every OCPM determines an arrow presentation; the idea has already been sketched
below Definition~\ref{def: AP}.
The two constructions AP $\rarr{}$ OCPM and OCPM $\rarr{}$ AP establish a bijective correspondence between (isomorphism classes of) oriented closed polyhedral
maps and arrow presentations. The discussion of isomorphisms is postponed until Section~\ref{sec: dudo}.

\begin{exa}\label{exa: 3 torus maps}
In order to illustrate the difference between combinatorial maps and arrow presentations, in particular, the role of axiom (AP-3), let us consider three maps on the torus:
\begin{align*}
\Sigma_{1\x 1}&=\quad
\parbox{60pt}{
\begin{picture}(40,40)
\put(0,0){\circle*{4}}
\put(40,0){\circle{4}}
\put(0,40){\circle{4}}
\put(40,40){\circle{4}}
\thicklines
\put(0,0){\vector(1,0){38}} \put(18,3){$\scriptstyle 1$}
\put(0,0){\vector(0,1){38}} \put(3,18){$\scriptstyle 2$}
\thinlines
\put(2,40){\vector(1,0){36}} \put(18,43){$\scriptstyle 1$}
\put(40,2){\vector(0,1){36}} \put(43,18){$\scriptstyle 2$}
\end{picture}
}
\qquad
\begin{aligned}
&A=\{1,2,\bar 1,\bar 2\},\\
&T_0=(12\bar 1\bar 2),\\
&T_2=(12\bar 1\bar 2),
\end{aligned}
\\
\Sigma_{1\mash 2}&=\quad
\parbox{60pt}{
\begin{picture}(60,30)
\put(0,0){\circle*{4}}
\put(30,0){\circle*{4}}
\put(0,30){\circle{4}}
\put(30,30){\circle{4}}
\put(60,0){\circle{4}}
\put(60,30){\circle{4}}
\thicklines
\put(2,0){\vector(1,0){26}} \put(15,3){$\scriptstyle 1$}
\put(0,2){\vector(0,1){26}} \put(3,13){$\scriptstyle 2$}
\put(32,0){\vector(1,0){26}} \put(45,3){$\scriptstyle 3$}
\put(30,2){\vector(0,1){26}} \put(33,13){$\scriptstyle 4$}
\thinlines
\put(2,30){\vector(1,0){26}} \put(15,33){$\scriptstyle 3$}
\put(32,30){\vector(1,0){26}} \put(45,33){$\scriptstyle 1$}
\put(60,2){\vector(0,1){26}} \put(63,13){$\scriptstyle 2$}
\end{picture}
}
\qquad
\begin{aligned}
&A=\{1,2,3,4,\bar 1,\bar 2,\bar 3,\bar 4\},\\
&T_0=(12\bar 3\bar 4)(\bar 1\bar 234),\\
&T_2=(14\bar 3\bar 2)(32\bar 1\bar 4),
\end{aligned}
\\
\Sigma_{2\x 2}&=\quad
\parbox{60pt}{
\begin{picture}(60,48)
\put(0,0){\circle*{4}}
\put(24,0){\circle*{4}}
\put(0,24){\circle*{4}}
\put(24,24){\circle*{4}}
\put(0,48){\circle{4}}
\put(24,48){\circle{4}}
\put(48,0){\circle{4}}
\put(48,24){\circle{4}}
\put(48,48){\circle{4}}
\thicklines
\put(2,0){\vector(1,0){20}} \put(12,3){$\scriptstyle 1$}
\put(0,2){\vector(0,1){20}} \put(3,10){$\scriptstyle 2$}
\put(26,0){\vector(1,0){20}} \put(36,3){$\scriptstyle 3$}
\put(24,2){\vector(0,1){20}} \put(27,10){$\scriptstyle 4$}
\put(2,24){\vector(1,0){20}} \put(12,27){$\scriptstyle 5$}
\put(0,26){\vector(0,1){20}} \put(3,34){$\scriptstyle 6$}
\put(26,24){\vector(1,0){20}} \put(36,27){$\scriptstyle 7$}
\put(24,26){\vector(0,1){20}} \put(27,34){$\scriptstyle 8$}
\thinlines
\put(2,48){\vector(1,0){20}}
\put(26,48){\vector(1,0){20}}
\put(48,2){\vector(0,1){20}}
\put(48,26){\vector(0,1){20}}
\end{picture}
}
\qquad
\begin{aligned}
&A=\{1,2,3,4,5,6,7,8,\bar 1,\bar 2,\bar 3,\bar 4,\bar 5,\bar 6,\bar 7,\bar 8\},\\
&T_0=(12\bar 3\bar 6)(34\bar 1\bar 8)(56\bar 7\bar 2)(78\bar 5\bar 4),\\
&T_2=(14\bar 5\bar 2)(32\bar 7\bar 4)(58\bar 1\bar 6)(76\bar 3\bar 8).
\end{aligned}
\end{align*}
The arrows are named $1,\dots,n$, $\bar 1,\dots,\bar n$ with $\bar k$ being the reverse of $k$. The permutations $T_i$ are given by their cycle decompositions.
So, in each of the cases $T_1=(1\bar 1)\dots(n\bar n)$.

$\Sigma_{1\x 1}$ arises from embedding the figure 8 graph (of 1 vertex and 2 loops)
into the torus so that the two loops become a pair of meridian and longitude.
$\Sigma_{1\mash 2}$ arises from embedding \smash{$\ \doubleoval\ $} into the torus in the following way. First, we draw a length 2 longitude and two disjoint length~1
meridians of distance 1 from each other. Then, we cut along the longitude, shift it by 1 and glue it back (half Dehn twist).
$\Sigma_{2\x 2}$ is also an embedding of a graph which can be best drawn on the torus itself.

It is easy to see that in each of the three cases the data $\bra A, T_0,T_1,T_2\ket$ obey the three axioms of combinatorial maps mentioned in the introduction.
Moreover, neither of the $T_i$ have a fixed point so also the axioms (AP-1) and (AP-2) are fulfilled.
In the map $\Sigma_{1\x 1}$ we see the same face on both hand sides of an edge (isthmus) because there is only one face. Also the edges are bordered by one and the same
vertex. So this example is breaking the axioms (CPM-3), (CPM-4) and \mbox{(CPM-5)}. In terms of the permutations, we see the weird identity $T_0=T_2$
implying immediately that (AP-3) fails.
In the map $\Sigma_{1\mash 2}$, there are no loops and no isthmuses but there are repeated faces
in the vertex neighbourhoods and repeated vertices in the face neighbourhoods. So \mbox{(CPM-4)} and \mbox{(CPM-5)}
are broken. Both the first cycle of~$T_0$ and the second cycle of~$T_2$ contains~$2$ and~$\bar 4$
which means that (AP-3) fails. This is not an arrow presentation, either.
The map $\Sigma_{2\x 2}$ is the only one among the three which is an OCPM and the only one
for which $\bra A,T_0,T_2\ket$ is an arrow presentation.
This example proves independence of the axiom (AP-3) and shows also that
in order to produce a map with given genus arrow presentations need more arrows than combinatorial maps do.
\end{exa}

Let us mention briefly two important constructions of CPMs which work also for non-orientable $\Sigma$. See Figure~\ref{fig1}.
\begin{defi}\label{def: double}
Let $\Sigma=\bra\Sigma,\dim,\Bd,\Cb\ket$ be a closed polyhedral map.
Its dual is
\begin{equation} \label{du Sigma}
\Sigma^*:=\bra\Sigma,2-\dim,\Cb,\Bd\ket
\end{equation}
and its double $\D=\D(\Sigma)=\bigl\bra \D^0,\D^1,\D^2,\mathbf{Bd},\mathbf{Cb}\bigr\ket$ is defined by{\samepage
\begin{align*}
&\D^0:=\Sigma^0\sqcup \Sigma^1 \sqcup \Sigma^2,\\
&\D^1:=\bigl\{\bra v,e\ket\in\Sigma^0\x\Sigma^1\mid v\in\Bd(e)\bigr\} \sqcup \bigl\{\bra e,f\ket\in\Sigma^1\x\Sigma^2\mid e\in\Bd(f)\bigr\},\\
&\D^2:=\bigl\{\bra v,f\ket\in\Sigma^0\x\Sigma^2\mid \Cb(v)\cap\Bd(f)\neq\varnothing\bigr\},\\
&\mathbf{Bd}(\bra s,t\ket):=\{s,t\}\qquad\text{for}\ \bra s,t\ket\in\D^1,\\
&\mathbf{Bd}(\bra v,f\ket):=\{\bra v,e_1\ket,\bra v,e_2\ket,\bra e_1,f\ket,\bra e_2,f\ket\},
\end{align*}
where $\{e_1,e_2\}=\Cb(v)\cap\Bd(f)$
for $\bra v,f\ket\in\D^2$.}

The coboundary $\mathbf{Cb}$ is then obtained by transposing the relation $x\in\mathbf{Bd}(y)$.
\end{defi}

\begin{figure}[t]\centering
\includegraphics[width=6cm]{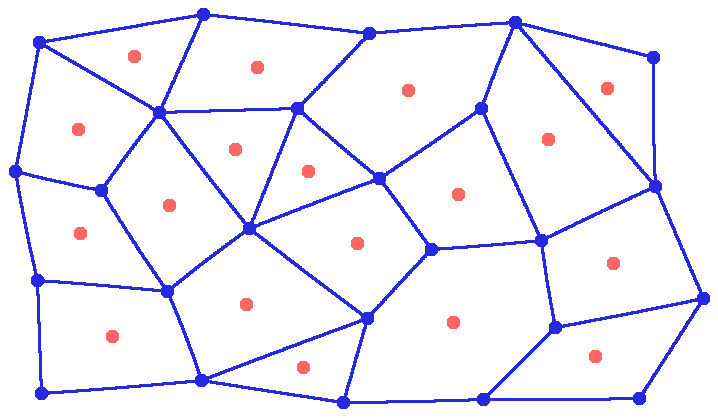}\hskip 0.5cm \includegraphics[width=6cm]{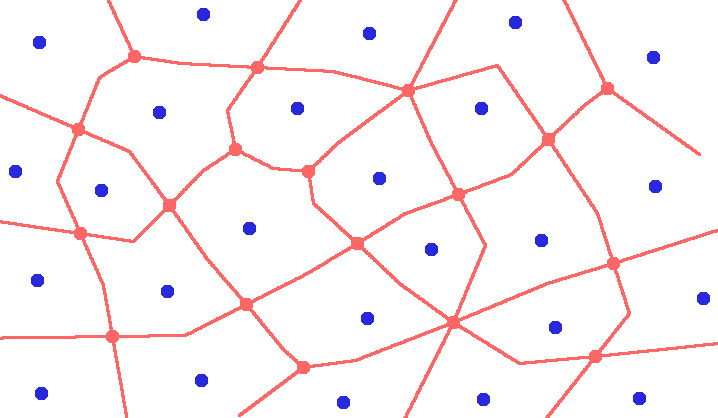}
\vskip 0.3cm
\includegraphics[width=6cm]{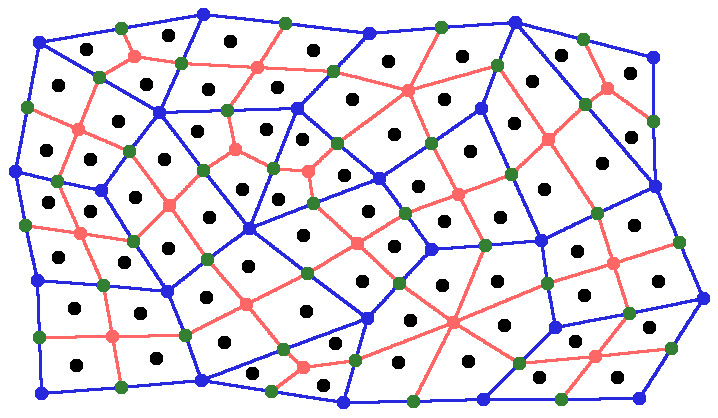}\hskip 0.5cm \includegraphics[width=6cm]{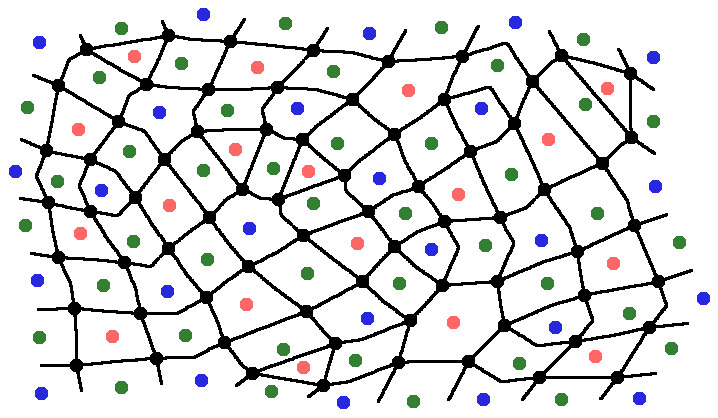}
\caption{A planar region of a CPM $\Sigma$ and the same region of $\Sigma^*$, $\D(\Sigma)$ and $\D(\Sigma)^*$. Blue dots are the vertices, red dots are the faces.
In the 2nd row the green dots are the edges, black ones are the sites.}\label{fig1}
\end{figure}

Notice that the edges of $\D(\Sigma)$ are naturally divided into two classes: the $ve$-type edges and the $ef$-type edges.
Pairs of $ve$ edges organize into the edges of $\Sigma$ and pairs of $ef$ edges do so for the dual complex $\Sigma^*$, explaining the name ``double'' of
$\Sigma$. The double $\D(\Sigma)$ is always free of multiple edges and loops.

Notice that every face of $\D(\Sigma)$ is a quadrangle. The faces of the double are precisely the sites of Definition~\ref{def: site}.
They are just the vertices of the dual of the double. Therefore, every vertex of $\D(\Sigma)^*$ is 4-valent.

\begin{exa}\label{exa: minimal OCPM}
The minimal OCPM. Let $\Sigma_\mini$ be the CPM consisting of 2 vertices, 2 edges and~2~faces. Topologically $\Sigma_\mini$ is a sphere made up of two hemispheres
$f_1$, $f_2$ separated by an equator consisting of two edges $e_1$, $e_2$ and two vertices $v_1$, $v_2$ such that $\Bd(e_1)=\Bd(e_2)=\{v_1,v_2\}$ and
$\Cb(e_1)=\Cb(e_2)=\{f_1,f_2\}$. Fixing $a$ to be one of the 4 arrows, we give an orientation to $\Sigma_\mini$ by the arrow presentation
\[
\bra\{a,T_1a,T_0a,T_2a\}, T_0,T_2\ket,
\]
where the action of $T_0$ and $T_2$ are fixed by the relations $T_0^2=\id$, $T_2^2=\id$. Since also $(T_0T_2)^2=\id$, this is the standard presentation of the left regular
$G$-set over the Klein four-group $G=Z_2\x Z_2$. It follows that if $\bra A,T_0,T_2\ket$ is an arrow presentation of a connected OCPM $\Sigma$ such that there is
an $a\in A$ satisfying $T_0^2a=a=T_2^2a$ then $\Sigma=\Sigma_\mini$.
The dual of the double
\[
\D(\Sigma_\mini)^*\ =
\parbox{100pt}{
\begin{picture}(100,70)
\linethickness{0.5pt}
\polyline(25,15)(75,15)(75,55)(25,55)(25,15)
\put(25,15){\circle*{4}}
\put(75,15){\circle*{4}}
\put(75,55){\circle*{4}}
\put(25,55){\circle*{4}}
\qbezier(25,15)(50,-20)(75,15)
\qbezier(75,15)(115,35)(75,55)
\qbezier(25,55)(50,90)(75,55)
\qbezier(25,15)(-15,35)(25,55)
\color{melegzold}
\put(48,32){$e_1$}
\put(120,32){$e_2$}
\color{blue}
\put(11,32){$v_1$}
\put(82,32){$v_2$}
\color{red}
\put(46,60){$f_1$}
\put(46,3){$f_2$}
\end{picture}}
\]
is another spherical OCPM the faces of which are the cells of $\Sigma_\mini$.
\end{exa}

In the rest of the paper, we shall work mainly with connected oriented closed polyhedral maps and freely mix the languages of $\bra \Arr,T_0,T_2\ket$ and
$\bra \Sigma,\dim,\Bd,\Cb\ket$. We write $\Sigma\,{::}\,P$ if $P=\bra \Arr,T_0,T_2\ket$ is a presentation of the OCPM $\Sigma$.

\section{Kitaev's model over a Hopf algebra}\label{section3}

Before launching the definition of the model, let us fix our conventions and notations about Hopf algebras. We work with finite-dimensional Hopf algebras
$\bra H,\,\cdot\,,1,\cop,\eps,S\ket$ over a field $K$. Later in Section~\ref{section10}, $K$ will be the complex field.
Let $H^*$ denote the dual Hopf algebra structure on the space of linear functions $H\to K$. The evaluation of $\varphi\in H^*$ on $h\in H$ is denoted either by~$\varphi(h)$ or, if~$\varphi$ is a long expression, by $\bra \varphi,h\ket$ or even by $\bra h,\varphi\ket$ since $H^{**}$ can be identified with~$H$.
We use the standard Hopf algebraist notation
$\cop(h)=h\p\ot h\pp$ for the coproduct of an element which is a concise notation for a linear combination $\sum_i h_{(1)i}\ot h_{(2)_i}$.
We often push this convention to the extreme and write, for example, $\varphi\ot h\in H^*\ot H$ but mean a general linear combination $\sum_i\varphi_i\ot h_i$ of
some $\varphi_i\in H^*$ and $h_i\in H$.
In want of enough letters and due to abundance of $^*$-s in this paper the antipode and counit of $H^*$ will be denoted also by~$S$ and~$\epsilon$, respectively.

A basis $\{x_i\}$ of $H$ and a basis $\{\xi_i\}$ of $H^*$ are called dual bases if $ \xi_i(x_j)=\delta_{i,j}$.
For the indices of these bases, we use Einstein's summation convention: repeated indices are summed over. In this sense, we have the useful identities
\begin{alignat*}{3}
&x_i\ot x_j\ot \xi_i\xi_j=x_{k(1)}\ot x_{k(2)}\ot\xi_k,\qquad&& x_ix_j\ot \xi_i\ot\xi_j=x_k\ot\xi_{k(1)}\ot\xi_{k(2)},&\\
&S(x_i)\ot\xi_i=x_i\ot S(\xi_i),&& &\\
&\bra\varphi,x_i\ket\,\xi_i=\varphi,\qquad && x_i \bra\xi_i,y\ket=y.&
\end{alignat*}
The Hopf algebra $H$ is a left and right module algebra over $H^*$ with respect to the actions $\varphi\sweedl h:=h\p\varphi\bigl(h\pp\bigr)$
and $h\sweedr \varphi:=\varphi\bigl(h\p\bigr)h\pp$, respectively.

The Drinfeld double $\D(H)$, a.k.a.\ the quantum double, is the following Hopf algebra structure on the vector space $H^* \ot H$: The multiplication is
\[
(\varphi\ot g)(\psi\ot h)=\varphi\psi\pp\ot g\pp h\cdot \psi\ppp\bigl(g\p\bigr)\psi\p\bigl(S^{-1}\bigl(g\ppp\bigr)\bigr)
\]
with unit $1_\D=\eps\ot 1$ and the comultiplication is
\[
\cop(\psi\ot h)=\bigl(\psi\pp\ot h\p\bigr)\ot\bigl(\psi\p\ot h\pp\bigr)
\]
with counit $\eps_\D(\psi\ot h)=\psi(1)\eps(h)$. The antipode is given by the formula
\[
S_\D(\varphi\ot h)=S^{-1}\bigl(\varphi\pp\bigr)\ot S\bigl(h\pp\bigr)\cdot\varphi\p\bigl(h\ppp\bigr)\varphi\ppp \bigl(S^{-1}\bigl(h\p\bigr)\bigr).
\]
Equipped with the $R$-matrix
\[
R\equiv R_1\ot R_2:=(\eps\ot x_i)\ot(\xi_i\ot 1)
\]
the double $\D(H)$ is a quasitriangular Hopf algebra for any finite-dimensional $H$.

The dual Hopf algebra of $\D(H)$ is denoted by $\D(H)^*$. It is the vector space $H\ot H^*$ equipped with algebra structure
\begin{equation} \label{D^* mul}
(h\ot \varphi)(g\ot\psi)=gh\ot\varphi\psi,\qquad 1_{\D^*}=1\ot\eps,
\end{equation}
with coalgebra structure
\begin{equation} \label{D^* comul}
\cop_{\D^*}(h\ot\varphi)=\bigl(h\p\ot\xi_i\varphi\p\xi_j\bigr)\ot\bigl(S^{-1}(x_j)h\pp x_i\ot \varphi\pp\bigr),\qquad \eps_{\D^*}(h\ot\varphi)=\eps(h)\varphi(1)
\end{equation}
and with antipode
\begin{align}\label{Sdudo}
S_{\D^*}(h\ot\varphi)&=x_jS^{-1}(h)S^{-1}(x_i)\ot\xi_i S(\varphi)\xi_j=(x_i\ot\xi_i)^{-1}\bigl(S^{-1}(h)\ot S(\varphi)\bigr)(x_j\ot\xi_j).
\end{align}

An element $h\in H$ is called cocommutative if $h\pp\ot h\p=h\p\ot h\pp$. The set $\Cocom H$ of cocommutative elements is a subalgebra
(but not a subcoalgebra, in general) of $H$.

If $H$ is involutive, i.e., $S^2=\id_H$, then so is $H^*$, $\D(H)$ and $\D(H)^*$. Involutive Hopf algebras are very close to being semisimple.
By a theorem of Larson and Radford, finite-dimensional involutive Hopf algebras are semisimple provided the ground field $K$ has characteristic 0 \cite{LR}.
The Hopf algebra~$H$ is semisimple if and only if it contains an element $i$ satisfying $hi=i\eps(h)$, $\forall h\in H$ and $\eps(i)=1$. If it exists, such an element is unique and called the Haar
integral. The Haar integral further satisfies $S(i)=i$ and $i\pp\ot i\p=i\p\ot i\pp$. So $i\in\Cocom H$.
If $H$ is semisimple, then so is $H^*$, $\D(H)$ and $\D(H)^*$ with respective Haar integrals $\iota$, $i_\D=\iota\ot i$ and $i_{\D^*}=i\ot\iota$.

After this preparation, we can define the algebra of the model as follows.
Let $\Sigma\,{::}\,\bra \Arr,T_0,T_2\ket$ be an OCPM and assume that $\Arr$ is finite. Let $H$ be a finite-dimensional Hopf algebra which is involutive.
We construct the algebra $\M=\M(\Sigma,H)$ as the tensor product algebra
\[
\M(\Sigma,H)=\bigotimes_{e\in\Sigma^1} \M_e,
\]
where the algebras $\M_e$ can be presented by a redundant set of generators
\[
P_a(h),\ Q_a(\varphi), \qquad \text{where}\quad h\in H,\ \varphi\in H^*,\ a\in e
\]
and by the relations
\begin{align}
\label{PQ lin}
&P_a(h),\ Q_a(\varphi)\ \text{are linear in }h \text{ and } \varphi, \\
\label{P alg}
&P_a(h)P_a(k)=P_a(hk),\qquad P_a(1)=\one_\M,\\
\label{Q alg}
&Q_a(\varphi)Q_a(\psi)=Q_a(\varphi\psi),\qquad Q_a(\eps)=\one_\M,\\
\label{QP comm rel}
&Q_a(\varphi)P_a(h)=P_a\bigl(\varphi\p\sweedl h\bigr)Q_a\bigl(\varphi\pp\bigr),
\end{align}
if $\{x_i\}$ and $\{\xi_i\}$ are dual bases, then
\begin{align}
\label{Pbar}
&P_{T_1a}(h)=P_a(x_jS(h)x_i)Q_a(S(\xi_i)\xi_j),\\
\label{Qbar}
&Q_{T_1a}(\varphi)=P_a(x_jS(x_i))Q_a(\xi_i S(\varphi)\xi_j)
\end{align}
for all $h,k\in H$, $\varphi,\psi\in H^*$ and $a\in e$.
The reader may observe that $P_a$ and $Q_a$ are nothing but the triangle operators $L_+$ and $T_+$ of \cite{Kitaev03}, although defined as abstract operators without representation on a (Hilbert) space.
(The $L_-$ and $T_-$ are the $L_+$ and $T_+$ of the opposite arrow~$T_1a$, as explained below.)

The first 4 relations tell us that for each $a\in\Arr$ the algebra generated by $P_a(H)$ and $Q_a(H^*)$ is the Heisenberg double of $H$, also
called the smash product algebra $H\pisharp H^*$.

The redundancy of the generators is made explicit by the last 2 relations so what needs explanation is that why the $P_{T_1a}$ and $Q_{T_1a}$ obey
the same relations as the $P_a$ and $Q_a$. This is closely related to involutivity of the antipode. But for the sake of explanation let $H$ be any finite-dimensional Hopf algebra. Recall \cite{Montgomery} that the smash product algebra is isomorphic to the full endomorphism algebra $\End H$. In fact, there are
two canonical isomorphisms
\begin{align}
\label{lambda}
&\mathcal{L}\colon \ H\ot H^*\to\End H,\qquad\mathcal{L}(h\ot\varphi)(x)=h(\varphi\sweedl x),\\
\label{rho}
&\mathcal{R}\colon \ H\ot H^*\to\End H,\qquad\mathcal{R}(h\ot\varphi)(x)=\bigl(x\sweedr S^{-1}(\varphi)\bigr) S(h)
\end{align}
related by $\mathcal{R}(\cdot)=S\ci\mathcal{L}(\cdot)\ci S^{-1}$. Introducing $p(h):=\mathcal{L}(h\ot\eps)$, $q(\varphi):=\mathcal{L}(1\ot\varphi)$ and
$\bar p(h):\mathcal{R}(h\ot\eps)$, $\bar q(\varphi):=\mathcal{R}(1\ot\varphi)$, we see that the operators $p$ and $q$ satisfy the same relations as $\bar p$ and $\bar q$, namely (\ref{P alg}), (\ref{Q alg}) and (\ref{QP comm rel}). It is clear that $\bar p$ and $\bar q$ can be expressed in terms of $p$ and $q$
and vice versa; we only have to compute
\begin{align}
\label{lambinv-rho}
&\mathcal{L}^{-1}\ci\mathcal{R}(h\ot\varphi)=x_jS(h)S^{-1}(x_i)\ot\xi_i S^{-1}(\varphi)\xi_j,\\
\label{rhoinv-lamb}
&\mathcal{R}^{-1}\ci\mathcal{L}(h\ot\varphi)=x_jS^{-1}(h)S^{-1}(x_i)\ot\xi_i S(\varphi)\xi_j,
\end{align}
which are identical expressions precisely if $S$ is involutive. Specializing them by substituting $\varphi=\eps$ or $h=1$, we obtain the required
expressions (\ref{Pbar}) and (\ref{Qbar}). At the end we did not have to choose whether $P_a$ and $Q_a$ were represented by $\mathcal{L}$ and $P_{T_1a}$
and $Q_{T_1a}$ by $\mathcal{R}$ or vice versa. That is to say, with the given generators, the presentation of $\M$ does not require any orientation of the edges:
$a$ and $T_1a$ are treated symmetrically as well as the $P$, $Q$ operators associated with them. In representations, however, such a choice has to be made.

Summarizing, we have a full matrix algebra $\M_e\cong H\pisharp H^*\cong\End H$ for each edge $e$ of $\Sigma$ and~$\M(\Sigma)$ is the tensor product
of these, so itself is a full matrix algebra. The long definition above should therefore be considered not just as the definition of the algebra $\M$ but
as the definition of $\M$ together with a distinguished set of generators.
This set of generators strongly influences what we shall ask and what we can answer in this model.

Some consequences of the defining relations of $\M$ are the following:
\begin{gather}
\label{PT_1 P}
P_{T_1a}(k)\quad \text{commutes with }\ P_a(h),\\
\label{QT_1 Q}
Q_{T_1a}(\psi)\quad \text{commutes with }\ Q_a(\varphi),\\
\label{QP inv-comm rel}
P_a(h)Q_a(\varphi) =Q_a\bigl(\varphi\sweedr S\bigl(h\pp\bigr)\bigr)P_a\bigl(h\p\bigr),\\
\label{QPbar comm rel}
Q_a(\varphi)P_{T_1a}(h) =P_{T_1a}\bigl(h\sweedr S\bigl(\varphi\pp\bigr)\bigr)Q_a\bigl(\varphi\p\bigr),\\
\label{QPbar inv-comm rel}
P_{T_1a}(h)Q_a(\varphi) =Q_a\bigl(h\p\sweedl\varphi\bigr)P_{T_1a}\bigl(h\pp\bigr)
\end{gather}
for all $h,k\in H$, $\varphi,\psi\in H^*$ and $a\in \Arr$.
(\ref{QP inv-comm rel}) is the inverse of (\ref{QP comm rel}).
(\ref{QPbar comm rel}) can be most easily obtained using the $\mathcal{L}$-$\mathcal{R}$ representations mentioned above.
(\ref{QPbar inv-comm rel}) is related to (\ref{QPbar comm rel}) as~(\ref{QP inv-comm rel}) does to (\ref{QP comm rel}).
Fortunately, we will never need the complicated relations~(\ref{Pbar}) and~(\ref{Qbar}); the~(\ref{PT_1 P}),~(\ref{QT_1 Q}) and~(\ref{QPbar inv-comm rel})
can be used as perfect substitutes for them.

The fact that the degrees of freedom
are attached to the arrows of $\Sigma$ suggests that we should treat the Kitaev model as a gauge theory. Since there are two fields $P_a$ and $Q_a$ at
each arrow satisfying Heisenberg double relations means something like having a quantized gauge theory with an involutive finite-dimensional Hopf algebra
$H$ playing the role of the gauge group. The field $Q_a$ can be interpreted as an exponential form of a vector potential and its conjugate momentum $P_a$ as an exponentiated electric field. In spite of this interpretation, we are not going to follow the standard strategy of gauge theory of declaring
the $H$-gauge invariant subalgebra as the algebra of observables. Following \cite{Kitaev03}, we shall treat to whole $\M$ as consisting of physically realizable operators.
There exist certain copies of the Drinfeld double $\D(H)$ within~$\M$ as subalgebras. Their induced action on $\M$ will be called ``gauge transformations'' with the warning that we are just borrowing a word from gauge theory and applying it to a new situation in which their role is yet to be explored.

In fact, there are plenty of ways to embed the double into $\M$ as a subalgebra.
\begin{lem} \label{lem: doubles in M}
Let $\mathbf{a}=(a_1,\dots,a_m)$ and $\mathbf{b}=(b_1,\dots, b_n)$ be sequences of arrows. Define
\begin{align}
\label{G}
&G_{a_1,\dots,a_m}(h):=P_{a_1}\bigl(h\p\bigr)\dots P_{a_m}(h_{(m)}),\qquad h\in H,\\
\label{F}
&F_{b_1,\dots, b_n}(\psi):=Q_{b_1}\bigl(\psi\p\bigr)\dots Q_{b_n}(\psi_{(n)}),\qquad\psi\in H^*.
\end{align}
Assume that
\begin{enumerate}\itemsep=0pt
\item[$({\rm i})$] neither the sequence $\Ou_1(a_1),\dots,\Ou_1(a_m)$ nor the sequence $\Ou_1(b_1),\dots,\Ou_1(b_n)$ contains repeated edges,
\item[$({\rm ii})$] $a_m=b_1$, $b_n=T_1a_1$,
\item[$({\rm iii})$] if $\Ou_1(a_i)=\Ou_1(b_j)$ then either $i=1$ and $j=n$ or $i=m$ and $j=1$.
\end{enumerate}
Then the map $\mathbf{D}(\psi\ot h):=F_{\mathbf{b}}(\psi)G_{\mathbf{a}}(h)$ is an algebra homomorphism $\D(H)\to \M$. That is to say,
\begin{gather*}
G_{\mathbf{a}}(g)G_{\mathbf{a}}(h) =G_{\mathbf{a}}(gh),\\
F_{\mathbf{b}}(\varphi)F_{\mathbf{b}}(\psi) =F_{\mathbf{b}}(\varphi\psi),\\
G_{\mathbf{a}}(h)F_{\mathbf{b}}(\psi) =F_{\mathbf{b}}\bigl(\psi\pp\bigr)G_{\mathbf{a}}\bigl(h\pp\bigr)\cdot \psi\ppp\bigl(h\p\bigr)\psi\p\bigl(S\bigl(h\ppp\bigr)\bigr)
\end{gather*}
holds for all $g,h\in H$ and for all $\varphi,\psi\in H^*$. If $H$ is semisimple, then $\mathbf{D}$ is injective.
\end{lem}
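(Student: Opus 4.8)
The plan is to verify the three displayed identities one at a time and then to read off that $\mathbf D$ is an algebra map from the standard bicrossproduct presentation of $\D(H)$: the double is generated by its subalgebras $\eps\ot H\cong H$ and $H^*\ot1\cong H^*$ subject only to these two algebra structures together with the single straightening relation $(\eps\ot h)(\psi\ot1)=\psi\pp\ot h\pp\cdot\psi\ppp(h\p)\psi\p(S^{-1}(h\ppp))$, so that giving an algebra homomorphism out of $\D(H)$ amounts to giving a pair of algebra homomorphisms out of $H$ and out of $H^*$ whose images obey that one cross relation. For the first two identities I would only use \emph{locality}: by hypothesis~(i) the edges $\Ou_1(a_1),\dots,\Ou_1(a_m)$ are pairwise distinct, so the factors $P_{a_i}(\cdot)$ of $G_{\mathbf a}(h)$ lie in pairwise different tensor components $\M_e$ of $\M$ and therefore commute; interleaving and using \eqref{P alg} gives $G_{\mathbf a}(g)G_{\mathbf a}(h)=\prod_i P_{a_i}(g_{(i)}h_{(i)})=G_{\mathbf a}(gh)$, the last step because iterated comultiplication is an algebra map $H\to H^{\ot m}$, and $G_{\mathbf a}(\one)=\prod_i P_{a_i}(\one)=\one_\M$. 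The same argument with \eqref{Q alg} handles $F_{\mathbf b}$.

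For the cross relation the combinatorial hypotheses~(ii)--(iii) do the work: taken together they say that $\Ou_1(a_i)=\Ou_1(b_j)$ holds for exactly two pairs, namely $(i,j)=(m,1)$ --- where $a_m=b_1$ is one and the same arrow --- and $(i,j)=(1,n)$ --- where $b_n=T_1a_1$; on every other pair the corresponding $P$ and $Q$ sit in different tensor factors of $\M$ and commute. Writing $G_{\mathbf a}(h)=P_{a_1}(h\p)\,\Gamma(h\pp)\,P_{a_m}(h\ppp)$ with $\Gamma$ the product of the middle factors $P_{a_2},\dots,P_{a_{m-1}}$ (which commutes with all of $F_{\mathbf b}(\psi)$), the computation reduces to sliding $P_{a_1}(h\p)$ to the right through $Q_{b_1},\dots,Q_{b_{n-1}}$ unchanged and then through $Q_{b_n}$ via \eqref{QPbar inv-comm rel} (legitimate since $b_n=T_1a_1$), and sliding $P_{a_m}(h\ppp)$ to the right through $Q_{b_1}$ via \eqref{QP inv-comm rel} and then freely through the remaining $Q$'s. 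Reassembling: the untouched middle $Q$'s and the two modified boundary $Q$'s recombine into $F_{\mathbf b}(\psi\pp)$, the $P$'s (being on distinct edges) recombine into $G_{\mathbf a}(h\pp)$, and the scalars produced by the module actions $\sweedl$, $\sweedr\,S(\cdot)$ collapse --- once the counit kills the spectator coproduct legs and one uses the Hopf pairing --- to exactly $\psi\ppp(h\p)\,\psi\p(S(h\ppp))$; here $S^2=\id$ enters, matching the $S^{-1}$ in the double's product. This is precisely $\mathbf D(\eps\ot h)\mathbf D(\psi\ot1)=\mathbf D\big((\eps\ot h)(\psi\ot1)\big)$, so $\mathbf D$ extends to an algebra homomorphism.

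For injectivity when $H$ is semisimple I would first note that $S^2=\id$ makes $H$ cosemisimple as well (Larson--Radford), hence its Drinfeld double $\D(H)$ is semisimple (Radford); this is exactly what is lacking in general, since $\M$ is a full matrix algebra and therefore cannot receive an injection from an algebra with nonzero Jacobson radical. Semisimplicity of $\D(H)$ makes $\ker\mathbf D$ a direct-summand two-sided ideal, so it suffices to prove $\dim\mathbf D(\D(H))=\dim\D(H)=(\dim H)^2$; since $\D(H)=(H^*\ot1)(\eps\ot H)$ with multiplication a linear isomorphism, the image is spanned by the $(\dim H)^2$ products $F_{\mathbf b}(\psi)G_{\mathbf a}(h)$ over bases of $H^*$ and $H$, and I would show these independent by composing $\mathbf D$ with the linear map $\rho\colon\M\to\M_e\ot\M_{e'}$ --- $e=\Ou_1(a_1)=\Ou_1(b_n)$, $e'=\Ou_1(a_m)=\Ou_1(b_1)$ the two shared edges --- that is the identity on the $\M_e$, $\M_{e'}$ factors and the functional $T\mapsto\eps(T\one)$ on every other $\M_f\cong\End H$. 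Locality and counitality give $\rho\big(\mathbf D(\psi\ot h)\big)=\big(Q_{T_1a_1}(\psi\pp)P_{a_1}(h\p)\big)\ot\big(Q_{a_m}(\psi\p)P_{a_m}(h\pp)\big)$ in $\End H\ot\End H$, and what remains --- the real obstacle --- is to see that this linear map $\D(H)\to\End H\ot\End H$ is injective. This is where semisimplicity of $H$ must genuinely be used (equivalently, the existence of a normalized two-sided integral, which one would feed into the functionals in place of $T\mapsto\eps(T\one)$), as the analogous map is not injective when $H$ is not semisimple.
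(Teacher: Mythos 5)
Your treatment of the three identities is essentially the paper's: locality (hypothesis (i)) plus multiplicativity of the iterated coproduct gives the first two, and for the cross relation the paper likewise splits $F_{\mathbf b}$, $G_{\mathbf a}$ into a middle part (commuting with everything by (iii)) and two boundary factors, exchanged via \eqref{QP inv-comm rel} and \eqref{QPbar inv-comm rel}. The framing via the bicrossproduct presentation of $\D(H)$ is a clean way to say why those three identities suffice.

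The injectivity argument, however, has a genuine gap. You reduce correctly to a two-edge linear map $\D(H)\to\End H\ot\End H$ --- your functional $T\mapsto\eps(T\one)$ kills the spectator legs exactly as the paper's ``compose with the counit at intermediate places'' does, and your formula for $\rho\circ\mathbf D$ is right --- but then you explicitly leave ``the real obstacle'', injectivity of this reduced map, unproved, offering only the assertion that semisimplicity must be used and a vague suggestion of feeding the integral into the auxiliary functionals. That last suggestion does not actually lead anywhere: the intermediate edges have already been disposed of by counitality, and changing the functional there changes nothing at $e_1,e_2$. What the paper does instead is push the reduced map through the representation $\mathcal L$ on both surviving edge factors, obtaining the explicit action
\[
\mathbf D'(\psi\ot h)\cdot(g\ot k)=(h\p g)\sweedr S(\psi\pp)\ \ot\ \psi\p\sweedl(h\pp k)
\]
on $H\ot H$, and then evaluates on the single vector $\haar\ot 1$, where $\haar$ is the Haar integral. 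The point is that $\mathbf D'(\psi\ot h)\cdot(\haar\ot 1)=\bigl(\haar\sweedr S(\psi\pp)\bigr)\ot\bigl(\psi\p\sweedl h\bigr)$, and since $\lambda\mapsto\haar\sweedr\lambda$ is a linear bijection $H^*\to H$, this vanishing forces $S(\psi\pp)\ot\psi\p\sweedl h=0$; a short Hopf computation (using $S^2=\id$, so that $S(\psi\pp)\psi\p=\eps(\psi)1$) then recovers $\psi\ot h=0$. This is the step you need to supply.

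A smaller remark: the detour through semisimplicity of $\D(H)$ and ``$\ker\mathbf D$ is a direct summand'' buys you nothing --- rank--nullity already reduces the problem to showing the image has dimension $(\dim H)^2$, and in any case once the reduced two-edge map is shown injective, $\mathbf D$ is injective directly, with no need to count dimensions or invoke Radford's theorem on $\D(H)$. The only place semisimplicity (or, really, just the existence of a nonzero integral, which every finite-dimensional Hopf algebra has) is used is in the nondegeneracy of $\lambda\mapsto\haar\sweedr\lambda$.
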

\begin{proof}
The first two relations are simple consequences of multiplicativity of the coproduct and of assumption (i).
The third can be proven by writing both $F$ and $G$ as a product of
3 terms (disregarding implicit linear combinations hidden in the coproducts), two side terms and one middle term where the middle term commutes with all
other $P$-s and $Q$-s due to assumption~(iii). The essence of the calculation is the exchange relations~(\ref{QP inv-comm rel}) and
(\ref{QPbar inv-comm rel}) applied to the side terms:
\begin{gather*}
G_{\mathbf{a}}(h)F_{\mathbf{b}}(\psi)= P_{a_1}\bigl(h\p\bigr)P_{\dots}\bigl(h\pp\bigr)P_{a_m}\bigl(h\ppp\bigr)\cdot Q_{b_1}\bigl(\psi\p\bigr)Q_{\dots}\bigl(\psi\pp\bigr)Q_{b_n}\bigl(\psi\ppp\bigr)\\ \hphantom{G_{\mathbf{a}}(h)F_{\mathbf{b}}(\psi)}{}
=P_{a_1}\bigl(h\p\bigr)P_{\dots}\bigl(h\pp\bigr)Q_{b_1}\bigl(\psi\p\sweedr S\bigl(h\pppp\bigr)\bigr)P_{a_m}\bigl(h\ppp\bigr)Q_{\dots}\bigl(\psi\pp\bigr)Q_{b_n}\bigl(\psi\ppp\bigr)\\ \hphantom{G_{\mathbf{a}}(h)F_{\mathbf{b}}(\psi)}{}
=Q_{b_1}\bigl(\psi\p\sweedr S\bigl(h\pppp\bigr)\bigr)Q_{\dots}\bigl(\psi\pp\bigr)P_{a_1}\bigl(h\p\bigr)Q_{b_n}\bigl(\psi\ppp\bigr)P_{\dots}\bigl(h\pp\bigr)P_{a_m}\bigl(h\ppp\bigr)\\ \hphantom{G_{\mathbf{a}}(h)F_{\mathbf{b}}(\psi)}{}
=Q_{b_1}\bigl(\psi\p\sweedr S\bigl(h\ppppp\bigr)\bigr) Q_{\dots}\bigl(\psi\pp\bigr)Q_{b_n}\bigl(h\p\sweedl\psi\ppp\bigr)P_{a_1}\bigl(h\pp\bigr)P_{\dots}\bigl(h\ppp\bigr)P_{a_m}\bigl(h\pppp\bigr)\!\!\\ \hphantom{G_{\mathbf{a}}(h)F_{\mathbf{b}}(\psi)}{}
=\psi\p\bigl(S\bigl(h\ppp\bigr)\bigr)\cdot F_{\mathbf{b}}\bigl(\psi\pp\bigr) G_{\mathbf{a}}\bigl(h\pp\bigr)\cdot \psi\ppp\bigl(h\p\bigr).
\end{gather*}
It remains to show injectivity of $\mathbf{D}$. Since the image of $\mathbf{D}$ lies in the tensor product of two copies of $H\pisharp H^*$ at the edges
$e_1:=\Ou_1(a_1)$, $e_2:=\Ou_1(a_m)$ and copies of $H$ or $H^*$ at the intermediate edges $\Ou_1(a_2),\dots,\Ou_1(a_{m-1})$ and
$\Ou_1(b_2),\dots,\Ou_1(b_{n-1})$, respectively, if we compose $\mathbf{D}$ with the counit of $H$, resp.\ $H^*$ at the intermediate places we obtain the
$m=n=2$ version of $\mathbf{D}$. So injectivity will follow from that of the $m=n=2$ case. Further composing with representation~$\mathcal{L}$ at both
$e_1$ and $e_2$, the map becomes a representation on $H\ot H$ given by
\[
\mathbf{D}'(\psi\ot h)\cdot(g\ot k)= \bigl(h\p g\bigr)\sweedr S\bigl(\psi\pp\bigr)\ot\psi\p\sweedl\bigl(h\pp k\bigr).
\]
Let $i\in H$ be the Haar integral of $H$. We claim that $i\ot 1$ is a cyclic vector of this representation.
As a matter of fact, if $\mathbf{D}'(\psi\ot h)\cdot (i\ot 1)=i\sweedr S\bigl(\psi\pp\bigr)\ot\psi\p\sweedl h$ is zero then so is
$\lambda\ot v:=S\bigl(\psi\pp\bigr)\ot\psi\p\sweedl h$, and therefore
\[
0=S\bigl(\lambda\p\bigr)\ot \lambda\pp\sweedl v=\psi\ppp\ot S\bigl(\psi\pp\bigr)\sweedl\bigl(\psi\p\sweedl h\bigr)=\psi\ot h.
\]
Hence, the kernel of $\mathbf{D}'$, containing the kernel of $\mathbf{D}$, is zero.
\end{proof}

Note that the sequences of arrows on which (\ref{F}) and (\ref{G}) are defined are arbitrary sets of arrows with an ordering, they do not have to respect
any neighbourhood relations on the surface complex $\Sigma$. Of course, $\M$ is just the tensor product of edge algebras, so knows nothing about how
the edges organize into a complex.

By some vague analogy with gauge theory, we nevertheless consider
those sequences ${\mathbf{a}}$, ${\mathbf{b}}$ which are closed curves on $\Sigma$: a Wilson loop of $Q$ operators and a dual Wison loop of $P$
operators. (Although it works for the minimal loops, taking larger ones we have to realize that these are not going to define a holonomy!)
The easiest way to construct such loops is to take the $T_2$-orbits or the $T_0$-orbits of a fixed arrow. For $a\in\Arr$, we define
\begin{align}
\label{Gauss}
&G_a(h):=P_{T_0a}\bigl(h\p\bigr)P_{T_0^2a}\bigl(h\pp\bigr)\cdots P_{T_0^ma}\bigl(h_{(m)}\bigr),\qquad m=|\Ou_0(a)|,\\
\label{flux}
&F_a(\varphi):=Q_a\bigl(\varphi\p\bigr)Q_{T_2 a}\bigl(\varphi\pp\bigr)\cdots Q_{T_2^{n-1}a}\bigl(\varphi_{(n)}\bigr),\qquad n=|\Ou_2(a)|.
\end{align}
$G_a$ is a sort of divergence of the electric field at the vertex $\Ou_0(a)$ so we call it the Gauss' law operator.
$F_a$ is sort of curl of the vector potential at the face $\Ou_2(a)$ so we call it the magnetic flux operator.
By Lemma~\ref{lem: doubles in M}, the operators
\[
\mathbf{D}_a(\varphi\ot h):=F_a(\varphi)G_a(h),\qquad \varphi\ot h\in\D(H)
\]
provide homomorphic images of the double in $\M$ for each $a\in\Arr$, and therefore
\begin{align*}
(\psi\ot k)\la{a} M :={}& \mathbf{D}_a\bigl((\psi\ot k)\p\bigr)\,M\,\mathbf{D}_a\bigl(S_\D\bigl((\psi\ot k)\pp\bigr)\bigr)\\ ={}& F_a\bigl(\psi\pp\bigr)G_a\bigl(k\p\bigr)\,M\,G_a\bigl(S\bigl(k\pp\bigr)\bigr)F_a\bigl(S\bigl(\psi\p\bigr)\bigr),\\
M\ra{a}(\psi\ot k) :={} & \mathbf{D}_a\bigl(S_\D\bigl((\psi\ot k)\p\bigr)\bigr)\,M\,\mathbf{D}_a\bigl((\psi\ot k)\pp\bigr)\\ ={}& G_a\bigl(S\bigl(k\p\bigr)\bigr)F_a\bigl(S\bigl(\psi\pp\bigr)\bigr)\,M\,F_a\bigl(\psi\p\bigr)G_a\bigl(k\pp\bigr)
\end{align*}
define a left $\D(H)$-module algebra and a right $\D(H)$-module algebra structure on $\M$ for each $a\in\Arr$. For different arrows, however, these actions
do not necessarily commute. The~$F_a$~loop operators are supported on the oriented boundary of the face $\Ou_2(a)$, so $F_a(\psi)$ and $F_b(\varphi)$
commute whenever $\Ou_2(a)\neq \Ou_2(b)$ because of (\ref{QT_1 Q}). Similarly, the $G_a$-s are supported on the oriented coboundary (a star) of the vertex
$\Ou_0(a)$, so $G_a(k)$ and $G_b(h)$ commute whenever $\Ou_0(a)\neq \Ou_0(b)$ because of (\ref{PT_1 P}). It follows that if $\Ou_0(a)\neq \Ou_0(b)$ and
$\Ou_2(a)\neq \Ou_2(b)$ then the images of~$\mathbf{D}_a$ and~$\mathbf{D}_b$ commute and so do the actions $\triangleright_a$ or $\triangleleft_a$ with
$\triangleright_b$ or $\triangleleft_b$. Thus, local $\D(H)$ gauge symmetry cannot be thought as a fixed system of pairwise commuting $\D(H)$-actions.
Although there are maximal systems of such actions, they are not unique and may not involve all~$F$ and~$G$ operators, especially if
$\big|\Sigma^0\big|\neq \big|\Sigma^2\big|$.

There is also another pair of left and right actions,
\begin{align*}
(\psi\ot k)\opla{a} M :={}& \mathbf{D}_a\bigl((\psi\ot k)\pp\bigr)\,M\,\mathbf{D}_a\bigl(S_\D\bigl((\psi\ot k)\p\bigr)\bigr)\\ ={}& F_a\bigl(\psi\p\bigr)G_a\bigl(k\pp\bigr)\,M\,G_a\bigl(S\bigl(k\p\bigr)\bigr)F_a\bigl(S\bigl(\psi\pp\bigr)\bigr),\\
M\opra{a}(\psi\ot k) :={}& \mathbf{D}_a\bigl(S_\D\bigl((\psi\ot k)\pp\bigr)\bigr)\,M\,\mathbf{D}_a\bigl((\psi\ot k)\p\bigr)\\ ={}& G_a\bigl(S\bigl(k\pp\bigr)\bigr)F_a\bigl(S\bigl(\psi\p\bigr)\bigr)\,M\,F_a\bigl(\psi\pp\bigr)G_a\bigl(k\p\bigr)
\end{align*}
with respect to which, however, not $\M$ but $\M^\op$ is a module algebra.

It is worth investigating the subspaces $P_a(H)$ and $Q_a(H^*)$ because they are the smallest $D(H)$-invariant subspaces, in fact module subalgebras,
of $\M$ or $\M^\op$. The exchange relations
\begin{align}
&G_a(k)Q_a(\varphi)=Q_a\bigl(\varphi\sweedr S\bigl(k\pp\bigr)\bigr)G_a\bigl(k\p\bigr), \label{Q G}\\
&F_a(\psi)Q_a(\varphi)=Q_a\bigl(\psi\p\varphi S\bigl(\psi\pp\bigr)\bigr)F_a\bigl(\psi\ppp\bigr), \\
&Q_a(\varphi)G_{T_2a}(k)=G_{T_2a}\bigl(k\pp\bigr)Q_a\bigl(S\bigl(k\p\bigr)\sweedl\varphi\bigr), \\
&Q_a(\varphi)F_{T_2a}(\psi)=F_{T_2a}\bigl(\psi\p\bigr)Q_a\bigl(S\bigl(\psi\pp\bigr)\varphi\psi\ppp\bigr)
\end{align}
follow easily from the defining relations of $\M$, as well as
\begin{gather}
F_{T_0^{-1}a}(\psi)P_a(h) =P_a\bigl(h\sweedr S\bigl(\psi\pp\bigr)\bigr)F_{T_0^{-1}a}\bigl(\psi\p\bigr),\\
G_{T_0^{-1}a}(k)P_a(h) =P_a\bigl(k\p h S\bigl(k\pp\bigr)\bigr)G_{T_0^{-1}a}\bigl(k\ppp\bigr),\\
P_a(h)F_a(\psi) =F_a\bigl(\psi\pp\bigr)P_a\bigl(S\bigl(\psi\p\bigr)\sweedl h\bigr),\label{P F}\\
P_a(h)G_a(k) =G_a\bigl(k\p\bigr)P_a\bigl(S\bigl(k\pp\bigr)hk\ppp\bigr).
\end{gather}
From these relations, one infers that $P_a(H)$ is an invariant subspace under $\triangleright_{T_0^{-1}a}$ and $\triangleleft_a$ while $Q_a(H^*)$ is an invariant subspace under $\blacktriangleright_a$ and $\blacktriangleleft_{T_2a}$. So we have
\begin{alignat*}{4}
& \text{left $\D(H)$-module subalgebras}\qquad&& P_a(H)\subset\bigl\bra\M,\la{T_0^{-1}a}\,\bigr\ket,\qquad && Q_a(H^*)^\op\subset\bigl\bra\M^\op,\opla{a}\,\bigr\ket,&\\
& \text{right $\D(H)$-module subalgebras}\qquad && P_a(H)\subset\bigl\bra\M,\ra{a}\,\bigr\ket,\qquad &&Q_a(H^*)^\op\subset\bigl\bra\M^\op,\opra{T_2a}\,\bigr\ket.&
\end{alignat*}
Despite of having a left and a right action neither $P_a(H)$ nor $Q_a(H^*)$ is a bimodule over $\D(H)$, the former because
$\Ou_0\bigl(T_0^{-1}a\bigr)=\Ou_0(a)$ and the latter because $\Ou_2(T_2a)=\Ou_2(a)$.
Denoting these modules by drawing an arrow from the left action to the right action,
\begin{equation}\label{P-arrow Q-arrow}
\mathbf{D}_{T_0^{-1}a}\longrarr{P_a(H)}\mathbf{D}_a,\qquad \mathbf{D}_a\longrarr{Q_a(H^*)}\mathbf{D}_{T_2a},
\end{equation}
we see that from the point of view of gauge transformations the $P_a$ and the $Q_a$ seem to belong to different edges and the vertices associated
to individual $\D(H)$ actions are not the vertices of~$\Sigma$, not its faces either, but are in bijection with the arrows themselves. Such a complex is
the dual of the double of $\Sigma$ and will be discussed, among others, in the next two sections.

The Hamiltonian of the model, however, can be introduced without moving to the dual of the double. If we assume that $H$ is semisimple
then $H$ has a Haar integral $i$. The Haar integral being a cocommutative element its
iterated coproducts $i\p\ot\cdots \ot i_{(n)}$ are invariant under cyclic permutations. Therefore, $G_a(i)$ does not depend on the starting
point of the dual loop, i.e., $G_{T_0a}(i)=G_a(i)$. So we may rename $G_a(i)$ as $A_v$, where $v=\Ou_0(a)$. Similarly, $H^*$ also has a Haar
integral $\iota$ and $F_{T_2a}(\iota)=F_a(\iota)$. Renaming $F_a(\iota)$ as $B_f$, where $f=\Ou_2(a)$, we have constructed a system of pairwise
commuting projections $A_v$, for $v\in\Sigma^0$, and $B_f$, for $f\in\Sigma^2$. The Hamiltonian is the sum
\begin{equation}\label{hamiltonian}
\mathbf{H}_{\Sigma,H}=\sum_{v\in\Sigma^0}(\one-A_v) + \sum_{f\in\Sigma^2}(\one- B_f).
\end{equation}
There is a remarkable duality of these models. Replacing $H$ with $H^*$ and $\Sigma$ with $\Sigma^*$, there is an algebra isomorphism
$\M(\Sigma,H)\iso\M(H^*,\Sigma^*)$ sending the Hamiltonian (\ref{hamiltonian}) to $\mathbf{H}_{\Sigma^*,H^*}$.
This (non-canonical) isomorphism can be lifted to the ribbon operators, see the duality formula~(\ref{duality formula}).
The general structure of this ``duality'' is still to be investigated. In this paper, we will use it only as a technical tool to prove one half of Theorem~\ref{thm: ribb bim}.

\section{Duals, mirror images and doubles in the arrow presentation}\label{sec: dudo}

Let $\Sigma\,{::}\,\bra \Arr,T_0,T_2\ket$ be an OCPM. The dual of $\Sigma$ is easy to formulate in the $\Sigma$-language, see~(\ref{du Sigma}).
In the arrow presentation, however, we work with directed edges and there are several natural and less natural ways to assign to an arrow of $\Sigma$ an
arrow of $\Sigma^*$. Such problems forces us to speak about, at least, the isomorphisms of arrow presentations.
(More general morphisms of OCPMs or of their presentations would require to extend the theory to 3-complexes which we cannot afford here.)
\begin{defi}
An isomorphism $\bra A,S_0,S_2\ket\to \bra B,T_0,T_2\ket$ of arrow presentations is a bijection $f\colon A\to B$ such that $f\ci S_i=T_i\ci f$ for $i=0,2$.
\end{defi}
In order to see how this definition translates to the polyhedral maps, we need
\begin{defi}
An isomorphism $\phi$ of CPMs $\bra\Sigma,\dim,\Bd,\Cb\ket\rarr{\phi}\bra\Sigma',\dim',\Bd',\Cb'\ket$ is a~bijection $\phi\colon\Sigma\to\Sigma'$
which preserves the dimensions of cells, $\dim'\ci\phi=\dim$, and such that $\Bd(\phi(x))=\{\phi(y)\mid y\in\Bd(x)\}$.
If $\phi\colon \Sigma\to\Sigma'$ is an isomorphism of CPMs between oriented closed polyhedral maps, then $\phi$ is called
orientation preserving if for all $v\in\Sigma^0$ and $f\in\Sigma^2$ the restrictions of $\phi$ to the neighbourhoods $\Nb(v)$ and $\Nb(f)$
preserve the cyclic orders given on them by the orientations. Such an orientation preserving isomorphism is called briefly an isomorphism of OCPMs.
\end{defi}
\begin{lem}
Let $\Sigma\,{::}\,\bra A,T_0,T_2\ket$, $\Sigma'\,{::}\,\bra A',T'_0,T'_2\ket$ and let $f\colon \bra A,T_0,T_2\ket\to\bra A',T'_0,T'_2\ket$ be an isomorphism of
arrow presentations. Then $\phi(\Ou_i(a)):=\Ou'_i(f(a))$ is an isomorphism of OCPMs. This proves, at least for isomorphisms, functoriality of the construction of OCPMs from
arrow presentations described by Theorem~$\ref{thm: CPM}$.
\end{lem}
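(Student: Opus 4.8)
The plan is to verify the two required properties of $\phi$ directly from the definitions, using Theorem \ref{thm: CSC} to know that both $\Sigma$ and $\Sigma'$ are genuine OCSCs with the canonical orientations coming from $T_0,T_2$. First I would check that $\phi$ is well defined and bijective on cells: since $f$ is a bijection with $f\ci T_i=T'_i\ci f$, it maps $T_i$-orbits onto $T'_i$-orbits, so for each $a$ the set $\phi(\Ou_i(a)):=\Ou'_i(f(a))$ does not depend on the chosen representative $a$, and $\phi$ restricts to a bijection $\Sigma^i\to\Sigma'^i$ for $i=0,1,2$ (using $T_1=T_0T_2$, which $f$ also intertwines with $T'_1$). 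Hence $\phi$ is a dimension-preserving bijection $\Sigma\to\Sigma'$.

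Next I would check that $\phi$ preserves the boundary relation. By (\ref{Bd cap Cb}) in the proof of Theorem \ref{thm: CSC}, for cells $s,t$ of $\Sigma$ we have $s\in\Bd(t)\iff s\cap t\neq\emptyset$, and likewise in $\Sigma'$. So it suffices to observe that for $T_i$-orbit $O$ and $T_j$-orbit $O'$ in $\Arr$, $f(O\cap O')=f(O)\cap f(O')$ because $f$ is injective; taking $O=\Ou_{i-1}(a)$, $O'=\Ou_i(a)$ with $a\in t$, $\dim t=i$, this gives $\phi(\Ou_{i-1}(a))\in\Bd'(\phi(t))$, i.e. $\Bd'(\phi(t))=\{\phi(y)\mid y\in\Bd(t)\}$. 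Thus $\phi$ is an isomorphism of closed surface complexes in the sense of the relevant definition.

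Finally I would check orientation preservation. Here I use the explicit canonical cyclic orders (\ref{cycord Nb(v)}) and (\ref{cycord Nb(f)}) from Theorem \ref{thm: CSC}. Fix $v=\Ou_0(a)\in\Sigma^0$; then $\Nb(v)$ is cyclically ordered as $[\Ou_2(a),\Ou_1(T_0a),\Ou_2(T_0a),\dots,\Ou_2(T_0^{m-1}a),\Ou_1(a)]$, and applying $\phi$ termwise sends this to $[\Ou'_2(f(a)),\Ou'_1(f(T_0a)),\dots]=[\Ou'_2(b),\Ou'_1(T'_0b),\dots]$ with $b=f(a)$, which is exactly the canonical cyclic order on $\Nb'(\phi(v))=\Nb'(\Ou'_0(b))$; here I use $f(T_0^k a)=T'^k_0 f(a)$ and that $\phi$ maps $\Nb(v)$ bijectively onto $\Nb'(\phi(v))$. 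The same computation with (\ref{cycord Nb(f)}) and $f\ci T_2=T'_2\ci f$ handles face neighbourhoods. Hence $\phi$ preserves all the canonical cyclic orders and is orientation preserving. The functoriality remark then follows because identities and composites of arrow-presentation isomorphisms clearly go to identities and composites of OCSC isomorphisms under $f\mapsto\phi$.

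I do not expect a serious obstacle; the only mild subtlety is bookkeeping of representatives — making sure $\phi$ is genuinely well defined on orbits and that it carries the $n=|\Ou_0(a)|$ (resp. $|\Ou_2(a)|$) matching $|\Ou'_0(f(a))|$ so the two cyclic sequences have the same length — but this is immediate from $f$ being a bijection intertwining the $T_i$. The real content is simply that everything in Theorem \ref{thm: CSC} was built functorially out of $T_0,T_2$, so an intertwiner $f$ transports all of it.
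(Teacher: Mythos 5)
Your proposal is correct and follows essentially the same route as the paper's own proof: show $\phi$ carries $T_i$-orbits to $T'_i$-orbits (hence is a well-defined, dimension-preserving bijection), use (\ref{Bd cap Cb}) plus injectivity of $f$ to transfer the boundary relation, and then invoke the explicit canonical cyclic orders to get orientation preservation. The only difference is one of emphasis: the paper dismisses the orientation step as ``obvious,'' whereas you spell out the termwise computation with (\ref{cycord Nb(v)}) and (\ref{cycord Nb(f)}), which is a welcome bit of extra care but not a different argument.
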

\begin{proof}
The isomorphism $\phi$ is the restriction to the orbits of the direct image $f_*\colon 2^A\to 2^{A'}$ which is the natural lift of $f$ to the power sets. So
\[
\phi(\Ou_i(a))=\{f\ci T_i^n(a)\mid n\in\NN\}=\bigl\{{T'}^n_i(f(a))\mid n\in\NN\bigr\}=\Ou'_i(f(a))
\]
holds a priori for $i=0,2$ but then also for $i=1$, i.e., $\phi$ maps $T_i$-orbits to $T'_i$-orbits for all~$i$.
Therefore, it preserves the dimension of cells and satisfies $b\in\Ou_i(a)$ if and only if $f(b)\in \phi(\Ou_i(f(a)))$. If follows that
$\Ou_{i-1}(b)\cap\Ou_i(a)\neq\varnothing$ if and only if $\Ou'_{i-1}(f(b))\cap\Ou'_i(f(a))\neq\varnothing$, so it preserves the boundary and coboundary operations
by (\ref{Bd cap Cb}), i.e., $\Bd(\phi(s))=\{\phi(t)\mid t\in\Bd(s)\}$ for all $s\in\Sigma$.
Finally, since $f$ commutes with the $T_i$ actions, it is obvious that $\phi$ preserves the orientation.
\end{proof}

After this preparation, we can discuss duals of arrow presentations. They are characterized by producing dual polyhedral maps in the sense of
(\ref{du Sigma}). Let $P=\bra \Arr,T_0,T_2\ket$ be a presentation of an OCPM $\Sigma$.
A dual \smash{$\duP$} of $P$ can be obtained by following the schema
\smash{$\begin{picture}(14,12){\color{blue}\put(0,3){\vector(1,0){14}}}{\color{red}\put(6,-3){\vector(0,1){14}}}\end{picture}$}
 which means that the blue arrow $a$ of $\Sigma$ is
interpreted as a red arrow of $\Sigma^*$ directed to point from the right face of~$a$ to the left face of~$a$. The $T_2$ of the dual, denoted
$\tilde{T}_2$, is rotation around the left face of the red arrow, i.e., around the source vertex of the blue one which is $T_0$. The $\tilde{T}_0$, however,
is obtained by rotating around the source vertex of the red arrow, i.e., around the right face of the blue one which is not $T_2$ applied to the blue
arrow. In its stead $\tilde{T}_0=T_1T_2T_1=T_0T_2T_0^{-1}$. Thus, \begin{equation}\label{AP du}
\duP:=\bigl\bra\Arr,T_0T_2T_0^{-1},T_0\bigr\ket
\end{equation}
is a presentation producing the dual polyhedral map $\Sigma^*$. If we follow the other natural schema \smash{$\begin{picture}(14,12){\color{blue}\put(0,3){\vector(1,0){14}}}{\color{red}\put(6,9){\vector(0,-1){14}}}\end{picture}$},
then we get another presentation
\begin{equation}\label{iduP}
\iduP:=\bigl\bra\Arr,T_2,T_2^{-1}T_0T_2\bigr\ket,
\end{equation}
which should produce the same (or isomorphic) dual map $\Sigma^*$. Encouraged by these examples, we may experiment with a simpler presentation
of the same sort. Let
\[
\xP:=\bra\Arr,T_2,T_0\ket.
\]
\begin{lem}
The bijections $T_i\colon \Arr\to\Arr$ provide isomorphisms of arrow presentations
\begin{gather*}
\xP\rarr{T_0}\duP\rarr{T_1}\iduP\rarr{T_2}\xP,\\
\duiduP\rarr{\id}P\rarr{\id}\iduduP,\\
\duduP\rarr{T_1}P\rarr{T_1}\iduiduP.
\end{gather*}
\end{lem}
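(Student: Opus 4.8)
The plan is to verify each of the nine claimed isomorphisms directly from the definition, which requires only checking that the stated bijection $T_i$ intertwines the two pairs of permutations. Recall that a map $f$ is an isomorphism $\bra A,S_0,S_2\ket\to\bra B,U_0,U_2\ket$ precisely when $f S_0 = U_0 f$ and $f S_2 = U_2 f$; since all of $T_0,T_1,T_2$ are bijections of the fixed set $\Arr$, each verification reduces to an identity between words in $T_0,T_2$ and their inverses, and we may freely use Lemma \ref{lem: OCSC}, especially $T_1T_1=\id$, $T_1=T_0T_2$, $T_2^{-1}=T_1T_0$ and $T_0^{-1}=T_2T_1$.

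First I would treat the top row. For $\xP\rarr{T_0}\duP$, with $\xP=\bra\Arr,T_2,T_0\ket$ and $\duP=\bra\Arr,T_0T_2T_0^{-1},T_0\ket$, one checks $T_0\circ T_2 = (T_0T_2T_0^{-1})\circ T_0$ and $T_0\circ T_0 = T_0\circ T_0$; the first is immediate and the second trivial. For $\duP\rarr{T_1}\iduP$, with $\iduP=\bra\Arr,T_2,T_2^{-1}T_0T_2\ket$, I must verify $T_1\circ(T_0T_2T_0^{-1})=T_2\circ T_1$ and $T_1\circ T_0 = (T_2^{-1}T_0T_2)\circ T_1$. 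The second follows because $T_1T_0 = T_2^{-1}$ (Lemma \ref{lem: OCSC}(i)) and $T_2^{-1}T_0T_2T_1 = T_2^{-1}T_0T_0^{-1}=T_2^{-1}$ using $T_2T_1=T_0^{-1}$; the first is a similar short manipulation using $T_1T_0T_2 = T_1T_1=\id$. For $\iduP\rarr{T_2}\xP$ one checks $T_2\circ T_2 = T_0\circ T_2$ — wait, here one must be careful: $\xP$ has first permutation $T_2$, so the conditions are $T_2\circ T_2 = T_2\circ T_2$ (trivial) and $T_2\circ(T_2^{-1}T_0T_2)=T_0\circ T_2$, which is immediate. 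A sanity check is that the three composites multiply to $T_2T_1T_0=\id$ (Lemma \ref{lem: OCSC}), consistent with going around the triangle.

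Next I would handle the middle and bottom rows, which concern the double-dual operation. The key computation is to work out $\duduP$ and $\iduduP$ explicitly by applying formula (\ref{AP du}) or (\ref{iduP}) to $\duP$ and to $\iduP$, and likewise the mixed composites $\duiduP$ and $\iduiduP$. For instance, applying (\ref{AP du}) to $\iduP=\bra\Arr,T_2,T_2^{-1}T_0T_2\ket$ means replacing $T_0\mapsto T_2$, $T_2\mapsto T_2^{-1}T_0T_2$ in $\bra\Arr,T_0T_2T_0^{-1},T_0\ket$, giving $\duiduP=\bra\Arr,\,T_2(T_2^{-1}T_0T_2)T_2^{-1},\,T_2\ket=\bra\Arr,T_0,\,T_2\ket=P$; so the identity map really is an isomorphism $\duiduP\to P$, and the remaining statements of the middle and bottom rows are obtained by the same bookkeeping, the nontrivial ones being that $\duduP$ and $\iduiduP$ equal $\bra\Arr,T_1T_0T_1,T_1T_2T_1\ket$ — conjugation by $T_1$ — whence $T_1$ (which equals $T_1^{-1}$) is the asserted isomorphism.

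I do not expect a genuine obstacle here: every claim is a finite word identity in the free-ish data $T_0,T_2$ modulo the single relation $(T_0T_2)^2=\id$ together with the derived inverse formulas, so the whole proof is a disciplined unwinding of definitions. The only place to be careful — and the place I would slow down — is orienting each individual isomorphism test correctly, i.e., matching the first/second permutation of the source presentation with the first/second of the target, since the notations $\duP,\iduP,\xP$ permute the roles of $T_0$ and $T_2$ in slightly different ways; miscounting which slot is "$T_0$" is the one realistic source of error. I would also remark, as a consistency check, that composing the displayed isomorphisms around any closed loop yields an automorphism of the relevant presentation given by a word in the $T_i$ that reduces to $\id_\Arr$, which it does.
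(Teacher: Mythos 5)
Your proposal is correct and follows essentially the same method as the paper's proof: unwind the definitions of $\duP$, $\iduP$, $\xP$ and verify the intertwining identities $f\circ S_i = U_i\circ f$ as word computations using $T_1 = T_0T_2$, $T_1^2 = \id$, $T_2^{-1}=T_1T_0$, $T_0^{-1}=T_2T_1$. The paper in fact only displays the verification of $\xP\rarr{T_0}\duP$ and dismisses the rest as "similarly simple," so your write-up is a more complete version of the same argument, including the useful observation that $\duiduP$ and $\iduduP$ are \emph{equal} to $P$ (not merely isomorphic) while $\duduP$ and $\iduiduP$ equal $\bra\Arr,T_1T_0T_1,T_1T_2T_1\ket$, i.e., the presentation the paper later calls $P^\op$.
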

\begin{proof}
Let us prove the first
\begin{align*}
&\duT_0T_0=T_0T_2T_0^{-1}T_0=T_0T_2=T_0\xT_0,\\
&\duT_2T_0=T_0T_0=T_0\xT_2.
\end{align*}
The remaining ones are similarly simple.
\end{proof}

Although all the three duals \smash{$\duP$}, \smash{$\iduP$} and $\xP$ are isomorphic, the first two differs from the third in that
\[
\duT_0\duT_2=\iduT_0\iduT_2=T_0T_2 \qquad \text{while} \quad \xT_0\xT_2=T_2T_0.
\]
In other words, the arrow-opposite arrow pairs, i.e., the $T_1$-orbits, of~\smash{$\duP$} and~\smash{$\iduP$} are the same as those of $P$
while in $\xP$ they are connected by the transformation $T_2T_0$ which is non-local in~$P$, it sends an arrow by distance 2 away.
Of course, $T_2T_0$ is also an
involution, $T_2T_0T_2T_0=T_2T_1T_0=T_0^{-1}T_0=\id$, algebraically as good as~$T_1$, so why is $T_1$ local and the other is not? We have made a choice,
not in Definition~\ref{def: AP} but in the construction of $\Sigma$ when we called edges the orbits of $T_1=T_0T_2$ and not of $T_2T_0$. The form of~$\xP$ shows that if we had chosen edges to be the $T_2T_0$-orbits the interpretation of~$T_0$,~$T_2$ would have changed to~$T_0$ rotating around faces and~$T_2$ rotating around vertices. (Speaking about non-locality of a presentation~$Q$ is meaningful only with respect to
 another isomorphic presentation~$P$:
$Q$ is non-local if the isomorphism $P\to Q$ is non-local, i.e., tears apart the arrows of an edge. Of course, both~$T_0$ and~$T_2$ are non-local
and~$T_1$ is local in this sense.)

When we draw a portion of a polyhedral map we look at it from a specific side of 3-space determined by the orientation. If we look at it from the other
side what we see is the mirror image. In more precise terms, if $P$ is a presentation of an OCPM $\Sigma$, then the mirror image should be the presentation
of the same underlying CPM but equipped with opposite orientation. It is easy to see that the mirror image of $P=\bra \Arr,T_0,T_2\ket$ can be presented by
\begin{equation}\label{AP mir}
P^\mir:=\bigl\bra \Arr,T_0^{-1},T_1T_2^{-1}T_1\bigr\ket = \bigl\bra \Arr,T_0^{-1},T_0T_2^{-1}T_0^{-1}\bigr\ket.
\end{equation}
Another, simpler but non-local, presentation is $P^\mathbf{M}:=\bigl\bra \Arr,T_0^{-1},T_2^{-1}\bigr\ket$ with isomorphism $T_0\colon P^\mathbf{M}\rarr{}P^\mir$.

In order to get an arrow presentation for the double $\D(\Sigma)$, we first have to enumerate the arrows of the double in some way.
From Definition~\ref{def: double}, one infers that $\D(\Sigma)^1$ is 4 times bigger than $\Sigma^1$ and the same should hold for the arrows.
Let $e=\Ou_1(a)\in\Sigma^1$ be considered as a vertex of the double. There are 4 edges joining at $e$, 2 ve-type edges and 2 ef-type edges.
We denote the arrow $\Ou_0(a)\to e$ by $a^+_0$ and the arrow $\Ou_2(a)\to e$ by $a^+_2$. Their opposites are called~$a^-_0$ and~$a^-_2$, respectively.
The arrows of the remaining two edges joining to $e$ can then be expressed as $(T_1a)^\pm_0$ and $(T_1a)^\pm_2$. The identities
\begin{align*}
&\Ou_0(a)\cap\Ou_1(T_0a)=\Ou_0(T_0a)\cap\Ou_1(T_0a)=\{T_0a\},\\
&\Ou_2(a)\cap\Ou_1\bigl(T_2^{-1}a\bigr)=\Ou_2\bigl(T_2^{-1}a\bigr)\cap\Ou_1\bigl(T_2^{-1}a\bigr)=\bigl\{T_2^{-1}a\bigr\}
\end{align*}
show that the arrows and vertices of every face neighbourhood of the double can be labelled by
$$
\parbox{200pt}{
\begin{picture}(200,120)(-100,-60)
\put(-50,0){\color{blue}{\circle*{3}}} \put(-75,-4){$\sst\Ou_0(a)$}
\put(50,0){\color{red}{\circle*{3}}} \put(55,-4){$\sst\Ou_2(a)$}
\put(0,-50){\color{melegzold}{\circle*{3}}} \put(-7,-60){$\sst\Ou_1(a)$}
\put(0,50){\color{melegzold}{\circle*{3}}} \put(-33,55){$\sst\Ou_1(T_0a)=\Ou_1\left(T^{-1}_2a\right)$}
\put(-45,5){\vector(1,1){40}} \put(-26,18){$\sst(T_0a)_0^+$}
\put(45,5){\vector(-1,1){40}} \put(30,22){$\sst\left(T_2^{-1}a\right)_2^+$}
\put(-45,-5){\vector(1,-1){40}} \put(-24,-24){$\sst a_0^+$}
\put(45,-5){\vector(-1,-1){40}} \put(29,-29){$\sst a_2^+$}
\end{picture}
}
$$
Thus, the arrows of $\D(\Sigma)$ is the set $\mathbb{A}:=\{a_i^\sigma\mid a\in\Arr,\, i\in\{0,2\},\,\sigma\in\{+.-\}\}$ and the arrow presentation
\[
\D(\Sigma)\,{::}\,\D(P)=\bra\mathbb{A},\mathbb{T}_0,\mathbb{T}_2\ket
\]
is given by the following permutations:
\begin{gather}\label{double T}
\mathbb{T}_0 a_i^\sigma:=\begin{cases}
(T_0a)_0^+&\text{if}\ i=0,\,\sigma=+,\\
(T_1a)_2^-&\text{if}\ i=0,\,\sigma=-,\\
(T_2a)_2^+&\text{if}\ i=2,\,\sigma=+,\\
a_0^-&\text{if}\ i=2,\,\sigma=-,
\end{cases} \qquad
\mathbb{T}_2 a_i^\sigma:=\begin{cases}
a_2^-&\text{if}\ i=0,\,\sigma=+,\\
(T_0^{-1}a)_0^+&\text{if}\ i=0,\,\sigma=-,\\
(T_1a)_0^-&\text{if}\ i=2,\,\sigma=+,\\
(T_2^{-1}a)_2^+&\text{if}\ i=2,\,\sigma=-.
\end{cases}
\end{gather}
Defining also $\mathbb{T}_1=\mathbb{T}_0\mathbb{T}_2$, we obtain
\begin{equation*}
\mathbb{T}_1a_i^\sigma=a_i^{-\sigma},\qquad \mathbb{T}_2^4=\id_\mathbb{A},\qquad \mathbb{T}_0^4a_i^-=a_i^-
\end{equation*}
for all $a$, $i$ and $\sigma$. The second expresses the fact that all faces of the double are quadrangles and the third holds
(only for $\sigma=-$) because the $e$-type vertices of $\D(\Sigma)$ all have degree 4.

Roughly speaking, the $T$-operators of the double $\D(\Sigma)$ are matrix amplifications of the $T$-operations of $\Sigma$,
\[
\mathbb{T}_0=\left(\begin{matrix}T_0&\ &\ &\ \\ \ &\ &\ & 1\\ \ &\ &T_2&\ \\ \ &T_1&\ &\ \end{matrix}\right),\qquad
\mathbb{T}_2=\left(\begin{matrix}\ &T_0^{-1}&&\\&&T_1&\\&&&T_2^{-1}\\1&&&\end{matrix}\right),
\]
where the order of rows and columns is that of formulas (\ref{double T}) and $1$ stands for $\id_\Arr$.
But these are not matrices in the usual sense: The addition involved in matrix multiplications is not defined at all.
(We are working in the category of sets after all.)
This is why we have left the nonvalid entries empty rather than put a zero there.

Having been constructed arrow presentations for $\Sigma^*$ and $\D(\Sigma)$ of every OCPM $\Sigma$ has the secondary role of extending the Definition
\ref{def: double} by giving them orientations.

Now it is easy to obtain an arrow presentation for the dual of the double of $\Sigma$. We take the above presentation of $\D(\Sigma)$ and then apply the
construction \smash{$\duP$}. This leads to the following presentation for $\D(\Sigma)^*$. The set of arrows is $\{a^\sigma_i\mid i=0,e,\,\sigma=\pm\}$,
the same as that of the double. \big(We spare the work of writing $\tilde{a_i^\sigma}$ to distinguish them from the arrows of $\D(\Sigma)$, eventually it is the same set.\big)
These arrows are shown on Figure~\ref{darrows}.
\begin{figure}\centering
\parbox{100pt}{
\begin{picture}(100,100)(-50,-50)
\put(0,0){\circle*{4}}
\put(0,0){\vector(1,0){50}} \put(30,-10){$\sst a_2^+$}
\put(0,-50){\vector(0,1){48}} \put(-12,-35){$\sst a_0^+$}
\put(0,0){\vector(-1,0){50}} \put(-40,5){$\sst(T_0a)_0^+$}
\put(0,50){\vector(0,-1){48}} \put(3,36){$\sst(T_2^{-1}a)_2^+$}
\put(-25,-25){\color{blue}{\circle*{4}}}
\put(25,25){\color{red}{\circle*{4}}}
\put(-25,25){\color{melegzold}{\circle*{4}}}
\put(25,-25){\color{melegzold}{\circle*{4}}}
\end{picture}
}
\hskip 1cm
\parbox{100pt}{
\begin{picture}(100,100)(-50,-50)
\put(-25,-25){\vector(0,1){48}} \put(-37,7){$\sst a_0^+$}
\put(-25,25){\vector(1,0){48}} \put(2,30){$\sst a_2^+$}
\put(25,25){\vector(0,-1){48}} \put(27,-12){$\sst (T_1a)_0^+$}
\put(25,-25){\vector(-1,0){48}} \put(-12,-33){$\sst (T_1a)_2^+$}
\put(-25,-25){\circle*{4}}
\put(-25,25){\circle*{4}}
\put(25,25){\circle*{4}}
\put(25,-25){\circle*{4}}
\put(-50,0){\color{blue}{\circle*{4}}}
\put(0,50){\color{red}{\circle*{4}}}
\put(50,0){\color{blue}{\circle*{4}}}
\put(0,-50){\color{red}{\circle*{4}}}
\put(0,0){\color{melegzold}{\circle*{4}}}
\end{picture}
}
\caption{The vertex neighbourhood of the site $\bra\Ou_0(a),\Ou_2(a)\ket$ and the face neighbourhood of the type~1 face $\Ou_1(a)$.}
\label{darrows}
\end{figure}
Thus, we have the presentation
\begin{equation}\label{AP dudo}
\D(\Sigma)^*\,{::}\,\D(P)^\sim=\bigl\bra\mathbb{A},\tilde{\mathbb{T}}_0,\tilde{\mathbb{T}}_2\bigr\ket
\end{equation}
with
\[
\tilde{\mathbb{T}}_0=\left(\begin{matrix}\ &&&T_1\\T_0^{-1}&&&\\&1&&\\&&T_2^{-1}&\end{matrix}\right),\qquad
\tilde{\mathbb{T}}_2=\left(\begin{matrix}T_0&\ &\ &\ \\ \ &\ &\ & 1\\ \ &\ &T_2&\ \\ \ &T_1&\ &\ \end{matrix}\right).
\]

The set $\D(\Sigma)^{*2}$ of faces of the dual of the double is the same set as the set $\Sigma$ of all cells of the original OCPM. Changing between
these two interpretations of the same set will be a recurring tool in this paper. Since the role of these faces depend on their dimensions in $\Sigma$
and we want to avoid saying ``vertex face'' or ``face face'' we shall use the term ``type $d$ face'' for an element $c\in\Sigma^d\subset \D(\Sigma)^{*2}$.

\begin{pro}
There exist natural isomorphisms of arrow presentations
\begin{alignat*}{5}
& \delta_P\colon \ && \D(P)^\sim \to \D(P^\sim)^\sim,\qquad&& a_0^\sigma\mapsto a_2^\sigma,\qquad && a_2^\sigma\mapsto(T_1a)_0^\sigma,&\\
&\mu_P\colon \ && \D(P^\mir)^\sim \to(\D(P)^\sim)^\mir,\qquad && a_0^\sigma\mapsto a_0^{-\sigma},\qquad && a_2^\sigma\mapsto(T_1a)_2^{-\sigma}&
\end{alignat*}
or in matrix form
\[
\delta_P=\left(\begin{matrix}\ &&T_1&\\&&&T_1\\1&&&\\&1&&\end{matrix}\right),
\qquad \mu_P=\left(\begin{matrix}\ &1&&\\1&&&\\&&&T_1\\&&T_1&\end{matrix}\right).
\]
\end{pro}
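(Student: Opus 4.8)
The plan is to check, as the definition of an isomorphism of arrow presentations demands, that $\delta_P$ and $\mu_P$ are bijections of $\mathbb{A}$ intertwining the respective $T_0$- and $T_2$-operators on the two sides. Bijectivity is immediate: $\delta_P$ and $\mu_P$ are each built out of the bijection $a\mapsto T_1a$ together with bijective relabellings of the index $i$ and the sign $\sigma$, and the inverses $\delta_P^{-1},\mu_P^{-1}$ are read straight off the displayed matrices. So only the intertwining relations need real work.

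For $\delta_P$ I would first cut down the bookkeeping by observing that the dual construction $Q\mapsto Q^\sim$ of (\ref{AP du}) is functorial for isomorphisms of arrow presentations: a bijection $f$ intertwines the operators of $Q$ with those of $Q'$ if and only if it intertwines the operators of $Q^\sim$ with those of ${Q'}^\sim$ — both implications by a one-line manipulation, using that the operators of $Q^\sim$ are words in those of $Q$ and that the $T_2$ of $Q^\sim$ is itself an operator of $Q$. Since $\D(P^\sim)^\sim$ and $\D(P)^\sim$ are by construction $(\cdot)^\sim$ applied to $\D(P^\sim)$ and $\D(P)$, it then suffices to check that $\delta_P$ is an isomorphism $\D(P^\sim)\to\D(P)$ (in effect realising the expected isomorphism $\D(\Sigma^*)\cong\D(\Sigma)$ of doubles), i.e.\ that $\delta_P$ intertwines the operators obtained from (\ref{double T}) for the presentations $P^\sim$ and $P$. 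As the $T_1$-orbits of $P^\sim$ agree with those of $P$, both doubles have arrow set $\mathbb{A}$, and the two required identities split into eight slot-wise identities, one for each pair $(i,\sigma)$ and each of the two operators. I expect each to hold either verbatim or, on the slots where two distinct words in $T_0,T_2$ must be matched, after reducing with the basic permutation relations $T_1^2=\id$, $T_2^{-1}=T_1T_0$, $T_0^{-1}=T_2T_1$ of Lemma \ref{lem: OCSC} — in particular the consequence $(T_2T_0)^2=\id$ already recorded in the text.

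For $\mu_P$ I would proceed in the same spirit, comparing slot by slot the operators of $\D(P^\mir)^\sim$ (obtained by feeding the presentation $P^\mir$ of (\ref{AP mir}) into (\ref{double T}) and then dualizing) with those of $(\D(P)^\sim)^\mir$ (the mirror, as in (\ref{AP mir}), of the operators (\ref{AP dudo}) of $\D(P)^\sim$); the same permutation relations should close every case. Naturality in $P$ is then automatic: an isomorphism $f\colon P\to P'$ induces isomorphisms of the doubles and of their duals and mirrors, all acting by $a_i^\sigma\mapsto(fa)_i^\sigma$, whereas $\delta_P$ and $\mu_P$ are defined by formulas built only from $T_1$ and the combinatorics of indices and signs, which commute with $f$; so the naturality squares commute. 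The main obstacle is clerical rather than conceptual: writing out (\ref{double T}) accurately for $P^\sim$ and $P^\mir$, and keeping track of inverses and of the index/sign relabelling as one passes through the $(\cdot)^\sim$ and $(\cdot)^\mir$ constructions.
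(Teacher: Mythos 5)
Your proposal is sound and the computation it plans would close the proposition, but it departs from the paper's route in an interesting way. The paper's proof is a one-liner — collect (\ref{AP du}), (\ref{AP dudo}), (\ref{AP mir}) and multiply the $4\times 4$ permutation matrices $\tilde{\mathbb{T}}_0$, $\tilde{\mathbb{T}}_2$ directly, once for $\delta_P$ and once for $\mu_P$. You instead prove, and exploit, a small lemma the paper never states: the construction $Q\mapsto Q^\sim$ of (\ref{AP du}) preserves and reflects isomorphisms, because $T_2^{Q^\sim}=T_0^Q$ lets you first recover the $T_0$-intertwining and then peel the conjugation $T_0T_2T_0^{-1}$ to recover the $T_2$-intertwining. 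This reduces the $\delta_P$ check to showing that the same map intertwines $\D(P^\sim)$ with $\D(P)$, i.e.\ realises the geometric isomorphism $\D(\Sigma^*)\cong\D(\Sigma)$ at the level of doubles, which is both a cleaner matrix computation (only the $\mathbb{T}_i$ of (\ref{double T}) are needed, not the $\tilde{\mathbb{T}}_i$) and a genuine conceptual gain — it exhibits $\delta_P$ as $(\cdot)^\sim$ applied to a "primitive" isomorphism of doubles. For $\mu_P$ the outer constructions on the two sides differ ($\sim$ vs.\ $\mir$), so no stripping is possible and you fall back to the paper's direct slot-by-slot check; your naturality remark is also correct, for the reason you give. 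One practical warning: when you carry out the slot-wise verification you will find that the formulas $a_0^\sigma\mapsto a_2^\sigma$, $a_2^\sigma\mapsto(T_1a)_0^\sigma$ intertwine $\D(P)^\sim$ with $\D(P^\sim)^\sim$ (equivalently $\D(P)$ with $\D(P^\sim)$), not the other way around as the arrow in the statement might suggest, so keep track of which presentation's $T_i$ you substitute into (\ref{double T}) on each side to avoid being derailed by an apparent mismatch; the relation $(T_0T_2)^2=\id$ is indeed the only nontrivial reduction you will need.
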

\begin{proof}
Collect the definitions (\ref{AP du}), (\ref{AP dudo}) and (\ref{AP mir}) and perform the matrix multiplications.
\end{proof}

\section{Curves, dual curves and ribbons} \label{sec: curves}

Curves on surfaces do not require orientation of the surface. Nevertheless, we assume that $\Sigma$ is an OCPM equipped with arrow presentation
$P=\bra\Arr(\Sigma),T_0,T_2\ket$.
Both curves and opposite curves (see opcurves below) on $\Sigma$ formulate the idea of what is called a walk on a directed graph.
By directed graph, we mean a quiver
\[
A\pair{\text{source}}{\text{target}}V
\]
 consisting of two sets, two functions and no axioms.
Although the CPM $\bra\Sigma,\dim,\Bd,\Cb\ket$ has an underlying graph, it is undirected. It consists of the sets $\Sigma^1$ and $\Sigma^0$ and
of the function $\Bd$ which associates to every edge $e\in\Sigma^1$ the 2-element subset $\Bd(e)\subseteq \Sigma^0$. Since this undirected graph has
no loops, there is a natural way to make it a quiver by doubling the set of edges. Making use of the arrow presentation~$P$, we can take the set
$\Arr(\Sigma)$ of arrows and define the quiver
\[
\Qv(P):=\biggl(\Arr(\Sigma)\shortpair{\del_0}{\del_1}\Sigma^0\biggr)\qquad\text{where}\quad \del_0 a:=\Ou_0(a),\quad \del_1 a:=\Ou_0(T_1a).
\]
This quiver is involutive in the sense of having an involution $T_1$ such that $\del_0\ci T_1=\del_1$; this way reminding us that it stems from
an undirected graph.

The difference between curves and opcurves is merely conventional; the literature on Kitaev models seems to use both curves \cite{BMCA,Meusburger}
and opcurves \cite{Bombin-Delgado}.
In this paper, most of the calculations will be done with opcurves. The reason is that opcurves fit better with the conventions we have chosen in
Definition~\ref{def: AP}.
\begin{defi}\quad
\begin{enumerate}\itemsep=0pt
\item[$({\rm a})$] An ``opcurve on $\Sigma$'', more precisely a $P$-opcurve, is a sequence $\gamma=(a_1,\dots,a_n)$ of arrows such that $\del_1 a_i=\del_0 a_{i+1}$ for $0<i<n$.
If $\del_0 a_1=v_0$ and $\del_1 a_n=v_n$, then we say that $\gamma$ is an opcurve from $v_0$ to $v_n$ and write $\gamma\colon \ v_0\to v_n$.
The opcurves on $\Sigma$ form a category with composition of opcurves \smash{$u\rarr{\alpha}v\rarr{\beta}w$} being the concatenation $\alpha\beta\colon \ u\to w$.

\item[$({\rm b})$] A ``curve on $\Sigma$'', i.e., a $P$-curve, is a sequence $\gamma=(a_1,\dots,a_n)$ of arrows such that the reverse sequence $\gamma^\rev=(a_n,\dots,a_1)$ is an opcurve,
that is to say, such that $\del_0 a_i=\del_1 a_{i+1}$ for $0<i<n$.
This curve has source $\del_0 a_n$ and target $\del_1 a_1$.
The curves on $\Sigma$ form a~category with composition of curves \smash{$u\rarr{\alpha}v\rarr{\beta}w$} being $\beta\ci\alpha=\alpha \beta\colon \ u\to w$.
So this category is just the opposite of the category of opcurves.

\item[$({\rm c})$] If an opcurve $\gamma=(a_1,\dots,a_n)$ is open, i.e., $\del_0a_1\neq\del_1 a_n$, then it is called simple if the sequence $(\del_0 a_1,\del_1 a_1,\del_1 a_2,\dots
\del_1 a_n)$ of visited vertices contains no repetitions. If~$\gamma$~is closed, i.e., $\del_0a_1=\del_1 a_n$, then it is called simple if $(\del_1 a_1,\del_1 a_2,\dots
\del_1 a_n)$ contains no repeated vertices. A curve $\gamma$ is called simple if the opcurve $\gamma^\rev$ is simple.
\end{enumerate}
\end{defi}

It is clear that passing to the reverse sequence provides a bijection between curves and opcurves. Another important operation is the ``inverse'' curve:
If $\gamma=(a_1,\dots,a_n)$ is a curve (opcurve), then
$\gamma^{-1}:=(T_1a_n,\dots,T_1a_1)$
is also a curve (opcurve). But it is not the inverse of $\gamma$ until we pass to the paths of the (op)curve.
Paths are equivalence classes of curves with respect to insertion and deletion of length 2 subcurves of the form $(a,T_1a)$.
Every equivalence class contains a unique element which does not contain any $(a,T_1a)$ subcurves, i.e., which is reduced.
We shall not introduce any
special notation for paths, just say ``the path $\gamma$'' for a curve $\gamma$ if we think of its equivalence class. This convention is used also
for opcurves, so we say ``the path $\gamma$'' even if $\gamma$ is an opcurve.
The paths of curves, as well as the paths of opcurves, form a groupoid $\Path(\Sigma)$ and $\Path(\Sigma)^\op$, respectively.
Of course, this path groupoid is not the fundamental groupoid of the surface~$[\Sigma]$, it is that of the 1-skeleton of $\Sigma$, so this groupoid
sees every face of~$\Sigma$ as a hole on~$[\Sigma]$.

Although the definition of (op)curves and therefore also of the path groupoid depend on the arrow presentation, one can easily verify that any isomorphism of
arrow presentations induces an isomorphism between the corresponding categories of (op)curves as well as an isomorphism of their path groupoids.

Using the term $P$-curve instead of ``curve on $\Sigma$'' is nevertheless advisable, especially if we~want to define dual curves.
The dual (op)curves on $\Sigma$ can be thought as (op)curves on the dual $\Sigma^*$. But if we want to see them as sequences of arrows
in $\Sigma$ then we need a mapping $\Arr(\Sigma^*)\to\Arr(\Sigma)$. Such mappings are not unique as it is witnessed by the two duals~\smash{$\duP$} and
\smash{$\iduP$} of a presentation. So, if the context requires the precision, we shall speak about \smash{$\duP$}-(op)curves and \smash{$\iduP$}-(op)curves instead of
``dual (op)curves on $\Sigma$''.
\begin{defi}
A \smash{$\duP$}-opcurve is a sequence $\beta=(a_1,\dots,a_n)$ of arrows $a_i\in\Arr(\Sigma)$ such that $d_0a_i=d_1 a_{i+1}$ for $0<i<n$,
where we introduced the notation
\[
d_0 a:=\Ou_2(a),\qquad d_1 a:=\Ou_2(T_1a).
\]
The \smash{$\iduP$}-opcurves are defined as the reverses of \smash{$\duP$}-opcurves.
\end{defi}

These definitions are based on the quivers
\[
\Qv\bigl(\duP\bigr)=\biggl(\Arr(\Sigma)\shortpair{d_1}{d_0}\Sigma^0\biggr),\qquad\Qv\bigl(\iduP\bigr)=\biggl(\Arr(\Sigma)\shortpair{d_0}{d_1}\Sigma^0\biggr).
\]
In fact, the \smash{$\duP$}-opcurves coincide with the \smash{$\iduP$}-curves and vice versa.

\begin{rmk}
Passing from curves to opcurves, or back, can be automatized by introducing ${P^\op:=\bra\Arr(\Sigma),T_1T_0T_1,T_1T_2T_1\ket}$ which is a presentation isomorphic to~$P$
via the transformation~$T_1$. Since $T_1$ is also the involution of the quiver, so
\[
\Qv(P^\op)=\biggl(\Arr(\Sigma)\shortpair{\del_1}{\del_0}\Sigma^0\biggr),
\]
and therefore the $P^\op$-curves are the $P$-opcurves. Also, we have \smash{$\bigl(\duP\bigr)^\op=\iduP$}. Although this might suggest a redundancy of using both curves and opcurves but
the concept of $P$-opcurve comprises not only the use of $P^\op$-curves but also the convention of using concatenation for their composition.
\end{rmk}

Now we turn to the description of curves on the dual of the double $\D(\Sigma)^*$ of an OCPM~$\Sigma$.
The dual of the double is particularly interesting for two reasons: Geometrically because it provides the Schreier coset graph for the
arrow presentation, and physically because it is the surface complex on which holonomy of the Kitaev model can be defined.

Fixing an arrow presentation $P=\bra A,T_0,T_2\ket$ for $\Sigma$, the vertices of $\D(\Sigma)^*$ can be identified with the set of sites, i.e.,
with pairs $\bra v,f\ket\in\Sigma^0\x\Sigma^2$ such that $v\cap f\neq\varnothing$, cf.~(\ref{site}) and Lemma~\ref{lem: site}. The set of
arrows of $\D(\Sigma)^*$ can be identified with the set $\mathbb{A}=\{a_i^\sigma\mid a\in A,\,i=0,2,\,\sigma=\pm\}$, see Figure~\ref{darrows}.
The meaning of the arrows can be made explicit by giving the source and target maps $\nabla_0,\nabla_1\colon \mathbb{A}\to\D(\Sigma)^{*0}$,
\begin{equation}\label{nabla}
\begin{aligned}
\nabla_0a_0^+&=\bra\del_0a,d_1a\ket\\
\nabla_1a_0^+&=\bra\del_0a,d_0a\ket\\
\nabla_0a_2^+&=\bra\del_0a,d_0a\ket\\
\nabla_1a_2^+&=\bra\del_1a,d_0a\ket
\end{aligned}
\qquad\text{and}\qquad
\begin{aligned}
\nabla_0a_i^-&=\nabla_1a_i^+\\
\nabla_1a_i^-&=\nabla_0a_i^+.
\end{aligned}
\end{equation}
The quiver $\mathbb{A}\shortpair{\nabla_0}{\nabla_1}\D(\Sigma)^{*0}$ can be seen to be the quiver associated to the arrow presentation~(\ref{AP dudo}) so we call it $\Qv(\D(P)^\sim)$. Curves and opcurves on $\D(\Sigma)^*$ will always be meant with respect to this quiver.

\begin{lem} \label{lem: E}
The following map embeds the involutive quiver of $\D(P)^\sim$ to the cartesian product of the involutive quivers of $P$ and \smash{$\duP$}:
\begin{align*}
\biggl(\mathbb{A}\shortpair{\nabla_0}{\nabla_1}\D(\Sigma)^{*0}\biggr)&\rarr{E}
\biggl(A\shortpair{\del_0}{\del_1}\Sigma^0\biggr)\x\biggl(A\shortpair{d_1}{d_0}\Sigma^2\biggr),\\
\bra v,f\ket&\mapsto\bra v,f\ket,\\
a_0^+&\mapsto\bra\del_0a,a\ket,\\
a_0^-&\mapsto\bra\del_0a,T_1a\ket,\\
a_2^+&\mapsto\bra a,d_0a\ket,\\
a_2^-&\mapsto\bra T_1a,d_0a\ket.
\end{align*}
\end{lem}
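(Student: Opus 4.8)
Since ``embeds'' here means an injective morphism of involutive quivers, the plan is to verify three things: that $E$ respects source and target, that it intertwines the two involutions, and that it is injective. On vertices $E$ is just the inclusion of the set of sites into $\Sigma^0\x\Sigma^2$, which is injective by Definition \ref{def: site}, so there is nothing to do there and all the work is on arrows. Throughout I would read a vertex symbol occupying a slot of a pair as the trivial loop at that vertex, so that in the product quiver a pair $\bra x,y\ket$ has source $\bra\del_0 x,d_1 y\ket$ and target $\bra\del_1 x,d_0 y\ket$, with $\del_i$ and $d_i$ acting as the identity on a vertex. (Note the deliberately swapped convention $(d_1,d_0)$ on the second factor, matching $\Qv(\duP)$.)

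For source--target compatibility I would run a finite check over the four arrow types. For $a_0^+\mapsto\bra\del_0 a,a\ket$ the source is $\bra\del_0 a,d_1 a\ket=\bra\Ou_0(a),\Ou_2(T_1a)\ket$ and the target is $\bra\del_0 a,d_0 a\ket=\bra\Ou_0(a),\Ou_2(a)\ket$, which are precisely $\nabla_0 a_0^+$ and $\nabla_1 a_0^+$ of (\ref{nabla}); for $a_2^+\mapsto\bra a,d_0 a\ket$ one gets source $\bra\del_0 a,d_0 a\ket=\nabla_0 a_2^+$ and target $\bra\del_1 a,d_0 a\ket=\nabla_1 a_2^+$. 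The two $\sigma=-$ cases I would deduce from these, using $\nabla_0 a_i^-=\nabla_1 a_i^+$ and $\nabla_1 a_i^-=\nabla_0 a_i^+$ together with the fact that replacing $a$ by $T_1a$ in a slot exchanges source and target there: indeed $\del_0(T_1a)=\del_1 a$, $\del_1(T_1a)=\del_0 a$, and, since $T_1^2=\id$, $d_1(T_1a)=\Ou_2(a)=d_0 a$ and $d_0(T_1a)=\Ou_2(T_1a)=d_1 a$.

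For the involution I would first observe that, since $\D(P)^\sim$ is built from $\D(P)$ by the dual construction (\ref{AP du}), its quiver involution is $\tilde{\mathbb{T}}_0\tilde{\mathbb{T}}_2=(\mathbb{T}_0\mathbb{T}_2\mathbb{T}_0^{-1})\mathbb{T}_0=\mathbb{T}_0\mathbb{T}_2=\mathbb{T}_1$, which by (\ref{double T}) is the map $a_i^\sigma\mapsto a_i^{-\sigma}$; likewise the involution of $\duP$ is $(T_0T_2T_0^{-1})T_0=T_0T_2=T_1$, the same as that of $P$, so the product carries the diagonal involution $(T_1,T_1)$ acting by $T_1$ on a genuine-arrow slot and trivially on a vertex slot. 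Then one checks $E(a_0^-)=\bra\del_0 a,T_1a\ket=(T_1,T_1)\,\bra\del_0 a,a\ket=(T_1,T_1)\,E(a_0^+)$ and $E(a_2^-)=\bra T_1a,d_0 a\ket=(T_1,T_1)\,\bra a,d_0 a\ket=(T_1,T_1)\,E(a_2^+)$, which is the required compatibility.

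For injectivity I would argue that the image of an arrow with $i=0$ has a vertex in its first slot and a genuine arrow in its second, the image of an arrow with $i=2$ has the opposite pattern, and the image of a site is a pair of vertices, so these three regimes are disjoint and $i$ is recovered from the image; within the $i=0$ arrows, $E(a_0^+)=E(b_0^+)$ forces $a=b$, whereas $E(a_0^+)=E(b_0^-)$ would give $a=T_1b$, hence $\del_0 a=\del_1 b\neq\del_0 b$ because there are no loop edges, contradicting equality of first slots; the $i=2$ case is symmetric. I do not expect a real obstacle here --- the statement is a bookkeeping identity --- and the only points demanding attention are the swapped source--target convention on the second factor, the reading of vertex symbols in slots as trivial loops, and getting the involution of $\D(P)^\sim$ right before comparing with the diagonal involution on the product.
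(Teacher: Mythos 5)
Your proof is correct and follows the same route as the paper's, which simply asserts the source, target, and involution intertwining identities and injectivity "can be easily verified"; you have supplied the explicit finite check over arrow types, the computation $\tilde{\mathbb{T}}_1=\mathbb{T}_1$ (equivalently $\duT_1=T_1$), and the no-loop-edge argument for injectivity, all of which are accurate.
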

\begin{proof}
Recall that the cartesian product of quivers $\mathbb{Q}$ and $\mathbb{Q'}$ has set of vertices $(\mathbb{Q}\x\mathbb{Q'})^0=\mathbb{Q}^0\x\mathbb{Q'}^0$
and set of arrows \smash{$(\mathbb{Q}\x\mathbb{Q'})^1=\bigl(\mathbb{Q}^0\x\mathbb{Q'}^1\bigr)\sqcup\bigl(\mathbb{Q}^1\x\mathbb{Q'}^0\bigr)$} and it is endowed with the obvious
source and target maps.
In case of the quivers of $\Sigma$ and $\Sigma^*$, these obvious maps are $\del_0\x d_1$ and $\del_1\x d_0$ with the warning that here the $\del_i$ and
$d_i$ are used in a generalized sense: The $\del_i$ is extended to the trivial edges of $\Sigma$, i.e., to the vertices, by $\del_0 v=\del_1 v=v$ and the
$d_i$ is extended to the trivial edges of $\Sigma^*$, i.e., to the faces, by $d_0 f=d_1 f=f$. Then the fact that $E$ is a homomorphism of
involutive quivers,
\begin{align*}
(\del_0\x d_1)\ci E=E\ci\nabla_0,\qquad
(\del_1\x d_0)\ci E=E\ci\nabla_1,\qquad
(T_1\x \tilde{T}_1)\ci E=E\ci\tilde{\mathbb{T}}_1
\end{align*}
can be easily verified. That $E$ is injective is also clear.
\end{proof}

In the image of $E$ the arrows can be visualized as ``isosceles triangles''. The triangle $E\bigl(a_2^\pm\bigr)$ has base edge $a$ with opposite vertex being the face
$f=d_0a$ and the legs of the triangle being the sites $\bra v,f\ket$ where $v$ is either $\del_0a$ or $\del_1a$. The triangles $E\bigl(a_0^\pm\bigr)$ are called
dual triangles because in them the roles of vertices and faces are interchanged. If $\rho=(d_j)$ is a curve on $\D(\Sigma)^*$, then two
consecutive triangles $E(d_j)$ and $E(d_{j+1})$ are pasted together at a leg so the whole picture of the curve looks like a ribbon with the base
edges of triangles forming one borderline and that of the dual triangles (drawn as edges of $\Sigma^*$) forming the other borderline.
\begin{figure}[t]\centering
\includegraphics[width=10cm]{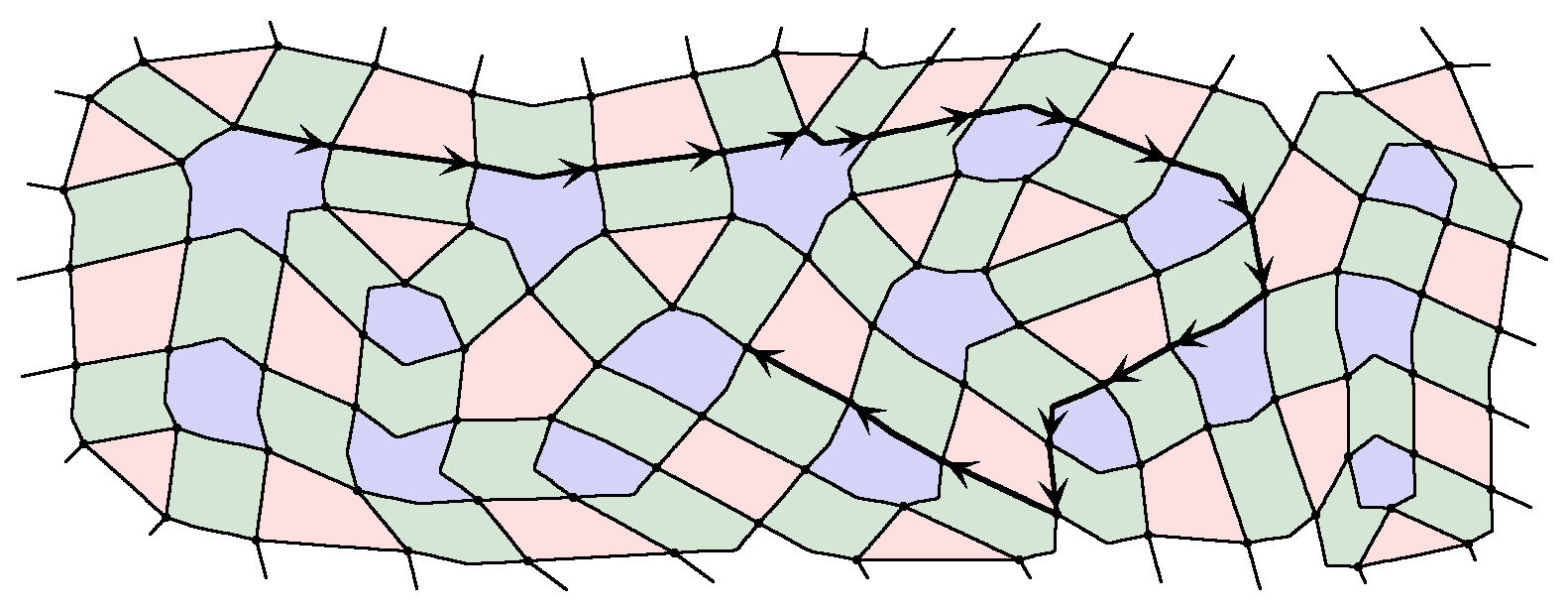}
\includegraphics[width=10cm]{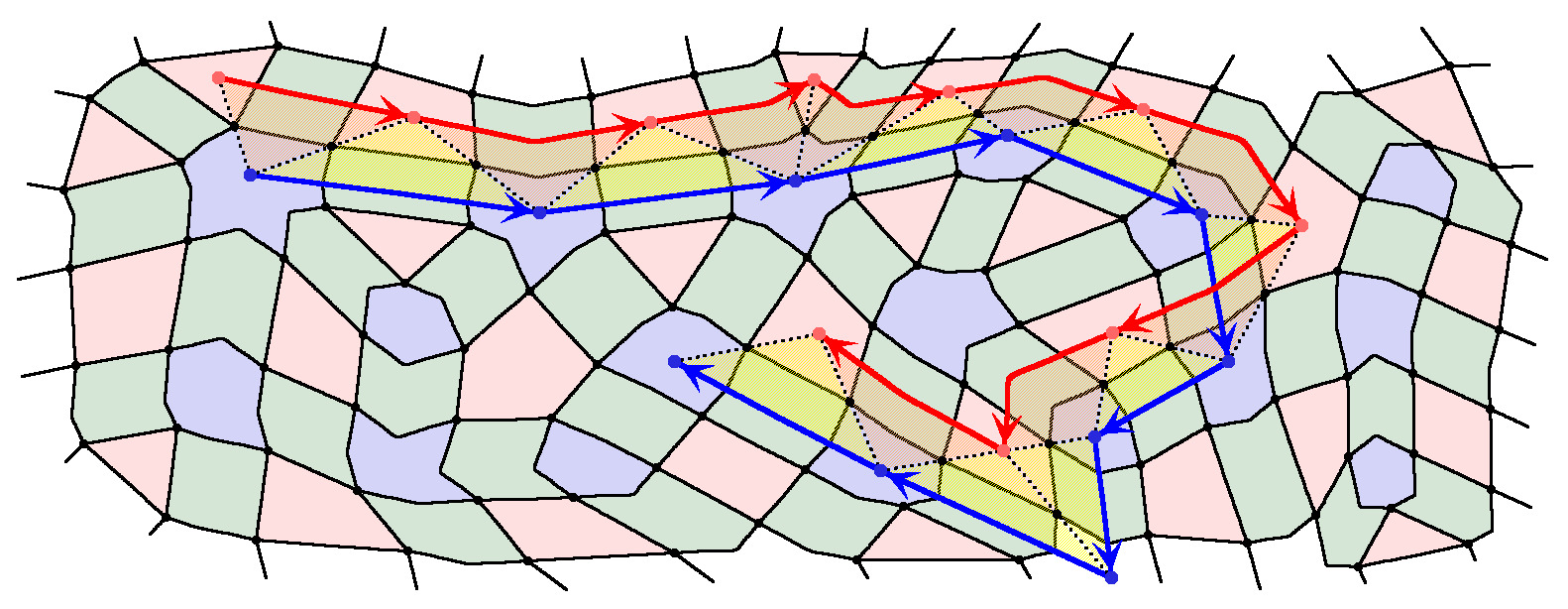}
\caption{Every curve $\rho$ (black) on $\D(\Sigma)^*$ is accompanied by a curve $\gamma$ (blue) on $\Sigma$ and a curve $\beta$ (red) on $\Sigma^*$
according to Lemma~\ref{lem: E}. They border a strip made of triangles but the strip may fail to be embedded into the surface of $\Sigma$:
The $\gamma$ and $\beta$ intersect locally, i.e., the strip folds, precisely if the $\rho$ makes a turn around a type 1 (green) face of $\D(\Sigma)^*$.}
\label{fig: triangles of a curve}
\end{figure}
On the two ends, we see two unpaired legs of some triangles and the texture of the ribbon itself is a triangulated surface, see
Figure~\ref{fig: triangles of a curve}.
The 2 borderline curves can be obtained by writing $E(d_j)$ as $\bra c_j,b_j\ket$ and then $\gamma=(c_j)$ is a $P$-curve and $\beta=(b_j)$ is a \smash{$\duP$}-curve
in the generalized sense: $\gamma$ may contain vertices and $\beta$ may contain faces. But $c_j$ is a vertex if and only if $b_j$ is a genuine arrow and
$c_j$ is a genuine arrow if and only if $b_j$ is a face, hence the pair $\bra c_j,b_j\ket$ is always a triangle.

This is the origin of the names such as ``triangle operators'' and ``ribbon curves'' in the literature on the Kitaev model, although the name
ribbon curve is usually reserved for a special kind of curve on $\D(\Sigma)^*$ which we will define below.

In the next lemma, we take a closer look into the structure of $\D(\Sigma)^*$. We will see that we can forget about the complicated arrow presentation
(\ref{AP dudo}) and navigate through $\D(\Sigma)^*$ using essentially only the arrow presentation of $\Sigma$.
\begin{lem} 
The map $a\mapsto s(a):=\bra\Ou_0(a),\Ou_2(a)\ket$ is a bijection $\Arr(\Sigma)\cong\D(\Sigma)^{*0}$.
Every vertex of $\D(\Sigma)^*$ has degree $4$ and the four arrows pointing out from $s(a)$ can be labelled, or colored, by the letters
$\bigl\{T_0,T_0^{-1},T_2,T_2^{-1}\bigr\}$ as follows
\begin{alignat*}{4}
&(T_0a)_0^+\colon \ && s(a)\to s(T_0a)\qquad&&\text{labelled by }T_0,&\\
&a_0^-\colon \ && s(a)\to s\bigl(T_0^{-1}a\bigr)\qquad&&\text{labelled by }T_0^{-1},&\\
&a_2^+\colon \ && s(a)\to s(T_2a)\qquad&&\text{labelled by }T_2,&\\
&\bigl(T_2^{-1}a\bigr)_2^-\colon \ && s(a)\to s\bigl(T_2^{-1}a\bigr)\qquad&&\text{labelled by }T_2^{-1}&.
\end{alignat*}
The order in which they are listed corresponds to the cyclic order induced by the action of $\tilde{\mathbb{T}}_0$ of~\eqref{AP dudo}. Moreover,
if an arrow $d$ carries the label $T_j^\epsilon$ then $\tilde{\mathbb{T}}_1d$ has label $T_j^{-\epsilon}$.
\end{lem}

\begin{proof}
$\Ou_0(a)\cap\Ou_2(a)=\{a\}$ is non-empty, so $s(a)$ is a site, cf.\ Definition~\ref{def: site} and (\ref{site}). If~$\bra v,f\ket$~is a~site, then
$v\cap f\neq\varnothing$, so $v=\Ou_0(a)$ and $f=\Ou_2(a)$
for the unique element $a\in v\cap f$. So $s$ is indeed a bijection.

The four arrows of source $s(a)$ are dual to the 4 boundary arrows of the rhombus face $s(a)$ of the double $\D(\Sigma)$. (The reason for drawing $\D(\Sigma)$ and not
$\D(\Sigma)^*$ is merely typographical.)
\begin{equation}\label{rhombus nb}
\parbox{120pt}{
\setlength{\unitlength}{1.2pt}
\begin{picture}(120,80)
\linethickness{0.2pt}
\polyline(20,0)(80,60)(60,80)(0,20)(20,0)
\polyline(60,0)(80,20)(20,80)(0,60)(60,0)
\color{melegzold}
\put(20,0){\circle*{4}}
\put(60,0){\circle*{4}}
\put(20,40){\circle*{4}}
\put(60,40){\circle*{4}}
\put(20,80){\circle*{4}}
\put(60,80){\circle*{4}}
\put(7,36){$e^L$}
\put(65,36){$e^R$}
\color{blue}
\put(40,20){\circle*{4}}
\put(0,60){\circle*{4}}
\put(80,60){\circle*{4}}
\put(38,25){$v$}
\put(-13,56){$v^L$}
\put(83,56){$v^R$}
\color{red}
\put(0,20){\circle*{4}}
\put(80,20){\circle*{4}}
\put(40,60){\circle*{4}}
\put(37,49){$f$}
\put(-13,18){$f^L$}
\put(83,18){$f^R$}
\linethickness{2pt}
\color{black}
\put(43,19){\vector(1,1){40}} 
\put(78,46){$a$}
\color{black}
\put(33,38){$\scriptstyle s(a)$}
\put(8,18){$\scriptstyle s(T_0a)$}
\put(48,18){$\scriptstyle s(T_0^{-1}a)$}
\put(6,58){$\scriptstyle s(T_2^{-1}a)$}
\put(49,58){$\scriptstyle s(T_2a)$}
\end{picture}}
\qquad
\begin{alignedat}{3}
&v=\Ou_0(a),\qquad&& f=\Ou_2(a)&\\
&e^L=\Ou_1(T_0a),\qquad&& e^R=\Ou_1(a)&\\
&v^L=\Ou_0\bigl(T_2^{-1}a\bigr),\qquad&& v^R=\Ou_0(T_1a)&\\
&f^L=\Ou_2(T_0a),\qquad&& f^R=\Ou_2(T_1a).&
\end{alignedat}
\end{equation}
The $e^L$, $v$, $e^R$, $f$ are the 4 vertices of the rhombus by (\ref{2 edges of a site}), $\bigl\{v,v^L\bigr\}=\Bd\bigl(e^L\bigr)$, $\bigl\{v,v^R\bigr\}=\Bd\bigl(e^R\bigr)$,
$\bigl\{f,f^L\bigr\}=\Cb\bigl(e^L\bigr)$, $\bigl\{f,f^R\bigr\}=\Cb\bigl(e^R\bigr)$ and the four neighbour sites of $\bra v,f\ket$ are $\bigl\bra v,f^L\bigr\ket$, $\bigl\bra v,f^R\bigr\ket$, $\bigl\bra v^R,f\bigr\ket$ and
$\bigl\bra v^L,f\bigr\ket$. Computing the intersections of the vertex and face within these sites, we recognize them as $s(T_0a)$, $s\bigl(T_0^{-1}a\bigr)$, $s(T_2a)$
and $s\bigl(T_2^{-1}a\bigr)$, respectively. \big(For exam\-ple, $v\cap f^L=\Ou_0(a)\cap\Ou_2(T_0a)=\Ou_0(T_0a)\cap\Ou_2(T_0a)=\{T_0a\}$.\big)
This partially explains the labels of the four arrows, partially because some of these 4 sites can be identical. (Indeed, $\D(\Sigma)^*$ may have double arrows but never
multiple arrows of larger multiplicity.)
$s(T_0a)=s\bigl(T_0^{-1}a\bigr)$ occurs when $v$ has degree 2 and $s(T_2a)=s\bigl(T_2^{-1}a\bigr)$ occurs when $f$ is a 2-gon. In these degenerate cases
\[
\parbox{100pt}{
\begin{picture}(100,85)
\put(44,50){$\scriptstyle s(a)$}
\polyline(50,40)(70,40)(50,70)(30,40)(50,40)
\polyline(50,70)(25,85)(5,55)(30,40)
\polyline(50,70)(75,85)(95,55)(70,40)
\polyline(30,40)(50,10)(70,40)
\color{blue}
\put(50,40){\circle*{4}} \put(48,32){$v$}
\put(5,55){\circle*{4}} \put(-8,52){$v^L$}
\put(95,55){\circle*{4}} \put(98,52){$v^R$}
\color{red}
\put(50,70){\circle*{4}} \put(48,75){$f$}
\put(50,10){\circle*{4}} \put(54,6){$f^L=f^R$}
\color{melegzold}
\put(30,40){\circle*{4}} \put(17,32){$e^L$}
\put(70,40){\circle*{4}} \put(73,32){$e^R$}
\put(25,85){\circle*{4}}
\put(75,85){\circle*{4}}
\end{picture}}
\qquad\text{or}\qquad
\parbox{100pt}{
\begin{picture}(100,85)(0,-10)
\put(44,32){$\scriptstyle s(a)$}
\polyline(50,45)(70,45)(50,15)(30,45)(50,45)
\polyline(50,15)(25,0)(5,30)(30,45)
\polyline(50,15)(75,0)(95,30)(70,45)
\polyline(30,45)(50,75)(70,45)
\color{red}
\put(50,45){\circle*{4}} \put(48,50){$f$}
\put(5,30){\circle*{4}} \put(-9,28){$f^L$}
\put(95,30){\circle*{4}} \put(97,28){$f^R$}
\color{blue}
\put(50,15){\circle*{4}} \put(48,6){$v$}
\put(50,75){\circle*{4}} \put(54,72){$v^L=v^R$}
\color{melegzold}
\put(30,45){\circle*{4}} \put(22,47){$e^L$}
\put(70,45){\circle*{4}} \put(72,47){$e^R$}
\put(25,0){\circle*{4}}
\put(75,0){\circle*{4}}
\end{picture}}
\]
there is a double
arrow from $s(a)$ to, let's say, $s(T_2a)$ (the 2nd picture above) and it seems to be indifferent which is labelled by $T_2$ because $T_2$ acts on $a$ as $T_2^{-1}$.
However, the letters $T_2$ and~$T_2^{-1}$ are different and so are the two parallel arrows. The labelling in this case is defined by forcing the lemma:
The labels must follow the cyclic order $\big[T_0,T_0^{-1},T_2,T_2^{-1}\big]$ when we apply~$\tilde{\mathbb{T}}_0$.
There is only one case left. If both types of degeneracy occur for the same arrow~$a$, then $T_0^2a=a=T_2^2a$ so by Example~\ref{exa: minimal OCPM} the connected component
of $a$ is nothing but the minimal OCPM, $\Sigma_\mini$. In this case, the correctness of the labelling can be checked by inspection.
\end{proof}

The above lemma has two important consequences. The first one is an interpretation of~$\D(\Sigma)^*$ as the coset graph of the arrow presentation of $\Sigma$.
This is the next corollary. The second one is a useful encoding of the curves on $\D(\Sigma)^*$ in terms of words in the alphabet $\bigl\{T_0,T_0^{-1},T_2,T_2^{-1}\bigr\}$, see
the definition~below.

\begin{cor}Let $\Sigma$ be a connected OCPM. Every arrow presentation $\bra\Arr,T_0,T_2\ket$ of $\Sigma$ determines a transitive action on $\Arr(\Sigma)$ of the
$\Sigma$-independent group $\T=\bra T_0,T_2\mid T_0T_2T_0T_2\ket$ and therefore the arrow presentation can be thought as a presentation of the left
coset space $\T/\Ha$ of the group $\T$ with respect to some subgroup. Then the Schreier coset graph of $\T/\Ha$ can be drawn on
the surface $[\Sigma]$ as follows. Each $a\in\Arr(\Sigma)$
is drawn as a point at $s(a)\in \D(\Sigma)^{*0}$. For each $a\in\Arr(\Sigma)$, we draw two arrows: $(T_0a)_0^+\colon s(a)\to s(T_0a)$ colored blue and
$a_2^+\colon s(a)\to s(T_2a)$ colored red. Then every edge of $\D(\Sigma)^*$ is drawn exactly once. The blue arrows represent the flow of the permutation
$T_0$ and the red ones represent that of $T_2$. In this sense drawing the dual of the double we draw the arrow presentation of~$\Sigma$
$($Figure~$\ref{fig: flow})$.
\end{cor}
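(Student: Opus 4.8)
The plan is to establish the three assertions in turn. First, that the group $\T=\bra T_0,T_2\mid T_0T_2T_0T_2\ket$ acts transitively on $\Arr(\Sigma)$; then, that $\Arr(\Sigma)$ is therefore, as a $\T$-set, a coset space $\T/\Ha$; and finally, recalling that $\D(\Sigma)^*$ is realized as a cell complex on the surface $|\Sigma|$, to check that the Schreier coset graph of $\T/\Ha$ relative to the generators $T_0,T_2$ is exactly the directed, two-coloured subgraph of the $1$-skeleton of $\D(\Sigma)^*$ cut out by the prescribed blue and red arrows.

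For the first assertion I would argue as follows. Since $T_1=T_0T_2$ and axiom (AP-2) says $T_1T_1=\id_\Arr$, the permutations $T_0,T_2$ of $\Arr(\Sigma)$ satisfy the single defining relator $T_0T_2T_0T_2$ of $\T$ (and, by Lemma~\ref{lem: OCSC}, also $T_2T_0T_2T_0=\id_\Arr$, so no orientation convention is preferred); hence $T_0\mapsto T_0$, $T_2\mapsto T_2$ extends uniquely to a group homomorphism from $\T$ into the symmetric group on $\Arr(\Sigma)$, which is the desired left action. For transitivity I would transport the connectedness of $\Sigma$ across the bijection $s\colon\Arr(\Sigma)\to\D(\Sigma)^{*0}$, $a\mapsto\bra\Ou_0(a),\Ou_2(a)\ket$, of Lemma~\ref{lem: code word}. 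Given $a,b\in\Arr(\Sigma)$, the site-form of connectedness yields a chain of sites $s(a)=s_0,s_1,\dots,s_n=s(b)$ in which consecutive $s_{i-1},s_i$ are neighbours, i.e.\ share either their vertex or their face. Writing $s_{i-1}=s(c_{i-1})$, $s_i=s(c_i)$: if the shared cell is the vertex then $c_{i-1},c_i$ lie in a common $T_0$-orbit, and if it is the face then in a common $T_2$-orbit, so in either case $c_i$ is a \emph{power} of $T_0$ or of $T_2$ applied to $c_{i-1}$ — in particular an element of $\T$ applied to it. Composing along the chain gives $b\in\T\cdot a$. Fixing $a_0\in\Arr(\Sigma)$ and letting $\Ha$ be its stabiliser in $\T$, the orbit map $g\Ha\mapsto g\cdot a_0$ is then the announced $\T$-equivariant bijection $\T/\Ha\to\Arr(\Sigma)$.

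For the last assertion I would read everything off Lemma~\ref{lem: code word}. Under $\T/\Ha\cong\Arr(\Sigma)$ the Schreier graph has vertex set $\{s(a)\}=\D(\Sigma)^{*0}$ and, at each vertex $s(a)$, one directed edge per generator: $s(a)\to s(T_0a)$ and $s(a)\to s(T_2a)$. Lemma~\ref{lem: code word} identifies these with the arrows $(T_0a)_0^+$ and $a_2^+$ of $\D(\Sigma)^*$, so drawing the former blue and the latter red embeds the Schreier graph inside $\D(\Sigma)^*$, and by construction the blue edge out of $s(a)$ records where $T_0$ sends $a$ and the red one where $T_2$ sends $a$. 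To see that every edge of $\D(\Sigma)^*$ is drawn exactly once, use that $a\mapsto T_0a$ is a bijection of $\Arr(\Sigma)$: the blue arrows are then precisely $\{b_0^+\mid b\in\Arr(\Sigma)\}$ and the red ones precisely $\{a_2^+\mid a\in\Arr(\Sigma)\}$, i.e.\ exactly one arrow from each of the $\mathbb{T}_1$-orbits $\{b_0^+,b_0^-\}$ and $\{a_2^+,a_2^-\}$ — and these orbits are precisely the (undirected) edges of $\D(\Sigma)^*$.

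I expect the only delicate point to be the transitivity argument: one must be careful that ``neighbour'' in the sense of Definition~\ref{def: site} and Lemma~\ref{lem: site} means that the two sites share a vertex or a face, which translates into multiplication by a power of $T_0$ or $T_2$ — an element of $\T$ — rather than by a single application of a generating permutation, and that it is the site-formulation of connectedness (not the vertex- or face-formulation) that makes this translation immediate. Everything else is routine bookkeeping with Lemma~\ref{lem: code word} and the $\mathbb{T}_1$-orbit structure of $\mathbb{A}$.
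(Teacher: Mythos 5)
Your proof is correct, and since the paper states this Corollary with no explicit proof (leaving it to follow from Lemma~\ref{lem: code word}), what you have written is exactly the kind of detailed argument the paper is implicitly relying on. The action of $\T$ is well-defined by (AP-2), the transitivity argument via the site-formulation of connectedness is sound, and the identification of the drawn arrows with the Schreier graph edges is read off correctly from the label list of Lemma~\ref{lem: code word}. Your final count — that the blue arrows are exactly $\{b_0^+\}$ and the red ones exactly $\{a_2^+\}$, i.e.\ one representative from each $\tilde{\mathbb{T}}_1$-orbit $\{c_i^+,c_i^-\}$ — is also right, since the edges of $\D(\Sigma)^*$ are precisely these orbits.

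One small remark: the transitivity step could be made even shorter by invoking Lemma~\ref{lem: code word} directly. Connectedness of $\Sigma$ implies connectedness of $\D(\Sigma)^*$, so any two sites $s(a),s(b)$ are joined by some opcurve, and the code word representation of that opcurve is a word $w\in\{T_0^{\pm1},T_2^{\pm1}\}^*$ with $b=wa$; reading $w$ in $\T$ gives $b\in\T\cdot a$ immediately, without passing through the site-neighbour chain and the orbit bookkeeping you do. Your version works too — the neighbour relation on sites does translate to multiplication by a power of $T_0$ or of $T_2$ — but the code-word route is the one that stays closest to the lemma the Corollary is attached to.
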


\begin{figure}[t]\centering
\includegraphics[width=9cm]{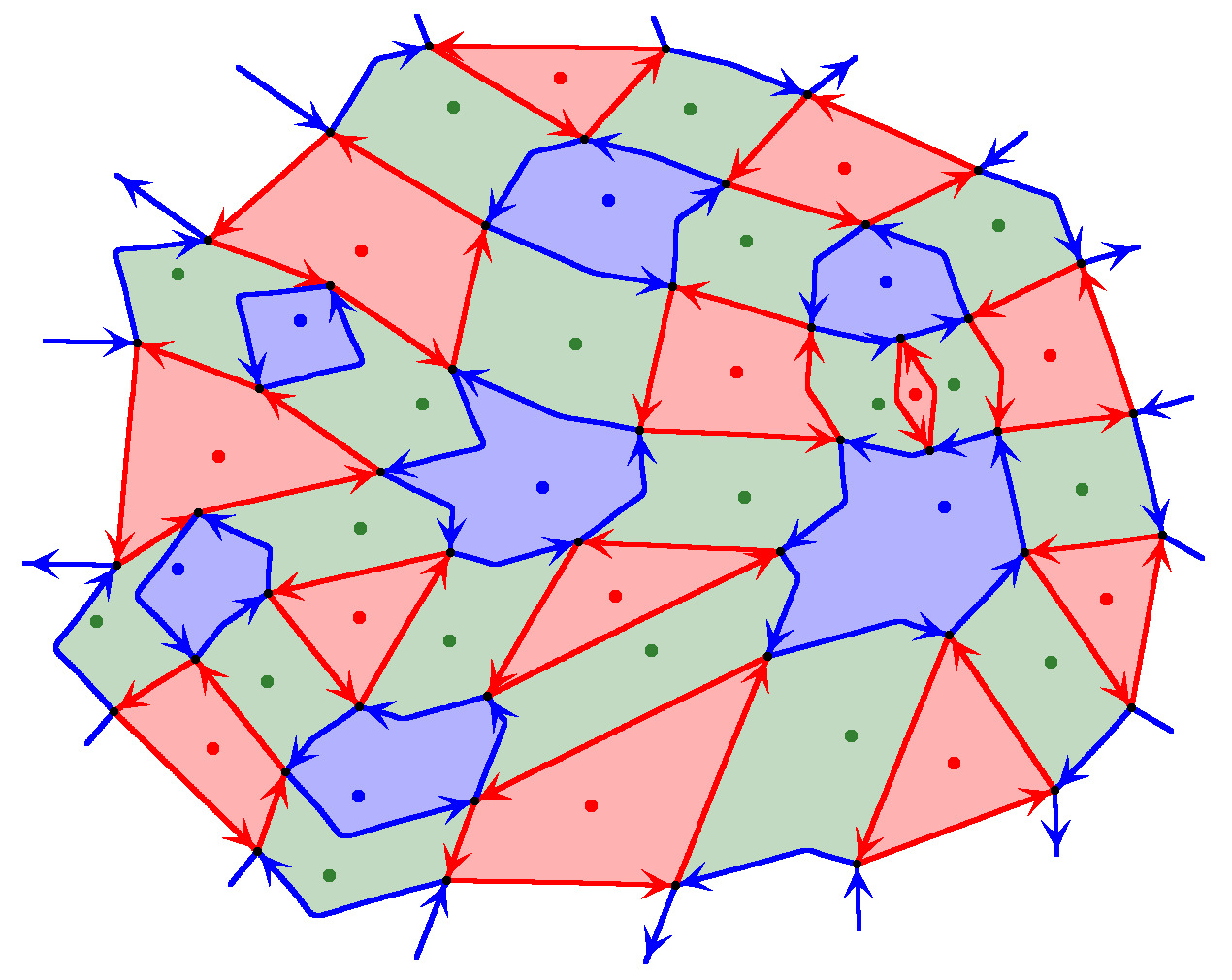}
\caption{The dual of the double of an OCPM $\Sigma$ provides the Schreier coset graph of the arrow presentation of $\Sigma$. The blue and red arrows
show the action of $T_0$ and $T_2$, respectively. The green quadrangles represent the universal relation $T_0T_2T_0T_2=\id$.}
\label{fig: flow}
\end{figure}

\begin{defi}\label{def: code word}
Using the function $s\colon \Arr(\Sigma)\to\D(\Sigma)^{*0}$ defined in the above lemma a curve $\rho=(d_n,\dots,d_1)$ on $\D(\Sigma)^*$ can be encoded by the pair
\[
\bra a_0,w\ket\in\Arr(\Sigma)\x\bigl\{T_0,T_0^{-1},T_2,T_2^{-1}\bigr\}^*,
\]
where $s(a_0)=\nabla_0 d_1$ is the source vertex of the curve, $w=W_n\cdots W_1$ is the code word made of letters $W_j\in\bigl\{T_0,T_0^{-1},T_2,T_2^{-1}\bigr\}$
in such a way that for $0<j\leq n$ the $d_j$ is the unique arrow of source $s(a_{j-1})$ and label $W_j$, so its target can be written as $s(a_j)$ with
$a_j=W_ja_{j-1}$.

The same pair $\bra a_0,w\ket$ is used to encode the opcurve $\rho^\rev=(d_1,\dots,d_n)$.
\end{defi}

Now we are in a position to define ribbons.
\begin{lem}\label{lem: ribbon}
Let $\rho=(d_1,\dots,d_n)$ be either a curve or an opcurve on $\D(\Sigma)^*$ with encoding $\bra a_0,w\ket$.
The following conditions are equivalent:
\begin{enumerate}\itemsep=0pt
\item[$(i)$] The triangles $E(d_j)$ and $E(d_{j+1})$ are non-overlapping $($their interiors are disjoint$)$ for $0<j<n$.
\item[$(ii)$] $\rho$ is reduced and the $2$ borderlines $\gamma$ and $\beta$ of the ``ribbon'' $\rho$ do not intersect locally, i.e., $\Ou_1(b_j)\neq\Ou_1(c_{j\pm 1})$
whenever both $b_j$ and $c_{j\pm 1}$ are non-trivial arrows.
\item[$(iii)$] $\rho$ is reduced and if $(d_j,d_{j+1})$ is a left or a right turn (not straight), then the unique face it turns around is either a type $0$ face
or a type $2$ face, never a type $1$ face.
\item[$(iv)$] The word $w$ is reduced and does not contain $T_0T_2$, $T_2T_0$ and their inverses.
\item[$(v)$] The word $w$ belongs to either $\bigl\{T_0^{-1},T_2\bigr\}^*$ or $\bigl\{T_0,T_2^{-1}\bigr\}^*$.
\end{enumerate}
\end{lem}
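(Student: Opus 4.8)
The plan is to exploit the code‑word description of Lemma \& Definition~\ref{lem: code word} and to notice that all five conditions are \emph{local}: (i)--(iv) each assert that some property holds for every consecutive pair $(d_j,d_{j+1})$ with $0<j<n$, and (v) is merely the global shape of the letterwise constraint (iv). (Since the same pair $\bra a_0,w\ket$ encodes $\rho$ and $\rho^\rev$ and every condition is reversal‑symmetric, it suffices to treat $\rho$ as a curve.) So I would fix one consecutive pair, write down the two adjacent letters of $w$ that produce it, and build a single \emph{dictionary} classifying the $16$ length‑two subwords of $w$ into four mutually exclusive types by the behaviour of $(d_j,d_{j+1})$: ``backtrack'' (when $d_{j+1}$ is the reverse of $d_j$), ``straight'', ``turn around a type~$0$ or type~$2$ face'', and ``turn around a type~$1$ face''. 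Once the dictionary is in place, each of (i), (ii), (iii), (iv) is visibly the single statement ``no length‑two subword of $\rho$ (resp.\ of $w$) is a backtrack or a type~$1$ turn'', so (i)$\Leftrightarrow$(ii)$\Leftrightarrow$(iii)$\Leftrightarrow$(iv) at once; and (iv)$\Leftrightarrow$(v) is a finite‑automaton triviality — after a letter $T_0$ or $T_2^{-1}$ a reduced word avoiding $T_0T_2$, $T_2T_0$ and their inverses can only continue with a letter of $\{T_0,T_2^{-1}\}$, and after $T_0^{-1}$ or $T_2$ only with a letter of $\{T_0^{-1},T_2\}$; as these two alphabets are disjoint, a nonempty such word lies entirely in one of them, and the converse needs nothing.

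To build the dictionary I would invoke two structural facts. First, by the last assertion of Lemma \& Definition~\ref{lem: code word}, the four arrows issuing from a vertex $s(a_j)$ of $\D(\Sigma)^*$ carry the labels $T_0,T_0^{-1},T_2,T_2^{-1}$ \emph{in this cyclic order}, and the reverse of the incoming arrow $d_j$ is the outgoing arrow with label $W_j^{-1}$. Consequently $(d_j,d_{j+1})$ is a backtrack iff $W_{j+1}=W_j^{-1}$, i.e.\ iff $\rho$ is not reduced at $j$; it is straight iff $W_{j+1}$ is opposite to $W_j^{-1}$ in the $4$-cycle; and it is a left or right turn iff $W_{j+1}$ and $W_j^{-1}$ are cyclically adjacent, the face turned around then being the face in the gap between those two adjacent arrows. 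Second, the local picture of the site neighbourhood (Figure~\ref{darrows}, together with (\ref{site}) and (\ref{2 edges of a site})) shows that around $s(a_j)$ the four faces are the two boundary vertices and the two boundary faces of the double‑rhombus, arranged so that the type~$0$ face $\Ou_0(a_j)$ sits in the gap between the $T_0$-arrow and the $T_0^{-1}$-arrow, the type~$2$ face $\Ou_2(a_j)$ sits in the gap between the $T_2$-arrow and the $T_2^{-1}$-arrow, and the two type~$1$ faces occupy the remaining gaps, each flanked by one $T_0^{\pm1}$-arrow and one $T_2^{\pm1}$-arrow; equivalently, a type~$1$ face is a quadrangle with boundary the relator cycle $T_0T_2T_0T_2$, as recorded in the Corollary above and in Figure~\ref{fig: flow}. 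Reading these off: a turn is around a type~$1$ face exactly when $\{W_j^{-1},W_{j+1}\}$ meets both $\{T_0,T_0^{-1}\}$ and $\{T_2,T_2^{-1}\}$, i.e.\ exactly when the subword is one of $T_0T_2$, $T_2T_0$, $T_0^{-1}T_2^{-1}$, $T_2^{-1}T_0^{-1}$ — precisely the subwords banned in (iv) beyond reducedness — and it is around a type~$0$ or type~$2$ face exactly when the subword is one of $T_0T_0$, $T_0^{-1}T_0^{-1}$, $T_2T_2$, $T_2^{-1}T_2^{-1}$. This is (iii)$\Leftrightarrow$(iv).

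Finally I would tie in the triangle picture to get (i)$\Leftrightarrow$(ii)$\Leftrightarrow$(iii). Writing $E(d_j)=\bra c_j,b_j\ket$ as in Lemma~\ref{lem: E}, exactly one of $c_j,b_j$ is a genuine arrow, and matching the formulas of $E$ against the dictionary shows that the two borderlines $\gamma=(c_j)$ and $\beta=(b_j)$ intersect locally at $j$, i.e.\ $\Ou_1(b_j)=\Ou_1(c_{j\pm1})$ with both arrows genuine, if and only if $(d_{j\mp1},d_j)$ is a turn around a type~$1$ face — the phenomenon recorded in the caption of Figure~\ref{fig: triangles of a curve}. Hence for reduced $\rho$ condition (ii) is equivalent to (iii). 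And two consecutive triangles $E(d_j)$, $E(d_{j+1})$, pasted along their common leg $s(a_j)$, actually coincide in the backtrack case and fold across that leg (so their interiors meet) in the type~$1$ turn case, and are non‑overlapping in all other cases; so (i) holds at $j$ exactly when $\rho$ is reduced at $j$ and $\gamma,\beta$ do not intersect locally there, which is (ii). This closes the cycle.

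The step I expect to cost real effort is the dictionary, and within it the placement of the two type~$1$ faces in the two ``mixed'' angular gaps at $s(a_j)$ — it is this that forces type~$1$ turns to correspond to the mixed subwords rather than to the straights — together with the companion bookkeeping for (i)$\Leftrightarrow$(ii): namely, establishing ``the pasted triangles fold'' and ``the borderlines meet locally'' purely combinatorially inside $\D(\Sigma)^*$ rather than by pointing at the geometric realization. Everything else is a finite check.
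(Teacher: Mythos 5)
Your proposal is correct and follows essentially the same route as the paper: both reduce everything to a local classification of consecutive arrow pairs $(d_j,d_{j+1})$, match the two banned behaviours (backtrack and type-$1$ turn) to the banned length-two subwords via the code-word description and the cyclic label order around a site, and then close the cycle with the triangle picture of Lemma~\ref{lem: E} and the finite-automaton step (iv)$\Leftrightarrow$(v). The only differences are presentational: the paper organizes the argument around the four triangles $vfv^L,fvf^L,vfv^R,fvf^R$ incident to the common leg and names the overlapping pairs directly, whereas you front-load a complete dictionary of the sixteen length-two subwords and derive the face placement in the angular gaps from Figure~\ref{darrows}; the substance and level of rigour are the same, including the part you flag as needing combinatorial bookkeeping.
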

\begin{proof}
Let $\bra v,f\ket=\nabla_0d_{j+1}=\nabla_1d_j$. Then the triangles $E(d_j)$ and $E(d_{j+1})$ contain $\bra v,f\ket$ as a~leg.
There are 4 such triangles. In the notations of (\ref{rhombus nb}), they are $vfv^L$, $fvf^L$, $vfv^R$ and $fvf^R$. The first two and the second two
overlap because their base edges intersect at their ``middle'' points $e^L$ and $e^R$, respectively. This intersection is, at the same time, an intersection
of the two borderlines $\gamma$ and $\beta$ of the ribbon.
No other pairs of triangles are overlapping except when the two triangles are the same, this happens precisely when $d_{j+1}=\tilde{\mathbb{T}}_1d_j$.

In case of intersection at $e^L$ the $(d_j,d_{j+1})$ is either \smash{$\bigl(\bigl(T_2^{-1}a\bigr)_2^+,(T_0a)_0^+\bigr)$} or its inverse and the corresponding code word is
$T_0T_2$ or its inverse.
In case of intersection at $e^R$, the $(d_j,d_{j+1})$ is either $\bigl(a_0^+,a_2^+\bigr)$ or its inverse and the corresponding code word is
$T_2T_0$ or its inverse. These pairs of arrows make turn around the type 1 face $e^L$ and $e^R$, respectively. All other pairs of arrows are either straight
or make a turn around $v$ or $f$.

If a reduced word $w$ does not contain the alphabet changing words $T_0T_2$, $T_2T_0$ and their inverses, then $W_1\in\bigl\{T_0^{-1},T_2\bigr\}$ is equivalent to
$w\in\bigl\{T_0^{-1},T_2\bigr\}^*$. The same holds for the alphabet $\bigl\{T_0,T_2^{-1}\bigr\}$.
\end{proof}

\begin{defi}\label{def: ribbon}
A ribbon (op)curve is a (op)curve on $\D(\Sigma)^*$ satisfying the conditions of Lemma~\ref{lem: ribbon}.
Let $\Ribb(\Sigma)$ and $\Ribb^\circ(\Sigma)$ be the set of ribbon curves and ribbon opcurves, respectively.
If the code word of a ribbon (op)curve belongs to $\bigl\{T_0^{-1},T_2\bigr\}^*$, then it is called a~left ribbon (op)curve.
If it belongs to $\bigl\{T_0,T_2^{-1}\bigr\}^*$, we speak about right ribbon (op)curve.
Let $\Ribb_L(\Sigma)$, $\Ribb_R(\Sigma)$, $\Ribb^\circ_L(\Sigma)$, $\Ribb^\circ_R(\Sigma)$ be the set of left/right ribbon curves/opcurves, respectively.
\end{defi}

Since every ribbon is reduced, $\Ribb(\Sigma)$ embeds into the path groupoid $\Path(\Sigma)$. But it is not a subgroupoid and not even a subcategory
as one can infer from the characterization Lemma~\ref{lem: ribbon}\,(v). The $\Ribb_L(\Sigma)$ and $\Ribb_R(\Sigma)$, however, are subcategories with
the inverse of every left ribbon being a right ribbon. Moreover, $\Ribb(\Sigma)=\Ribb_L(\Sigma)\cup\Ribb_R(\Sigma)$.
Analogous remarks can be made for the ribbon opcurves.

The above definition of ribbons seems to agree with that of~\cite{Bombin-Delgado} but it is less restrictive than that of \cite{Meusburger}.
The latter type of ribbons will appear here as proper ribbons in Definition~\ref{def: prop ribb}. The necessity of distinguishing two kinds of ribbons
has been realized already in~\cite{CCY}, where they are called locally clockwise and locally counterclockwise. Our terminology of left and right refers to that whether
the red dual curve on Figure~\ref{fig: triangles of a curve} is on the left or on the right of the blue curve.
So ribbons may also be described as curves on $\Sigma$ together with a consistent framing by a dual curve. It is a left ribbon if and only if the framing goes, and stays, on the left-hand side.

\section{Holonomy} \label{sec: holonomy}

According to \cite{Meusburger, Meusburger-Wise}, the holonomy of a Hopf algebra gauge theory on $\Sigma$ over the quasitriangular Hopf algebra $Q$ is a
groupoid homomorphism
\[
\Hol\colon \ \Path(\Sigma)\to\Hom_K^\x\biggl(Q^*,\bigotimes_{e\in\Sigma^1}Q^*\biggr),
\]
where the $\Hom_K(C,A)$ for a coalgebra $C$ and algebra $A$ is the convolution algebra \cite{Sweedler} and $\Hom_K^\x(C,A)$ refers to the group of
convolution invertible elements of $\Hom_K(C,A)$.
For a gauge theory over $\D(H)$ we must choose $Q=\D(H)$. But since we need holonomy in the Kitaev model merely for the purpose of constructing string operators,
we will follow the above recipe very loosely and use the name holonomy for any groupoid homomorphism
\[
\Hol\colon \ \Path(\D(\Sigma)^*)\to\Hom_K^\x(\D(H)^*,\M(\Sigma)).
\]
The convolution product of two functions $f,g\colon \D(H)^*\to\M$ is $f\cv g(\Psi)=f\bigl(\Psi\p\bigr)g\bigl(\Psi\pp\bigr)$ for $\Psi\in \D(H)^*$ and the holonomy of a path
$(d_n,\dots,d_1)$ on $\D(\Sigma)^*$ is uniquely determined by the holonomies of the length 1 paths by
\begin{equation}\label{Hol long}
\Hol_{(d_n,\dots,d_1)}=\Hol_{(d_n)}\cv\cdots\cv\Hol_{(d_2)}\cv\Hol_{(d_1)}.
\end{equation}
In order for this formula to depend only on the path of the curve the length 1, holonomies should satisfy that $\Hol_{(\tilde{\mathbb{T}}_1d)}$
be the convolution inverse of $\Hol_{(d)}$. If the function $\Hol_{(d)}\colon {\D(H)^*\to\M}$ happens to be either an algebra homomorphism or an antialgebra
homomorphism, then its convolution inverse is simply $\Hol_{(d)}\ci S_{\D^*}$.

We shall use also opholonomies
\[
\Ophol\colon \ \Path(\D(\Sigma)^*)^\op\to\Hom_K^\x(\D(H)^*,\M(\Sigma)),
\]
which can be defined as functions on opcurves $(d_1,\dots,d_n)$ by
\begin{equation}\label{Ophol long}
\Ophol_{(d_1,\dots,d_n)}=\Ophol_{(d_1)}\cv\Ophol_{(d_2)}\cv\cdots\cv\Ophol_{(d_n)}.
\end{equation}
The condition of descending to the paths of opcurves is again that $\Ophol_{(\tilde{\mathbb{T}}_1d)}$ be the convolution inverse of $\Ophol_{(d)}$.

The ribbon operators of \cite{Bombin-Delgado}, as well as the symbolic arrows (\ref{P-arrow Q-arrow}), suggest that opholonomy and holonomy
should be defined by
\begin{equation}\label{Ophol}
\Ophol_{(d)}(h\ot\varphi):=\begin{cases}
\varphi(1)P_a(h)&\text{if}\ d=a_0^+,\\
\eps(h)Q_a(\varphi)&\text{if}\ d=a_2^+,\\
\varphi(1)P_a(S(h))&\text{if}\ d=a_0^-,\\
\eps(h)Q_a(S(\varphi))&\text{if}\ d=a_2^-
\end{cases}
\end{equation}
for all $a\in\Arr(\Sigma)$ and
\begin{equation}\label{Hol}
\Hol_{(d)}:=S_\M\ci\Ophol_{(d)}\ci S_{\D^*},\qquad d\in\Arr(\D(\Sigma)^*),
\end{equation}
where $S_\M$ is the involutive algebra automorphism $\M\to\M$ which sends $P_a(h)$ to $P_{T_1a}(h)$ and $Q_a(\varphi)$ to $Q_{T_1a}(\varphi)$.
By (\ref{D^* mul}), the $\Ophol_{(d)}$ are antialgebra maps for $d=a_0^+$ and algebra maps for $d=a_2^+$. Moreover, in spite of the complicated
expression (\ref{Sdudo}) it is easy to see that $\Ophol_{(\tilde{\mathbb{T}}_1d)}=\Ophol_{(d)}\ci S_{\D^*}$. So (\ref{Ophol}) together with
(\ref{Ophol long}) defines indeed an opholonomy. Then (\ref{Hol}) together with (\ref{Hol long}) is automatically a holonomy.

The relation (\ref{Hol}) between holonomy and opholonomy is rather arbitrary. We could have defined them in such a way that $\Hol_{(d)}=\Ophol_{(d)}$
for all arrows $d$ of $\D(\Sigma)^*$. The choice (\ref{Hol}) perhaps allows easier comparison with the existing definitions of the literature.
\begin{defi}\label{def: prop ribb}
Let $\gamma=(d_1,\dots,d_n)$ be a (op)curve on $\D(\Sigma)^*$ with $d_k=(a_k)^{\sigma_k}_{i_k}$. Then
$\gamma$ is called proper if for all $1\leq k<l\leq n$ the following statements hold: $a_k\neq a_l$ and if $a_k=T_1 a_l$ then $i_k=i_l$.
In other words, no pairs $d_k$, $d_l$ of arrows in the (op)curve are equal or incident and lie on the boundary of the same type 1 face $e\in\Sigma^1$.
\end{defi}
\begin{pro}\label{pro: Hol-Ophol}
The relation of proper $($op$)$curves to ribbons is the following:
\begin{enumerate}\itemsep=0pt
\item[$(i)$] Every proper curve $\gamma=(d_n,\dots,d_1)$ on $\D(\Sigma)^*$ is a ribbon curve and
\[
\Hol_\gamma=S_\M\ci\Ophol_{\gamma^\rev}\ci S_{\D^*}=S_\M\ci\Ophol_{\gamma^{\rev -1}}.
\]
\item[$(ii)$] A ribbon opcurve $\rho=(d_1,\dots,d_n)$ on $\D(\Sigma)^*$ is proper if and only if it is a simple opcurve. So proper ribbon and simple ribbon are synonymous.
\end{enumerate}
\end{pro}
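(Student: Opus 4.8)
The plan is to treat the two parts separately, in both cases leaning on the encoding of (op)curves on $\D(\Sigma)^*$ by pairs $\bra a_0,w\ket$ (Lemma~\ref{lem: code word}) and on the geometric characterisation of Lemma~\ref{lem: ribbon}; throughout write $d_k=(b_k)^{\sigma_k}_{i_k}$. For part~(i), that a proper curve $\gamma$ is automatically a ribbon curve I would read off Lemma~\ref{lem: ribbon}(iii): the clause $b_k\neq b_l$ already forbids a consecutive backtrack $d_{j+1}=\tilde{\mathbb{T}}_1 d_j$ (which would need $b_j=b_{j+1}$), so $\gamma$ is reduced; and by the case analysis in the proof of Lemma~\ref{lem: ribbon}, a consecutive pair turning around a type~$1$ face is, up to inversion, either $\bigl((T_2^{-1}a)_2^+,(T_0a)_0^+\bigr)$, whose underlying arrows satisfy $T_2^{-1}a=T_1(T_0a)$ by Lemma~\ref{lem: OCSC} and carry $i$-labels $2\neq0$, or $(a_0^+,a_2^+)$, whose underlying arrows coincide --- both excluded by properness.

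For the holonomy identity I would proceed in two moves. First, $S_\M$ is an algebra automorphism, so it commutes past the convolution product, and (\ref{Hol}) together with $\Ophol_{(\tilde{\mathbb{T}}_1 d)}=\Ophol_{(d)}\ci S_{\D^*}$ gives $\Hol_{(d)}=S_\M\ci\Ophol_{(\tilde{\mathbb{T}}_1 d)}$; feeding this into (\ref{Hol long}) and comparing with (\ref{Ophol long}) yields $\Hol_\gamma=S_\M\ci\Ophol_{\gamma^{\rev -1}}$ for \emph{every} curve $\gamma$. Second, since $S_{\D^*}$ is a coalgebra anti-morphism, the two expressions $S_\M\ci\Ophol_{\gamma^\rev}\ci S_{\D^*}$ and $S_\M\ci\Ophol_{\gamma^{\rev -1}}$ differ only in the order in which the $n$ operators $\Ophol_{(\tilde{\mathbb{T}}_1 d_k)}(\cdot)$ get multiplied inside $\M$, each one carrying the same Sweedler leg in both. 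So it suffices to see that these operators pairwise commute when $\gamma$ is proper: $\Ophol_{(\tilde{\mathbb{T}}_1 d_k)}$ takes values in the subalgebra generated by $P_{b_k}(H)$ if $i_k=0$ and by $Q_{b_k}(H^*)$ if $i_k=2$, and two such subalgebras with $k\neq l$ commute except when $b_k=b_l$, or $b_k=T_1 b_l$ with $i_k\neq i_l$ --- exactly the configurations properness forbids --- using (\ref{PT_1 P}) and (\ref{QT_1 Q}) in the remaining case $b_k=T_1 b_l,\ i_k=i_l$ and disjointness of edges otherwise. What is left is routine Sweedler bookkeeping.

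For part~(ii), first note that inverting an opcurve reverses it and replaces each $d_k$ by $\tilde{\mathbb{T}}_1 d_k=(b_k)^{-\sigma_k}_{i_k}$, changing neither $b_k$ nor $i_k$ (hence preserving properness) nor the set of visited vertices (hence preserving simplicity); since $\Ribb^\circ(\Sigma)=\Ribb^\circ_L(\Sigma)\cup\Ribb^\circ_R(\Sigma)$ and inversion swaps the two families, it is enough to treat a left ribbon opcurve $\rho=(d_1,\dots,d_n)$, whose code $\bra a_0,w\ket$ has $w=W_n\cdots W_1\in\{T_0^{-1},T_2\}^*$ (Lemma~\ref{lem: ribbon}) and $a_k=W_k a_{k-1}$. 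By Lemma~\ref{lem: code word}, $d_k=(a_{k-1})^-_0$ when $W_k=T_0^{-1}$ and $d_k=(a_{k-1})^+_2$ when $W_k=T_2$, so $b_k=a_{k-1}$, the label $i_k$ is fixed by $W_k$, and the visited vertices are $s(a_0),\dots,s(a_n)$. Since $s$ is a bijection, $\rho$ is simple iff both $a_0,\dots,a_{n-1}$ and $a_1,\dots,a_n$ are repetition-free (the open case $a_0\neq a_n$ and the closed case $a_0=a_n$ both reduce to this), whereas properness amounts to: $a_0,\dots,a_{n-1}$ repetition-free, and $a_{k-1}=T_1 a_{l-1}$ for $k<l$ implies $W_k=W_l$.

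It then remains, granting $a_0,\dots,a_{n-1}$ repetition-free, to show the properness implication is equivalent to $a_1,\dots,a_n$ being repetition-free. In one direction, if $a_{k-1}=T_1 a_{l-1}$ with $\{W_k,W_l\}=\{T_0^{-1},T_2\}$ then, using $T_0^{-1}=T_2 T_1$ and $T_2^{-1}=T_1 T_0$ from Lemma~\ref{lem: OCSC}, a one-line computation gives $a_k=a_l$; so if the $a_1,\dots,a_n$ are repetition-free then $a_{k-1}=T_1 a_{l-1}$ forces $W_k=W_l$. In the other direction, suppose the implication holds but $a_n=a_j$ for some $1\leq j\leq n-1$: if $W_n=W_j$ then $a_{n-1}=W_n^{-1}a_n=W_j^{-1}a_j=a_{j-1}$, contradicting repetition-freeness of $a_0,\dots,a_{n-1}$; and if $W_n\neq W_j$ then $a_{n-1}=W_n^{-1}a_j=T_1 a_{j-1}$ (using $T_1 T_2^{-1}=T_0$ and $T_1 T_0=T_2^{-1}$), so $a_{j-1}=T_1 a_{n-1}$ with $W_j\neq W_n$, contradicting the implication for the pair $k=j<l=n$. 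Hence $a_n\notin\{a_1,\dots,a_{n-1}\}$, so $\rho$ is simple; the case $n\leq1$ is trivial. I expect the main obstacles to be exactly this case analysis --- correctly pairing the four possibilities for $(W_n,W_j)$ with the two contradictions and handling the open/closed alternative --- together with the Sweedler-calculus step of part~(i); the rest is unwinding definitions.
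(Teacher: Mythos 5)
Your proof is correct. Part~(i) is essentially the paper's argument, just packaged differently: like the paper, you observe that properness forces reducedness (since $\tilde{\mathbb{T}}_1 d$ keeps the same underlying arrow $b$) and excludes the two type-1 ``elbows'' of Lemma~\ref{lem: ribbon}, and you close the holonomy identity by noting that properness makes the $n$ one-step opholonomies of $\gamma$ pairwise commute in $\M$, so they can be reordered. Your intermediate step --- establishing $\Hol_\gamma=S_\M\ci\Ophol_{\gamma^{\rev-1}}$ unconditionally for all curves and then using commutativity and anti-comultiplicativity of $S_{\D^*}$ to identify it with $S_\M\ci\Ophol_{\gamma^{\rev}}\ci S_{\D^*}$ --- is a mild reorganisation of the same Sweedler computation the paper writes out directly.

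Part~(ii) is where you take a genuinely different route. The paper argues geometrically on $\D(\Sigma)^*$: a non-simple ribbon has a minimal closed subcurve, and the two arrows entering (resp.\ leaving) the repeated site are forced onto the boundary of one and the same type~1 face, contradicting properness; the converse is a short two-case observation. You instead reduce by $\rho\mapsto\rho^{-1}$ to left ribbons, pass to the code-word description $\bra a_0,w\ket$ with $w\in\{T_0^{-1},T_2\}^*$ and $a_k=W_k a_{k-1}$, and translate both simplicity and properness into purely combinatorial conditions on the sequence $(a_0,\dots,a_n)$ and the letters $W_k$. The bridge is the arithmetic of $T_1$: $a_{k-1}=T_1a_{l-1}$ with $W_k\neq W_l$ forces $a_k=a_l$ (using $T_0^{-1}T_1=T_2$ and $T_2T_1=T_0^{-1}$), and conversely a repetition $a_n=a_j$ with $W_n\neq W_j$ forces $a_{j-1}=T_1a_{n-1}$. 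This is a slightly longer case analysis than the paper's, and it needs the WLOG step and the bijectivity of $a\mapsto s(a)$, but it has the advantage of being a fully symbolic verification with no appeal to a drawing of $\D(\Sigma)^*$ --- useful if one wants a machine-checkable version. Both arguments ultimately isolate the same obstruction: at a repeated site the two incident arrows of the same parity (incoming or outgoing) must have underlying arrows equal or $T_1$-related with mismatched $i$-labels, which is exactly what properness rules out.
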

\begin{proof}
(i) Notice that an opposite pair $d$, $\tilde{\mathbb{T}}_1d$ is always of the form $a_i^+$, $a_i^-$.
Therefore, proper (op)curves are reduced. Hence, the property of being proper for consecutive arrows $d_k$, $d_{k+1}$ is equivalent to
the property of Lemma~\ref{lem: ribbon}\,(iii). This proves that proper (op)curves are ribbons.
The property of being proper for any pair $d_k$, $d_l$ implies that the $P$ or $Q$ operators associated to them by the opholonomy commute:
$\bigl[\Ophol_{(d_k)}(\Phi),\Ophol_{(d_l)}(\Psi)\bigr]=0$ for every $\Phi,\Psi\in\D(H)^*$ and for $k\neq l$. Therefore, \begin{align*}
\Hol_\gamma(\Psi) =S_\M\bigl(\Ophol_{(d_1)}\bigl(S_{\D^*}\bigl(\Psi_{(n)}\bigr)\bigr)\cdots \Ophol_{(d_n)}\bigl(S_{\D^*}\bigl(\Psi_{(1)}\bigr)\bigr)\bigr) =S_\M\bigl(\Ophol_{\gamma^\rev}(S_{\D^*}(\Psi))\bigr).
\end{align*}
This proves the first equation. The second follows from the first by the observation that for a~proper opcurve $\alpha$ the
$\Ophol_\alpha\ci S_{\D^*}$ is the convolution inverse of $\Ophol_\alpha$ hence equal to the opholonomy of $\alpha^{-1}$.

(ii) Assume $\rho$ is not simple. Then there exists a site $s$ and a minimal closed subcurve $(d_i,d_{i+1},\dots,d_{j-1})\colon s\to s$ of $\rho$. Here $i=1$ or $j=n+1$ is allowed
but not both because this would imply that $\rho$ is closed and simple. If $i>1$, then $d_{i-1}$ and $d_{j-1}$ are incident to $s$ and lie on the boundary of a type~1 face.
If $j\leq n$, then the same holds for the arrows $d_i$ and $d_j$. Anyway, we get contradiction with properness of $\rho$.

If $\rho$ is not proper, then for some $1\leq k<l\leq n$ the $d_k=(a_k)^{\sigma_k}_{i_k}$ and $d_l=(a_l)^{\sigma_l}_{i_l}$ satisfy one of the following: either $a_k=a_l$ or
($a_k=T_1 a_l$ and $i_k\neq i_l$). In both cases $\rho$ is either not simple or not a ribbon.
\end{proof}

En passant, we mention a useful rewriting of the exchange relations of the $P$ and $Q$ operators in terms of length 1 opholonomies:
\begin{align}
\label{ophol exch rel1}
&\Ophol_{(a_0^-)}(\Psi)\Ophol_{(a_2^+)}(\Phi)=\Ophol_{(a_2^+)}(\Phi\sweedr R_1)\Ophol_{(a_0^-)}(\Psi\sweedr R_2),\\
\label{ophol exch rel2}
&\Ophol_{((T_1a)_2^+)}(\Psi) \Ophol_{(a_0^-)}(\Phi)=\Ophol_{(a_0^-)}(R_2\sweedl \Phi)\Ophol_{((T_1a)_2^+)}(R_1\sweedl \Psi),
\end{align}
where $R$ is the $R$-matrix and $\Phi,\Psi\in\D(H)^*$.

The minimal closed ribbon (op)curves are the face loops that encircle a face of $\D(\Sigma)^*$.
Since these loops depend on a base point, i.e., a site $s(a)$, we parametrize
them with the arrow $a\in\Arr(\Sigma)$. The opcurve versions of them are
\begin{align}
\label{alpha_a}
&\text{type 0 loop around }\Ou_0(a)\colon\quad \alpha_a=\bigl((T_0a)_0^+,\bigl(T_0^2a\bigr)_0^+,\dots,(T_0^ma)_0^+\bigr),\\
\label{beta_a}
&\text{type 1 loop around }\Ou_1(a)\colon\quad \beta_a=\bigl(a_0^+,a_2^+,(T_1a)_0^+,(T_1a)_2^+\bigr),\\
\label{gamma_a}
&\text{type 2 loop around }\Ou_2(a)\colon\quad \gamma_a=\bigl(a_2^+,(T_2a)_2^+,\dots,\bigl(T_2^{n-1}a\bigr)_2^+\bigr),
\end{align}
where $m=|\Ou_0(a)|$ and $n=|\Ou_2(a)|$. The type 0 and type 2 loops are oriented counterclockwise and the type 1 loop clockwise, just like in
Figure~\ref{fig: flow}. The $\alpha_a$ and $\gamma_a$ are proper ribbons but~$\beta_a$ is maximally improper. In order to facilitate the computation of the opholonomies
of these opcurves we rewrite the formula (\ref{Ophol}) as follows:
\begin{equation*}
\Ophol_{(a_i^\sigma)}=\theta_a\ci\pi_i\ci S_{\D^*}^{(1-\sigma)/2},
\end{equation*}
where
\begin{equation*} 
\pi_0,\pi_2\colon\ \D(H)^*\to \D(H)^*,\qquad
\pi_0(h\ot\varphi):=h\ot\eps\cdot\varphi(1),\qquad
\pi_2(h\ot\varphi):=\eps(h)\cdot 1\ot\varphi,
\end{equation*}
are algebra endomorphisms of the dual of the double and the
\begin{equation*}
\theta_a\colon \ H\ot H^*\to\M(\Sigma),\qquad \theta_a(h\ot\varphi):=P_a(h)Q_a(\varphi)
\end{equation*}
are linear injections.
\begin{lem}\label{lem: theta}
The maps $\theta_a$, $a\in\Arr(\Sigma)$, satisfy the following properties:
\begin{enumerate}\itemsep=0pt
\item[$(i)$] $\theta_a=\Ophol_{(a_0^+,a_2^+)}$.
\item[$(ii)$] $\theta_{T_1a}=S_\M\ci\theta_a$.
\item[$(iii)$] $\theta_a(\Phi)\theta_b(\Psi)=\theta_b(\Psi)\theta_a(\Phi)$ for all $\Phi,\Psi\in\D(H)^*$ if $\Ou_1(a)\neq\Ou_1(b)$.
\item[$(iv)$] The image of $\theta_a$ is the edge algebra $\M_e\cong H\pisharp H^*$ on which $S_\M$ acts as the
$\mathcal{L}^{-1}\ci\mathcal{R}=\mathcal{R}^{-1}\ci\mathcal{L}$ of \eqref{lambinv-rho} and \eqref{rhoinv-lamb}.
The $\theta_a$ identifies this action of $S_\M$ with the action of the antipode of $\D(H)^*$ via
\[
S_\M\ci\theta_a=\theta_a\ci S_{\D^*}.
\]
\item[$(v)$] The $\theta_a$ becomes an antialgebra map
\[
\theta_a(\Phi)\theta_a(\Psi)=\theta_a(\Psi\underset{R}{\bullet}\Phi)
\]
if we endow $\D(H)^*$, as a monoid in the braided monoidal category of right $\D(H)$-modules, with the braided-opposite multiplication
\[
\Psi\underset{R}{\bullet}\Phi:=(\Phi\sweedr R_1)(\Psi\sweedr R_2).
\]
\end{enumerate}
\end{lem}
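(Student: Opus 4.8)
The plan is to settle the five items in an order that lets the formal ones ride on the computational ones: (i) carries the only genuine calculation, (ii) and (iii) are then immediate, (iv) is read off from the $\mathcal{L}$--$\mathcal{R}$ description of the edge algebra, and (v) follows from the single exchange relation (\ref{QP comm rel}) together with a Hopf-algebraic identification of the braided-opposite product.

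For (i) I would expand $\Ophol_{(a_0^+,a_2^+)}=\Ophol_{(a_0^+)}\cv\Ophol_{(a_2^+)}$ against the comultiplication (\ref{D^* comul}) of $\D(H)^*$. Applying $\Ophol_{(a_0^+)}$ to the first leg $h\p\ot\xi_i\varphi\p\xi_j$ and $\Ophol_{(a_2^+)}$ to the second leg $S^{-1}(x_j)h\pp x_i\ot\varphi\pp$, and using the explicit values (\ref{Ophol}) $\Ophol_{(a_0^+)}(k\ot\chi)=\chi(1)P_a(k)$, $\Ophol_{(a_2^+)}(k\ot\chi)=\eps(k)Q_a(\chi)$, the product becomes $(\xi_i\varphi\p\xi_j)(1)\,\eps(S^{-1}(x_j)h\pp x_i)\,P_a(h\p)Q_a(\varphi\pp)$. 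Since $1$ is grouplike and $\eps\ci S^{-1}=\eps$, the scalar factors split and collapse via $\sum_i\xi_i(1)\eps(x_i)=\eps\big(\sum_i\xi_i(1)x_i\big)=\eps(1)=1$ (used twice), $\varphi\p(1)\varphi\pp=\varphi$ and $\eps(h\pp)h\p=h$, leaving exactly $P_a(h)Q_a(\varphi)=\theta_a(h\ot\varphi)$.

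Items (ii) and (iii) are then formal. Since $S_\M$ is the algebra automorphism with $S_\M(P_a(h))=P_{T_1a}(h)$ and $S_\M(Q_a(\varphi))=Q_{T_1a}(\varphi)$, multiplicativity gives $S_\M(\theta_a(h\ot\varphi))=P_{T_1a}(h)Q_{T_1a}(\varphi)=\theta_{T_1a}(h\ot\varphi)$, which is (ii); for (iii), $\theta_a(\Phi)\in\M_{\Ou_1(a)}$ and $\theta_b(\Psi)\in\M_{\Ou_1(b)}$ sit in distinct tensor factors of $\M=\bigotimes_{e\in\Sigma^1}\M_e$ once $\Ou_1(a)\neq\Ou_1(b)$, and distinct factors commute elementwise. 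For (iv), the elements $P_a(h)Q_a(\varphi)$ span the whole edge algebra $\M_e\cong H\pisharp H^*$ (normal form of the smash product), so a dimension count makes $\theta_a\colon H\ot H^*\to\M_e$ a linear isomorphism; composing with the algebra isomorphism $\mathcal{L}$ of (\ref{lambda}) identifies $\theta_a$ with $\mathcal{L}$ itself, because $p(h)q(\varphi)=\mathcal{L}(h\ot\varphi)$. The relations (\ref{Pbar}), (\ref{Qbar}) say precisely that under this identification $P_{T_1a},Q_{T_1a}$ are carried by $\mathcal{R}$ instead of $\mathcal{L}$, so $S_\M$ transported to $\End H$ is $\mathcal{R}\ci\mathcal{L}^{-1}$, hence transported back along $\theta_a$ it is $\mathcal{L}^{-1}\ci\mathcal{R}$; by (\ref{lambinv-rho}), (\ref{rhoinv-lamb}) and the standing hypothesis $S^2=\id_H$ this equals $\mathcal{R}^{-1}\ci\mathcal{L}$. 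Comparing (\ref{Sdudo}) with (\ref{lambinv-rho}), again using $S^{-1}=S$ and involutivity of $\duS$, one reads off $S_{\D^*}=\mathcal{L}^{-1}\ci\mathcal{R}$, whence $S_\M\ci\theta_a$ and $\theta_a\ci S_{\D^*}$ are the same map (both equal $\mathcal{R}$, viewed in $\End H\cong\M_e$), which is the displayed identity.

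Finally (v). For $\Phi=h\ot\varphi$ and $\Psi=g\ot\psi$, pushing the middle $Q_a$ past the middle $P_a$ by (\ref{QP comm rel}) and collapsing by (\ref{P alg}), (\ref{Q alg}) gives $\theta_a(\Phi)\theta_a(\Psi)=P_a\big(h(\varphi\p\sweedl g)\big)Q_a(\varphi\pp\psi)=\theta_a\big(h(\varphi\p\sweedl g)\ot\varphi\pp\psi\big)$; equivalently, $\theta_a$ intertwines the product of $\M_e$ with the smash-product multiplication on $H\ot H^*$ (already implicit in (iv) via $\mathcal{L}$). It then remains to recognize this smash product as the braided-opposite product, i.e. to verify the purely Hopf-algebraic identity $(\Phi\sweedr R_1)(\Psi\sweedr R_2)=h(\varphi\p\sweedl g)\ot\varphi\pp\psi$, where the juxtaposition is the $\D(H)^*$-product (\ref{D^* mul}), $R_1\ot R_2=(\eps\ot x_i)\ot(\xi_i\ot 1)$, and $\sweedr$ is the right $\D(H)$-module-algebra action on $\D(H)^*$ dual to left multiplication in $\D(H)$. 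Associativity of $\underset{R}{\bullet}$, so that $\D(H)^*$ really is a monoid in right $\D(H)$-modules, is automatic, the braided-opposite of an algebra in a braided monoidal category being again an algebra and the category of right $\D(H)$-modules being braided via the R-matrix; alternatively it follows a posteriori from $\theta_a$ being a bijection onto the associative algebra $\M_e$. I expect the only real friction to be this last identification: carrying the R-matrix and the somewhat intricate module and coalgebra data of $\D(H)^*$ through without a convention slip; everything preceding it is either a short collapse of scalars or essentially bookkeeping.
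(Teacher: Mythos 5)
Your proposal is correct and follows essentially the same route as the paper: (i) by unwinding the coproduct \eqref{D^* comul} against the length-one opholonomies and collapsing $\sum_i\xi_i(1)\eps(x_i)=1$ (twice); (ii) and (iii) as immediate consequences of the definition of $S_\M$ and of the tensor factorisation $\M=\bigotimes_e\M_e$; (iv) by transporting $S_\M$ along $\theta_a\cong\mathcal{L}$ and matching $\mathcal{L}^{-1}\circ\mathcal{R}$ against \eqref{Sdudo} using $S^2=\id$; and (v) by first showing $\theta_a(\Phi)\theta_a(\Psi)=\theta_a\big(h(\varphi\p\sweedl g)\ot\varphi\pp\psi\big)$ via \eqref{QP comm rel} and then reducing to the Hopf-algebraic identity $(\Phi\sweedr R_1)(\Psi\sweedr R_2)=h(\varphi\p\sweedl g)\ot\varphi\pp\psi$. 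The one piece you flag as deferred — the explicit R-matrix computation of $\Psi\underset{R}{\bullet}\Phi$ — is precisely what the paper's proof of (v) carries out line by line, and it does go through without friction; you should still write it out to complete the argument.
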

\begin{proof}
(i) Applying (\ref{Ophol}) and using the expression (\ref{D^* comul}) for the coproduct, it is easy to see that the equation
$\Ophol_{(a_0^+,a_2^+)}(h\ot\varphi)=P_a(h)Q_a(\varphi)$ holds.

(ii) is essentially the definition of $S_\M$.

(iii) holds because $\M$ is the tensor product of the edge algebras $\M_e$ for $e\in\Sigma^1$.

(iv) can be seen by comparing formulas (\ref{lambinv-rho}), (\ref{rhoinv-lamb}) with the antipode formula (\ref{Sdudo}).

(v) The braiding on $(g\ot\psi)\ot(h\ot\varphi)\in\D(H)^*\ot\D(H)^*$ acts by
\begin{gather*}
[(h\ot\varphi)\sweedr R_1]\ot[(g\ot\psi)\sweedr R_2]
=\bigl\bra \eps\ot x_i,h\p\ot\xi_j\varphi\p\xi_k\bigr\ket\bigl(S^{-1}(x_k)h\pp x_j\ot\varphi\pp\bigr)\\ \hphantom{[(h\ot\varphi)\sweedr R_1]\ot[(g\ot\psi)\sweedr R_2]=}{}
\ot\bigl\bra \xi_i\ot 1,g\p\ot\xi_l\psi\p\xi_m\bigr\ket\bigl(S^{-1}(x_m)g\pp x_l\ot\psi\pp\bigr)\\ \hphantom{[(h\ot\varphi)\sweedr R_1]\ot[(g\ot\psi)\sweedr R_2]}{}
=\bigl(S^{-1}(x_k)hx_j\ot\varphi\pp\bigr)\ot\bigl\bra \xi_j\varphi\p\xi_k, g\p\bigr\ket\bigl(g\pp\ot\psi\bigr)\\ \hphantom{[(h\ot\varphi)\sweedr R_1]\ot[(g\ot\psi)\sweedr R_2]}{}
=\bigl(S^{-1}\bigl(g\ppp\bigr)hg\p\ot\bigl(\varphi\sweedr g\pp\bigr)\bigr)\ot\bigl(g\pppp\ot\psi\bigr),
\end{gather*}
therefore
\begin{align*}
&(g\ot\psi)\underset{R}{\bullet}(h\ot\varphi)=g\pppp S^{-1}\bigl(g\ppp\bigr)hg\p\ot\bigl(\varphi\sweedr g\pp\bigr)\psi=h\bigl(\varphi\p\sweedl g\bigr)\ot \varphi\pp\psi
\end{align*}
(even for non-involutive $H$-s) is precisely the multiplication of the smash product applied to $(h\ot\varphi)\ot(g\ot\psi)
\in(H\pisharp H^*)\ot (H\pisharp H^*)$.
\end{proof}

Part (v) of the lemma immediately implies (cf.\ \cite[Theorem 7.6]{Meusburger}).
\begin{cor}
The algebra $\M^\op$ is isomorphic to the tensor product $\bigotimes_{e\in\Sigma^1}\D(H)^*_R$, where the algebra $\D(H)^*_R$ is $\D(H)^*$ endowed with the
braided opposite multiplication.
\end{cor}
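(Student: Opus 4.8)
The plan is to assemble the isomorphism one edge at a time out of the maps $\theta_a$ of Lemma~\ref{lem: theta}. The first, purely formal, step is to note that since $\M(\Sigma)=\bigotimes_{e\in\Sigma^1}\M_e$ as a tensor product of $K$-algebras (and $\Sigma^1$ is finite, since $\Arr$ is), reversing the multiplication reverses each tensor factor, so $\M(\Sigma)^\op=\bigotimes_{e\in\Sigma^1}\M_e^\op$. It therefore suffices to exhibit, for every edge $e$, an algebra isomorphism $\D(H)^*_R\xrightarrow{\sim}\M_e^\op$, and then to tensor these together over all $e\in\Sigma^1$.

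For the per-edge step I would fix an arrow $a\in e$ and look at $\theta_a\colon\D(H)^*\to\M(\Sigma)$, $\theta_a(h\ot\varphi)=P_a(h)Q_a(\varphi)$. By Lemma~\ref{lem: theta}(iv) the image of $\theta_a$ is exactly the edge algebra $\M_e$; since $\theta_a$ is injective and $\dim_K\D(H)^*=\dim_K(H\ot H^*)=(\dim_K H)^2=\dim_K\End H=\dim_K\M_e$, the corestriction $\theta_a\colon\D(H)^*\to\M_e$ is a linear bijection. It is unital because $\theta_a(1\ot\eps)=P_a(\one)Q_a(\eps)=\one_\M$. Now Lemma~\ref{lem: theta}(v) supplies the multiplicativity: the identity $\theta_a(\Phi)\theta_a(\Psi)=\theta_a(\Psi\underset{R}{\bullet}\Phi)$ says precisely that $\theta_a$ is an antialgebra map out of $\D(H)^*$ with its braided-opposite product $\underset{R}{\bullet}$, equivalently that $\theta_a\colon\D(H)^*_R\to\M_e^\op$ is a homomorphism of unital algebras. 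Being also bijective, it is an isomorphism $\D(H)^*_R\xrightarrow{\sim}\M_e^\op$.

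To conclude I would pick a representative $a(e)\in e$ for each edge and form the tensor product $\bigotimes_{e\in\Sigma^1}\theta_{a(e)}\colon\bigotimes_{e\in\Sigma^1}\D(H)^*_R\xrightarrow{\sim}\bigotimes_{e\in\Sigma^1}\M_e^\op=\M(\Sigma)^\op$, which is the claimed isomorphism. As a coda I would point out that the construction a priori depends on the chosen representatives, but by Lemma~\ref{lem: theta}(ii) and the intertwining relation $S_\M\ci\theta_a=\theta_a\ci S_{\D^*}$ of part~(iv) one has $\theta_{T_1a}=\theta_a\ci S_{\D^*}$, so switching a representative only twists the corresponding factor by the antipode $S_{\D^*}$ and the ambiguity is inessential.

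I do not expect a real obstacle: all the substance lives in Lemma~\ref{lem: theta}(v), whose proof has already been carried out; the remaining points — the identity $(A\ot B)^\op=A^\op\ot B^\op$ for tensor products of algebras, and the dimension count ensuring $\theta_a$ is onto $\M_e$ — are routine. If anything deserves a second of care it is only keeping straight that the braided-opposite product $\underset{R}{\bullet}$ is the multiplication of $\D(H)^*_R$ \emph{itself} (not its opposite), so that an \emph{anti}homomorphism into $\M_e$ becomes a genuine homomorphism into $\M_e^\op$.
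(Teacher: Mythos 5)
Your proposal is correct and takes the same route the paper implicitly takes: the paper asserts the corollary follows "immediately" from Lemma~\ref{lem: theta}(v), and you have simply made the routine bookkeeping explicit (tensoring edge-by-edge, the dimension count ensuring surjectivity onto $\M_e$, and the verification that an antihomomorphism out of $\D(H)^*_R$ is a homomorphism into $\M_e^\op$). The coda correctly notes via $\theta_{T_1a}=\theta_a\ci S_{\D^*}$ that the choice of representative $a\in e$ only twists by the antipode, which is indeed immaterial.
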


\begin{pro} \label{pro: face loop holonomies}
The opholonomy of the type $0$ and $2$ face loops $\alpha_a$ and $\gamma_a$ of~\eqref{alpha_a} and~\eqref{gamma_a} essentially reproduce the Gauss' law and
flux operators of~\eqref{Gauss} and~\eqref{flux}, respectively,
\begin{align*}
\Ophol_{\alpha_a}(h\ot\varphi)=\varphi(1)\cdot G_a(h),\qquad
\Ophol_{\gamma_a}(h\ot\varphi)=\eps(h)\cdot F_a(\varphi).
\end{align*}
The opholonomy of a type $1$ loop is
\begin{equation}\label{edge loop ophol}
\Ophol_{\beta_a}(h\ot\varphi)=\one_\M\cdot\bra R_1S_\D(R_2),h\ot\varphi\ket,
\end{equation}
where $R_1S_\D(R_2)=S_\D(u)$ with $u$ denoting the Drinfeld element $S_\D(R_2)R_1$ of the quasitriangular Hopf algebra $\bra\D(H),R\ket$.
\end{pro}
\begin{proof}
Iterating the formula $(\pi_0\ot\id)\ci\cop_{\D^*}(h\ot\varphi)=\bigl(h\p\ot\eps\bigr)\ot\bigl(h\pp\ot\varphi\bigr)$ for multiple coproducts we arrive to
\begin{align*}
\Ophol_{\alpha_a}(h\ot\varphi)&{}=\theta_{T_0a}\bigl(h\p\ot\eps\bigr)\cdots\theta_{T_0^ma}\bigl(h_{(m)}\ot\eps\bigr)\cdot\varphi(1)\\
&{}=P_{T_0a}\bigl(h\p\bigr)\cdots P_{T_0^ma}\bigl(h_{(m)}\bigr)\cdot\varphi(1)=G_a(h)\cdot \varphi(1).
\end{align*}
Similarly, formula $(\id\ot\pi_2)\ci\cop_{\D^*}(h\ot\varphi)=\bigl(h\ot\varphi\p\bigr)\ot\bigl(1\ot\varphi\pp\bigr)$ leads to
\begin{align*}
\Ophol_{\gamma_a}(h\ot\varphi)&{}=\eps(h)\cdot\theta_{a}\bigl(1\ot\varphi\p\bigr)\cdots\theta_{T_2^{n-1}a}\bigl(1\ot\varphi_{(n)}\bigr)\\
&{}=\eps(h)\cdot Q_{a}\bigl(\varphi\p\bigr)\cdots Q_{T_2^{n-1}a}\bigl(\varphi_{(n)}\bigr)=\eps(h)\cdot F_a(\varphi).
\end{align*}
Using (i), (ii) and (iv) of Lemma~\ref{lem: theta}, the opholonomy of $\beta_a$ can be written as
\[
\Ophol_{\beta_a}=\Ophol_{(a_0^+,a_2^+)}\cv\bigl(S_\M\ci\Ophol_{(a_0^+,a_2^+)}\bigr)=\theta_a\cv(\theta_a\ci S_{\D^*}),
\]
which, by (v) of the lemma, evaluates to
\begin{gather*}
\Ophol_{\beta_a}(h\ot\varphi)=\theta_a\bigl(\bigl((h\ot\varphi)\p\sweedr R_1\bigr)\bigl(S_{\D^*}\bigl((h\ot\varphi)\pp\bigr)\sweedr R_2\bigr)\bigr)\\ \hphantom{\Ophol_{\beta_a}(h\ot\varphi)}{}
=\theta_a\bigl((h\ot\varphi)\pp S_{\D^*}\bigl((h\ot\varphi)\ppp\bigr)\bigr)\bigl\bra (h\ot\varphi)\p,R_1\bigr\ket\bigl\bra S_{\D^*}\bigl((h\ot\varphi)\pppp\bigr),R_2\bigr\ket\\ \hphantom{\Ophol_{\beta_a}(h\ot\varphi)}{}
=\theta_a(1_{\D^*})\cdot \bra h\ot\varphi, R_1S_\D(R_2)\ket.
\end{gather*}
We note that $R_1S_\D(R_2)=(\eps\ot x_i)(S(\xi_i)\ot 1)=(\eps\ot S(x_i))(\xi_i\ot 1)=S_\D(R_1)R_2=S_\D(u)$.
\end{proof}

Since the only non-commuting $P$ and $Q$ operators are lying at corners of type 1 loops, the result (\ref{edge loop ophol}) can be thought of as a holonomy
version of the Heisenberg exchange relations (\ref{QP comm rel}).

Using Proposition~\ref{pro: Hol-Ophol}\,(i), the above results can be translated to holonomies of the basic loop curves $\alpha_a^{\rev -1}$, $\beta_a^{\rev -1}$
and $\gamma_a^{\rev -1}$ as follows:
\begin{align*}
&\Hol_{\alpha_a^{\rev -1}}(h\ot\eps)=S_\M(G_a(h))=P_{T_1T_0a}\bigl(h\p\bigr)P_{T_1T_0^2a}\bigl(h\pp\bigr)\cdots P_{T_1T_0^ma}\bigl(h_{(m)}\bigr),\\
&\Hol_{\beta_a^{\rev -1}}(h\ot\varphi)=\one_\M\cdot\bra R_1S_\D(R_2),h\ot\varphi\ket,\\
&\Hol_{\gamma_a^{\rev -1}}(1\ot\varphi)=S_\M(F_a(\varphi))=Q_{T_1a}\bigl(\varphi\p\bigr)Q_{T_1T_2a}\bigl(\varphi\pp\bigr)\cdots Q_{T_1T_2^{n-1}a}\bigl(\varphi_{(n)}\bigr).
\end{align*}
The curve $\alpha_a^{\rev -1}$, if viewed on $\Sigma$, consists of arrows pointing toward the vertex $v=\Ou_0(a)$ and, as a \smash{$\duP$}-curve, winds around $v$
in clockwise direction. The curve $\gamma_a^{\rev -1}$ consists of arrows on the boundary of the face $f=\Ou_2(a)$ directed in clockwise direction.
These basic loops and their holonomies fit to the conventions used in the literature~\cite{BMCA,Meusburger}. Although the automorphism $S_\M$ provides a direct connection to our conventions, the simplicity of~(\ref{Gauss}) and~(\ref{flux}) can be an excuse for our preference of opholonomies.

The type 1 loops $\beta_a$ and $\beta_a^{\rev -1}$ belong to the following type of (op)curves:
\begin{defi}
A closed (op)curve $\zeta$ on $\D(\Sigma)^*$ is called a central (op)curve if its (op)holonomy is of the form
$(h\ot\varphi)\mapsto \one_\M\cdot\bra Z,h\ot\varphi\ket$ for some central element $Z$ of $\D(H)$.
\end{defi}
Notice that by convolution invertibility of (op)holonomies the $Z$ in the above definition~is necessarily invertible.
Due to Proposition~\ref{pro: Hol-Ophol}\,(i), a proper ribbon opcurve $\zeta$ is central if and only if the proper ribbon curve $\zeta^\rev$ is central.

\begin{lem}\label{lem: central}
Let $s\in\D(\Sigma)^{*\,0}$ be a site.
\begin{enumerate}\itemsep=0pt
\item[$({\rm i})$] The central opcurves $\zeta\colon s\to s$ form a monoid under concatenation and their paths form a~group.
\item[$({\rm ii})$] If $\zeta\colon s\to s$ is a central opcurve and $\alpha\colon s\to s'$ is any opcurve, then $\zeta':=\alpha^{-1}\zeta\alpha\colon s'\to s'$ is a central opcurve, too.
Moreover, $\Ophol_{\zeta'}=\Ophol_\zeta$.
\end{enumerate}
\end{lem}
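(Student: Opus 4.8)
The plan is to reduce both claims to a single structural observation about the ``scalar'' holonomies. For $Z\in\D(H)$ let $e_Z\in\Hom_K(\D(H)^*,\M)$ be the function $e_Z(\Psi)=\one_\M\cdot\bra Z,\Psi\ket$, so that by definition a closed (op)curve $\zeta$ is central precisely when $\Ophol_\zeta=e_Z$ for some central $Z\in\D(H)$. First I would observe that $Z\mapsto e_Z$ is a unital algebra homomorphism from $\D(H)$ into the convolution algebra $\Hom_K(\D(H)^*,\M)$ --- concretely the inclusion $\D(H)\to\D(H)\ot\M\cong\Hom_K(\D(H)^*,\M)$, $Z\mapsto Z\ot\one_\M$. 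Indeed $e_Z\cv e_W(\Psi)=\bra Z,\Psi\p\ket\bra W,\Psi\pp\ket\,\one_\M=\bra ZW,\Psi\ket\,\one_\M=e_{ZW}(\Psi)$ by the definition of the product of $\D(H)=\D(H)^{**}$ dual to the coproduct of $\D(H)^*$, and $e_{1_\D}$ is the convolution unit because $1_\D$ is the counit of $\D(H)^*$. This map is injective by nondegeneracy of the pairing, and since $ZW=WZ$ for all $W\in\D(H)$ is equivalent to $\bra ZW,\Psi\ket=\bra WZ,\Psi\ket$ for all $\Psi$, it carries central elements of $\D(H)$ into central elements of the convolution algebra. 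Finally, as remarked right after the Definition of central (op)curves, convolution invertibility of $\Ophol_\zeta=e_Z$ forces $Z$ to be invertible in $\D(H)$; then $e_Z\cv e_{Z^{-1}}=e_{1_\D}=e_{Z^{-1}}\cv e_Z$ shows that the convolution inverse of $e_Z$ is $e_{Z^{-1}}$, with $Z^{-1}$ again central.

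With this in hand part (i) is a formality. Being central is a property of the \emph{path} of an opcurve, since $\Ophol$ descends to paths and centrality only sees $\Ophol_\zeta$. If $\zeta_1,\zeta_2:s\to s$ are central with $\Ophol_{\zeta_i}=e_{Z_i}$, then $\Ophol_{\zeta_1\zeta_2}=\Ophol_{\zeta_1}\cv\Ophol_{\zeta_2}=e_{Z_1Z_2}$ with $Z_1Z_2$ central, so $\zeta_1\zeta_2$ is central; and the trivial opcurve $1_s:s\to s$ has $\Ophol_{1_s}=e_{1_\D}$. Hence the central opcurves $s\to s$ form a submonoid of the concatenation monoid of all opcurves $s\to s$. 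Passing to paths, the path of $\zeta^{-1}$ is the inverse of the path of $\zeta$, and $\Ophol_{\zeta^{-1}}$, being the convolution inverse of $e_Z$, equals $e_{Z^{-1}}$ with $Z^{-1}$ central; so the set of paths of central opcurves is closed under composition and inversion and contains the identity at $s$, i.e.\ it is a subgroup of the vertex group of $\Path(\D(\Sigma)^*)^\op$ at $s$.

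For part (ii), given a central $\zeta:s\to s$ with $\Ophol_\zeta=e_Z$ ($Z$ central) and an arbitrary opcurve $\alpha:s\to s'$, the opcurve $\zeta'=\alpha^{-1}\zeta\alpha:s'\to s'$ has, at the level of paths,
\[
\Ophol_{\zeta'}=\Ophol_{\alpha^{-1}}\cv\Ophol_\zeta\cv\Ophol_\alpha=\Ophol_{\alpha^{-1}}\cv e_Z\cv\Ophol_\alpha .
\]
Since $e_Z$ is central in the convolution algebra it commutes with $\Ophol_\alpha$, so this equals $\Ophol_{\alpha^{-1}}\cv\Ophol_\alpha\cv e_Z=e_{1_\D}\cv e_Z=e_Z=\Ophol_\zeta$, using that $\Ophol_{\alpha^{-1}}$ is the convolution inverse of $\Ophol_\alpha$. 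In particular $\Ophol_{\zeta'}$ has the form required by the Definition, with the same central element $Z$, so $\zeta'$ is central and $\Ophol_{\zeta'}=\Ophol_\zeta$.

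The only step carrying real content is the first paragraph: that $Z\mapsto e_Z$ is a unital algebra embedding $\D(H)\hookrightarrow\Hom_K(\D(H)^*,\M)$ sending centre to centre, together with the invertibility clause. I expect no genuine obstacle there beyond keeping the Sweedler bookkeeping straight; once it is available, (i) and (ii) are pure monoid/groupoid manipulations, the content of (ii) being just that a central element of an algebra is fixed by conjugation.
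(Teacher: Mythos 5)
Your proof is correct and organizes the argument more structurally than the paper's. For part (ii) the paper inducts on the length of $\alpha$, reduces to a single arrow $\alpha=(d)$, and there performs the Sweedler computation $S_{\D^*}(\Psi\p)\ot\bra Z,\Psi\pp\ket\Psi\ppp=S_{\D^*}(\Psi\p)\ot\Psi\pp\bra Z,\Psi\ppp\ket$ (verified by pairing against $\D(H)\ot\D(H)$) to shift the scalar factor out. You instead abstract this into the statement that $Z\mapsto e_Z$ is a unital embedding of $\D(H)$ into the convolution algebra $\Hom_K(\D(H)^*,\M)$ carrying $\Center\D(H)$ into the convolution centre, after which (ii) becomes a one-line conjugation argument with no induction. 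The essential input — that for central $Z$ one has $\bra Z,\Psi\p\ket\Psi\pp=\Psi\p\bra Z,\Psi\pp\ket$ in $\D(H)^*$ — is the same in both proofs; your packaging is cleaner and scales to arbitrary $\alpha$ at once, while the paper's is more hands-on and self-contained. Part (i) is handled identically in spirit.

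One step to tighten: your first paragraph establishes $e_Z\cv e_W=e_{ZW}$ and hence that $e_Z$ commutes with every $e_W$, but in (ii) you invoke the stronger claim that $e_Z$ commutes with an arbitrary $g\in\Hom_K(\D(H)^*,\M)$, in particular with $\Ophol_\alpha$, which need not be of the form $e_W$. The stronger claim is true, and is exactly the Sweedler identity above: for central $Z$, $e_Z\cv g(\Psi)=\bra Z,\Psi\p\ket\,g(\Psi\pp)=g\bigl(\bra Z,\Psi\p\ket\Psi\pp\bigr)=g\bigl(\Psi\p\bra Z,\Psi\pp\ket\bigr)=g\cv e_Z(\Psi)$. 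Insert that one line; as written, ``carries central elements into central elements of the convolution algebra'' is asserted but only checked against the image of $Z\mapsto e_Z$. With that filled in the rest goes through exactly as you wrote.
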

\begin{proof}
(i) If $\zeta_j\colon s\to s$ have $\Ophol_{\zeta_j}(\Psi)=\one_\M\cdot\bra Z_j,\Psi\ket$ for $j=1,2$, then
$\Ophol_{\zeta_1\zeta_2}(\Psi)=\one_\M\cdot\bra Z_1Z_2,\Psi\ket$. If $\zeta$ is central, then so is $\zeta^{-1}$.

(ii) If we can prove the statement for $\alpha$ having length 1, then the statement will follow by induction on the length of $\alpha$.
Let $\alpha=(d)$. Then
\begin{align*}
\Ophol_{\zeta'}(\Psi)&{}=\Ophol_{(d)}\bigl(S_{\D^*}\bigl(\Psi\p\bigr)\bigr)\Ophol_\zeta\bigl(\Psi\pp\bigr)\Ophol_{(d)}\bigl(\Psi\ppp\bigr)\\
&{}=\Ophol_{(d)}\bigl(S_{\D^*}\bigl(\Psi\p\bigr)\bigr)\Ophol_{(d)}\bigl(\Psi\ppp\bigr)\cdot\bigl\bra Z,\Psi\pp\bigr\ket.
\end{align*}
Evaluating $S_{\D^*}\bigl(\Psi\p\bigr)\ot\bra Z,\Psi\pp\ket\Psi\ppp$ on $Y\ot Y'\in\D(H)\ot\D(H)$, one obtains
$\bra\Psi, S_{\D}(Y)ZY'\ket=\bra\Psi, S_{\D}(Y)Y'Z\ket$, therefore
\[
S_{\D^*}\bigl(\Psi\p\bigr)\ot\bigl\bra Z,\Psi\pp\bigr\ket\Psi\ppp=S_{\D^*}\bigl(\Psi\p\bigr)\ot\Psi\pp\bigl\bra Z,\Psi\ppp\bigr\ket
\]
and
\begin{align*}
\Ophol_{\zeta'}(\Psi)&{}=\Ophol_{(d)}\bigl(S_{\D^*}\bigl(\Psi\p\bigr)\bigr)\Ophol_{(d)}\bigl(\Psi\pp\bigr)\cdot\bigl\bra Z,\Psi\ppp\bigr\ket\\
&{}=\one_\M\cdot\eps_{\D^*}\bigl(\Psi\p\bigr)\bigl\bra Z,\Psi\pp\bigr\ket=\one_\M\cdot\bra Z,\Psi\ket\\
&{}=\Ophol_{\zeta}(\Psi).
\tag*{\qed}
\end{align*}
\renewcommand{\qed}{}
\end{proof}

\begin{defi}
Let $\gamma\colon s\to s'$ be an opcurve on $\D(\Sigma)^*$. A central deformation of $\gamma$ is an opcurve $\delta\colon s\to s'$ such that $\delta\gamma^{-1}$
is central.
\end{defi}

\section{The algebra of ribbon operators}\label{section7}

In this section, we shall work with ribbon operators $M=\Ophol_\rho(\Psi)$ for some ribbon path $\rho$ and some $\Psi\in\D(H)^*$.
Ribbon operators can be multiplied either using the convolution product or the
multiplication of $\M$. The relations of the former type come from the definition of opholonomy,
\[
\Ophol_\alpha\cv\Ophol_\beta=\Ophol_{\alpha\,\beta},\qquad \alpha,\beta\in\Ribb_L^\circ \quad \text{or} \quad \alpha,\beta\in\Ribb_R^\circ.
\]
So we can, for example, change the starting point of the Gauss' law and flux operators by conjugation in the convolution sense,
\begin{align}
\label{rotate G}
&G_{T_0a}(h)=P_{T_0a}(S\bigl(h\p\bigr))G_a\bigl(h\pp\bigr)P_{T_0a}\bigl(h\ppp\bigr),\\
&F_{T_2a}(\varphi)=Q_a(S\bigl(\varphi\p\bigr))F_a\bigl(\varphi\pp\bigr)Q_a\bigl(\varphi\ppp\bigr).
\end{align}
As for the ordinary multiplication of $\M$, it is quite difficult to find computable relations between ribbon operators.
The rare cases which we are able to compute occupy the present Section.

If two ribbons are sufficiently distant, then the corresponding operators should commute. Thinking over which edges have $P$ or $Q$ operators that do not
commute we arrive to the following:
\begin{defi}\label{def: Usep}
For an arrow $d\in\Arr(\D(\Sigma)^*)$, let $U(d)$ be the set of arrows that are incident to~$d$ and lie on the boundary of the same type 1 face
as $d$ does. (There are 6 such arrows, including~$d$, forming a U-shape of 3 edges.) Two (op)curves, or just sets of arrows,~$\gamma$ and~$\gamma'$
in~$\D(\Sigma)^*$ are called U-separated, written $\gamma\Usep\gamma'$ if $d\nin U(d')$ for all $d\in\gamma$ and $d'\in\gamma'$.
\end{defi}
So we have
\begin{equation}\label{U-sep commute}
\Ophol_\rho(\Phi)\Ophol_\gamma(\Psi)=\Ophol_\gamma(\Psi)\Ophol_\rho(\Phi)\qquad\forall \Phi,\Psi\in\D(H)^*\quad\text{if}\ \rho\Usep\gamma.
\end{equation}

At the other extreme let $\rho=\gamma$. Then the product is computable if $d\nin U(d')$ for any pair of different arrows $d,d'\in\rho$. This property
of being ``self-U-separated'' can be seen to be equivalent to properness (Definition~\ref{def: prop ribb}) of the opcurve. Thus, \begin{align} \label{proper multiply}
\Ophol_\rho(\Phi)\Ophol_\rho(\Psi)&=\Ophol_\rho(\Phi\Psi)\qquad\text{if}\ \rho\in\Ribb^\circ_L\ \text{is proper}\\
\notag&=\Ophol_\rho(\Psi\Phi)\qquad\text{if}\ \rho\in\Ribb^\circ_R\ \text{is proper}
\end{align}
by comparing (\ref{Ophol}) with (\ref{D^* mul}) and using that the coproduct of $\D(H)^*$ is multiplicative. For proper ribbons, we also have the inversion formula
\begin{equation} \label{proper inverse}
\Ophol_{\rho^{-1}}(\Phi)=\Ophol_\rho(S_{\D^*}(\Phi))\qquad\text{if}\ \rho\ \text{is proper}
\end{equation}
and the cyclic permutation formula
\begin{equation} \label{proper cyclic}
\Ophol_{C\rho}(\Psi)=\Ophol_\rho(\Psi)\qquad\text{if}\ \rho \ \text{is closed, proper and}\ \Psi\in\Cocom\D(H)^*,
\end{equation}
where $C\rho=(d_2,\dots,d_n,d_1)$ whenever $\rho=(d_1,\dots,d_n)$.

Now consider the length 2 opcurves that are not straight, the ``elbows''.
Up to taking inverses, there are 4 such opcurves with common middle point being the site $s(a)$:
\begin{equation*}
\bigl(a_0^+,a_2^+\bigr),\qquad \bigl((T_2^{-1}a)_2^+,a_2^+\bigr),\qquad \bigl((T_2^{-1}a)_2^+,(T_0a)_0^+\bigr),\qquad \bigl(a_0^+,(T_0a)_0^+\bigr).
\end{equation*}
The following lemma is known in the context of Hopf algebra gauge theory \cite{Meusburger}.
\begin{lem}
The opholonomies of an ``opposite pair of elbows'' commute. That is to say,
\begin{align}
\label{PQ-QP elbows}
&\big[\Ophol_{(a_0^+,a_2^+)}(\Phi),\Ophol_{((T_2^{-1}a)_2^+,(T_0a)_0^+)}(\Psi)\big]=0,\\
\label{QQ-PP elbows}
&\big[\Ophol_{((T_2^{-1}a)_2^+,a_2^+)}(\Phi),\Ophol_{(a_0^+,(T_0a)_0^+)}(\Psi)\big]=0 
\end{align}
for all $\Phi,\Psi\in\D(H)^*$.
If we replace any one or both of the elbows in \eqref{PQ-QP elbows} or \eqref{QQ-PP elbows} by their inverses commutativity of their opholonomies
remains true.
\end{lem}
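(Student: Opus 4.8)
I would organize the proof by which edges of $\Sigma$ the two opholonomies occupy. Write $v=\Ou_0(a)$, $f=\Ou_2(a)$, $e^R=\Ou_1(a)$ and $e^L=\Ou_1(T_0a)=\Ou_1(T_2^{-1}a)$ (the last equality holds since $T_1T_0=T_2^{-1}$ and $T_1T_2^{-1}=T_0$); by (\ref{2 edges of a site}) and Lemma~\ref{lem: site} these are the two \emph{distinct} edges making up the site $s(a)=\langle v,f\rangle$, so $e^L\neq e^R$. For (\ref{PQ-QP elbows}), expanding the opholonomy with (\ref{Ophol}) and the coproduct (\ref{D^* comul}) gives $\Ophol_{(a_0^+,a_2^+)}(\Phi)=\theta_a(\Phi)$ (Lemma~\ref{lem: theta}(i)), which involves only $P_a$ and $Q_a$ and hence lies in $\M_{e^R}$; likewise $\Ophol_{((T_2^{-1}a)_2^+,(T_0a)_0^+)}(\Psi)$ involves only $Q_{T_2^{-1}a}$ and $P_{T_0a}$, both supported on $e^L$, so it lies in $\M_{e^L}$. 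Since $\M=\bigotimes_e\M_e$ and $e^L\neq e^R$, the two operators commute; and passing from an elbow to its inverse only replaces $a_i^+$ by $a_i^-$, which does not change the edge, so commutativity persists for all the inverse variants.

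For (\ref{QQ-PP elbows}) the operators are no longer on disjoint edges, so a computation is needed. First, using (\ref{Ophol}), (\ref{D^* comul}) and the basis identities $\sum_i\eps(x_i)\xi_i=\eps$ and $\sum_i\xi_i(1)x_i=1$, one evaluates
\[
\Ophol_{((T_2^{-1}a)_2^+,a_2^+)}(h\ot\varphi)=\eps(h)\,Q_{T_2^{-1}a}(\varphi\p)\,Q_a(\varphi\pp),\qquad
\Ophol_{(a_0^+,(T_0a)_0^+)}(k\ot\psi)=\psi(1)\,P_a(k\p)\,P_{T_0a}(k\pp).
\]
Both lie in $\M_{e^L}\ot\M_{e^R}$: on $e^R$ the first carries $Q_a$ and the second $P_a$ (the same arrow), and on $e^L$ the first carries $Q_{T_2^{-1}a}=Q_b$ and the second $P_{T_0a}=P_{T_1b}$ with $b:=T_2^{-1}a$ (opposite arrows of $e^L$). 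I would then compute both products $\Ophol_{((T_2^{-1}a)_2^+,a_2^+)}(\Phi)\,\Ophol_{(a_0^+,(T_0a)_0^+)}(\Psi)$ and $\Ophol_{(a_0^+,(T_0a)_0^+)}(\Psi)\,\Ophol_{((T_2^{-1}a)_2^+,a_2^+)}(\Phi)$ by moving each $Q$ operator to the right of each $P$ operator, using (\ref{QP comm rel}) on $e^R$ and (\ref{QPbar comm rel}) on $e^L$, keeping track of the resulting twists ($\sweedl$ on $e^R$, $\sweedr$ and $S$ on $e^L$) and of the coproducts of $h,k,\varphi,\psi$ they introduce.

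After contracting all the pairings, the two expressions are seen to differ only by the replacement, for a middle Sweedler leg $z$ of $k$, of $\eps(z)\,1$ (occurring in the first product) by $\sum_{(z)}S(z\pp)\,z\p$ (in the second). The proof then turns on the identity $\sum_{(z)}S(z\pp)\,z\p=\eps(z)\,1$, valid for every $z\in H$: since $S$ is anti-multiplicative, applying $S$ to the left-hand side and using $S^2=\id_H$ yields $\sum S(z\p)\,z\pp=\eps(z)\,1$ by the antipode axiom, and applying $S$ once more (again by $S^2=\id_H$) recovers $\sum S(z\pp)z\p=\eps(z)1$. Hence the two products coincide, establishing (\ref{QQ-PP elbows}). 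For the inverse variants one has only to insert the antipodes $S$ (resp.\ $S^*$) into the arguments of the $P$'s and $Q$'s; the computation is formally identical and is again closed by $\sum S(z\pp)z\p=\eps(z)1$. The main obstacle is the coproduct bookkeeping of the penultimate step together with the recognition that involutivity of the antipode is exactly what makes the twists coming from $e^R$ and from $e^L$ cancel — everything else is routine.
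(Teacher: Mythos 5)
Your proposal is correct and follows the same two-track structure as the paper (disjoint support for the first commutator, explicit Heisenberg-relation computation for the second), but the details differ in ways worth flagging. For (\ref{PQ-QP elbows}) your observation that the two opholonomies lie in the distinct tensor factors $\M_{e^R}$ and $\M_{e^L}$ is exactly what underlies the paper's appeal to U-separatedness, and the extension to all inverse variants — since passing from $a_i^+$ to $a_i^-$ never changes the underlying edge — is genuinely cleaner than the paper's detour, which rewrites $\Ophol_{(a_0^+,a_2^+)^{-1}}$ as a $T_1$-shifted elbow convolved with the central loop $\Ophol_{\beta_a^{-1}}$. For (\ref{QQ-PP elbows}) you correctly name the strategy (push $Q$'s past $P$'s with (\ref{QP comm rel}) on $e^R$ and (\ref{QPbar comm rel}) on $e^L$) and the Hopf-algebraic lever that makes the twists cancel — the identity $S(z\pp)z\p=\eps(z)1$, which holds exactly because $S^2=\id$ — but that cancellation is the entire content of this part and you have only sketched it; the paper's explicit chain through (\ref{QPbar inv-comm rel}) and (\ref{QP inv-comm rel}) shows what the bookkeeping actually looks like. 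Finally, for the inverse variants of (\ref{QQ-PP elbows}), the claim that the $S$-inserted calculation is "formally identical" glosses over the reversal of Sweedler legs under $\cop(S(k))=S(k\pp)\ot S(k\p)$; the paper's shorter route, which you should adopt, is to observe that both elbows are proper ribbons, so by (\ref{proper inverse}) replacing an elbow by its inverse merely substitutes $\Phi\mapsto S_{\D^*}\Phi$ or $\Psi\mapsto S_{\D^*}\Psi$, and commutativity proved for all $\Phi,\Psi$ survives automatically.
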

\begin{proof}
(\ref{PQ-QP elbows}) follows from (\ref{U-sep commute}) since the two curves are U-separated.
(\ref{QQ-PP elbows}) can be shown by checking that
\begin{align*}
&\Ophol_{((T_2^{-1}a)_2^+,a_2^+)}(g\ot\varphi)=\eps(g)\cdot Q_{T_2^{-1}a}\bigl(\varphi\p\bigr)Q_a\bigl(\varphi\pp\bigr),\\
&\Ophol_{(a_0^+,(T_0a)_0^+)}(h\ot\psi)=\psi(1)\cdot P_a\bigl(h\p\bigr) P_{T_0a}\bigl(h\pp\bigr)
\end{align*}
and then computing
\begin{gather*}
P_a\bigl(h\p\bigr) P_{T_0a}\bigl(h\pp\bigr)\cdot Q_{T_2^{-1}a}\bigl(\varphi\p\bigr)Q_a\bigl(\varphi\pp\bigr)=\\
\quad \  {}\eqby{QPbar inv-comm rel}
P_a\bigl(h\p\bigr)Q_{T_2^{-1}a}\bigl(\varphi\p\bigr)P_{T_0a}\bigl(h\pp\sweedr\varphi\pp\bigr) Q_a\bigl(\varphi\ppp\bigr)\\
\qquad {}=Q_{T_2^{-1}a}\bigl(\varphi\p\bigr)P_a\bigl(h\p\bigr)Q_a\bigl(\varphi\ppp\bigr)P_{T_0a}\bigl(h\pp\sweedr\varphi\pp\bigr)\\
\quad \  {}\eqby{QP inv-comm rel}
Q_{T_2^{-1}a}\bigl(\varphi\p\bigr)Q_a\bigl(\varphi\pppp\bigr)P_a(S\bigl(\varphi\ppp\bigr)\sweedl h\p)P_{T_0a}\bigl(h\pp\sweedr\varphi\pp\bigr)\\
\qquad {}=Q_{T_2^{-1}a}\bigl(\varphi\p\bigr)Q_a\bigl(\varphi\pp\bigr)\cdot P_a\bigl(h\p\bigr) P_{T_0a}\bigl(h\pp\bigr).
\end{gather*}
Since the elbows in (\ref{QQ-PP elbows}) are proper ribbons, replacing any one of them with its inverse amounts only to applying $S_{\D^*}$ on $\Phi$
or $\Psi$, see (\ref{proper inverse}).
Taking inverses of the elbows of (\ref{PQ-QP elbows}) requires more work,
\begin{align*}
&\Ophol_{(a_0^+,a_2^+)^{-1}}=\Ophol_{((T_1a)_0^+,(T_1a)_2^+)}\cv\Ophol_{\beta_a^{-1}},\\
&\Ophol_{((T_2^{-1}a)_2^+,(T_0a)_0^+)^{-1}}=\Ophol_{((T_1T_2^{-1}a)_2^+,(T_1T_0a)_0^+)}\cv\Ophol_{{\beta'}_b^{-1}},
\end{align*}
where $b=T_2^{-1}a$ and $\beta'_b:=\bigl(b_0^-\bigr)\beta_b\bigl(b_0^+\bigr)$. The appearance of $T_1$ in the first terms does not change U-separatedness of the curves and
the $\beta_a$ and $\beta'_b$ being central the second terms do not change commutativity, either.
\end{proof}

\begin{cor}\label{cor: elbow-loop}
Let $\lambda$ be either a type $0$ or a type $2$ face loop. Let $\epsilon=(d,b)$ be an elbow opcurve with midpoint $\nabla_1d=\nabla_0b=s$ which intersects
$\lambda$ only at the site $s$. If $s$ is not the base point of~$\lambda$, then
$\Ophol_\epsilon(\Phi)$ commutes with $\Ophol_\lambda(\Psi)$ for every $\Phi,\Psi\in\D(H)^*$.
\end{cor}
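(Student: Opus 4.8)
The strategy is to split $\lambda$ at the site $s$ into a short ``elbow'' piece that interacts with $\epsilon$ and a long remainder that is U-separated from $\epsilon$, then apply the preceding Lemma to the short piece and $(\ref{U-sep commute})$ to the rest. Consider first the case where $\lambda$ is a type $0$ face loop, so $\lambda=\alpha_{a_0}=((T_0a_0)_0^+,\dots,(T_0^m a_0)_0^+)$ with $m=|\Ou_0(a_0)|$ and base point $s(a_0)$. Since such a loop is simple and $s$ is not its base point, $\lambda$ passes through $s$ exactly once, at an interior position; writing $a'$ for the unique arrow with $s=s(a')$, the arrow of $\lambda$ entering $s$ is $(a')_0^+$ and the one leaving $s$ is $(T_0a')_0^+$, these being the two edges at $s$ carrying the labels $T_0$ and $T_0^{-1}$ in the colouring of Lemma \ref{lem: code word}. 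Hence $\lambda$ decomposes as $\lambda=\lambda'\,\lambda_s\,\lambda''$, where $\lambda_s=((a')_0^+,(T_0a')_0^+)$ is the type $0$ elbow at $s$ and $\lambda'$, $\lambda''$ are the (possibly empty) parts of $\lambda$ before and after $s$, neither of which visits $s$.

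Next I locate $\epsilon$. Because $\epsilon$ meets $\lambda$ only at $s$, it shares no arrow with $\lambda$, and at $s$ it cannot run along either of the two edges used by $\lambda_s$: using such an edge would make $\epsilon$ and $\lambda$ meet also at its other endpoint, a site different from $s$. So the two arrows of $\epsilon$ lie on the two remaining edges at $s$, the ones coloured $T_2$ and $T_2^{-1}$, and the only length-$2$ reduced opcurves through $s$ with this property are the type $2$ elbow $((T_2^{-1}a')_2^+,(a')_2^+)$ and its inverse. Thus $\lambda_s$ and $\epsilon$ are, up to inverting $\epsilon$, an ``opposite pair of elbows'' as in $(\ref{QQ-PP elbows})$, and the preceding Lemma gives $[\Ophol_\epsilon(\Phi),\Ophol_{\lambda_s}(X)]=0$ for all $\Phi,X\in\D(H)^*$. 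Moreover, by hypothesis $\lambda'$ and $\lambda''$ consist of arrows incident only to sites distinct from $s$ and from the two remaining vertices $\nabla_0 d$, $\nabla_1 b$ of $\epsilon$, so no arrow of $\lambda'$ or $\lambda''$ is incident to an arrow of $\epsilon$; hence $\epsilon\Usep\lambda'$ and $\epsilon\Usep\lambda''$, and $(\ref{U-sep commute})$ gives $[\Ophol_\epsilon(\Phi),\Ophol_{\lambda'}(X)]=[\Ophol_\epsilon(\Phi),\Ophol_{\lambda''}(X)]=0$ for all arguments. Writing $\Ophol_\lambda(\Psi)=\Ophol_{\lambda'}(\Psi\p)\,\Ophol_{\lambda_s}(\Psi\pp)\,\Ophol_{\lambda''}(\Psi\ppp)$ and carrying $\Ophol_\epsilon(\Phi)$ past the three factors one at a time yields $\Ophol_\epsilon(\Phi)\Ophol_\lambda(\Psi)=\Ophol_\lambda(\Psi)\Ophol_\epsilon(\Phi)$. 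The case where $\lambda$ is a type $2$ loop is identical after interchanging $T_0$ with $T_2$ (equivalently, the two elbows in $(\ref{QQ-PP elbows})$): then $\lambda_s$ is the type $2$ elbow at $s$ and $\epsilon$ is forced to be the type $0$ elbow there or its inverse.

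The only genuinely delicate step is the middle one: verifying that the hypotheses ``$\lambda$ is a type $0$ or type $2$ loop'' and ``$\epsilon$ meets $\lambda$ only at $s$'' together pin $(\epsilon,\lambda_s)$ down to be exactly an opposite pair of elbows. This requires reading off from Lemma \ref{lem: code word} which two of the four edges at $s$ a face loop of a given type runs along, and observing that the complementary pair is precisely what an opposite elbow uses; it is also where the restriction on the base point enters, because if $s$ were the base point the two edges of $\lambda$ at $s$ would be split between the first and last arrows of $\lambda$ rather than grouped into a single elbow block, and one of them would fail to be U-separated from $\epsilon$. Everything after this identification is the routine ``decompose and commute factor by factor'' computation, using only $(\ref{U-sep commute})$, the preceding Lemma, and multiplicativity of the coproduct of $\D(H)^*$.
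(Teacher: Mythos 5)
Your argument is correct: split $\lambda=\lambda'\lambda_s\lambda''$ at $s$, identify $\lambda_s$ and (up to inversion) $\epsilon$ as the opposite pair of elbows from (\ref{QQ-PP elbows}), and use (\ref{U-sep commute}) for $\lambda'$ and $\lambda''$, the base-point hypothesis being exactly what ensures the two arrows of $\lambda$ at $s$ are consecutive and can be grouped into $\lambda_s$. The paper states this as a corollary of Lemma 7.2 without a written proof, and your reconstruction is precisely the intended one.
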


The above corollary is the first instance of a more general commutativity theorem of holo\-no\-mies of type~0 or type~2 face loops with certain ribbon
operators; expressing a sort of ``gauge invariance'' of the ribbon operator. Before formulating the statement, let us recall some Hopf algebra.

If $h\in\Cocom H$, then the iterated coproducts $h\p\ot\cdots\ot h_{(n)}$ are invariant under cyclic permutations.
In particular, the Gauss' law operators $G_a(h)$ are independent of the choice of the base point, $G_{T_0a}(h)=G_a(h)$, if $h\in\Cocom H$.
The same holds for the flux operators~$F_a(\varphi)$ if $\varphi\in\Cocom H^*$. A cocommutative element $\varphi$ defines a trace on $H$ by the
evaluation, ${\bra\varphi,hg\ket=\bra\varphi,gh\ket}$. All traces on $H$ are of this form. Since the Haar integral $\iota$ is cocommutative and
the map $H\to H^*$, $h\mapsto\iota\sweedr h$ is an isomorphism \cite{Sweedler}, every $\varphi\in\Cocom H^*$ is of the form $\varphi=\iota\sweedr z$
for a unique central element $z$ of $H$.

The relation of cocommutative elements of $H$ and $H^*$ with that of $\D(H)^*$ is intricate. Suppose we want the holonomy of the type 0 face loop $\alpha_a$
to be independent of the base point. For which elements $\Psi\in\D(H)^*$ do we have the relation $\Ophol_{\alpha_{T_0a}}(\Psi)=\Ophol_{\alpha_a}(\Psi)$?
One answer is provided by (\ref{proper cyclic}) saying that $\Psi\in\Cocom\D(H)^*$. Another answer is obtained by Proposition~\ref{pro: face loop holonomies}
and by equation~(\ref{rotate G}) saying that $\Psi=h\ot \eps$ with $h\in\Cocom H$. Which answer implies the other? None of them. The point is that the surjective
coalgebra maps
\[
H \quad \longlarr{\id\ot\eps}\quad \D(H)^*\quad\longrarr{\eps\ot\id}\quad H^*
\]
if restricted to $\Cocom\D(H)^*$ become mere linear maps
\begin{equation*}
\Cocom H \quad \longlarr{\id\ot\eps}\quad \Cocom\D(H)^*\quad\longrarr{\eps\ot\id}\quad \Cocom H^*,
\end{equation*}
which are no longer surjective, unless $\Cocom H$, resp.\ $\Cocom H^*$, is commutative. This is why condition~A of the theorem below contains a list of 3 possibilities.

\begin{thm}\label{thm: ribbon-loop commute}
Let $\rho\colon s_0\to s_1$ be a ribbon opcurve $($Definition~$\ref{def: ribbon})$ and let $\lambda$ be either a type $0$ face loop or a type $2$ face loop which
is disjoint from both $s_0$ and $s_1$. Then
\begin{equation*}
\Ophol_\rho(\Phi)\ \text{and }\Ophol_\lambda(\Psi)\ \text{commute for all }\Phi\in\D(H)^*
\end{equation*}
provided one of the following conditions holds:
\begin{enumerate}\itemsep=0pt
\item[A.] $\Psi$ belongs to either $\Cocom\D(H)^*$ or $\Cocom H\ot \eps$ or $1\ot\Cocom H^*$.
\item[B.] $\Psi\in\D(H)^*$ is arbitrary but the base point of $\lambda$ is not lying on $\rho$.
\end{enumerate}
\end{thm}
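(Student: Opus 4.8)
The plan is to chop $\rho$ into finitely many subopcurves along its meetings with $\lambda$, dispose of the pieces that stay away from $\lambda$ by the commutation rule $(\ref{U-sep commute})$, and treat the few remaining ``interaction chunks'' one by one, recombining the results through the elementary observation that if $X(\cdot)$ and $Y(\cdot)$ each commute with $\Ophol_\lambda(\Psi)$ for every argument then so does $X\cv Y$, which is immediate from $(X\cv Y)(\Phi)=X(\Phi\p)Y(\Phi\pp)$. View $\lambda$ as a loop around a cell $c$ ($c\in\Sigma^0$ for a type~0 loop, $c\in\Sigma^2$ for a type~2 loop), so that $\Ophol_\lambda$ is supported on the edges of the star of $c$ and every arrow of $\D(\Sigma)^*$ that is \emph{not} U-separated from $\lambda$ is supported there too. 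Write $\rho=\rho_0\tau_1\rho_1\cdots\tau_k\rho_k$ with each $\rho_i$ a maximal subopcurve U-separated from $\lambda$ and each $\tau_j$ a maximal interaction chunk, so that $\Ophol_\rho=\Ophol_{\rho_0}\cv\Ophol_{\tau_1}\cv\cdots\cv\Ophol_{\rho_k}$. Since $s_0$ and $s_1$ are disjoint from $\lambda$ they lie inside $\rho_0$ and $\rho_k$; the $\rho_i$ commute with $\Ophol_\lambda(\Psi)$ by $(\ref{U-sep commute})$, so only the $\tau_j$ remain.

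Because $\rho$ is a ribbon, Lemma~\ref{lem: ribbon} forbids a turn around a type~1 face, and this pins down the local shape of each $\tau_j$: either it is a single elbow at one site of $\lambda$, or it runs along a contiguous arc of $\lambda$ --- forward through code letters $T_0$ (type~0 loop) or backward through $T_0^{-1}$, and correspondingly $T_2^{\mp1}$ for a type~2 loop --- entering and leaving that arc through two ``turning'' arrows whose code letters are $T_2^{\pm1}$ (resp.\ $T_0^{\pm1}$); using $(\ref{double T})$ and Lemma~\ref{lem: code word} together with $T_2T_0T_2=T_0^{-1}$ one checks that the two turning arrows sit on the two star edges flanking the arc. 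The single-elbow case is precisely Corollary~\ref{cor: elbow-loop}, once we know the midpoint of the elbow is not the base point of $\lambda$: under hypothesis~B it is not, and under hypothesis~A a cocommutative $\Psi$ forces (for a type~0 loop) either $h\in\Cocom H$ or $\varphi(1)=0$, whence $\Ophol_\lambda(\Psi)$ is base-point independent by the remarks preceding Theorem~\ref{thm: ribbon-loop commute} and we may slide the base point of $\lambda$ off the finitely many sites visited by $\rho$, reducing A to B.

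For an arc chunk the opholonomy is, up to a scalar, a product of $\M$-operators $Q\,P\cdots P\,Q$ along a contiguous block of star edges (with $P$'s and $Q$'s interchanged for a type~2 loop), the two outer factors coming from the turning arrows. One commutes $\Ophol_\lambda(\Psi)=\varphi(1)\,G_a(h)$ (resp.\ $\eps(h)\,F_a(\varphi)$) past such a chunk by transporting it from left to right, using the exchange relations $(\ref{Q G})$, $(\ref{GT_2 Q})$, $(\ref{P F})$ and multiplicativity $(\ref{P alg})$: the inner run $P\cdots P$ alone would leave, at each shared star edge, a mismatch between the $h_{(i)}$ and the chunk's $\D(H)^*$-argument, and the content of the computation is that the $P$--$Q$ exchanges at the two flanking edges supply exactly the terms needed to reconstitute $G_a(h)$ with $\eps_{\D^*}$ applied to the consumed coproduct legs. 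Whichever of A or B holds is exactly what makes these corrections telescope rather than leave a residual ``holonomy around $c$'': under B the arc cannot contain the base point of $\lambda$ and, not being a full wrap, leaves no loose end, while if $\rho$ actually wraps once around $c$ one closes the computation cyclically using cocommutativity of $\Psi$ as in hypothesis~A.

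The main obstacle is this last computation --- the Hopf-algebraic identity by which the two turning (dual-)triangle operators absorb the mismatch created by pushing $G_a$ or $F_a$ through the run, and the precise point at which cocommutativity (A) or the base point lying off $\rho$ (B) is what closes it up. The rest --- the decomposition, the reductions through $(\ref{U-sep commute})$ and Corollary~\ref{cor: elbow-loop}, and the recombination via $\cv$ --- is bookkeeping with results already established.
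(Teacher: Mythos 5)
Your decomposition of $\rho$ into U-separated pieces and ``interaction chunks'' (the paper calls the latter \emph{collars}), your dispatch of U-separated pieces via $(\ref{U-sep commute})$, and your handling of single-elbow chunks via Corollary~\ref{cor: elbow-loop} are all exactly as in the paper's proof. Where you diverge, and where the gap lies, is in the arc chunk. You propose to push $G_a(h)$ (or $F_a(\varphi)$) directly through the string of $P$'s (or $Q$'s) flanked by two turning operators and claim the flanking exchanges ``telescope'' to reconstitute the loop operator --- but you neither carry out nor reference this computation, and you yourself flag it as ``the main obstacle.'' As it stands this is a conjecture, not a proof step, and it is precisely the point the rest of the argument hangs on.

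The paper avoids this computation altogether. For a collar $\gamma$ whose code word is $T_0^{-1}T_2^{l}T_0^{-1}$, it constructs a \emph{central deformation} $\gamma'$ by the free-monoid substitution $T_0^{-1}\mapsto T_0^{-1}$, $T_2\mapsto T_0^{-1}T_2^{-1}T_0^{-1}$. Each replaced letter differs from its image by a conjugate of a type~1 loop $\beta_a$, which is a \emph{central} opcurve (Lemma~\ref{lem: central}); hence $\Ophol_\gamma=\Ophol_{\gamma\gamma'^{-1}}\cv\Ophol_{\gamma'}$ with the first factor a scalar multiple of $\one_\M$, so $\Ophol_\gamma$ commutes with $\Ophol_\lambda(\Psi)$ iff $\Ophol_{\gamma'}$ does. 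And $\gamma'$ decomposes as a concatenation of single elbows touching $\lambda$ at one site apiece, interleaved with length-1 pieces that the paper shows (using Lemma~\ref{lem: OCSC}(iii)) are U-separated from $\lambda$. So the entire arc chunk is reduced to the cases you already dispose of, with no Hopf-algebraic telescoping required. If you want to salvage your direct approach you would have to actually prove the telescoping identity; alternatively, adopt the central-deformation reduction.

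One further subtlety in your treatment of hypothesis~A: you reduce A to B by sliding the base point of $\lambda$ ``off the finitely many sites visited by $\rho$.'' But a single collar can wind around $\lambda$ arbitrarily many times, and in particular $\rho$ may visit \emph{every} site of $\lambda$, in which case no such global shift exists. The paper avoids this by invoking cocommutativity \emph{per elbow}: for each elbow in turn, the base point of $\lambda$ is shifted off that elbow's single contact point (always possible since $|\Nb(v)|\geq 2$), and Corollary~\ref{cor: elbow-loop} is applied. You should do the same rather than attempt a one-shot global shift. Also, your parenthetical that $\Psi\in\Cocom\D(H)^*$ ``forces either $h\in\Cocom H$ or $\varphi(1)=0$'' is not quite right for sums $\Psi=\sum_i h_i\ot\varphi_i$; the correct and sufficient fact is simply $(\ref{proper cyclic})$: cocommutative $\Psi$ makes $\Ophol_\lambda(\Psi)$ invariant under cyclic permutation of the proper face loop.
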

\begin{proof}
First we assume that $\rho$ is a left ribbon and $\lambda$ is a type 2 face loop. Let $w_\rho\in\bigl\{T_0^{-1},T_2\bigr\}^*$ be the code word of $\rho$
(Definition~\ref{def: code word}).
Let $\alpha_1,\dots,\alpha_k$ be the list of generalized subcurves of $\rho$ such that (1) all arrows and sites of $\alpha_j$ belongs to $\lambda$ and
(2) $\alpha_j$ is maximal among the subcurves satisfying (1). Mentioning sites under (1) refers to the possibility that $\alpha_j$ consists of a single
site. It is maximal if and only if none of its neighbour arrows in $\rho$ belongs to $\lambda$.
The arrow $d_j^\inn$ of~$\rho$ preceding~$\alpha_j$ and the arrow $d_j^\out$ succeeding it must belong to $U(\lambda)$ but not to $\lambda$. These arrows
are uniquely determined by the arc~$\alpha_j$ as follows. If $\alpha_j=(s)$ is trivial, then \smash{$\bigl(d_j^\inn,d_j^\out\bigr)$} is the unique elbow which touches
$\lambda$ only at the site $s$. If \smash{$\alpha_j=\bigl(c_{j,1},\dots,c_{j,l_j}\bigr)$} is non-trivial, then the \smash{$\bigl(d_j^\inn,c_{j,1}\bigr)$} and~\smash{$\bigl(c_{j,l_j},d_j^\out\bigr)$}
are straight (do not turn either left or right). The existence of~such~\smash{$d_j^{\inn/\out}$} in $\rho$ for all $j$ follows either from maximality of
the arcs $\alpha_j$ or \big(for $d_1^\inn$ and $d_k^\out$\big) from the assumption that $s_0$, $s_1$ do not lie on $\lambda$. Let \smash{$\gamma_j:=
\bigl(d_j^\inn\bigr)\alpha_j\bigl(d_j^\out\bigr)$}. Then every $\gamma_j$ corresponds to a subword $T_0^{-1}T_2^{l_j}T_0^{-1}$ of $w_\rho$ and the whole ribbon can be written
as the composite
\[
\rho=\delta_0\gamma_1\delta_1\cdots\gamma_k\delta_k
\]
of possibly empty\footnote{$\delta_j$ is empty if $\bigl(d_j^\out,d_{j+1}^\inn\bigr)$ is a type 0 loop of length~2. Such double edges in $\D(\Sigma)^*$ correspond to
a 2-valent vertex in~$\Sigma$.} curves $\delta_j$ U-separated from $\lambda$ and of ``collars'' $\gamma_j$ of $\lambda$. These collars need not be
as simple as the one of Figure~\ref{fig: collar}. They can wind around $\lambda$ arbitrary many times, although always counterclockwise. For different indices, $\gamma_i$ and~$\gamma_j$ may overlap arbitrarily or can even coincide.

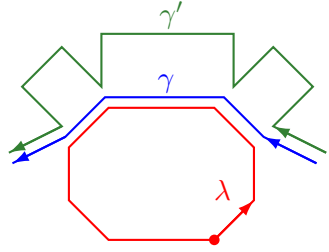
\begin{figure}[t]\centering
\parbox{200pt}{
\begin{picture}(200,100)(0,-10)
\thicklines
{\color{red}
\put(120,5){\circle*{4}}
\polyline(120,5)(135,20)(135,40)(120,55)(80,55)(65,40)(65,20)(80,5)(120,5)
\put(120,5){\vector(1,1){15}}
\put(120,20){$\lambda$}
}
{\color{blue}
\polyline(155,34)(135,44)(120,59)(75,59)(60,44)(40,34)
\put(155,34){\vector(-2,1){20}}
\put(60,44){\vector(-2,-1){20}}
\put(95,63){$\gamma$}
}
{\color{melegzold}
\polyline(155,38)(135,48)(150,63)(135,78)(120,63)(120,83)(70,83)(70,63)(55,78)(40,63)(55,48)(35,38)
\put(155,38){\vector(-2,1){20}}
\put(55,48){\vector(-2,-1){20}}
\put(92,87){$\gamma'$}
}
\end{picture}
}
\caption{To the proof of Theorem~\ref{thm: ribbon-loop commute}: A subcurve $\gamma$ of a left ribbon which collars the face loop $\lambda$ of type~2
and its central deformation $\gamma'$.}
\label{fig: collar}
\end{figure}

It is now clear that commutativity of the opholonomies of $\rho$ and $\lambda$ will follow from that of $\gamma$ and $\lambda$ for a single collar
$\gamma=\bigl(d^\inn,c_1,\dots,c_l,d^\out\bigr)$. A central deformation $\gamma'$ of $\gamma$ can be defined in the code word representation
(Lemma~\ref{def: code word}) by means of a homomorphism $\smash{\mathfrak{h}\colon \bigl\{T_0^{-1},T_2\bigr\}^*}\to\smash{\bigl\{T_0,T_0^{-1},T_2,T_2^{-1}\bigr\}^*}$ of free monoids as
\[
\gamma':=\bra a_0,\mathfrak{h}(w_\gamma)\ket,\qquad\text{where}\quad\mathfrak{h}\bigl(T_0^{-1}\bigr):=T_0^{-1},\ \mathfrak{h}(T_2):=T_0^{-1}T_2^{-1}T_0^{-1},
\]
where $\gamma=\bra a_0,w_\gamma\ket$. This means that $\gamma'$ differs from $\gamma$ by replacing each $c_j$ of color $T_2$ by a~detour
$\bigl(b_j,c'_j,d_j\bigr)$ of color $\mathfrak{h}(T_2)$ such that the difference $T_2\mathfrak{h}(T_2)^{-1}=T_2T_0T_2T_0$ is a type 1 loop, a conjugate of some
$\beta_a$, hence a central curve. To see that $\gamma'$ is a central deformation of~$\gamma$ introduce the opcurves $\sigma_j:=\bigl(d^\inn,c_1,\dots,c_j\bigr)$
and the lassos $\vartheta_j:=\sigma_j\beta_j\sigma_j^{-1}$ with the central curve \smash{$\beta_j=\bigl(\tilde{\mathbb{T}}_1c_j,b_j,c'_j,d_j\bigr)^{-1}$}. Then we have equality of paths
\[
\gamma{\gamma'}^{-1}=\vartheta_l\cdots\vartheta_1
\]
proving that $\gamma{\gamma'}^{-1}$ is a central opcurve by Lemma~\ref{lem: central}. Thus, \[
\Ophol_\gamma=\Ophol_{\gamma{\gamma'}^{-1}}\cv\Ophol_{\gamma'}=\one_\M\cdot\bra Z,\under\ket\cv\Ophol_{\gamma'}
\]
(for some $Z\in\Center(\D(H))$) commutes with $\Ophol_\lambda(\Psi)$ if and only if $\Ophol_{\gamma'}$ does. We write $\gamma'$ as the composite
\[
(d_0,b_1)(c'_1)(d_1,b_2)(c'_2)\cdots(c'_l)(d_l,b_{l+1}),\qquad\text{where}\quad d_0=d^\inn,\quad b_{l+1}=d^\out
\]
of elbows \smash{$\bigl(d_j,b_{j+1}\bigr)$} touching $\lambda$ at a single site and of length 1 pieces $\bigl(c'_j\bigr)$. The elbows commute with $\Ophol_\lambda(\Psi)$
because -- in case A -- the assumption on $\Psi$ ensures $\Ophol_\lambda(\Psi)$ is independent of the base point of $\lambda$ so that it can be
shifted to become different from the contact point of the elbow and Corollary~\ref{cor: elbow-loop} to apply -- and in case~B -- the base point of
$\lambda$ is different from the contact point and Corollary~\ref{cor: elbow-loop} directly applies.
We claim that $\bigl(c'_j\bigr)$ is U-separated from $\lambda$. Assume it is not. Then at least one endpoint of $c'_j$ lies
on $\lambda$ so either $b_j$ or $d_j$ connects 2 sites of $\lambda$. Let $s(a_1)$ and~$s(a_2)$ be these two sites. Since they lie on the face loop
$\lambda$, $a_1$ and $a_2$ must belong to the same $T_2$-orbit on $\Arr(\Sigma)$. But $s(a_1)$ and $s(a_2)$ are also connected by $b_j$ or $d_j$ which
have color~$T_0^{-1}$, so~$a_1$ and~$a_2$ belong to the same $T_0$-orbit. By Lemma~\ref{lem: OCPM}\,(iii), this is possible only if $a_1=a_2$ so~$b_j$
or~$d_j$ is a loop which cannot exist on $\D(\Sigma)^*$. This contradiction refutes the assumption that $\bigl(c'_j\bigr)$ is not U-separated from $\lambda$.
Hence, $\Ophol_\lambda(\Psi)$ commutes also with the $\Ophol_{(c'_j)}(\Phi)$ for all~$j$ and finally also with $\Ophol_{\gamma'}$.
This finishes the proof of the theorem for left ribbons $\rho$ and type~2 face loops $\lambda$.

If $\rho$ is a left ribbon and $\lambda$ is a type 0 face loop, then the proof goes exactly as above except that the collars $\gamma$ now, having color code
\smash{$T_2\bigl(T_0^{-1}\bigr)^lT_2$}, wind around $\lambda$ in clockwise direction. The deformation $\gamma'$ is given by the substitution
$T_0^{-1}\mapsto T_2T_0T_2$, $T_2\mapsto T_2$.

If $\rho$ is a right ribbon, then the collars go clockwise around type~2 face loops and counterclockwise around type 0 loops.
Otherwise, the proof is the same.
\end{proof}

\begin{cor}\label{cor: ribop gauge invar}
For every ribbon $\rho\colon\bra v_0,f_0\ket\to \bra v_1,f_1\ket$ and for every $\Phi\in\D(H)^*$, the operator $M=\Ophol_\rho(\Phi)$ commutes with all the
operators $A_v$ and $B_f$ except for $v\in\{v_0,v_1\}$ and ${f\in\{f_0,f_1\}}$.
\end{cor}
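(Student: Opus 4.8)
The plan is to recognize $A_v$ and $B_f$ as opholonomies of type $0$ and type $2$ face loops and then invoke Theorem \ref{thm: ribbon-loop commute}. Fix an arrow $a\in\Arr(\Sigma)$. By the formulas for $\Ophol_{\alpha_a}$ and $\Ophol_{\gamma_a}$ established just above, the type $0$ face loop $\alpha_a$ of (\ref{alpha_a}) around $v=\Ou_0(a)$ satisfies $\Ophol_{\alpha_a}(\haar\ot\eps)=\eps(1)\,G_a(\haar)=A_v$, and the type $2$ face loop $\gamma_a$ of (\ref{gamma_a}) around $f=\Ou_2(a)$ satisfies $\Ophol_{\gamma_a}(1\ot\duhaar)=\eps(1)\,F_a(\duhaar)=B_f$. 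The two endpoint sites of the ribbon opcurve $\rho$ are $s_0=\bra v_0,f_0\ket$ and $s_1=\bra v_1,f_1\ket$. Since the sites visited by $\alpha_a$ are exactly the sites whose vertex component is $v$, and those visited by $\gamma_a$ are exactly the sites whose face component is $f$, we get: $\alpha_a$ is disjoint from both $s_0$ and $s_1$ precisely when $v\notin\{v_0,v_1\}$, and $\gamma_a$ is disjoint from both $s_0$ and $s_1$ precisely when $f\notin\{f_0,f_1\}$.

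Granting this, Theorem \ref{thm: ribbon-loop commute} applied to $\lambda=\alpha_a$ gives $[\Ophol_\rho(\Phi),A_v]=0$ for all $\Phi\in\D(H)^*$ whenever $v\notin\{v_0,v_1\}$, and the same Theorem applied to $\lambda=\gamma_a$ gives $[\Ophol_\rho(\Phi),B_f]=0$ whenever $f\notin\{f_0,f_1\}$. These are exactly the commutations claimed; the statement makes no assertion about $A_{v_0},A_{v_1},B_{f_0},B_{f_1}$, so nothing more is required.

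The one delicate point — and the only real obstacle — is which clause of Theorem \ref{thm: ribbon-loop commute} to invoke. The elements $\haar\ot\eps$ and $1\ot\duhaar$ need not be cocommutative in $\D(H)^*$ (already for the group algebra of a non-abelian group they fail to be, as one sees from the coproduct (\ref{D^* comul})), so clause A cannot be quoted verbatim; clause B may fail as well, since a ribbon opcurve can wind around $\lambda$ and pass through every one of its sites, leaving no admissible base point off $\rho$. The remedy is to observe that in the proof of Theorem \ref{thm: ribbon-loop commute} the cocommutativity of $\Psi$ is used only to ensure that $\Ophol_\lambda(\Psi)$ does not depend on the base point of $\lambda$; and this base-point independence does hold for $\Ophol_{\alpha_a}(\haar\ot\eps)=A_v$ because $\haar$ is cocommutative in $H$, and for $\Ophol_{\gamma_a}(1\ot\duhaar)=B_f$ because $\duhaar$ is cocommutative in $H^*$ — this is precisely what made $A_v$ and $B_f$ well defined in Section 3. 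Hence the proof of Theorem \ref{thm: ribbon-loop commute} applies verbatim with these $\Psi$, which completes the argument. Equivalently: Theorem \ref{thm: ribbon-loop commute} holds, with the same proof, under the weaker hypothesis ``$\Ophol_\lambda(\Psi)$ is independent of the base point of $\lambda$'', and under that reading the Corollary is immediate.
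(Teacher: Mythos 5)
Your proof is correct, and the approach — recognize $A_v$ and $B_f$ as opholonomies of face loops and invoke Theorem~\ref{thm: ribbon-loop commute} — is clearly the one the paper intends (the corollary is stated without proof immediately after that theorem). The ``delicate point'' you raise is genuine: with $\Psi=\haar\ot\eps$ one cannot quote clause~A verbatim (indeed, for $H=\CC G$ with $G$ nonabelian, $\haar\ot\eps$ fails to be a trace on $\D(H)$), and clause~B can fail if $\rho$ contains a full loop $\alpha_a^{-1}$ and so visits every site with vertex component $v$. Your remedy — isolating from the proof of Theorem~\ref{thm: ribbon-loop commute} that cocommutativity enters only through base-point independence of $\Ophol_\lambda(\Psi)$, which $A_v$ and $B_f$ enjoy directly by cocommutativity of $\haar$ in $H$ and of $\duhaar$ in $H^*$ — is a faithful reading of that proof and closes the gap.

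There is, however, a cleaner way to invoke clause~A without reopening the theorem's proof: replace $\haar\ot\eps$ and $1\ot\duhaar$ by the single element $\haar\ot\duhaar\in\D(H)^*$. From the algebra structure~(\ref{D^* mul}), $\D(H)^*\cong H^{\op}\ot H^*$ as an algebra, so $\haar\ot\duhaar$ is its normalized two-sided integral and hence cocommutative; and it still reproduces both operators, since $\Ophol_{\alpha_a}(\haar\ot\duhaar)=\duhaar(1)\,G_a(\haar)=A_v$ and $\Ophol_{\gamma_a}(\haar\ot\duhaar)=\eps(\haar)\,F_a(\duhaar)=B_f$. With this choice of $\Psi$, clause~A of Theorem~\ref{thm: ribbon-loop commute} applies as stated and the corollary follows at once. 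Both routes are valid; yours has the mild advantage of not needing the identification of the Haar integral of $\D(H)^*$, while the Haar-integral route has the advantage of leaving the theorem as a black box.
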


For a given ribbon, there are four type 0 or type 2 face loops for which the above theorem does not apply.
The next theorem is to deal with these remaining cases.

\begin{thm} \label{thm: ribb bim}
Let $\rho\colon s(a)\to s(b)$ be a proper left ribbon, where $s(a)$ and $s(b)$ are disjoint sites. Then
\begin{gather}
\label{source gauge transf}
\Ophol_\rho(\Phi)\ra{a} X=\Ophol_\rho(\Phi\sweedr X),\\
\label{target gauge transf}
X\la{b}\Ophol_\rho(\Phi)=\Ophol_\rho(X\sweedl \Phi)
\end{gather}
hold true for every $X\in\D(H)$ and $\Phi\in\D(H)^*$.
\end{thm}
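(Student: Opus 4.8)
The plan is to reduce to two elementary cases and handle each by commuting a loop operator through the ribbon with the exchange relations of the model algebra together with Theorem~\ref{thm: ribbon-loop commute}.

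\emph{Reduction to generators.} Since $\psi\ot h=(\psi\ot 1_H)(\eps\ot h)$, the Hopf algebra $\D(H)$ is generated as an algebra by the sub-Hopf-algebras $\eps\ot H$ and $H^*\ot 1_H$. As $\ra{a}$ on $\M$ and $\sweedr$ on $\D(H)^*$ (and likewise $\la{b}$, $\sweedl$) are module algebra actions, it suffices to prove \eqref{source gauge transf} and \eqref{target gauge transf} for $X$ in these two families and all $\Phi$, the general case following from $(M\ra{a}X)\ra{a}Y=M\ra{a}(XY)$ and $(\Phi\sweedr X)\sweedr Y=\Phi\sweedr(XY)$. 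For $X=\eps\ot k$ one has $\mathbf{D}_a(\eps\ot k)=G_a(k)$ (because $F_a(\eps)=\one$), so $M\ra{a}(\eps\ot k)=G_a(S(k\p))\,M\,G_a(k\pp)$; for $X=\psi\ot 1_H$, $\mathbf{D}_a(\psi\ot 1_H)=F_a(\psi)$ and $M\ra{a}(\psi\ot 1_H)=F_a(S(\psi\pp))\,M\,F_a(\psi\p)$. So the two cases amount to conjugating $\Ophol_\rho(\Phi)$ by the Gauss' law loop $G_a$ at the vertex $\Ou_0(a)$ and by the flux loop $F_a$ at the face $\Ou_2(a)$.

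\emph{Isolating the relevant portion of the ribbon.} Treat the $G_a$-case; the $F_a$-case is verbatim with $\Ou_0\leftrightarrow\Ou_2$, $T_0^{-1}\leftrightarrow T_2$, $\alpha_a\leftrightarrow\gamma_a$, and the pairs \eqref{Q G}--\eqref{GT_2 Q} and \eqref{F PT_0}--\eqref{P F} of exchange relations swapped. Write $\rho=(d_1,\dots,d_n)$; since $\rho$ is a left ribbon its triangles are $T_0^{-1}$- or $T_2$-coloured (Lemma~\ref{lem: code word}), a $T_0^{-1}$-triangle keeping the curve at a single vertex of $\Sigma$ and a $T_2$-triangle moving it off. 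Because $s(a)$ and $s(b)$ are disjoint the curve must leave $\Ou_0(a)$, so the first $r\ge 0$ triangles form the initial $T_0^{-1}$-run, $d_{r+1}$ is $T_2$-coloured, and $\rho':=(d_{r+2},\dots,d_n)$ begins at $s(T_2T_0^{-r}a)$ --- a site \emph{not} at the vertex $\Ou_0(a)$, since $T_2T_0^{-r}a\in\Ou_0(a)=\Ou_0(T_0^{-r}a)$ would by Lemma~\ref{lem: OCSC}(iii) force $T_2T_0^{-r}a=T_0^{-r}a$, a $T_2$-fixed point --- and ends at $s(b)$, also not at $\Ou_0(a)$. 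Hence the type~$0$ face loop $\alpha_a$ is disjoint from both endpoints of $\rho'$, and its base point $s(a)$ does not lie on $\rho'$ because $\rho$, being proper, is a simple opcurve (Proposition~\ref{pro: Hol-Ophol}(ii)); so condition~B of Theorem~\ref{thm: ribbon-loop commute} gives that $G_a(h)$ commutes with $\Ophol_{\rho'}(\Phi')$ for all $h,\Phi'$. Therefore the conjugation touches only the first $r+1$ triangles.

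\emph{The finite computation and the target identity.} Write $\Ophol_\rho(\Phi)=\Ophol_{(d_1)}(\Phi\p)\cdots\Ophol_{(d_n)}(\Phi_{(n)})$ and commute $G_a(S(k\p))$ rightward: passing $\Ophol_{(d_1)},\dots,\Ophol_{(d_{r+1})}$ uses $r+1$ applications of the $G_a$--$P$ exchange relations on the star of $\Ou_0(a)$ (shifts of \eqref{F PT_0} and of the relation displayed just after~\eqref{P F}) and of \eqref{Q G} for the final, $T_2$-coloured, triangle, each pass converting into a twisted action on that triangle's argument and a $G_a$ with a shorter argument; passing the rest $\Ophol_{(d_{r+2})},\dots,\Ophol_{(d_n)}$ is trivial by the commutation just established; finally the surviving $G_a$ recombines with the right factor $G_a(k\pp)$. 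Collecting the $\D(H)^*$-coproducts through this process --- the key bookkeeping being that $\cop_{\D^*}$ is a morphism of $\D(H)$-modules, so the coproduct of $k$ is parcelled out over the triangles exactly in the pattern the exchange relations produce --- one recognises the result as $\Ophol_\rho(\Phi\sweedr(\eps\ot k))$, which is \eqref{source gauge transf}. Identity \eqref{target gauge transf} follows by the mirror argument run from the far end $s(b)$ (so that the \emph{last} triangles of $\rho$ are the affected ones, using \eqref{F PT_0}, \eqref{P F} in place of \eqref{Q G}), or, more economically, by applying the established \eqref{source gauge transf} to the proper right ribbon $\rho^{-1}\colon s(b)\to s(a)$ and converting back with the inversion formula \eqref{proper inverse} and the definitions of $\ra{}$, $\la{}$.

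\emph{Main obstacle.} The delicate step is the last one: one must verify that the $r+1$ twisted passes, the trivial passage through $\rho'$, and the recombination $G_a(\cdot)\,G_a(k\pp)$ conspire so that \emph{no} residual $G_a$-factor survives (it must reduce to $G_a(1)=\one$) and so that the argument-shifts picked up along the way reassemble into precisely the coproduct-distributed pieces of $\sweedr(\eps\ot k)$ --- this is where involutivity of the antipode and the exact form of the exchange relations are used. A subsidiary issue is the degenerate geometry of $2$-valent vertices and $2$-gon faces, where $\rho'$, $\alpha_a$ or $\gamma_a$ may run along a double edge of $\D(\Sigma)^*$ and the list of triangles failing to commute with $G_a$ (resp.\ $F_a$) has to be re-derived; Lemma~\ref{lem: OCSC}(iii),(iv) is exactly what excludes the configurations that would break the argument.
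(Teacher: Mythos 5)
Your overall strategy matches the paper's: convert the module equation into the exchange relations $\Ophol_\rho(\Phi)\mathbf{D}_a(X)=\mathbf{D}_a(X\p)\Ophol_\rho(\Phi\sweedr X\pp)$ (resp.\ the $\mathbf{D}_b$ version), isolate the portion of the ribbon near $s(a)$ using Theorem~\ref{thm: ribbon-loop commute} part B, and push the Gauss' law and flux loops through that portion. The reduction to $X\in\{\eps\ot k,\ \psi\ot 1\}$ is a clean packaging of what the paper implicitly does (the paper keeps $X=\psi\ot k$ but immediately splits the computations into $G_a$ and $F_a$ cases). The isolation argument, that after the initial $T_0^{-1}$-run and its one $T_2$-coloured successor one lands at a site off $\alpha_a$ and that simplicity of $\rho$ keeps the base point $s(a)$ off the tail, is sound and close to the paper's use of (\ref{G_a and v2...vm if v1 is type 2})--(\ref{F_a and v3...vm if v1 is type 2}).

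The gap is in the parts you flag as the ``delicate step''. What you call the ``finite computation'' is in fact the substance of the theorem: the paper devotes roughly a page to six explicit exchange computations, including the nontrivial identity $\xi_i\varphi\xi_j\psi\p\ot[S(x_j)hx_i]\sweedr\psi\pp=\psi\pp\xi_i\varphi\xi_j\ot S(x_j)(h\sweedr\psi\p)x_i$ needed in the ``$F_a$ with $\upsilon_2$ if $\upsilon_1$ is type 2'' case, where the $\D(H)$-module structure of $\cop_{\D^*}$ has to be tracked through the square-root basis $\{x_i,\xi_i\}$. Asserting that this ``conspires'' and that the $F_a$-case is ``verbatim'' with indices swapped is a plan, not a proof; the two sets of relations are not literally symmetric (compare the blocks labelled ``Case $G_a$\dots'' with ``Case $F_a$\dots'' in the paper). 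So the proposal is an outline of the first half of the theorem, not a proof.

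The second half, (\ref{target gauge transf}), is where you diverge from the paper. Your second (``more economical'') route has a genuine error: (\ref{source gauge transf}) is a statement about proper \emph{left} ribbons, while $\rho^{-1}$ is a proper \emph{right} ribbon, for which the correct source formula is (\ref{source gauge transf - right ribb}) with the co-opposite actions $\opra{}$/$\opla{}$, not $\ra{}$/$\la{}$. From (\ref{source gauge transf - right ribb}) one can indeed derive (\ref{target gauge transf}) by the kind of $S_\D$-juggling you describe, but (\ref{source gauge transf - right ribb}) is not ``the established (\ref{source gauge transf})'' applied to $\rho^{-1}$ --- it is a new statement that would need the same six-case argument run for right ribbons. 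Your first route (the ``mirror argument run from $s(b)$'') is valid in principle but is what the paper explicitly declines as ``another 6 cases.'' The paper instead gives a genuinely different, nicer argument for (\ref{target gauge transf}): an algebra isomorphism $I:\M(\Sigma,H)\to\M(\Sigma^*,H^*)$ together with a quiver isomorphism $J$ between $\D(P)^\sim$ and $\D(P^\backsim)^\sim$ and a twist $\Upsilon:\D(H)\to\D(H^*)$, so that the already-proved (\ref{source gauge rel}) on the dual model $(\Sigma^*,H^*)$ transports to (\ref{target gauge rel}) on $(\Sigma,H)$. That duality is both a more conceptual shortcut and an instructive structural fact not present in your proposal.
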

\begin{proof}
The equations to be proven are equivalent to the exchange relations
\begin{gather}
\label{source gauge rel}
\Ophol_\rho(\Phi)\mathbf{D}_a(X)=\mathbf{D}_a\bigl(X\p\bigr)\Ophol_\rho\bigl(\Phi\sweedr X\pp\bigr),\\
\label{target gauge rel}
\mathbf{D}_b(X)\Ophol_\rho(\Phi)=\Ophol_\rho\bigl(X\p\sweedl \Phi\bigr)\mathbf{D}_b\bigl(X\pp\bigr).
\end{gather}

Every arrow of $\rho$ is either a $c_0^-$ or a $c_2^+$ for some $c\in\Arr(\Sigma)$. The consecutive $c_0^-$ arrows form a non-trivial arc on a type 0 face loop
and the consecutive $c_2^+$ arrows do the same on a type 2 face loop. So there is a unique decomposition $\rho=\upsilon_1\cdots\upsilon_m$ into a concatenation
of arcs which are alternatingly type 0 or type 2. \big(This decomposition corresponds to the decomposition of a word \smash{$w\in\bigl\{T_0^{-1},T_2\bigr\}^*$} into an alternating product
of powers \smash{$T_0^{-k}$ and $T_2^l$.\big)} Since $\rho$ is proper, two arcs can intersect only at some of their common endpoints and in this case the two arcs are consecutive in the
sequence $(\upsilon_1,\dots,\upsilon_m)$. The number of arcs $m\geq 2$ because we assumed that~$s(a)$ and $s(b)$ are disjoint.

The face loops $\alpha_a$, $\gamma_a$ and $\alpha_b$, $\gamma_b$ involved in the gauge transformations $\mathbf{D}_a(X)$ and $\mathbf{D}_b(X)$ fail to commute only with a
few initial and final arcs. More precisely, let $\upsilon_j\colon s_{j-1}\to s_j$ and assume that $\upsilon_1$ is type 2. Then $s_1$ does not lie on $\alpha_a$,
otherwise two different sites, $s_0$ and $s_1$, belonged to both a type 0 and a type 2 face loop which is impossible in view of Figure~\ref{fig: flow}.
So neither endpoints of $\upsilon_2\cdots\upsilon_m$ belongs to $\alpha_a$. Furthermore, $\upsilon_2\cdots\upsilon_m$ never visits the site $s(a)=s_0$ because $\upsilon_1$ does
but $\rho$ is proper, hence a simple curve by Proposition~\ref{pro: Hol-Ophol}\,(ii). It follows that
\begin{equation}\label{G_a and v2...vm if v1 is type 2}
\upsilon_1\ \text{is type 2}\quad\Rightarrow\quad G_a(H)\ \text{commutes with} \ \Ophol_{ \upsilon_2\dots\upsilon_m}(\D(H)^*)
\end{equation}
by Theorem~\ref{thm: ribbon-loop commute}\,(B).
Now assume that $\upsilon_1$ is type 0. Then $s_1$ lies on $\alpha_a$ and $s_2$ does not because~$\upsilon_2$ is type 2. Therefore, \begin{equation}\label{G_a and v3...vm if v1 is type 0}
\upsilon_1\ \text{is type 0}\quad\Rightarrow\quad G_a(H)\ \text{commutes with} \ \Ophol_{ \upsilon_3\dots\upsilon_m}(\D(H)^*)
\end{equation}
again by Theorem~\ref{thm: ribbon-loop commute}\,(B). Similar arguments yield
\begin{gather}
\label{F_a and v2...vm if v1 is type 0}
\upsilon_1\ \text{is type 0}\quad\Rightarrow\quad F_a(H^*)\ \text{commutes with} \ \Ophol_{ \upsilon_2\dots\upsilon_m}(\D(H)^*),\\
\label{F_a and v3...vm if v1 is type 2}
\upsilon_1\ \text{is type 2}\quad\Rightarrow\quad F_a(H^*)\ \text{commutes with} \ \Ophol_{ \upsilon_3\dots\upsilon_m}(\D(H)^*).
\end{gather}
Similar conclusions can be drawn for commutativity of $F_b(H^*)$ and $G_b(H)$ with $\Ophol_{\upsilon_1\dots\upsilon_{m-1}}$ or $\Ophol_{\upsilon_1\dots\upsilon_{m-2}}$
but we shall not need them.

Concentrating on the relation (\ref{source gauge rel}) we have 6 different exchange relations to compute:

\textbf{Case $\boldsymbol{G_a}$ with $\boldsymbol{\upsilon_1}$ if $\boldsymbol{\upsilon_1}$ is type 0.} Let $n=|\Ou_0(a)|$. Then the arc is $\smash{\upsilon_1=\bigl(a_0^-,\bigl(T_0^{-1}a\bigr)_0^-,\dots,}\allowbreak\smash{\bigl(T_0^{-j+1}a\bigr)_0^-\bigr)}$
for some $0<j<n$. Since $\Ophol_{\upsilon_1}(h\ot\varphi)$ is proportional to $\varphi(1)$, it suffices to work with
$\Ophol_{\upsilon_1}(h\ot\eps)=P^S_{a,\dots,T_0^{-j+1}a}(h)$ where we use the notation $P_{c_1,\dots,c_i}(h):=P_{c_1}\bigl(h\p\bigr)\cdots P_{c_i}\bigl(h_{(i)}\bigr)$ and similarly for
$P^S_c=P_c\ci S$,
\begin{align*}
P^S_{a,\dots,T_0^{-j+1}a}(h)G_a(g)&{}
=P_{T_0^{n-j+1}a,\dots,T_0^na}(S(h))P_{T_0a,\dots,T_0^{n-j}a}\bigl(g\p\bigr)P_{T_0^{n-j+1}a,\dots,T_0^na}\bigl(g\pp\bigr)\\
&{}=P_{T_0a,\dots,T_0^{n-j}a}\bigl(g\p\bigr)P_{T_0^{n-j+1}a,\dots,T_0^na}\bigl(S(h)g\pp\bigr)\\
&{}=G_a\bigl(g\p\bigr)P_{T_0^{n-j+1}a,\dots,T_0^na}\bigl(S\bigl(g\pp\bigr)S(h)g\ppp\bigr)\\
&{}=G_a\bigl(g\p\bigr)P^S_{a,\dots,T_0^{-j+1}a}\bigl(S\bigl(g\ppp\bigr)hg\pp\bigr).
\end{align*}

\textbf{Case $\boldsymbol{G_a}$ with $\boldsymbol{\upsilon_2}$ if $\boldsymbol{\upsilon_1}$ is type 0.} Given $\upsilon_1$ as in the previous case the next arc~$\upsilon_2$ starts at \smash{$s\bigl(T_0^{-j}a\bigr)$} therefore
\smash{$\upsilon_2=\bigl(\bigl(T_0^{-j}a\bigr)_2^+,\bigl(T_2T_0^{-j}a\bigr)_2^+,\dots,\bigl(T_2^{k-1}T_0^{-j}a\bigr)_2^+\bigr)$} for some \smash{$0<k<\!\big|\Ou_2\bigl(T_0^{-j}a\bigr)\big|$}. Nontriviality of the commutator of
$\Ophol_{\upsilon_2}(1\ot\varphi)=Q_{T_0^{-j}a,\dots,T_2^{k-1}T_0^{-j}a}(\varphi)$ and $G_a$ comes from the first arrow of $\upsilon_2$ and from the
$(n-j)$-th arrow of $G_a$,
\begin{gather*}
Q_{T_0^{-j}a,\dots,T_2^{k-1}T_0^{-j}a}(\varphi)G_a(g)=Q_{T_0^{-j}a}\bigl(\varphi\p\bigr)G_a(g)Q_{T_2T_0^{-j}a,\dots,T_2^{k-1}T_0^{-j}a}\bigl(\varphi\pp\bigr)\\[1mm]
 \qquad{}=P_{T_0a,\dots,T_0^{n-j-1}a}\bigl(g\p\bigr) Q_{T_0^{-j}a}\bigl(\varphi\p\bigr) P_{T_0^{n-j}a}\bigl(g\pp\bigr)\\[1mm]
 \qquad\quad{}\times P_{T_0^{n-j+1}a,\dots,T_0^na}\bigl(g\ppp\bigr)Q_{T_2T_0^{-j}a,\dots,T_2^{k-1}T_0^{-j}a}\bigl(\varphi\pp\bigr)\\[1mm]
 \quad \ \ {}\stackrel{\eqref{Bd cap Cb}}{=} P_{T_0a,\dots,T_0^{n-j-1}a}\bigl(g\p\bigr)P_{T_0^{n-j}a}(\varphi\p\sweedl g\pp)Q_{T_0^{-j}a}\bigl(\varphi\pp\bigr)\\[1mm]
 \qquad\quad{}\times P_{T_0^{n-j+1}a,\dots,T_0^na}\bigl(g\ppp\bigr)Q_{T_2T_0^{-j}a,\dots,T_2^{k-1}T_0^{-j}a}\bigl(\varphi\ppp\bigr)\\[1mm]
 \qquad{}=P_{T_0a,\dots,T_0^{n-j}a}\bigl(g\p\bigr) P_{T_0^{n-j+1}a,\dots,T_0^na}\bigl(g\ppp\bigr) Q_{T_0^{-j}a,\dots,T_2^{k-1}T_0^{-j}a}\bigl(\varphi\sweedr g\pp\bigr).
\end{gather*}
This is not yet in the desired form, there is no $G_a$ on the left-hand side, but we wish to complete it in the presence of the holonomy of $\upsilon_1$:
\begin{gather*}
 P^S_{a,\dots,T_0^{-j+1}a}(h)Q_{T_0^{-j}a,\dots,T_2^{k-1}T_0^{-j}a}(\varphi)G_a(g)\\
 =P_{T_0^{n-j+1}a,\dots,T_0^na}(S(h)) P_{T_0a,\dots,T_0^{n-j}a}\bigl(g\p\bigr)
 P_{T_0^{n-j+1}a,\dots,T_0^na}\bigl(g\ppp\bigr)Q_{T_0^{-j}a,\dots,T_2^{k-1}T_0^{-j}a}\bigl(\varphi\sweedr g\pp\bigr)\\
 =P_{T_0a,\dots,T_0^{n-j}a}\bigl(g\p\bigr)P_{T_0^{n-j+1}a,\dots,T_0^na}\bigl(S(h)g\ppp\bigr)Q_{T_0^{-j}a,\dots,T_2^{k-1}T_0^{-j}a}\bigl(\varphi\sweedr g\pp\bigr)\\
 =G_a\bigl(g\p\bigr)P_{T_0^{n-j+1}a,\dots,T_0^na}\bigl(S\bigl(g\pp\bigr)S(h)g\pppp\bigr)Q_{T_0^{-j}a,\dots,T_2^{k-1}T_0^{-j}a}\bigl(\varphi\sweedr g\ppp\bigr)\\
 =G_a\bigl(g\p\bigr)P^S_{a,\dots,T_0^{-j+1}a}\bigl(S\bigl(g\pppp\bigr)hg\pp\bigr)Q_{T_0^{-j}a,\dots,T_2^{k-1}T_0^{-j}a}\bigl(\varphi\sweedr g\ppp\bigr).
\end{gather*}
Since $\Ophol_{\upsilon_1\upsilon_2}(\Phi)=\Ophol_{\upsilon_1}(h\ot\eps)\Ophol_{\upsilon_2}(1\ot\varphi)$, where $\Phi=h\ot\varphi$ and
\begin{gather*}
S\bigl(g\ppp\bigr)hg\p\ot\varphi\sweedr g\pp=\bigl\bra h\p\ot\xi_i\varphi\p\xi_j,\eps\ot g\bigr\ket S(x_j)h\pp x_i\ot\varphi\pp=\Phi\sweedr(\eps\ot g),
\end{gather*}
we obtain $\Ophol_{\upsilon_1\upsilon_2}(\Phi)G_a(g)=G_a\bigl(g\p\bigr)\Ophol_{\upsilon_1\upsilon_2}\bigl(\Phi\sweedr\bigl(\eps\ot g\pp\bigr)\bigr)$ which, together with
(\ref{G_a and v3...vm if v1 is type 0}), gives the result
\begin{equation} \label{Hol_rho G_a}
\Ophol_\rho(\Phi)G_a(g)=G_a\bigl(g\p\bigr)\Ophol_\rho\bigl(\Phi\sweedr\bigl(\eps\ot g\pp\bigr)\bigr),
\end{equation}
whenever $\upsilon_1$ is a type 0 arc.

\textbf{Case $\boldsymbol{F_a}$ with $\boldsymbol{\upsilon_1}$ if $\boldsymbol{\upsilon_1}$ is type 0.}
For $\upsilon_1$ as above, we can write
\begin{gather*}
 P^S_{a,\dots,T_0^{-j+1}a}(h)F_a(\psi)=P_a\bigl(S\bigl(h\p\bigr)\bigr)F_a(\psi)P_{T_0^{-1}a}\bigl(S\bigl(h\pp\bigr)\bigr)\cdots P_{T_0^{-j+1}a}\bigl(S\bigl(h_{(j)}\bigr)\bigr)\\
\quad \ {}\stackrel{\eqref{rhoinv-lamb}}{=}F_a\bigl(\psi\pp\bigr)P_a\bigl(S\bigl(\psi\p\bigr)\sweedl S\bigl(h\p\bigr)\bigr)P_{T_0^{-1}a}\bigl(S\bigl(h\pp\bigr)\bigr)\cdots P_{T_0^{-j+1}a}\bigl(S\bigl(h_{(j)}\bigr)\bigr)\\
 \qquad {}=F_a\bigl(\psi\pp\bigr)P^S_{a,\dots,T_0^{-j+1}a}\bigl(h\sweedr \psi\p\bigr).
\end{gather*}
Since
\begin{gather*}
\Phi\sweedr(\psi\ot 1)=\bigl\bra\psi,h\p\bigr\ket h\pp\ot\varphi=h\sweedr\psi\ot\varphi,\\
\Ophol_{\upsilon_1}(\Phi\sweedr(\psi\ot 1))= P^S_{a,\dots,T_0^{-j+1}a}(h\sweedr \psi)\cdot\bra\varphi,1\ket,
\end{gather*}
 using also (\ref{F_a and v2...vm if v1 is type 0}), we immediately obtain
\begin{equation}\label{Hol_rho F_a}
\Ophol_\rho(\Phi)F_a(\psi)=F_a\bigl(\psi\pp\bigr)\Ophol_\rho\bigl(\Phi\sweedr\bigl(\psi\p\ot 1\bigr)\bigr),
\end{equation}
which together with (\ref{Hol_rho G_a}) proves (\ref{source gauge rel}) at least if the first arc of $\rho$ is type 0.

\textbf{Case $\boldsymbol{G_a}$ with $\boldsymbol{\upsilon_1}$ if $\boldsymbol{\upsilon_1}$ is type 2.} Then $\upsilon_1=\bigl(a_2^+,\dots,\bigl(T_2^{j-1}a\bigr)_2^+\bigr)$ for some $0<j<n$ where $n=|\Ou_2(a)|$. Thus, \begin{align*}
\Ophol_{\upsilon_1}(\Phi)G_a(g)={}&\eps(h) Q_{a,\dots,T_2^{j-1}a}(\varphi)G_a(g)\\
={}&\eps(h) Q_a\bigl(\varphi\p\bigr)G_a(g)Q_{T_2a}\bigl(\varphi\pp\bigr)\cdots Q_{T_2^{j-1}a}\bigl(\varphi_{(j)}\bigr)\\
\stackrel{\eqref{QP comm rel}}{=}{}&\eps(h) G_a\bigl(g\p\bigr)Q_a\bigl(\varphi\p\sweedr g\pp\bigr)Q_{T_2a}\bigl(\varphi\pp\bigr)\cdots Q_{T_2^{j-1}a}\bigl(\varphi_{(j)}\bigr)\\
={}&G_a\bigl(g\p\bigr) \eps(h) Q_{a,\dots,T_2^{j-1}a}\bigl(\varphi\sweedr g\pp\bigr)\\
={}&G_a\bigl(g\p\bigr)\Ophol_{\upsilon_1}\bigl(\Phi\sweedr \bigl(\eps\ot g\pp\bigr)\bigr),
\end{align*}
which, together with (\ref{G_a and v2...vm if v1 is type 2}), is sufficient to conclude that (\ref{Hol_rho G_a}) holds also if $\upsilon_1$ is a type 2 arc.

\textbf{Case $\boldsymbol{F_a}$ with $\boldsymbol{\upsilon_1}$ if $\boldsymbol{\upsilon_1}$ is type 2.} With the same $\upsilon_1$ as in the previous case, we can write
\begin{align*}
Q_{a,\dots,T_2^{j-1}a}(\varphi)F_a(\psi)&{}=Q_{a,\dots,T_2^{j-1}a}\bigl(\varphi\psi\p\bigr) Q_{T_2^ja,\dots,T_2^{n-1}a}\bigl(\psi\pp\bigr)\\
&{}=Q_{T_2^ja,\dots,T_2^{n-1}a}\bigl(\psi\pppp\bigr)Q_{a,\dots,T_2^{j-1}a}\bigl(\psi\ppp\bigr)Q_{a,\dots,T_2^{j-1}a}\bigl(S\bigl(\psi\pp\bigr)\varphi\psi\p\bigr)\\
&{}=F_a\bigl(\psi\ppp\bigr)Q_{a,\dots,T_2^{j-1}a}\bigl(S\bigl(\psi\pp\bigr)\varphi\psi\p\bigr).
\end{align*}

\textbf{Case $\boldsymbol{F_a}$ with $\boldsymbol{\upsilon_2}$ if $\boldsymbol{\upsilon_1}$ is type 2.} With $\upsilon_1$ as in the previous two cases the next arc is
\smash{$\upsilon_2=\bigl(\bigl(T_2^ja\bigr)_0^-,\bigl(T_0^{-1}T_2^ja\bigr)_0^-,\dots,\bigl(T_0^{-k+1}T_2^ja\bigr)_0^-\bigr)$}, where $0<k<\big|\Ou_0\bigl(T_2^ja\bigr)\big|$. Non-commutativity of $F_a$ and $\Ophol_{\upsilon_2}$
is due to the first arrow of $\upsilon_2$ and to the arrow $\bigl(T_2^ja\bigr)_2^+$ of $\gamma_a$,
\begin{align*}
&P^S_{T_2^ja,\dots,T_0^{-k+1}T_2^ja}(h)F_a(\psi)=P_{T_0^{-k+1}T_2^ja,\dots,T_2^ja}(S(h)) Q_{a,\dots,T_2^{j-1}a}\bigl(\psi\p\bigr)Q_{T_2^ja,\dots,T_2^{n-1}a}\bigl(\psi\pp\bigr)\\
&\qquad {}=Q_{a,\dots,T_2^{j-1}a}\bigl(\psi\p\bigr)P_{T_0^{-k+1}T_2^ja,\dots,T_0^{-1}T_2^ja}(S\bigl(h\pp\bigr))P_{T_2^ja}\bigl(S\bigl(h\p\bigr)\bigr) Q_{T_2^ja}\bigl(\psi\pp\bigr)\\
&\qquad\quad{}\times Q_{T_2^{j+1}a,\dots,T_2^{n-1}a}\bigl(\psi\ppp\bigr)\\
&\quad \ \ {}\stackrel{\eqref{PQ lin}}{=} Q_{a,\dots,T_2^{j-1}a}\bigl(\psi\p\bigr)P_{T_0^{-k+1}T_2^ja,\dots,T_0^{-1}T_2^ja}\bigl(S\bigl(h\pp\bigr)\bigr)\\
&\qquad\quad{}\times Q_{T_2^ja}\bigl(\psi\pp\sweedr S\bigl(S\bigl(h\p\bigr)\pp\bigr)\bigr)P_{T_2^ja}\bigl(S\bigl(h\p\bigr)\p\bigr) Q_{T_2^{j+1}a,\dots,T_2^{n-1}a}\bigl(\psi\ppp\bigr)\\
&\qquad {}=Q_{a,\dots,T_2^{j-1}a}\bigl(\psi\p\bigr)Q_{T_2^ja,\dots,T_2^{n-1}a}\bigl(\psi\pp\sweedr h\p\bigr) P_{T_0^{-k+1}T_2^ja,\dots,T_2^ja}\bigl(S\bigl(h\pp\bigr)\bigr).
\end{align*}
Combining this with the result of the previous case, we have
\begin{gather*}
 \Ophol_{\upsilon_1\upsilon_2}(\Phi)F_a(\psi)=\Ophol_{\upsilon_1}\bigl(h\p\ot\xi_i\varphi\p\xi_j\bigr)\Ophol_{\upsilon_2}\bigl(S(x_j)h\pp x_i\ot\varphi\pp\bigr)F_a(\psi)\\
 \qquad {}=Q_{a,\dots,T_2^{j-1}a}(\xi_i\varphi\xi_j)P^S_{T_2^ja,\dots,T_0^{-k+1}T_2^ja}(S(x_j)hx_i)F_a(\psi)\\
 \qquad {}=Q_{a,\dots,T_2^{j-1}a}(\xi_i\varphi\xi_j)Q_{a,\dots,T_2^{j-1}a}\bigl(\psi\p\bigr)Q_{T_2^ja,\dots,T_2^{n-1}a}\bigl(\psi\pp\sweedr [S(x_j)hx_i]\p\bigr) \\
 \qquad\quad{}\times P_{T_0^{-k+1}T_2^ja,\dots,T_2^ja}\bigl(S\bigl([S(x_j)hx_i]\pp\bigr)\bigr)\\
 \qquad {}=Q_{a,\dots,T_2^{j-1}a}\bigl(\xi_i\varphi\xi_j\psi\p\bigr)Q_{T_2^ja,\dots,T_2^{n-1}a}\bigl(\psi\ppp\bigr)P^S_{T_2^ja,\dots,T_0^{-k+1}T_2^ja}\bigl([S(x_j)hx_i]\sweedr \psi\pp\bigr)
\end{gather*}
The Hopf algebraic identity
\[
\xi_i\varphi\xi_j\psi\p\ot[S(x_j)hx_i]\sweedr\psi\pp=\psi\pp\xi_i\varphi\xi_j\ot S(x_j)(h\sweedr\psi\p)x_i
\]
shows that the last line above equals to
\begin{align*}
&Q_{a,\dots,T_2^{j-1}a}\bigl(\psi\pp\xi_i\varphi\xi_j\bigr)Q_{T_2^ja,\dots,T_2^{n-1}a}\bigl(\psi\ppp\bigr)P^S_{T_2^ja,\dots,T_0^{-k+1}T_2^ja}\bigl(S(x_j)\bigl(h\sweedr\psi\p\bigr)x_i\bigr)\\
& \qquad{}=F_a\bigl(\psi\pp\bigr)Q_{a,\dots,T_2^{j-1}a}(\xi_i\varphi\xi_j)P^S_{T_2^ja,\dots,T_0^{-k+1}T_2^ja}\bigl(S(x_j)\bigl(h\sweedr\psi\p\bigr)x_i\bigr)\\
& \qquad{}=F_a\bigl(\psi\pp\bigr)\Ophol_{\upsilon_1}\bigl(\bigl(h\sweedr\psi\p\bigr)\p\ot \xi_i\varphi\p\xi_j\bigr)\Ophol_{\upsilon_2}\bigl(S(x_j)\bigl(h\sweedr\psi\p\bigr)\pp x_i\ot\varphi\pp\bigr)\\
& \qquad{}=F_a\bigl(\psi\pp\bigr)\Ophol_{\upsilon_1\upsilon_2}\bigl(h\sweedr\psi\p\ot\varphi\bigr)\\
& \qquad{}=F_a\bigl(\psi\pp\bigr)\Ophol_{\upsilon_1\upsilon_2}\bigl(\Phi\sweedr\bigl(\psi\p\ot 1\bigr)\bigr).
\end{align*}
Taking into account (\ref{F_a and v3...vm if v1 is type 2}), this proves (\ref{Hol_rho F_a}) for
$\rho$ having first arc of type~2.

The above cases together prove formula (\ref{source gauge rel}) for all proper left ribbons with disjoint source and target.

Formula (\ref{target gauge rel}) requires to consider another 6 cases but there is a shorter and more instructive way to prove it.
We can apply duality to view the holonomy of the
left ribbon $\rho$ as the holonomy of a right ribbon in another Kitaev model based on the Hopf algebra $H^*$. This duality has two ingredients: an algebra isomorphism
$I\colon \M(\Sigma,H)\to\M(\Sigma^*,H^*)$ and an isomorphism $J\colon \Qv(\D(P)^\sim)\to\Qv(\D(P^\backsim)^\sim)$ of quivers.
The $I$ is defined by $I(P_a(h)):=Q^*_{T_1a}(h)$, $I(Q_a(\varphi)):=P^*_a(\varphi)$ where the $P_a^*$ and $Q_a^*$ are the standard generators of $\M(\Sigma^*,H^*)$
like the $P_a$ and $Q_a$ have been for $\M(\Sigma,H)$ in Section~\ref{section3}. Comparing equations~\eqref{QP comm rel} and~\eqref{QPbar inv-comm rel},
it is easy to see that $I$ is an algebra isomorphism.
It maps Gauss' law operators to flux operators and vice versa,
\begin{align*}
I(G_a(h))=F^*_{T^{-1}_2a}(h),\qquad
I(F_a(\varphi))=G^*_{T^{-1}_2a}(\varphi),\qquad
I(\mathbf{D}_a(X))=\mathbf{D}^*_{T^{-1}_2a}(\Upsilon(X)),
\end{align*}
where
\begin{align*}
\Upsilon\colon\ \D(H)&\to\D(H^*),\\
\varphi\ot h&\mapsto h\pp\ot\varphi\pp\cdot\bigl\bra h\ppp,\varphi\p\bigr\ket\bigl\bra S\bigl(h\p\bigr),\varphi\ppp\bigr\ket
\end{align*}
is an isomorphism of algebras and antiisomorphism of coalgebras.

In order to construct $J$, we first define a bijection $j$ between the OCPMs associated to the arrow presentations $P$ and its dual $P^\backsim$ of (\ref{iduP}) by
\begin{align*}
j\colon \ \Sigma(\mathrm{P}) \to\Sigma(\iduP),\qquad
\Ou_0(a) \mapsto\, \stackrel{\raisebox{-1pt}{\scriptsize $\backsim$}}{\Ou}_2(T_1a),\qquad
\Ou_1(a) \mapsto\,\stackrel{\raisebox{-1pt}{\scriptsize $\backsim$}}{\Ou}_1(a),\qquad
\Ou_2(a) \mapsto\, \stackrel{\raisebox{-1pt}{\scriptsize $\backsim$}}{\Ou}_0(a),
\end{align*}
which satisfies
\begin{gather*}
 j\ci\del_0= \, \stackrel{\raisebox{-1pt}{\scriptsize $\backsim$}}{d}_1,\qquad j\ci d_0=\,\stackrel{\raisebox{-1pt}{\scriptsize $\backsim$}}{\del}_0,\qquad
 j\ci\del_1=\, \stackrel{\raisebox{-1pt}{\scriptsize $\backsim$}}{d}_0,\qquad j\ci d_1=\,\stackrel{\raisebox{-1pt}{\scriptsize $\backsim$}}{\del}_1.
\end{gather*}
The definition of $J$ is this
\begin{align*}
J\colon \ \Qv(\D(\mathrm{P})^\sim) \to\Qv\bigl(\D\bigl(\iduP\bigr)^\sim\bigr),\qquad\!\!
\bra v,f\ket \mapsto\bra j(f),j(v)\ket,\qquad\!\!
a_0^\pm \mapsto(T_1a)_2^\pm,\qquad\!\!
a_2^\pm \mapsto a_0^\pm.
\end{align*}
This can be shown to be a map of quivers by reading off the definition of $\nabla_i$ from (\ref{nabla}). For example,
\begin{align*}
\stackrel{\raisebox{-1pt}{\scriptsize $\backsim$}}{\nabla}_0\ci J\bigl(a_0^+\bigr)&{}=\, \stackrel{\raisebox{-1pt}{\scriptsize $\backsim$}}{\nabla}_0(T_1a)_2^+=\bigl\bra\stackrel{\raisebox{-1pt}{\scriptsize $\backsim$}}{\del}_0T_1a,\stackrel{\raisebox{-1pt}{\scriptsize $\backsim$}}{d}_0T_1a\bigr\ket=
\bigl\bra\stackrel{\raisebox{-1pt}{\scriptsize $\backsim$}}{\del}_1a,\stackrel{\raisebox{-1pt}{\scriptsize $\backsim$}}{d}_1a\bigr\ket\\
&{}=\bra j(d_1a),j(\del_0a)\ket=J(\bra\del_0a,d_1a\ket)=J\ci\nabla_0\bigl(a_0^+\bigr).
\end{align*}
This $J$ is one of the maps that realizes the intuitively clear idea that the complexes~$\D(\Sigma)^*$ and~$\D(\Sigma^*)^*$ are the ``same'' except relabelling the arrows
and repainting the blue and red faces to red and blue, respectively.

Now $J$ induces an isomorphism $J^*\colon \Path \D(\mathrm{P})^\sim\to\Path\D(\iduP)^\sim$ of groupoids which sends a~left ribbon $\rho\colon s(a)\to s(b)$ to a right ribbon
\smash{$J^*\rho\colon \stackrel{\raisebox{-1pt}{\scriptsize $\backsim$}}{s}\!\bigl(T_2^{-1}a\bigr)\to \, \stackrel{\raisebox{-1pt}{\scriptsize $\backsim$}}{s}\!\bigl(T_2^{-1}b\bigr)$}.
\big(The appearance of~$T_2^{-1}$ is due to the fact that the site labelling function
$a\mapsto s(a)$ satisfies \smash{$J\ci s=\,\stackrel{\raisebox{-1pt}{\scriptsize $\backsim$}}{s}\ci T_2^{-1}$}. As such, it belongs to the object map of the functor $J^*$.\big)
If $\Hol^{\circ\,\backsim}$ denotes the opholonomy of the dual model, then the duality formula says that for all curves $\rho$ of the original model
\begin{equation}\label{duality formula}
I\bigl(\Ophol_\rho(\Phi)\bigr)=\Hol^{\circ\,\backsim}_{J^*\rho}(\Upsilon^*(\Phi)),
\end{equation}
where $\Upsilon^*= \bigl(\Upsilon^{\mathsf{T}}\bigr)^{-1} \colon\D(H)^*\to\D(H^*)^*$ is the mapping 
\[
\Upsilon^*(h\ot\varphi)=\xi_k\varphi\xi_l\ot S(x_l)hx_k,
\]
which is an antiisomorphism of algebras and an isomorphism of coalgebras.

Applying the duality formula to (\ref{source gauge rel}), we obtain
\begin{align*}
\Hol^{\circ\,\backsim}_{J^*\rho}(\Upsilon^*(\Phi))\mathbf{D}^*_{T^{-1}_2a}(\Upsilon(X))
&{}=\mathbf{D}^*_{T^{-1}_2a}\bigl(\Upsilon\bigl(X\p\bigr)\bigr)\Hol^{\circ\,\backsim}_{J^*\rho}\bigl(\Upsilon^*\bigl(\Phi\pp\bigr)\bigr)\bigl\bra\Upsilon^*\bigl(\Phi\p\bigr),\Upsilon\bigl(X\pp\bigr)\bigr\ket\\
&{}=\mathbf{D}^*_{T^{-1}_2a}\bigl(\Upsilon(X)\pp\bigr)\Hol^{\circ\,\backsim}_{J^*\rho}\bigl(\Upsilon^*(\Phi)\pp) \bra\Upsilon^*(\Phi)\p,\Upsilon(X)\p\bigr\ket\\
&{}=\mathbf{D}^*_{T^{-1}_2a}\bigl(\Upsilon(X)\pp\bigr)\Hol^{\circ\,\backsim}_{J^*\rho}\bigl(\Upsilon^*(\Phi)\sweedr \Upsilon(X)\p\bigr).
\end{align*}
Therefore, \begin{equation*}
\Hol^{\circ\,\backsim}_{\gamma}(\Psi)\mathbf{D}^*_a(Y)=\mathbf{D}^*_a\bigl(Y\pp\bigr)\Hol^{\circ\,\backsim}_{\gamma}\bigl(\Psi\sweedr Y\p\bigr)
\end{equation*}
for all proper right ribbons $\gamma\colon \stackrel{\raisebox{-1pt}{\scriptsize $\backsim$}}{s}\!(a)\to$$\stackrel{\raisebox{-1pt}{\scriptsize $\backsim$}}{s}\!(b)$, for $Y\in\D(H^*)$ and for $\Psi\in\D(H^*)^*$. For the left ribbon
$\gamma^{-1}$, this implies
\begin{align*}
\mathbf{D}^*_a(Y)\Hol^{\circ\,\backsim}_{\gamma^{-1}}(\Psi)
&{}=\Hol^{\circ\,\backsim}_{\gamma^{-1}}\bigl(\Psi\p\bigr)\Hol^{\circ\,\backsim}_{\gamma}\bigl(\Psi\pp\bigr)\mathbf{D}^*_a(Y)\Hol^{\circ\,\backsim}_{\gamma^{-1}}\bigl(\Psi\ppp\bigr)\\
&{}=\Hol^{\circ\,\backsim}_{\gamma^{-1}}\bigl(\Psi\p\bigr)\mathbf{D}^*_a\bigl(Y\pp\bigr)\Hol^{\circ\,\backsim}_{\gamma}\bigl(\Psi\pp\sweedr Y\p\bigr)\Hol^{\circ\,\backsim}_{\gamma^{-1}}\bigl(\Psi\ppp\bigr)\\
&{}=\Hol^{\circ\,\backsim}_{\gamma^{-1}}\bigl(Y\p\sweedl \Psi\bigr)\mathbf{D}^*_a\bigl(Y\pp\bigr).
\end{align*}
Since this is true for all left ribbons $\gamma^{-1}$ with target $\stackrel{\raisebox{-1pt}{\scriptsize $\backsim$}}{s}\!(a)$ not only on $\D(\Sigma^*)^*$ and not only for~$H^*$ but on the dual
of the double of any OCPM $\Sigma$ and for any involutive f.d.\ Hopf algebra~$H$, we can apply it to the original $\Sigma$ and $H$ and we get the proof of
(\ref{target gauge rel}).
\end{proof}

For sake of completeness, we note here the formulas that hold instead of (\ref{source gauge transf}) and (\ref{target gauge transf})
if $\rho\colon s(a)\to s(b)$ is a proper right ribbon,
\begin{gather}
\Ophol_\rho(\Phi)\opra{a} X=\Ophol_\rho(\Phi\sweedr X),\\
X\opla{b}\Ophol_\rho(\Phi)=\Ophol_\rho(X\sweedl \Phi).
\end{gather}

\begin{cor}
Let $s(a)$ and $s(b)$ be disjoint sites and $\rho\colon s(a)\to s(b)$ be a proper left ribbon.
Then the space $\Ophol_\rho(\D(H)^*)$ of ribbon operators over $\rho$ equipped with
the actions $\la{b}$ and $\ra{a}$ is a~bimodule over $\D(H)$ such that
\[
\Ophol_\rho\colon\ \D(H)^*\to\Ophol_\rho(\D(H)^*)
\]
is a homomorphism of $\D(H)$-$\D(H)$ bimodules if we consider $\D(H)^*$ as the regular bimodule $\bra\D(H)^*,\sweedl,\sweedr\ket$.
This map is also a homomorphism of left $\D(H)$-module algebras and of right $\D(H)$-module algebras.

Similar holds for proper right ribbons $\rho\colon s(a)\to s(b)$ but then the bimodule structure is given by $\opla{b}$ and $\opra{a}$
and the algebra structure by $\D(H)^{*\,\op}$.
\end{cor}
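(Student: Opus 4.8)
The plan is to read Corollary~\ref{cor: ribb bim} as a formal consequence of Theorem~\ref{thm: ribb bim} together with the module-algebra structures on $\M$ recalled in Section~3, so that essentially no new computation is needed. I would first assemble four ingredients: (1) for a proper left ribbon $\rho$ the relation~(\ref{proper multiply}) gives $\Ophol_\rho(\Phi)\Ophol_\rho(\Psi)=\Ophol_\rho(\Phi\Psi)$, and since $\Ophol_\rho(1_{\D^*})=\one_\M$ this says $\Ophol_\rho$ is an algebra homomorphism $\D(H)^*\to\M$ whose image is a unital subalgebra; (2) the actions $\la{b}$ and $\ra{a}$ make $\M$ a left, resp.\ right, $\D(H)$-module algebra (Section~3); (3) Theorem~\ref{thm: ribb bim}, i.e.\ (\ref{source gauge transf}) and (\ref{target gauge transf}), reads $\Ophol_\rho(\Phi)\ra{a}X=\Ophol_\rho(\Phi\sweedr X)$ and $X\la{b}\Ophol_\rho(\Phi)=\Ophol_\rho(X\sweedl \Phi)$; (4) $\bra\D(H)^*,\sweedl,\sweedr\ket$ is the regular $\D(H)$-bimodule, which is a standard fact for a finite-dimensional Hopf algebra \cite{Montgomery}.

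For the left-ribbon case I would argue as follows. By ingredient~(3) the subspace $\Ophol_\rho(\D(H)^*)$ is carried into itself by both $\la{b}$ and $\ra{a}$; since restricting a module-algebra structure to a unital invariant subalgebra again gives a module-algebra structure, ingredients~(1)--(3) already yield that $\Ophol_\rho(\D(H)^*)$ is a left $\D(H)$-module algebra under $\la{b}$ and a right one under $\ra{a}$, and that $\Ophol_\rho$ is a homomorphism of left, resp.\ right, module algebras, the source being $\D(H)^*$ with the hit action $\sweedl$, resp.\ $\sweedr$. Next I would invoke the remark after~(\ref{flux}): disjointness of $s(a)$ and $s(b)$ means $\Ou_0(a)\neq\Ou_0(b)$ and $\Ou_2(a)\neq\Ou_2(b)$, hence the images of $\mathbf{D}_a$ and $\mathbf{D}_b$ commute in $\M$ and therefore so do $\ra{a}$ and $\la{b}$ on all of $\M$; this makes $\Ophol_\rho(\D(H)^*)$ a $\D(H)$-$\D(H)$-bimodule. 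Finally I would compose the two equations of ingredient~(3), obtaining $X\la{b}\bigl(\Ophol_\rho(\Phi)\ra{a}Y\bigr)=\Ophol_\rho\bigl(X\sweedl(\Phi\sweedr Y)\bigr)$ for all $X,Y\in\D(H)$, which together with ingredient~(4) is exactly the assertion that $\Ophol_\rho$ is a homomorphism of bimodules out of the regular bimodule.

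For a proper right ribbon $\rho:s(a)\to s(b)$ I would rerun the same argument with the inputs replaced by their right-ribbon versions: (\ref{proper multiply}) now reads $\Ophol_\rho(\Phi)\Ophol_\rho(\Psi)=\Ophol_\rho(\Psi\Phi)$, so $\Ophol_\rho$ becomes an algebra homomorphism once one passes to $\D(H)^{*\,\op}$ (equivalently, to $\M^\op$); the relevant module-algebra structure is the one on $\M^\op$ under $\opla{b}$ and $\opra{a}$; formulas~(\ref{source gauge transf - right ribb}) and~(\ref{target gauge transf - right ribb}) supply the invariance of $\Ophol_\rho(\D(H)^*)$ under these two actions; disjointness of the sites again makes $\opla{b}$ and $\opra{a}$ commute on $\M$; and the same composition of the two exchange relations gives the bimodule-homomorphism property out of the regular bimodule $\bra\D(H)^*,\sweedl,\sweedr\ket$. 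This yields the right-ribbon half of the corollary.

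I do not anticipate a genuine obstacle here — all the real work has already been done in Theorem~\ref{thm: ribb bim} — so the only places that require care are bookkeeping points: (a) verifying that a module-algebra action really does restrict to an invariant subalgebra, which is immediate once one records $\Ophol_\rho(1_{\D^*})=\one_\M$ so that the image is a unital subalgebra of $\M$; and (b) keeping the $\mathrm{op}$-conventions aligned between the two ribbon types, in particular that for right ribbons the ambient algebra is $\M^\op$ and the source algebra is $\D(H)^{*\,\op}$, which is precisely what converts the antimultiplicativity exhibited by~(\ref{proper multiply}) into an honest algebra homomorphism. A minor expository point is to make explicit which hit actions $\sweedl$, $\sweedr$ of $\D(H)$ on $\D(H)^*$ are meant, so that $\bra\D(H)^*,\sweedl,\sweedr\ket$ is literally the regular bimodule and matches the $\sweedl$, $\sweedr$ occurring on the right-hand sides of~(\ref{source gauge transf}) and~(\ref{target gauge transf}).
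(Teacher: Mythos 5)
Your argument is correct and follows the same route as the paper: Theorem~\ref{thm: ribb bim} gives invariance of the image and the left/right module-map property, disjointness of $s(a)$ and $s(b)$ makes $\ra{a}$ and $\la{b}$ commute (via the remark after~(\ref{flux}) that $\mathbf{D}_a$ and $\mathbf{D}_b$ have commuting images), and~(\ref{proper multiply}) supplies the module-subalgebra statement. The paper's proof is just a terser version of what you wrote.
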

\begin{proof}
That $\Ophol_\rho(\D(H)^*)$ is a left and right $\D(H)$-module and that the map is a left and right $\D(H)$-module map follow from
Theorem~\ref{thm: ribb bim}. Due to disjointness of the source and target sites, the left and right actions commute so
$\Ophol_\rho(\D(H)^*)$ is a bimodule. The last sentence follows from (\ref{proper multiply}) which shows
that $\Ophol_\rho(\D(H)^*)\subset\M$ is a left and right module subalgebra.
\end{proof}

Similar results can be obtained \cite{Cowtan-Majid} in a context of non-semisimple Hopf algebras.

We close the section with two more results on closed ribbon operators.
An opcurve $\gamma\colon s_0\to s_1$ is called closed if $s_0=s_1$.
If $\gamma=(d_1,\dots,d_n)$ is closed, then every cyclic permutation
$C^k\gamma=(d_{k+1},\dots,d_n,d_1,\dots,d_k)$ is also closed. It follows from the code word representation of ribbons that if the closed curve $\gamma$ happens to be a ribbon then the $C^k\gamma$ are also ribbons of the same type (left/right). Similarly, the cyclic permutations of proper closed ribbons are again proper as it is clear from
Definition~\ref{def: prop ribb}.

\begin{pro}
Let $\delta\colon s(a)\to s(a)$ and $\rho\colon s(a)\to s(a)$ be closed left ribbons of the following form \big(for $\Usep$ see Definition~$\ref{def: Usep}$\big):
\begin{gather*}
\delta=\bigl(a_0^-\bigr)\delta'\bigl(\bigl(T_2^{-1}a\bigr)_2^+\bigr),\qquad\text{where}\quad\delta'\Usep\rho,\\
\rho=\bigl(a_2^+\bigr)\rho'\bigl((T_0a)_0^-\bigr),\qquad\text{where}\quad\rho'\Usep\lambda,
\end{gather*}
so that the only intersection of $\delta$ and $\rho$ is the site $s(a)$. $($This is possible if the genus of $[\Sigma]$ is at least $1.)$ Then
\begin{equation}\label{longi-merid}
\Ophol_\delta(\Psi)\Ophol_\rho(\Phi)=\Ophol_\rho(R_{2'}\sweedl \Phi\sweedr R_1)\Ophol_\delta(R_{1'}\sweedl\Psi\sweedr R_2)
\end{equation}
for all $\Phi,\Psi\in\D(H)^*$ where $R=R_1\ot R_2\equiv R_{1'}\ot R_{2'}$ is the $R$-matrix of $\D(H)$.
\end{pro}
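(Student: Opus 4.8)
The plan is to expand both ribbon opholonomies by the definition of opholonomy, use the two $U$-separation hypotheses to isolate the only two edges along which $\delta$ and $\rho$ genuinely interact, feed those interactions into the length-one exchange relations \eqref{ophol exch rel1}--\eqref{ophol exch rel2}, and reassemble. First, by \eqref{Ophol long} and the displayed factorisations,
\[
\Ophol_\delta(\Psi)=\Ophol_{(a_0^-)}(\Psi\p)\,\Ophol_{\delta'}(\Psi\pp)\,\Ophol_{((T_2^{-1}a)_2^+)}(\Psi\ppp),\qquad
\Ophol_\rho(\Phi)=\Ophol_{(a_2^+)}(\Phi\p)\,\Ophol_{\rho'}(\Phi\pp)\,\Ophol_{((T_0a)_0^-)}(\Phi\ppp).
\]
By \eqref{Ophol}, $\Ophol_{(a_0^-)}$ is a $P_a$-operator and $\Ophol_{(a_2^+)}$ a $Q_a$-operator on the edge algebra at $\Ou_1(a)$, while $\Ophol_{((T_2^{-1}a)_2^+)}$ is a $Q_{T_2^{-1}a}$-operator and $\Ophol_{((T_0a)_0^-)}$ a $P_{T_0a}$-operator on the edge algebra at $\Ou_1(T_2^{-1}a)=\Ou_1(T_0a)$ (equal since $T_0a=T_1T_2^{-1}a$), and $\Ou_1(a)\ne\Ou_1(T_0a)$ by Lemma \ref{lem: OCSC}. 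Consequently each of the nine cross-pairs formed by one factor of $\Ophol_\delta(\Psi)$ and one of $\Ophol_\rho(\Phi)$ commutes — either because the two operators lie in different edge algebras, or, for the pairs involving $\Ophol_{\delta'}$ or $\Ophol_{\rho'}$, because of $\delta'\Usep\rho$, $\rho'\Usep\delta$ and \eqref{U-sep commute} — with exactly two exceptions: the ``colliding'' pairs $\bigl(\Ophol_{(a_0^-)},\Ophol_{(a_2^+)}\bigr)$ at $\Ou_1(a)$ and $\bigl(\Ophol_{((T_2^{-1}a)_2^+)},\Ophol_{((T_0a)_0^-)}\bigr)$ at $\Ou_1(T_0a)$.

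Using these cross-commutations (never moving $\Ophol_{\delta'}$ past $\Ophol_{(a_0^-)}$ or $\Ophol_{((T_2^{-1}a)_2^+)}$, nor $\Ophol_{\rho'}$ past $\Ophol_{(a_2^+)}$ or $\Ophol_{((T_0a)_0^-)}$), one brings $\Ophol_\delta(\Psi)\Ophol_\rho(\Phi)$ to the shape
\[
\bigl[\Ophol_{(a_0^-)}(\Psi\p)\Ophol_{(a_2^+)}(\Phi\p)\bigr]\,\bigl[\Ophol_{\delta'}(\Psi\pp)\Ophol_{\rho'}(\Phi\pp)\bigr]\,\bigl[\Ophol_{((T_2^{-1}a)_2^+)}(\Psi\ppp)\Ophol_{((T_0a)_0^-)}(\Phi\ppp)\bigr].
\]
Now apply \eqref{ophol exch rel1} to the first bracket and \eqref{ophol exch rel2}, with $a$ replaced by $T_0a$ and the presentation identities $T_1T_0a=T_0T_2T_0a=T_2^{-1}a$, to the third bracket. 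Writing $R=R_1\ot R_2$ and a second copy $R'=R_{1'}\ot R_{2'}$ of the $R$-matrix of $\D(H)$, the first bracket becomes $\Ophol_{(a_2^+)}(\Phi\p\sweedr R_1)\,\Ophol_{(a_0^-)}(\Psi\p\sweedr R_2)$ and the third becomes $\Ophol_{((T_0a)_0^-)}(R_{2'}\sweedl\Phi\ppp)\,\Ophol_{((T_2^{-1}a)_2^+)}(R_{1'}\sweedl\Psi\ppp)$. A second round of purely cross-commutations — legitimate since after the exchanges these factors still sit on the same edge algebras and still carry the arrows $a_0^-,(T_0a)_0^-,(T_2^{-1}a)_2^+$, so all the needed commutations persist — collects the three $\rho$-factors $\Ophol_{(a_2^+)}(\Phi\p\sweedr R_1),\ \Ophol_{\rho'}(\Phi\pp),\ \Ophol_{((T_0a)_0^-)}(R_{2'}\sweedl\Phi\ppp)$ to the left and the three $\delta$-factors $\Ophol_{(a_0^-)}(\Psi\p\sweedr R_2),\ \Ophol_{\delta'}(\Psi\pp),\ \Ophol_{((T_2^{-1}a)_2^+)}(R_{1'}\sweedl\Psi\ppp)$ to the right.

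It remains to recognise these two triple products. Since $\sweedl$ and $\sweedr$ are the regular $\D(H)$-bimodule (coregular) actions on $\D(H)^*$, they satisfy $\cop_{\D^*}(\Phi\sweedr x)=(\Phi\p\sweedr x)\ot\Phi\pp$ and $\cop_{\D^*}(x\sweedl\Phi)=\Phi\p\ot(x\sweedl\Phi\pp)$, so the iterated coproduct of $\D(H)^*$ applied to $R_{2'}\sweedl\Phi\sweedr R_1$, resp. $R_{1'}\sweedl\Psi\sweedr R_2$, yields precisely $(\Phi\p\sweedr R_1)\ot\Phi\pp\ot(R_{2'}\sweedl\Phi\ppp)$, resp. $(\Psi\p\sweedr R_2)\ot\Psi\pp\ot(R_{1'}\sweedl\Psi\ppp)$ — exactly the three arguments of the left, resp. right, triple product; by \eqref{Ophol long} these equal $\Ophol_\rho(R_{2'}\sweedl\Phi\sweedr R_1)$ and $\Ophol_\delta(R_{1'}\sweedl\Psi\sweedr R_2)$, which is \eqref{longi-merid}. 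I expect the main obstacle to be the commutation step: one must argue carefully, from $\delta'\Usep\rho$, $\rho'\Usep\delta$, the hypothesis that $s(a)$ is the only common vertex of $\delta$ and $\rho$, and Lemma \ref{lem: OCSC}, that no interaction beyond the two at $\Ou_1(a)$ and $\Ou_1(T_0a)$ is hidden in the bulk arcs $\delta'$, $\rho'$, and to keep the two independent $R$-matrix copies and the iterated coproducts correctly bookkept through both exchanges.
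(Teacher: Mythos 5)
Your proof is correct and follows essentially the same route as the paper's: expand both opholonomies into three factors, isolate the two colliding edge pairs $(a_0^-,a_2^+)$ at $\Ou_1(a)$ and $((T_2^{-1}a)_2^+,(T_0a)_0^-)$ at $\Ou_1(T_0a)=\Ou_1(T_2^{-1}a)$, apply \eqref{ophol exch rel1} and \eqref{ophol exch rel2} to those two pairs (the latter with $a\mapsto T_0a$ and $T_1T_0a=T_2^{-1}a$), then recombine using the coregularity identities $\cop_{\D^*}(\Phi\sweedr x)=(\Phi\p\sweedr x)\otimes\Phi\pp$ and $\cop_{\D^*}(x\sweedl\Phi)=\Phi\p\otimes(x\sweedl\Phi\pp)$. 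The only cosmetic difference is that you first collect the three colliding pairs side by side before exchanging, whereas the paper exchanges as it goes, keeping each line's internal order fixed; the bookkeeping of which factor must commute past which is handled equally rigorously in both. (You correctly read the second $\Usep$-hypothesis as $\rho'\Usep\delta$; the $\lambda$ in the statement is an obvious typo.)
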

\begin{proof}
For any curve $\gamma$ we abbreviate $\Ophol_\gamma$ by $H_\gamma$.
The strategy of the proof is similar to that of Lemma~\ref{lem: doubles in M}. We decompose the opholonomies into 3 terms,
\begin{align*}
&H_\delta(\Psi)=H_{(a_0^-)}\bigl(\Psi\p\bigr)\,H_{\delta'}\bigl(\Psi\pp\bigr)\,H_{((T_2^{-1}a)_2^+)}\bigl(\Psi\ppp\bigr),\\
&H_\rho(\Phi)=H_{(a_2^+)}\bigl(\Phi\p\bigr)\,H_{\rho'}\bigl(\Phi\pp\bigr)\,H_{((T_0a)_0^-)}\bigl(\Phi\ppp\bigr)
\end{align*}
with the middle terms commuting with all terms of the other line. Then we exchange~$H_\delta$ and~$H_\rho$ in such a way that the order of terms
of the same line remains intact in every step. Using the identities (\ref{ophol exch rel1}) and (\ref{ophol exch rel2}), we can write
\begin{align*}
H_\delta(\Psi)H_\rho(\Phi)&{}=H_{(a_0^-)}\bigl(\Psi\p\bigr)H_{(a_2^+)}\bigl(\Phi\p\bigr)H_{\rho'}\bigl(\Phi\pp\bigr)\\
&\quad{}\cdot H_{\delta'}\bigl(\Psi\pp\bigr)H_{((T_2^{-1}a)_2^+)}\bigl(\Psi\ppp\bigr)H_{((T_0a)_0^-)}\bigl(\Phi\ppp\bigr)\\
&{}=H_{(a_2^+)}\bigl(\Phi\p\sweedr R_1\bigr)H_{(a_0^-)}\bigl(\Psi\p\sweedr R_2\bigr) H_{\rho'}\bigl(\Phi\pp\bigr)H_{\delta'}\bigl(\Psi\pp\bigr)\\
&\quad{}\cdot H_{((T_0a)_0^-)}\bigl(R_{2'}\sweedl\Phi\ppp\bigr) H_{((T_2^{-1}a)_2^+)}\bigl(R_{1'}\sweedl\Psi\ppp\bigr)\\
&{}=H_{(a_2^+)}\bigl(\Phi\p\sweedr R_1\bigr) H_{\rho'}\bigl(\Phi\pp\bigr) H_{((T_0a)_0^-)}\bigl(R_{2'}\sweedl\Phi\ppp\bigr)\\
&\quad{}\cdot H_{(a_0^-)}\bigl(\Psi\p\sweedr R_2\bigr)H_{\delta'}\bigl(\Psi\pp\bigr) H_{((T_2^{-1}a)_2^+)}\bigl(R_{1'}\sweedl\Psi\ppp\bigr)\\
&{}=H_\rho\bigl(R_{2'}\sweedl \Phi\sweedr R_1\bigr) H_\delta\bigl(R_{1'}\sweedl\Psi\sweedr R_2\bigr),
\end{align*}
which completes the proof of (\ref{longi-merid}).
\end{proof}

A closed ribbon version of (\ref{source gauge transf}) and (\ref{target gauge transf}) can be obtained as follows. First, we need a lemma.
\begin{lem} \label{lem: closed rib=rib rib}
Let $\rho\colon s(a)\to s(a)$ be a non-empty closed proper left ribbon which is not a face loop. Then there exist proper left ribbons $\rho_1\colon s(a)\to s(b)$ and
$\rho_2\colon s(b)\to s(a)$ with disjoint $s(a)$ and $s(b)$ such that $\rho=\rho_1\rho_2$.
\end{lem}
\begin{proof}
Let $\rho=\upsilon_1\cdots\upsilon_m$ be the arc decomposition as in the prof of Theorem~\ref{thm: ribb bim}. Since $\rho$ is not a face loop, $m\geq 2$. Let
$s(a_0)\rarr{\upsilon_1}s(a_1)\rarr{\upsilon_2}s(a_2)$, where $a_0=a$. Then $s(a_1)$ is neighbour to~$s(a_0)$ and~$s(a_2)$ is disjoint from $s(a_0)$. Similarly, the sites of
\smash{$s(a_{m-2})\rarr{\upsilon_{m-1}}s(a_{m-1})\rarr{\upsilon_m}s(a_m)$}, where $a_m=a$, satisfy that $s(a_{m-1})$ is neighbour to $s(a_m)$ and $s(a_{m-2})$ is disjoint from
$s(a_m)$. This already excludes $m=2$ and $m=3$. So we can take $\rho_1=\upsilon_1\upsilon_2$ and $\rho_2=\upsilon_3\cdots\upsilon_m$.
\end{proof}

\begin{pro}
Let $\rho\colon s(a)\to s(a)$ be a non-empty closed proper left ribbon. Then
\begin{equation}
\label{D(X) on closed ribbon}
\mathbf{D}_a(X)\Hol_\rho(\Phi)=\Hol_\rho\bigl(X\p\sweedl\Phi\sweedr S_\D\bigl(X\ppp\bigr)\bigr)\mathbf{D}_a\bigl(X\pp\bigr)
\end{equation}
holds true for every $X\in\D(H)$ and $\Phi\in\D(H)^*$.
\end{pro}
\begin{proof}
First, we assume that $\rho$ is not a face loop.
Take a decomposition
\[
\rho=s(a)\rarr{\rho_1}s(b)\rarr{\rho_2} s(a)
\]
 provided by Lemma~\ref{lem: closed rib=rib rib} with disjoint sites $s(a)$ and $s(b)$.
Using Theorem~\ref{thm: ribb bim},{\samepage
\begin{align*}
\mathbf{D}_a(X)\Hol_\rho(\Phi)&{}=\mathbf{D}_a(X)\Hol_{\rho_1}\bigl(\Phi\p\bigr)\Hol_{\rho_2}\bigl(\Phi\pp\bigr)\\
&{}=\Hol_{\rho_1}\bigl(\Phi\p\sweedr S_\D\bigl(X\pp\bigr)\bigr)\mathbf{D}_a\bigl(X\p\bigr)\Hol_\rho\bigl(\Phi\pp\bigr)\\
&{}=\Hol_{\rho_1}\bigl(\Phi\p\sweedr S_\D\bigl(X\ppp\bigr)\bigr)\Hol_\rho\bigl(X\p\sweedl\Phi\pp\bigr)\mathbf{D}_a\bigl(X\pp\bigr)\\
&{}=\Hol_\rho\bigl(X\p\sweedl\Phi\sweedr S_\D\bigl(X\ppp\bigr)\bigr)\mathbf{D}_a\bigl(X\pp\bigr)
\end{align*}
the statement is proven.}

If $\rho$ is a face loop, then $\rho$ is either $\gamma_a$ or $\alpha^{-1}_a$. Assume $\rho=\gamma_a$. Then by Proposition~\ref{pro: face loop holonomies}
\[
\Hol_\rho(h\ot\varphi)=\eps(h)F_a(\varphi)=\mathbf{D}_a(\varphi\ot 1)\,\eps(h),
\]
so the left-hand side of (\ref{D(X) on closed ribbon}) with $X=\psi\ot g$ and $\Phi=h\ot\varphi$ can be written as
\begin{gather*}
\mathbf{D}_a(X)\Hol_\rho(\Phi)=D_a(X(\varphi\ot 1)\eps(h)\\ \hphantom{\mathbf{D}_a(X)\Hol_\rho(\Phi)}{}
=\mathbf{D}_a\bigl(\psi\varphi\pp\ot g\pp\bigr)\cdot\bigl\bra\varphi\ppp,g\p\bigr\ket \bigl\bra\varphi\p,S\bigl(g\ppp\bigr)\bigr\ket\eps(h).
\end{gather*}
For the right-hand side, a much longer computation is necessary,
\begin{gather*}
 \Hol_\rho\bigl(X\p\sweedl\Phi\sweedr S_\D\bigl(X\ppp\bigr)\bigr)\mathbf{D}_a\bigl(X\pp\bigr)\\
\qquad{}= \Hol_\rho\bigl(\big[\underset{h_{ji}}{\underbrace{S(x_j)h\pp x_i}} \ot\varphi\pp\big]\p\bigr)\bigl\bra\big[S(x_j)h\pp x_i \ot\varphi\pp\big]\pp,X\p\bigl\ket\\
\qquad\quad{}\times \bigl\bra h\p\ot\xi_i\varphi\p\xi_j,S_\D\bigl(X\ppp\bigr)\bigl\ket\mathbf{D}_a\bigl(X\pp\bigr)\\
\qquad{}= \Hol_\rho\bigl(h_{ji(1)}\ot\xi_k\varphi\pp\xi_l\bigr)\bigl\bra S(x_l)h_{ji(2)}x_k\ot\varphi\ppp,X\p\bigl\ket\\
\qquad\quad{}\times \bigl\bra h\p\ot\xi_i\varphi\p\xi_j,S_\D\bigl(X\ppp\bigr)\bigl\ket\mathbf{D}_a\bigl(X\pp\bigr) \\
\qquad{}= \mathbf{D}_a\bigl(\xi_k\varphi\pp\xi_l\ot 1\bigr)\bigl\bra S(x_l)h_{ji}x_k\ot\varphi\ppp,\psi\ppp\ot g\p\bigl\ket\\
\qquad\quad{}\times \bigl\bra x_n S\bigl(h\p\bigr)S(x_m)\ot\xi_mS(\xi_j)S\bigl(\varphi\p\bigr)S(\xi_i)\xi_n,\psi\p\ot g\ppp\bigl\ket\mathbf{D}_a\bigl(\psi\pp\ot g\pp\bigr)\\
\qquad{}= \bigl\bra\psi\ppp,S(x_l)S(x_j)h\pp x_ix_k\bigl\ket\bigl\bra\varphi\ppp,g\p\bigl\ket\bigl\bra\psi\p,x_nS\bigl(h\p\bigr)S(x_m)\bigl\ket\\
\qquad\quad{}\times \bigl\bra\xi_mS(\xi_j)S\bigl(\varphi\p\bigr)S(\xi_i)\xi_n,g\ppp\bigl\ket\mathbf{D}_a\bigl(\xi_k\varphi\pp\xi_l\psi\pp\ot g\pp\bigr)\\
\qquad {}= \bigl\bra\psi_{(7)},h\pp\bigl\ket\bigl\bra\varphi\ppp,g\p\bigl\ket\bigl\bra\psi\pp,S\bigl(h\p\bigr)\bigl\ket\\
\qquad\quad{}\times \bigl\bra S\bigl(\psi\ppp\bigr)\psi_{(6)}S\bigl(\varphi\p\bigr)S\bigl(\psi_{(8)}\bigr)\psi\p,g\ppp\bigl\ket\mathbf{D}_a\bigl(\psi_{(9)}\varphi\pp S\bigl(\psi\ppppp\bigr)\psi\pppp\ot g\pp\bigr)\\
\qquad {}= \eps(h)\bigl\bra\varphi\ppp,g\p\bigl\ket\bigl\bra S\bigl(\varphi\p\bigr),g\ppp\bigl\ket \mathbf{D}_a\bigl(\psi\varphi\pp\ot g\pp\bigr).
\end{gather*}
The case of $\rho=\alpha^{-1}_a$ can be proven by a similar calculation.
\end{proof}

Note that (\ref{D(X) on closed ribbon}) is identical to the multiplication rule in the double of the double, $\D(\D(H))$.

\section{Homotopy}\label{section8}

In order to explain ``topological invariance'' of the Kitaev model, the first step is to look for an appropriate notion of homotopy of curves.
Eventually, we want to use homotopy on the complex~$\D(\Sigma)^*$ but the very notion should be given for an arbitrary $\Sigma$.
\begin{defi}\label{def: htp}
Let $\Sigma$ be a connected OCPM with arrow presentation $\bra \Arr, T_0,T_2\ket$.
\begin{enumerate}\itemsep=0pt
\item Let $f$ be a face of $\Sigma$. An $f$-loop $\lambda$ is a closed opcurve $\bigl(a,T_2a,T_2^2a,\dots\bigr)$ or its inverse where $a\in\Arr$ is such that
$\Ou_2(a)=f$.
The set of $f$-loops for all $f\in\Sigma^2$ are called face loops.
\item A lasso $\vartheta$ is a conjugate of a face loop, i.e., $\vartheta=\sigma\lambda\sigma^{-1}$, where $\lambda$ is a face loop and $\sigma$ is
arbitrary, provided $\del_1\sigma=\del_0\lambda$.
If $\lambda$ is an $f$-loop then $\vartheta$ is called an $f$-lasso.
\item Let $\alpha$, $\beta$ be parallel opcurves, i.e., $\del_i\alpha=\del_i\beta$, $i=0,1$. A (combinatorial) homotopy from~$\alpha$ to~$\beta$ is a
sequence of lassos
$(\vartheta_N,\dots,\vartheta_1)$ such that
\[
\beta\vartheta_N\dots\vartheta_2\vartheta_1=\alpha\qquad\text{as paths.}
\]
Two opcurves are called homotopic, denoted by $\alpha\sim\beta$, if they are parallel and there is a~homotopy from one of them to the other.
\item A contraction of a closed curve $\alpha\colon v\to v$ is a homotopy from $\alpha$ to the trivial curve $(v)$. A~closed curve is contractible if has a contraction.
\item If $(\vartheta_N,\dots,\vartheta_1)\colon \alpha\to\beta$ is a homotopy with $\vartheta_i$ being an $f_i$-lasso, then the support of this homotopy is the set of faces occurring in the list~$(f_i)$.
\end{enumerate}
\end{defi}

\begin{rmk}
There are op-versions of the above definition of homotopy of opcurves:
\begin{itemize}\itemsep=0pt
\item In ophomotopy of opcurves $\alpha\to\beta$ means $\alpha\vartheta_1\cdots\vartheta_N=\beta$.
\item In homotopy of curves $\alpha\to\beta$ means $\vartheta_N\ci\cdots\ci\vartheta_1\ci\alpha=\beta$.
\item In ophomotopy of curves $\alpha\to\beta$ means $\vartheta_1\ci\cdots\ci\vartheta_N\ci\beta=\alpha$.
\end{itemize}
\end{rmk}

\begin{sch}
Let $\Sigma$ be a connected OCPM. Homotopy of opcurves on $\Sigma$ satisfies the following elementary properties:
\begin{enumerate}\itemsep=0pt
\item If $\boldsymbol{\vartheta}=(\vartheta_N,\dots,\vartheta_1)$ is a homotopy $\alpha\to\beta$ then
\smash{$\boldsymbol{\vartheta}^{-1}=\bigl(\vartheta_1^{-1},\dots,\vartheta_N^{-1}\bigr)$} is a homotopy $\beta\to\alpha$.
\item If $\alpha\rarr{\boldsymbol{\vartheta}}\beta\rarr{\boldsymbol{\vartheta}'}\gamma$ are homotopies, then the composition
\[
\boldsymbol{\vartheta}'\ci\boldsymbol{\vartheta}:=\bigl(\vartheta'_M,\dots,\vartheta'_1,\vartheta_N,\dots,\vartheta_1\bigr)
\]
is a homotopy $\alpha\to\gamma$.
\item If two opcurves are equal as paths, then they are homotopic.
\item Homotopy is an equivalence relation on the set of opcurves and each path is the subset of a unique homotopy class.
\item A homotopy $\alpha\to\beta$ is the same as a contraction of $\beta^{-1}\alpha$.
\item If $\boldsymbol{\vartheta}\colon\alpha\to\beta$ is a homotopy, then $\bigl(\beta\vartheta_N\beta^{-1},\dots\beta\vartheta_1\beta^{-1}\bigr)$ is a
homotopy $\beta^{-1}\to\alpha^{-1}$.
\item Let
\[
u\rarr{\alpha}v\pair{\beta}{\gamma}w\rarr{\delta}z
\]
 be opcurves and let $\boldsymbol{\vartheta}\colon \beta\to\gamma$ be a homotopy. Then
 $\bigl(\dots,\delta^{-1}\vartheta_i\delta,\dots\bigr)$ is a homotopy $\alpha\beta\delta\to\alpha\gamma\delta$.
\item If $(\vartheta_N,\dots,\vartheta_1)\colon\alpha\to\beta$ is a homotopy and $p\in S_N$ is any permutation, then there exist closed curves $\pi_i$ such that
$\vartheta'_i:=\pi_i\vartheta_{p(i)}\pi_i^{-1}$ defines a homotopy $(\vartheta'_N,\dots,\vartheta'_1)\colon\alpha\to\beta$.
\end{enumerate}
\end{sch}

The set of homotopy classes of opcurves together with concatenation is a groupoid which is the opposite of the traditional fundamental groupoid
$\pi_1(\Sigma)$. It is evident from Definition~\ref{def: htp}, although its formalization could be tedious, that this groupoid is the same as the
fundamental groupoid $\pi_1([\Sigma])$ of the geometric realization of the complex $\Sigma$.

The notion of contraction defined above does not reflect the idea that a contractible curve bounds a disk and the support of the contraction is the disk itself. As a matter of fact, any finite set of faces can be the support of a contraction. So one should study simple curves and their simple homotopies.
But before we would dig too deeply into the subject of lasso homotopy we should 
remind ourselves of our main goal which is the algebra of ribbon operators on~$\D(\Sigma)^*$.
It turns out that the use of lassos in holonomy operators is extremely inconvenient. The lassos~$\sigma\lambda\sigma^{-1}$ are not proper
ribbons and the long tails $\sigma$ introduce serious non-commutativities which practically obstructs any computation.
We need some other kind of homotopy specialized to ribbon curves on $\D(\Sigma)^*$ by means of which we can deform (proper) ribbons in such a way that the
curve stays a (proper) ribbon in each step.

Let us introduce the following open ``lassos'' (Figure~\ref{fig: kappa-lambda}):
\begin{align}
\label{kappa_a}
&\kappa_a:=\bigl((T_1a)_2^+,\bigl(T_0^{m-1}a\bigr)_0^-,\dots,(T_0a)_0^-,a_2^+\bigr)\colon \ s(T_1a)\to s(T_2a),\\
\label{lambda_a}
&\lambda_a:=\bigl((T_1a)_0^-,(T_2a)_2^+,\dots,\bigl(T_2^{n-1}a\bigr)_2^+,a_0^-\bigr)\colon \ s(T_1a)\to s\bigl(T_0^{-1}a\bigr),
\end{align}
where $m=|\Ou_0(a)|$ and $n=|\Ou_2(a)|$. Note that $\kappa_a,\lambda_a\in\Ribb^\circ_L$.

\begin{figure}[t]\centering
\parbox{200pt}{
\begin{picture}(160,140)(40,20)
\thicklines
{\color{blue}\polyline(160,40)(120,40)(90,20)(40,50)(80,100)(120,80)(160,80)
\put(162,40){\vector(-1,0){40}}
\put(122,80){\vector(1,0){40}}
\put(90,60){\circle*{5}}
\put(50,50){$\kappa_a$}}
{\color{red}\polyline(160,40)(160,80)(200,110)(190,150)(140,155)(100,120)(120,80)(120,40)
\put(160,40){\vector(0,1){40}}
\put(120,80){\vector(0,-1){40}}
\put(150,120){\circle*{5}}
\put(180,120){$\lambda_a$}}
\put(120,80){\circle*{5}}\put(123,68){$s(a)$}
{\color{melegzold}\put(140,60){\circle*{5}}}
\end{picture}
}
\caption{The basic curves of ribbon homotopy.}
\label{fig: kappa-lambda}
\end{figure}
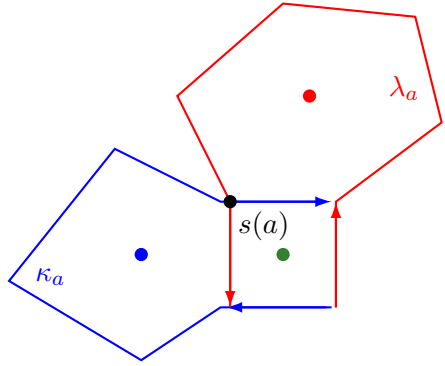

Unlike the lassos the $\kappa$-s and $\lambda$-s cannot be nicely
composed. So the strategy in ribbon curve homotopy is not composing the elementary paths, it is more like the procedure of grafting trees in which
$\kappa_a$ and $\lambda_a$ play the role of the grafts.

\begin{rmk}
Speaking about subcurves and subpaths, in general, requires some care. For opcurves~$\sigma$ and~$\rho$, the relation $\sigma\subset\rho$ means the
existence of opcurves $\rho_1$ and $\rho_2$ such that $\rho=\rho_1\sigma\rho_2$ as opcurves, hence no reduction is allowed. For paths of opcurves,
$\sigma\subset\rho$ means the existence of paths $\rho_1$ and $\rho_2$ such that $\rho=\rho_1\sigma\rho_2$ as paths.
In case of both $\sigma$ and $\rho$ are ribbons (of the same type), the relation $\sigma\subset\rho$ for the ribbon opcurves is stronger than the
same relation for their paths. However, if we think more categorically and utilize the fact that $\Ribb^\circ_{L/R}$ is a~category for concatenation, then
we can use the notion of subribbon as a third possibility. For left ribbons, e.g., $\sigma\subset\rho$ is a subribbon if there exist left ribbons
$\rho_1$ and $\rho_2$ such that $\rho=\rho_1\sigma\rho_2$ either as opcurves or as paths because concatenation of left ribbons is always reduced
so these two alternatives are equivalent.
This third version of ``subcurve'' is which we need in the next definition.
\end{rmk}

\begin{defi}\quad
\begin{enumerate}\itemsep=0pt
\item[(a)] For left ribbon opcurves $\rho,\rho'\colon s_0\to s_1$, we say that the pair $(\rho,\rho')$ is an elementary contraction and that $(\rho',\rho)$
is an elementary relaxation in the following 4 cases:
\begin{itemize}\itemsep=0pt
\item Vertex contraction: For some $a\in\Arr(\Sigma)$ the $\rho$ contains a subribbon $\alpha_a^{-1}$ (see (\ref{alpha_a})) and $\rho'$ is
obtained from $\rho$ by discarding this $\alpha_a^{-1}$.
\item Face contraction: For some $a\in\Arr(\Sigma)$, the $\rho$ contains a subribbon $\gamma_a$ (see (\ref{gamma_a})) and $\rho'$ is
obtained from $\rho$ by discarding this $\gamma_a$.
\item $\kappa$-contraction: For some $a\in\Arr(\Sigma)$, the $\rho$ contains a subribbon $\kappa_a\subset\rho$ and $\rho'$ is obtained by replacing this
subribbon by $\bigl((T_1a)_0^-\bigr)$.
\item $\lambda$-contraction: For some $a\in\Arr(\Sigma)$, the $\rho$ contains a subribbon $\lambda_a\subset\rho$ and $\rho'$ is obtained by replacing this
subribbon by $\bigl((T_1a)_2^+\bigr)$.
\end{itemize}
For $\rho,\rho'\in\Ribb^\circ_L$, we write $\rho\approx\rho'$ and say that they are ribbon-homotopic if there is a~sequence of left ribbons $(\rho_0,\dots,\rho_k)$,
the homotopy, such that $\rho_0=\rho$, $\rho_k=\rho'$ and each move $(\rho_i,\rho_{i+1})$ is an elementary contraction or an elementary relaxation.

\item[(b)] For closed ribbons, we also define free ribbon-homotopy as a ribbon-homotopy in which we allow also circular moves: $(\rho_i,\rho_{i+1})$
is a circular move if $\rho_{i+1}$ differs from $\rho_i$ only in the choice of the base point. Free ribbon-homotopy is denoted by
$\overset{\star}{\approx}$.

\item[(c)] For right ribbons, the definitions (a) and (b) can be used without change except that in the definition of moves the curves $\alpha_a^{-1}$, $\gamma_a$, $\kappa_a$, $\lambda_a$ have to be replaced by~$\alpha_a$,~$\gamma_a^{-1}$,~$\kappa_a^{-1}$,~$\lambda_a^{-1}$, respectively.

\item[(d)] The support of a (free) ribbon-homotopy is the set $F\subseteq\Sigma$ of faces of $\D(\Sigma)^*$ arising as the union of the supports of the elementary
contractions/relaxations. In case of vertex or face contraction/relaxation, the support is just the vertex or face in question.
In case of $\kappa$-contraction/relaxation, the support has 2 elements, a vertex and an edge of $\Sigma$, the ones being winded around by $\kappa_a$.
For $\lambda$-contraction/relaxation, the support is a similarly defined pair of a face and of an edge.
\end{enumerate}
\end{defi}

Notice that we have not defined homotopy between a left and a right ribbon.

In order to place ribbon-homotopy into the wider context of homotopy, we need to see abundance of ribbons in some sense.
\begin{pro}
Let $\Sigma$ be a connected OCPM and let $s,s'\in\D(\Sigma)^{*0}$ be two sites.
\begin{enumerate}\itemsep=0pt
\item[$({\rm i})$] There exists a left ribbon $\rho\colon s\to s'$.
\item[$({\rm ii})$] If $\gamma\colon s\to s'$ is an opcurve, then there exists a left ribbon $\rho\colon s\to s'$ such that $\gamma\sim\rho$, i.e.,~$\gamma$ and~$\rho$ are
homotopic in the sense of Definition~$\ref{def: htp}$.
\item[$({\rm iii})$] If $\rho_1,\rho_2\colon s\to s'$ are left ribbons such that $\rho_1\approx\rho_2$, then $\rho_1\sim\rho_2$.
\end{enumerate}
\end{pro}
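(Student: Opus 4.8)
The plan is to handle the three parts separately, throughout identifying a curve on $\D(\Sigma)^*$ with its code $\langle a_0,w\rangle$ over $\{T_0,T_0^{-1},T_2,T_2^{-1}\}$ as in Lemma~\ref{lem: code word}, and using that by Lemma~\ref{lem: ribbon}(v) and Definition~\ref{def: ribbon} a curve is a left ribbon exactly when its code word lies in $\{T_0^{-1},T_2\}^*$. For (i) I would argue thus. Write $s=s(a)$, $s'=s(b)$ via the bijection $a\mapsto s(a)$. Since $\Sigma$ is connected, $\langle T_0,T_2\rangle$ acts transitively on $\Arr(\Sigma)$ (the Corollary following Lemma~\ref{lem: code word}), so some word $g=g_k\cdots g_1$ over $\{T_0^{\pm1},T_2^{\pm1}\}$ sends $a$ to $b$. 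Now rewrite $g$ letter by letter, keeping track of the arrow currently reached: each occurrence of $T_0$ is replaced by $(T_0^{-1})^{|\Ou_0(c)|-1}$ and each occurrence of $T_2^{-1}$ by $T_2^{|\Ou_2(c)|-1}$, where $c$ is the current arrow. This is legitimate because the $T_i$-orbits are finite by (AP-1), and these powers act on $c$ exactly as $T_0$, resp.\ $T_2^{-1}$, do; so the resulting word $w\in\{T_0^{-1},T_2\}^*$ still sends $a$ to $b$. Any such word is automatically reduced and vacuously avoids the factors $T_0T_2,T_2T_0$ and their inverses, so $\langle a,w\rangle$ encodes a left ribbon opcurve $s\to s'$.

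For (ii), after replacing $\gamma$ by its reduced path (a homotopy, by the Scholium) I would induct on the number of \emph{bad} letters of the code word of $\gamma$, i.e.\ of occurrences of $T_0$ or $T_2^{-1}$. If there are none, $\gamma$ is already a left ribbon. Otherwise, at a bad letter realized, say, by an arrow $d=(T_0c)_0^+$ at a site $s(c)$, replace $d$ by the arc of $T_0^{-1}$-edges running the other way around the vertex $\Ou_0(c)$; it has the same endpoints as $d$, and the result $\gamma'$ has one fewer bad letter. A short code-word computation shows that the local difference $d\cdot(\text{new arc})^{-1}$ is exactly the type~$0$ face loop of $\D(\Sigma)^*$ around $\Ou_0(c)$, so $\gamma\,{\gamma'}^{-1}$ is a lasso and $\gamma\sim\gamma'$ by the Scholium. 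Since reducing $\gamma'$ cannot increase the bad-letter count, the induction applies; the $T_2^{-1}$ case is identical with the type~$2$ face loop around $\Ou_2(c)$. (This argument also re-proves (i), since connectedness of $\D(\Sigma)^*$ always furnishes some opcurve $s\to s'$.)

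For (iii) it suffices, by transitivity of $\sim$ and the fact that a conjugate of a contractible loop is contractible (Scholium), to show for each elementary contraction or relaxation $(\rho,\rho')$ of Definition~\ref{def: ribbon curve homotopy} that the loop $\rho\,{\rho'}^{-1}$ is contractible; then $\rho\sim\rho'$, and chaining the moves witnessing $\rho_1\approx\rho_2$ gives $\rho_1\sim\rho_2$ (relaxations are reverses of contractions, so symmetry of $\sim$ covers them). If $\rho=\rho_1 X\rho_2$ and $\rho'=\rho_1 X'\rho_2$ differ in a subribbon, then $\rho\,{\rho'}^{-1}=\rho_1\,(X{X'}^{-1})\,\rho_1^{-1}$, so it is enough that the local difference $X{X'}^{-1}$ be a product of lassos. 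For a vertex contraction $X{X'}^{-1}=\alpha_a^{-1}$ and for a face contraction $X{X'}^{-1}=\gamma_a$, and $\alpha_a,\gamma_a$ are face loops of $\D(\Sigma)^*$ (they are the $\tilde{\mathbb{T}}_2$-orbit loops through $(T_0a)_0^+$, resp.\ $a_2^+$), hence lassos. For a $\kappa$-contraction, $X{X'}^{-1}=\kappa_a\cdot((T_1a)_0^-)^{-1}$; splicing in the type~$0$ face loop around $\Ou_0(a)$ collapses the run of $T_0^{-1}$-letters inside the code word of $\kappa_a$ to a single $T_0$-step, which turns $\kappa_a\cdot((T_1a)_0^-)^{-1}$, up to a lasso, into a loop with code word $T_0T_2T_0T_2$, i.e.\ into a type~$1$ face loop; hence it is a product of two lassos and is contractible. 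The $\lambda$-contraction is the same with the type~$2$ face loop around $\Ou_2(a)$ in place of the type~$0$ one.

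The main obstacle is this last computation in part (iii): one must verify that the ``difference loop'' of a $\kappa$- or $\lambda$-move really bounds a region assembled from one type~$0$ (resp.\ type~$2$) face and one type~$1$ face of $\D(\Sigma)^*$ — the combinatorial content of ``$\kappa_a$ winds once around a vertex and an edge'' recorded in Definition~\ref{def: ribbon curve homotopy}(d) — and making this precise is the one place where an honest calculation with $\tilde{\mathbb{T}}_0,\tilde{\mathbb{T}}_2$ and the identities of Lemma~\ref{lem: OCSC} cannot be avoided. Parts (i) and (ii), by contrast, are purely formal once the code-word calculus is in hand.
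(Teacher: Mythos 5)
Your proof is correct and follows essentially the same route as the paper: for (ii) (and hence (i)), replacing each bad letter $T_0$ by $(T_0^{-1})^{|v|-1}$ and each $T_2^{-1}$ by $T_2^{|f|-1}$ is exactly the substitution $\mathfrak{h}$ the paper applies (though the paper does it in one pass and records the homotopy as a chain of conjugated lassos $\vartheta_k=\rho_{k-1}(c_k)\rho_k^{-1}$, whereas you do it one letter at a time); and for (iii) both arguments reduce to showing the four elementary moves are local homotopies. For the step you flagged, the paper supplies the explicit lasso formulas that your sketch leaves implicit: $((T_1 a)_0^-)=\kappa_a\vartheta_v\vartheta_e$ with $\vartheta_v=(a_2^-)\alpha_a(a_2^+)$ and $\vartheta_e=(a_2^-,a_0^-)\beta_a^{-1}(a_0^+,a_2^+)$, confirming your expectation that the $\kappa$-difference loop is a product of a type-0 and a type-1 lasso (and dually for $\lambda$).
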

\begin{proof}
(ii) Let $\gamma$ be the composite $s=s(a_0)\rarr{c_1}s(a_1)\rarr{c_2}\cdots\rarr{c_n}s(a_n)=s'$, $v_k=\Ou_0(a_k)$, $f_k=\Ou_2(a_k)$ and let $w=W_n\cdots W_1$
be the code word of $\gamma$. The proof follows the principle of replacing bad letters with good words.
Construct the new word
\begin{gather*}
w':=w'_n\cdots w'_1,
\qquad \text{where}\quad w'_k:=\begin{cases}
W_k&\text{if}\ W_k\in\bigl\{T_0^{-1},T_2\bigr\},\\
T_0^{-|v_k|+1}&\text{if}\ W_k=T_0,\\
T_2^{|f_k|-1}&\text{if}\ W_k=T_2^{-1},
\end{cases}
\end{gather*}
and define the ribbons $\rho_k:=\bra a_0,w'_k\cdots w'_1\ket$. Then $\vartheta_k:=\rho_{k-1}(c_k)\rho_k^{-1}$ is a lasso such that
$\vartheta_1\vartheta_2\cdots\vartheta_n=\gamma\rho_n^{-1}$. Therefore, $\rho=\rho_n$ is the desired ribbon and
$\bigl(\gamma^{-1}\vartheta_1\gamma,\dots,\gamma^{-1}\vartheta_n\gamma\bigr)\colon\gamma\to\rho$ is the desired homotopy.

(i) Since $\D(\Sigma)^*$ is connected, there exists an opcurve $\gamma=s\to s'$. Thus, (i) follows from (ii).

(iii) It suffices to show that the elementary relaxations are homotopies. For the cases of vertex and face relaxations, this is obvious. For $\kappa_a$,
we can easily check the equality of paths $\bigl((T_1 a)_0^-\bigr)=\kappa_a\vartheta_v\vartheta_e$ with lassos $\vartheta_v=(a_2^-)\alpha_a\bigl(a_2^+\bigr)$ and
${\vartheta_e=(a_2^-,a_0^-)\beta_a^{-1}\bigl(a_0^+,a_2^+\bigr)}$. Thus, ${(\vartheta_v,\vartheta_e)\colon\kappa_a\to \bigl((T_1 a)_0^-\bigr)}$ is a homotopy.
Similar formulas can be found for $\lambda_a$.
\end{proof}

Assume that we are given a closed ribbon which is contractible, e.g., because we know that the ribbon bounds a disk. How can we design a ribbon-homotopy
which shrinks the ribbon to zero? This is the kind of problem in which ``tree curves'' play a role.

\begin{defi}
Tree curves or, more descriptively, tree dressing curves are closed ribbon opcurves occurring in two types:

A tree curve $\tau_F$ can be constructed from a tree $F\subset\Sigma^0\cup\Sigma^1$, as a subgraph of $\Sigma$, as follows. Considering
$F$ as a set of faces of $\D(\Sigma)^*$ we take $\tau_F$ to be any of the negatively oriented reduced boundary opcurves of this fattened tree $F$.

A dual tree curve $\tau_F$ can be constructed from a dual tree $F\subset\Sigma^2\cup\Sigma^1$, as a subgraph of $\Sigma^*$, as follows. Consider
$F$ as a set of faces of $\D(\Sigma)^*$ and take $\tau_F$ to be any of the positively oriented reduced boundary opcurves of this fattened tree~$F$.
\end{defi}

Predecessors of the tree curves are the ``dual blocks'' and ``direct blocks'' of Bombin and Martin-Delgado \cite{Bombin-Delgado}.
We need also rooted versions of them which are open ribbons:
\begin{defi}
A rooted tree curve is obtained from a tree curve $\tau_F$ by discarding the maximal arc running around a leaf
vertex $v\in F$. This curve is denoted by $\tau_{F'}$, where $F'=F\setminus\{v\}$ is the underlying rooted tree. The root of $\tau_{F'}$ is the arrow
which would complete the discarded arc to a full loop around $v$. If $r$ is the root, $(r)$ is called the root curve.

A rooted dual tree curve is obtained from a dual tree curve $\tau_F$ by discarding the maximal arc running around a leaf
face $f\in F$. This curve is denoted by $\tau_{F'}$, where $F'=F\setminus\{f\}$ is the underlying rooted dual tree. The root of $\tau_{F'}$ is
the arrow which would complete the discarded arc to a full loop around $f$. If $r$ is the root, $(r)$ is called the root curve.
\end{defi}

Note the deviation from the usual notion of rooted tree. Here the root vertex is not an arbitrary vertex but a leaf. Furthermore, the root vertex
does not belong to the rooted tree, so in fact there is only a root edge which is characterized by having only one boundary vertex (a~``half edge'').

The inverses $\tau^{-1}_F$ $\bigl(\tau^{-1}_{F'}\bigr)$ of the above (rooted) tree curves are needed in ribbon-homotopy of right ribbons,
so they should also be called (rooted) tree curves.

\begin{lem}
The following facts are either obvious or easy to prove:
\begin{enumerate}\itemsep=0pt
\item[$(i)$] For every $($rooted$)$ $($dual$)$ tree $F$, the $\tau_F$ is a proper left ribbon and $\tau_F^{-1}$ is a proper right ribbon.
\item[$(ii)$] If $F$ is a singleton tree in $\Sigma$ $($in $\Sigma^*)$, then the $($dual$)$ tree curve $\tau_F$ is $\alpha_a^{-1}$ $(\gamma_a)$ for some $a\in\Arr(\Sigma)$.
\item[$(iii)$] If the rooted tree $F'$ in $\Sigma$ $($in $\Sigma^*)$ consists of $2$ elements, then the rooted $($dual$)$ tree curve~$\tau_{F'}$ is a~$\kappa_a$ $(\lambda_a)$ for some $a\in\Arr(\Sigma)$.
\item[$(iv)$] Every $($dual$)$ tree curve can be obtained from an $\alpha_a^{-1}$ $(\gamma_a)$ by successive grafting of $\kappa_a$-s $(\lambda_a$-s$)$
by their roots to some arrows of color $T_0^{-1}$ $(T_2)$. The same can be said about the rooted $($dual$)$ tree curves if we start with a $\kappa$ $(\lambda)$.
\item[$(v)$] Every $\kappa_a$, $\lambda_a$ is ribbon-homotopic to its root curve.
\item[$(vi)$] Every tree curve and dual tree curve is ribbon-homotopic to some trivial ribbon.
\item[$(vii)$] Every rooted tree curve and rooted dual tree curve is ribbon-homotopic to its root curve.
\end{enumerate}
\end{lem}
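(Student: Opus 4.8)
The plan is to prove the seven items in sequence, using the earlier ones to bootstrap the later ones, and to treat items (i)--(iv) as essentially combinatorial bookkeeping about the boundary opcurve of a fattened (dual) tree. First I would prove (i): a tree curve $\tau_F$ traces the boundary of a fattened tree, which is a polygonal disk whose sides are arrows of $\D(\Sigma)^*$; because a tree has no cycles, each leg-site of $\D(\Sigma)^*$ is visited at most once along the boundary walk, so no arrow $a_i^\sigma$ recurs and none pairs with its $T_1$-companion across the curve, which is exactly properness in the sense of Definition \ref{def: prop ribb}; that $\tau_F$ lies in $\Ribb^\circ_L$ (so $\tau_F^{-1}\in\Ribb^\circ_R$) follows since the boundary of a fattened tree only ever turns around type $0$ or type $2$ faces of $\D(\Sigma)^*$, never a type $1$ face, which is condition (iii) of Lemma \ref{lem: ribbon}; the negative orientation convention forces the code word into $\{T_0^{-1},T_2\}^*$, giving the left ribbon property by Lemma \ref{lem: ribbon}(v). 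Items (ii) and (iii) are then just the unwinding of the definitions of $\alpha_a$, $\gamma_a$ (\ref{alpha_a}),(\ref{gamma_a}) and of $\kappa_a$, $\lambda_a$ (\ref{kappa_a}),(\ref{lambda_a}): a one-vertex tree fattens to a single type $0$ face whose negatively oriented boundary is $\alpha_a^{-1}$, a one-face dual tree fattens to a single type $2$ face with boundary $\gamma_a$; a two-element rooted tree (one edge plus its non-root endpoint vertex) fattens to a shape whose boundary arc is precisely $\kappa_a$ for the appropriate $a$, and dually for $\lambda_a$.

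Item (iv) is the structural induction that makes everything else work. I would argue that growing a tree $F$ by attaching one new leaf vertex $v$ (together with the edge $e$ joining it to $F$) corresponds, on the level of boundary opcurves, to splicing a $\kappa_{a}$-graft into $\tau_F$ at the arrow of color $T_0^{-1}$ that sits where $e$ meets the already-fattened tree; the key point is that the new boundary walk agrees with the old one except that, at that arrow, it detours around the new fattened edge-and-vertex, and this detour is exactly a $\kappa_a$ read off the local picture (Fig.~\ref{fig: kappa-lambda}). Properness is preserved because the new sites $v$ and $e$ were not previously in the support, and the ribbon stays left because the grafted word $\mathfrak h(T_0^{-1})$-style substitution stays inside $\{T_0^{-1},T_2\}^*$. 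The dual statement for dual trees and $\lambda_a$-grafts at $T_2$-colored arrows is identical with the roles of $T_0$ and $T_2$ (vertices and faces) interchanged, and the rooted versions are obtained by simply not closing the final loop around the leaf. I expect this item to be the main obstacle: one has to make the "splice a graft at a $T_0^{-1}$-colored arrow" operation precise enough that it both describes every tree curve and visibly matches a $\kappa$- or $\lambda$-relaxation in the sense of Definition \ref{def: ribbon curve homotopy}; the bookkeeping about \emph{which} arrow receives the graft, and checking the reduced-ness so that "as opcurves'' and "as paths'' coincide, is where the real care is needed.

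With (i)--(iv) in hand, (v) is immediate: Proposition \ref{pro: abundance of ribbons}(iii) — or rather its proof — already exhibits $\kappa_a\to((T_1a)_0^-)$ and $\lambda_a\to((T_1a)_2^+)$ as single elementary relaxations, so $\kappa_a$ and $\lambda_a$ are ribbon-homotopic to their root curves by one move each. Then (vi) follows by reversing the grafting induction of (iv): starting from a tree curve, each $\kappa$- (resp.\ $\lambda$-) subribbon can be contracted by an elementary $\kappa$- (resp.\ $\lambda$-) contraction to its root curve, which prunes one leaf from the underlying (dual) tree; iterating, we reach a singleton tree, whose curve is $\alpha_a^{-1}$ (resp.\ $\gamma_a$) by (ii), and one more vertex (resp.\ face) contraction collapses that to a trivial ribbon. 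Item (vii) is the same pruning argument carried out on a \emph{rooted} tree: prune leaves one at a time by $\kappa$-/$\lambda$-contractions until only the root half-edge remains, i.e.\ until the curve is the root curve $(r)$; the base case is a two-element rooted tree, handled by (iii) and (v). Throughout, one should note that each contraction strictly decreases the number of edges of the underlying tree, which guarantees termination of the induction.
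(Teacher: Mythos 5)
The paper offers no proof for this Lemma: the statement is flagged as ``either obvious or easy to prove,'' and the only hint is the one-sentence gloss following it, explaining that the grafting in (iv) means a $\kappa$- or $\lambda$-relaxation constrained so that the result stays proper. Your proposal therefore supplies details the paper leaves implicit rather than reproducing a printed argument, and the overall architecture --- (i)--(iv) as boundary-walk combinatorics, (iv) as a leaf-at-a-time induction via grafting, (v)--(vii) by single elementary moves and by running (iv) in reverse as a pruning --- is the right one and matches the author's hint.

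Two points should be tightened. In (i), you infer properness directly from the boundary walk being simple (``each leg-site visited at most once''); but simplicity alone does not give properness --- the pair $a_0^+,a_2^+$ can occur consecutively in a simple opcurve and already violates Definition~\ref{def: prop ribb}. The equivalence ``simple $\Leftrightarrow$ proper'' is Proposition~\ref{pro: Hol-Ophol}(ii) and holds only once you already know the walk is a ribbon, which is what your code-word argument establishes. So the logical order should be: first show the code stays in $\{T_0^{-1},T_2\}^*$ (and here it is worth spelling out that this uses the fact that a subgraph-tree $F$ contains both endpoints of each of its edges, so the boundary walk never U-turns around the exposed end of a type 1 square, which would introduce one of the forbidden subwords of Lemma~\ref{lem: ribbon}(iv)); then invoke simplicity of the boundary of a disk to conclude properness. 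In (v), the appeal to Proposition~\ref{pro: abundance of ribbons}(iii) is a misdirection: that proposition and its proof build a two-lasso homotopy in the sense of Definition~\ref{def: htp}, whereas (v) asks for a ribbon-homotopy, and that is immediate from Definition~\ref{def: ribbon curve homotopy} itself, since the elementary $\kappa$- and $\lambda$-contractions are by definition the single moves $\kappa_a\to((T_1a)_0^-)$ and $\lambda_a\to((T_1a)_2^+)$. Your conclusion there is right; only the citation is off.
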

Under (iv) the ``grafting'' means a $\kappa$- or $\lambda$-relaxation under the condition that the result remains a proper opcurve. We do not have to use
unproper relaxations to get tree curves although a general ribbon-homotopy would allow them.

\begin{thm}\label{thm: proper contract}
For a connected OCPM $\Sigma$, let $\rho\colon s\to s$ be a closed proper left ribbon.
We divide the faces of $\D(\Sigma)^*$ into connected components by saying that two faces $p$ and $q$ of $\D(\Sigma)^*$ are connected if there is a dual
curve $\delta\colon p\to q$ on $\D(\Sigma)^*$ such that neither the arrows of $\delta$ nor their opposites are members of $\rho$. Assume that
\begin{itemize}\itemsep=0pt
\item $\rho$ cuts the set of faces into $2$ connected components, $L$ consisting of the faces lying on the left and $R$ consisting of the
faces lying on the right of $\rho$;
\item as subcomplexes of $\Sigma$ either $L$ or $R$ has Euler characteristic $1$.
\end{itemize}
Then there is a ribbon-homotopy from $\rho$ to the trivial ribbon $(s)$ such that the support of the homotopy is $L$, resp.\ $R$, and in each step of the
homotopy the curve is proper.
\end{thm}

\begin{figure}[t]\centering
\includegraphics[width=9cm]{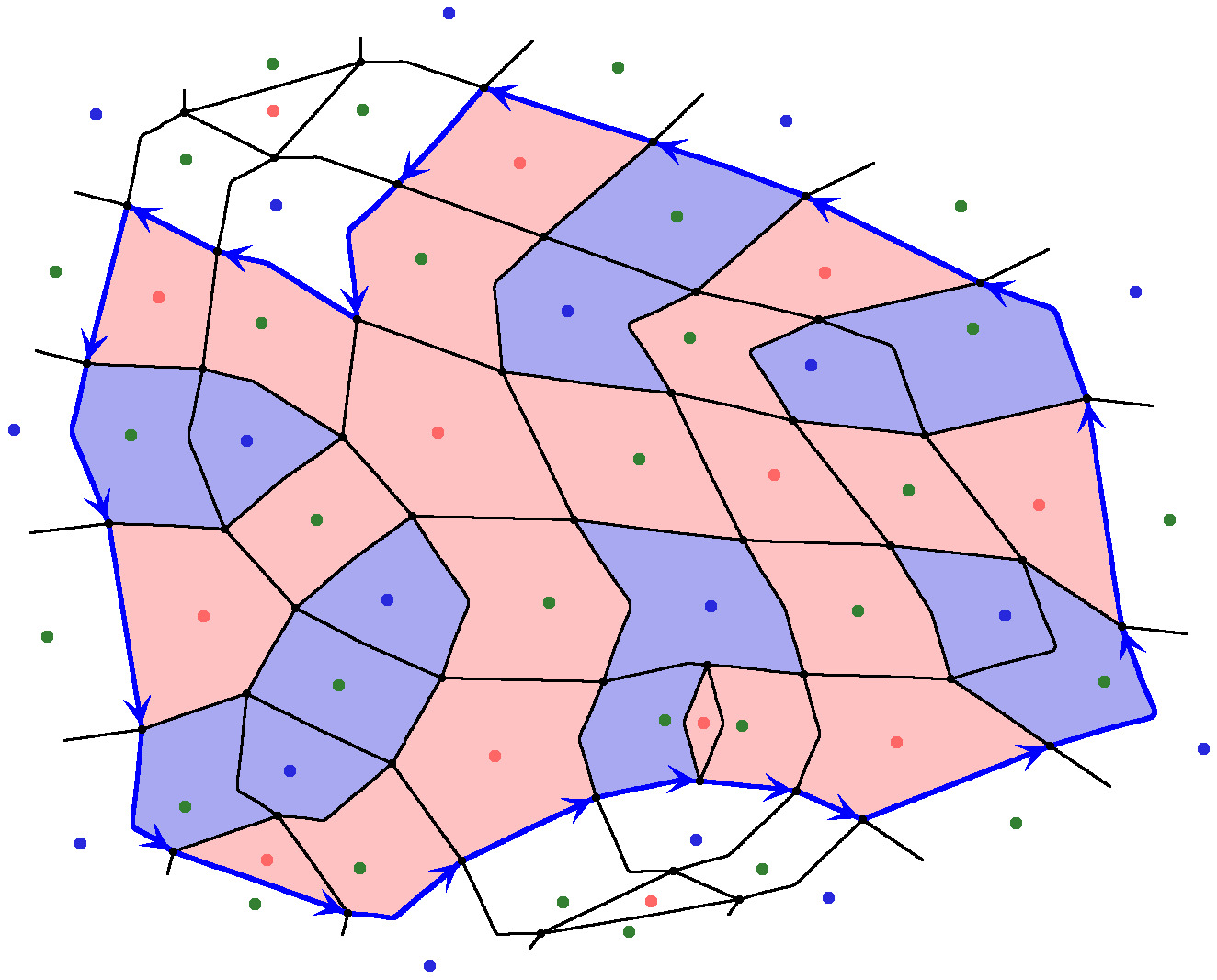}
\caption{A plan for contracting a left ribbon. (See the proof of Theorem~\ref{thm: proper contract}.)}
\label{fig: contraction}
\end{figure}

\begin{proof}Looking at the two connected components $L$ and $R$ as sets of cells of $\Sigma$, we have $L\cap R=\varnothing$ and $L\cup R=\Sigma$. But there is a substantial difference
between these subcomplexes. $L$ is upper semiclosed, i.e., together with any cell $c$ it contains also $\Cb(c)$, while $R$ is lower semiclosed\footnote{A subset of a~connected CPM $\Sigma$ which is both lower and upper semiclosed is either empty or the whole $\Sigma$.},
i.e., together with any cell $c$ it contains also $\Bd(c)$. This is visible on Figure~\ref{fig: contraction} by the appearance of alternating
red and green dots on the left-hand side of $\rho$ and of the blue and green dots on the right-hand side. These sequences of faces and edges on the left
and vertices and edges on the right are the boundary dual curve of $L$ and the boundary curve of $R$, respectively. They coincide with the two
borderlines $E^*(\rho)=\bra c,b\ket$ of $\rho$, see in and below Lemma~\ref{lem: E}. Since $\rho$ is a~ribbon, these borderlines never cross by
Lemma~\ref{lem: ribbon}.

Let $L^d=\Sigma^d\cap L$ for $d=0,1,2$ and assume that it is $L$ which has Euler characteristic $\chi(L)=1$. Choose a maximal dual tree
$F\subset L^2\cup L^1$ in the connected complex $L$. Let $\tau_F$ be its dual tree curve the base point of which is that of $\rho$.
Since $F$ contains all faces of $L$ but none of its vertices and $F$ has $\chi(F)=1$, the complement $G:=L\setminus F$ has Euler characteristic
\begin{gather*}
\chi(G)=\bigl|G^0\bigr|-\bigl|G^1\bigr|=\bigl|L^0\bigr|-\bigl(\bigl|L^1\bigr|-\bigl(\bigl|L^2\bigr|-1\bigr)\!\bigr)=\bigl|L^0\bigr|-\bigl|L^1\bigr|+\bigl|L^2\bigr|-1=\!\chi(L)-1=0.
\end{gather*}
As a subgraph of the planar $L$ the $G$ cannot contain circles because they would separate some faces from the maximal dual tree $F$. Therefore, $G$ is a disjoint union of trees $G=G_1\cup\dots\cup G_k$ each of which has to be rooted in order to satisfy $\chi(G_i)=0$.
The root edge of $G_i$, in the language of $\D(\Sigma)^*$, is a type 1 face $e_i$ having one arrow $r_i\in\rho$ of color $T_0^{-1}$ on its boundary.
This is the root of $G_i$. Starting with the root $r_i$, we can perform a sequence of $\kappa$-relaxations along the tree $G_i$. Doing this for all~$G_i$,
we arrive to that $\rho$ is a ribbon-homotopic to $\tau_F$. Since~$\tau_F$ can be contracted by $\lambda$-contractions to the trivial curve, we are done.
During each steps of the homotopy we had only proper ribbons and used only grafts $\kappa_a$, $\lambda_b$ which belonged entirely to~$L$. Furthermore,
the supports of the used grafts form a partition of $L$ into vertex-edge pairs and face-edge pairs, so the support of the homotopy is precisely $L$.
(In this case, we call $L$ the contraction disk of~$\rho$.)

If it is $R$ which has $\chi(R)=1$, then the proof goes by constructing a maximal tree $F\subset R^0\cup R^1$ and the forest of rooted dual trees
$G=R\setminus F$. Then the homotopy first $\lambda$-relaxes $\rho$ inside $R$ along the forest and then $\kappa$-contracts the resulting tree
curve~$\tau_F$.
\end{proof}

\begin{figure}[t]\centering
\includegraphics[width=7cm]{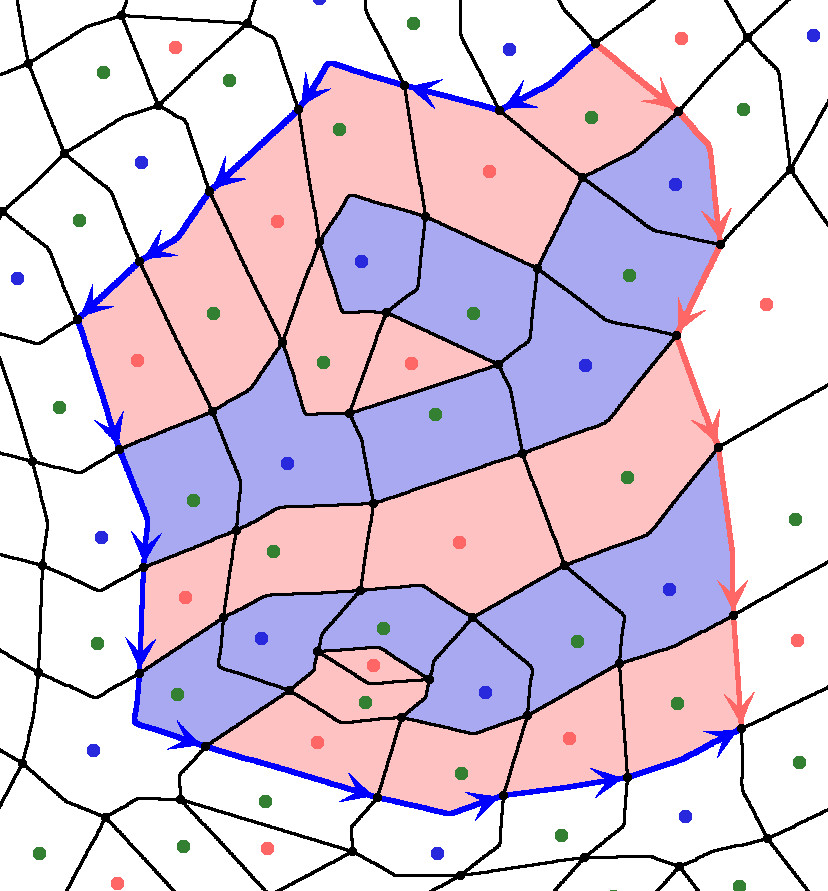}
\caption{The plan of a proper ribbon-homotopy from the blue to the red left ribbon. (See the proof of Theorem~\ref{thm: ribb2ribb htopy}.)}
\label{fig: ribb2ribb htopy}
\end{figure}

The special ribbon-homotopies we found in the above theorem turn out to be the only ribbon-homotopies applicable in the Kitaev model. So we give
them a name.
\begin{defi}
A (free) ribbon-homotopy $(\rho_0,\rho_1,\dots,\rho_n)$ is called a proper (free) ribbon-homotopy if each $\rho_i$ is proper (Definition~\ref{def: prop ribb}).
A proper ribbon which has a proper ribbon-homotopy to a trivial curve is called properly contractible.
\end{defi}
\begin{exa}
The figure~8 opcurve $8_a:=\alpha_a^{-1}\gamma_a$ is a left ribbon which is not proper. It has a~ribbon-homotopy to $(s(a))$ but no proper ribbon-homotopy
at all. The same can be said about the double loop $\gamma_a\gamma_a$.
\end{exa}

\begin{thm}\label{thm: ribb2ribb htopy}
Let $\rho_1,\rho_2\colon s(a)\to s(b)$ be parallel non-trivial proper left ribbons such that the closed opcurve $\rho_1\rho_2^{-1}$ is the positively oriented boundary
curve of a disk $B\subset \D(\Sigma)^{*2}$. Then there is a proper ribbon-homotopy $\rho_1\to \rho_2$ of support $B$.
\end{thm}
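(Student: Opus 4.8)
The plan is to run the argument of Theorem~\ref{thm: proper contract}, now on a disk whose boundary is cut into two arcs. Identifying the faces of $\D(\Sigma)^*$ with the cells of $\Sigma$ (so that $\D(\Sigma)^{*2}=\Sigma$), I regard $B$ as a subcomplex of $\Sigma$; it is a disk, so $\chi(B)=1$, and its positively oriented boundary opcurve is $\rho_1\rho_2^{-1}$. The first thing to pin down is how $B$ meets this boundary. Since $\rho_1$ is a left ribbon, Lemma~\ref{lem: E} and the discussion of borderlines following it show that the cells of $B$ lying against $\rho_1$ from the interior are faces and edges of $\Sigma$ (type-2 and type-1 faces of $\D(\Sigma)^*$), whereas $\rho_2^{-1}$ is a right ribbon and so the cells of $B$ lying against $\rho_2$ from the interior are vertices and edges of $\Sigma$; in particular no face of $B$ touches $\rho_2$ and no vertex of $B$ touches $\rho_1$. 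Moreover $\rho_1,\rho_2$ are simple opcurves, being proper (Proposition~\ref{pro: Hol-Ophol}(ii)), so inside $B$ their two borderlines never cross (Lemma~\ref{lem: ribbon}).

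Next I would split $B$ combinatorially and build the homotopy in two phases. Choose a maximal dual tree $F\subseteq B^1\cup B^2$ spanning all faces of $\Sigma$ in $B$; then $G:=B\setminus F$ is the set of vertices $B^0$ together with the leftover edges, and exactly as in Theorem~\ref{thm: proper contract} the equality $\chi(F)=1$ forces $\chi(G)=0$, so $G$ is a disjoint union of trees, each rooted at an edge of $\Sigma$ carrying an arrow of $\rho_2$ of colour $T_0^{-1}$. \textbf{Phase~A} transforms $\rho_1$ by a sequence of $\lambda$-relaxations (and, in the degenerate cases, face-relaxations) that graft the dual tree $F$ onto $\rho_1$ one leaf at a time, the way a dual tree curve is assembled from $\gamma_a$ and $\lambda_a$-grafts in the Lemma on tree curves; its support is $F$, and at its end $\rho_1$ has become the proper left ribbon $\sigma\colon s(a)\to s(b)$ whose path is the common borderline of $F$ and $G$ inside $B$. \textbf{Phase~B} does the symmetric thing to $\rho_2$: a sequence of $\kappa$-relaxations (and, for $2$-valent vertices, vertex-relaxations) grafting the rooted trees of $G$ onto $\rho_2$ starting from their roots --- precisely the $\kappa$-relaxation step of Theorem~\ref{thm: proper contract} --- with support $G$, carrying $\rho_2$ to the same $\sigma$ (uniqueness of $\sigma$ holds because a reduced left ribbon is determined by its path and hence by its code word, Lemma~\ref{lem: code word}). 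Concatenating the Phase~A homotopy with the reverse of the Phase~B homotopy gives a proper ribbon-homotopy $\rho_1\approx\sigma\approx\rho_2$ whose support is $F\sqcup G=B$.

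The main obstacle is the verification inside each phase that every intermediate curve stays a \emph{proper} left ribbon: one must show that at each stage the next graftable leaf of $F$ (resp. $G$) can be chosen so that the inserted $\lambda_a$ (resp. $\kappa_a$) lies entirely in $B$ and is attached along a stretch of the current ribbon not yet visited, so that no repeated or improperly incident arrow is created (Definition~\ref{def: prop ribb}) and the complement of the used cells is again a disk. This is the combinatorial heart, and it is here that planarity of $B$ (a disk) and the left/right-ribbon structure of the two boundary arcs really enter: the latter is what makes $\lambda$-moves the correct ones on the $\rho_1$-side and $\kappa$-moves the correct ones on the $\rho_2$-side, and what guarantees that $F$ and $G$ exist with the stated Euler characteristics and can be grown from the appropriate boundary arc inward. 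The small disks --- $B$ a single vertex or face of $\Sigma$, or a vertex-edge or face-edge pair --- must be handled directly, by one vertex-, face-, $\kappa$- or $\lambda$-contraction; under the non-triviality hypothesis on $\rho_1,\rho_2$ these are exactly the cases in which $F$ or $G$ collapses, and that hypothesis rules out the truly degenerate situation (for instance $B$ a single face with $\rho_1$ the whole face loop, which would force $\rho_2$ trivial).
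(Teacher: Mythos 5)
The central combinatorial claim in your argument is wrong, and the error matters. You assert that since $B$ is a disk, $\chi(B)=1$; the paper stresses (calling it ``strangely enough'') that the cellular Euler characteristic is $\chi(B)=0$. The reason is exactly the asymmetric boundary behaviour you yourself identify: $B$ is upper semiclosed along $\rho_1$ (it picks up faces and edges there but no vertices) and lower semiclosed along $\rho_2$ (vertices and edges but no faces), so the alternating count $|B^0|-|B^1|+|B^2|$ does not compute the topological Euler characteristic of the disk. You can sanity--check this on the minimal instance of the theorem, a single $\kappa$- or $\lambda$-contraction: the support is a vertex--edge or face--edge pair, with $\chi=0$, not $1$.

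This kills your decomposition. If a maximal dual tree $F\subseteq B^1\cup B^2$ spanning all of $B^2$ existed, it would have $\chi(F)=1$, and then $\chi(G)=\chi(B)-\chi(F)=-1$, so $G$ carries an independent cycle (or a dangling edge without its endpoint) and cannot be a disjoint union of rooted trees; the $\kappa$-relaxation step of your Phase~B has nothing to graft along. The paper's fix is precisely to \emph{not} take one spanning dual tree: instead it grows a maximal forest $F$ of \emph{rooted} trees (vertices and edges, each with $\chi=0$, roots on $\rho_1$) so that the complement $F^*=B\setminus F$ is automatically a forest of rooted dual trees (faces and edges, $\chi=0$, roots on $\rho_2$), and $\chi(F)+\chi(F^*)=0=\chi(B)$ is consistent. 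Your ``meet in the middle at the common borderline $\sigma$'' structure is the same as the paper's intermediate ribbon $\gamma$, and the idea of using the two kinds of tree-moves on the two sides is right; but the bookkeeping of which cells go into which forest, and the rootedness that makes both forests have Euler characteristic zero, is the heart of the proof and is where your version breaks.
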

\begin{proof}
By assumption, $B$ lies on the left of $\rho_1$ and the right of $\rho_2$.
The $B$ is now neither upper nor lower semiclosed and, strangely enough, has Euler characteristic 0 in $\Sigma$. This is an artifact of the different
behaviour at the two borderline segments $\rho_1$ and $\rho_2$:
$B$ looks like upper semiclosed along the $\rho_1$ and lower semiclosed along $\rho_2$.
So we can start growing
rooted trees (with roots on~$\rho_1$ necessarily) until we get a maximal forest $F\subset B$ of rooted trees (see Figure~\ref{fig: ribb2ribb htopy}).
The complement $F^*=B\setminus F$ is then
a maximal forest of rooted dual trees (with roots on~$\rho_2$ necessarily). The homotopy then starts with $\kappa$-relaxations of $\rho_1$ along
the trees of~$F$. The result is a proper left ribbon $\gamma\colon s(a)\to s(b)$ which consists of the (broken) borderline between~$F$ and~$F^*$ completed by
those arrows of~$\rho_1$ and~$\rho_2$ which are not roots of some tree. Finally, a~sequence of $\lambda$-contractions applied to $\gamma$ along $F^*$
results in the ribbon~$\rho_2$.
\end{proof}

The intermediate curve $\gamma$ in the proof of the above theorem has the remarkable property of passing through every site of the disk $B$
exactly once. So it is a Hamiltonian opcurve on the graph $\downarrow\!B\setminus B$ obtained by taking the lower semiclosed closure of $B$ in the
complex $\D(\Sigma)^*$ and then discarding all its faces.
The existence of such Hamiltonian curve is crucial in the following free ribbon-homotopy.

\begin{thm}\label{thm: annulus homotopy}
Let $A$ be a finite connected set of faces of $\D(\Sigma)^*$ with Euler characteristic~$0$ in~$\Sigma$ such that the positively oriented boundary
of $A$ in $\D(\Sigma)^*$ consists of two components, a~closed proper left ribbon $\rho_1$ and a closed proper right ribbon $\rho_2^{-1}$.
Then there is a proper free ribbon-homotopy $\rho_1\to\rho_2$ supported on the annulus $A$.
\end{thm}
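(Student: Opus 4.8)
The plan is to adapt the forest-decomposition argument of Theorems \ref{thm: proper contract} and \ref{thm: ribb2ribb htopy} to the annular situation, exploiting the extra freedom of circular moves. First I would note that $\rho_2$ is a closed proper left ribbon, being the inverse of the closed proper right ribbon $\rho_2^{-1}$, so that a ribbon-homotopy $\rho_1\to\rho_2$ between two left ribbons is indeed what is being asked for. Reading off the two borderlines $E^{*}(\rho_1)$ and $E^{*}(\rho_2)$ as in Lemma \ref{lem: E} and the paragraph following it, and using that $\rho_1\sqcup\rho_2^{-1}$ is the \emph{positively} oriented boundary of $A$, one sees exactly as in the proof of Theorem \ref{thm: ribb2ribb htopy} that $A$ lies on the left of $\rho_1$ and on the right of $\rho_2$. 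Consequently $A$ is upper semiclosed along $\rho_1$, its $\rho_1$-borderline being the alternating sequence of type $2$ and type $1$ faces that forms a dual curve, and lower semiclosed along $\rho_2$, its $\rho_2$-borderline being the alternating sequence of type $0$ and type $1$ faces that forms a curve; the two borderlines never cross because $\rho_1$ and $\rho_2$ are ribbons (Lemma \ref{lem: ribbon}). Since $\rho_1$ and $\rho_2$ are the two disjoint boundary circles of $A$, no site lies on both.

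The combinatorial core is a decomposition of $A$ analogous to the one built inside the disk in the proof of Theorem \ref{thm: ribb2ribb htopy}. Grow inside $A$ a maximal forest $F$ of rooted trees with roots necessarily on $\rho_1$, each root being a type $1$ face of $\rho_1$ carrying an arrow of color $T_0^{-1}$, exactly as in the proof of Theorem \ref{thm: proper contract}. Every rooted tree has Euler characteristic $0$, hence $\chi(F)=0$, and therefore the complement $F^{*}:=A\setminus F$ has $\chi(F^{*})=\chi(A)-\chi(F)=0$. I would then argue that $F^{*}$ is a maximal forest of rooted dual trees with roots on $\rho_2$: by maximality of $F$ it can contain no null-homotopic cycle, since such a cycle would bound a disk of faces into which $F$ could have been extended, and it can contain no essential cycle either, since an essential dual cycle would split $A$ into two sub-annuli and again contradict maximality of $F$; so $F^{*}$ is a forest, and the Euler count forces each component to be a rooted dual tree, whose root lies on $\rho_2$ because the $\rho_1$-side is already occupied by $F$. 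The common borderline $\gamma$ of the fattened forests $F$ and $F^{*}$ is then a single reduced closed opcurve which, exactly as the intermediate curve in Theorem \ref{thm: ribb2ribb htopy}, passes through every site of $A$ exactly once, i.e. is a Hamiltonian opcurve on the graph obtained from the lower-semiclosed closure of $A$ by discarding its faces; its local structure being the same as that of tree curves, $\gamma$ is a closed proper left ribbon winding once around the annulus $A$. Finally, the vertex--edge pairs carried by the rooted trees of $F$ together with the face--edge pairs carried by the rooted dual trees of $F^{*}$ partition the cells of $A$.

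With this decomposition the homotopy is assembled in two stages, just as in Theorem \ref{thm: ribb2ribb htopy}. Starting from $\rho_1$, grow the rooted trees of $F$ one graft at a time by $\kappa$-relaxations, each $\kappa_a$ grafted at an arrow of color $T_0^{-1}$ currently on the curve; choosing the order tree by tree and from the roots outward, every intermediate curve stays a proper left ribbon, since each inserted $\kappa_a$ lies inside $A$ and is therefore $U$-separated from the rest of the curve, by the same argument that shows tree curves are proper. When $F$ is exhausted the current ribbon equals $\gamma$, up to a circular move fixing the base point. In the second stage apply $\lambda$-contractions to $\gamma$, contracting the rooted dual trees of $F^{*}$ from $\gamma$ toward $\rho_2$, again in an order keeping each curve a proper left ribbon; when $F^{*}$ is exhausted the curve is $\rho_2$, again up to a circular move. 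Every intermediate curve is proper, and the support of the homotopy, being the union of all the vertex--edge and face--edge pairs used, is precisely $A$. This is the required proper free ribbon-homotopy $\rho_1\to\rho_2$ supported on $A$.

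The step I expect to be the real obstacle is the middle paragraph: proving that the complement $F^{*}$ of the maximal rooted-tree forest is itself a forest of rooted dual trees rooted on $\rho_2$, and that the separating borderline $\gamma$ is a well-defined closed proper left ribbon passing through each site of $A$ once. This is precisely where the annulus genuinely differs from the disk of Theorem \ref{thm: ribb2ribb htopy}: one must exclude both null-homotopic and essential cycles in $F^{*}$, which forces a careful use of the annular topology and of the maximality of $F$ rather than mere Euler-characteristic bookkeeping, and one must verify that the resulting Hamiltonian curve $\gamma$ is proper at every site.
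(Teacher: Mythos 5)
Your proposal follows exactly the approach the paper takes: the paper's own proof of Theorem~\ref{thm: annulus homotopy} is essentially the single sentence ``the proof is very similar to that of Theorem~\ref{thm: ribb2ribb htopy}'' plus a reference to Figure~\ref{fig: annulus}, and the strategy you lay out (grow a maximal forest $F$ of rooted trees from $\rho_1$, $\kappa$-relax $\rho_1$ along $F$ to reach a Hamiltonian cycle $\gamma$ of proper sites, use a circular move to relocate the base point, then $\lambda$-contract along the complementary forest $F^{*}$ of rooted dual trees down to $\rho_2$) is precisely what the figure caption describes. You also correctly identify the one point that genuinely needs an argument beyond Theorem~\ref{thm: ribb2ribb htopy}, namely that $F^{*}=A\setminus F$ is a forest.

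However, the argument you offer for that point does not quite close. You claim that a (null-homotopic or essential) cycle $C\subset F^{*}$ would ``contradict maximality of $F$''. But maximality of a forest of rooted trees with roots on $\rho_1$ only says that no further cell of $A$ can be adjoined to $F$ while keeping it such a forest; it does \emph{not} by itself say that $F$ reaches every vertex of $A$. Indeed, if $C$ is a dual cycle in $F^{*}$, then no tree of $F$ can cross $C$ (the only cells that could do so are edges lying on $C$, which belong to $F^{*}$), so vertices on the far side of $C$ are simply unreachable by any rooted tree and cannot be added to $F$ -- $F$ remains maximal. Thus ``maximal'' and ``$F^{*}$ has a cycle'' are not in conflict on the face of it, and invoking maximality here is circular. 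What one actually needs is a separate argument that every vertex of $A$ \emph{is} reachable from $\rho_1$ by a path of vertices and edges inside $A$ (equivalently, that a maximal rooted forest is automatically vertex-spanning); this uses the assumption that the positively oriented boundary of $A$ in $\D(\Sigma)^*$ consists precisely of $\rho_1$ and $\rho_2^{-1}$ and nothing else, so $A$ has no internal ``holes'' behind which vertices could hide. Once $F$ is known to span all vertices of $A$, a cycle in $F^{*}$ genuinely separates vertices from $\rho_1$ and gives the contradiction. To be fair, the paper elides this step as well (both in Theorem~\ref{thm: ribb2ribb htopy} and here), so your gap mirrors the paper's terseness rather than a misunderstanding of the method, and you flag the weak spot yourself -- but the ``maximality'' reasoning as written would not survive scrutiny.
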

\begin{proof}
The proof is very similar to the proof of Theorem~\ref{thm: ribb2ribb htopy} therefore we content ourselves with referring to Figure~\ref{fig: annulus}.
\end{proof}

\begin{figure}[t]\centering
\includegraphics[width=10cm]{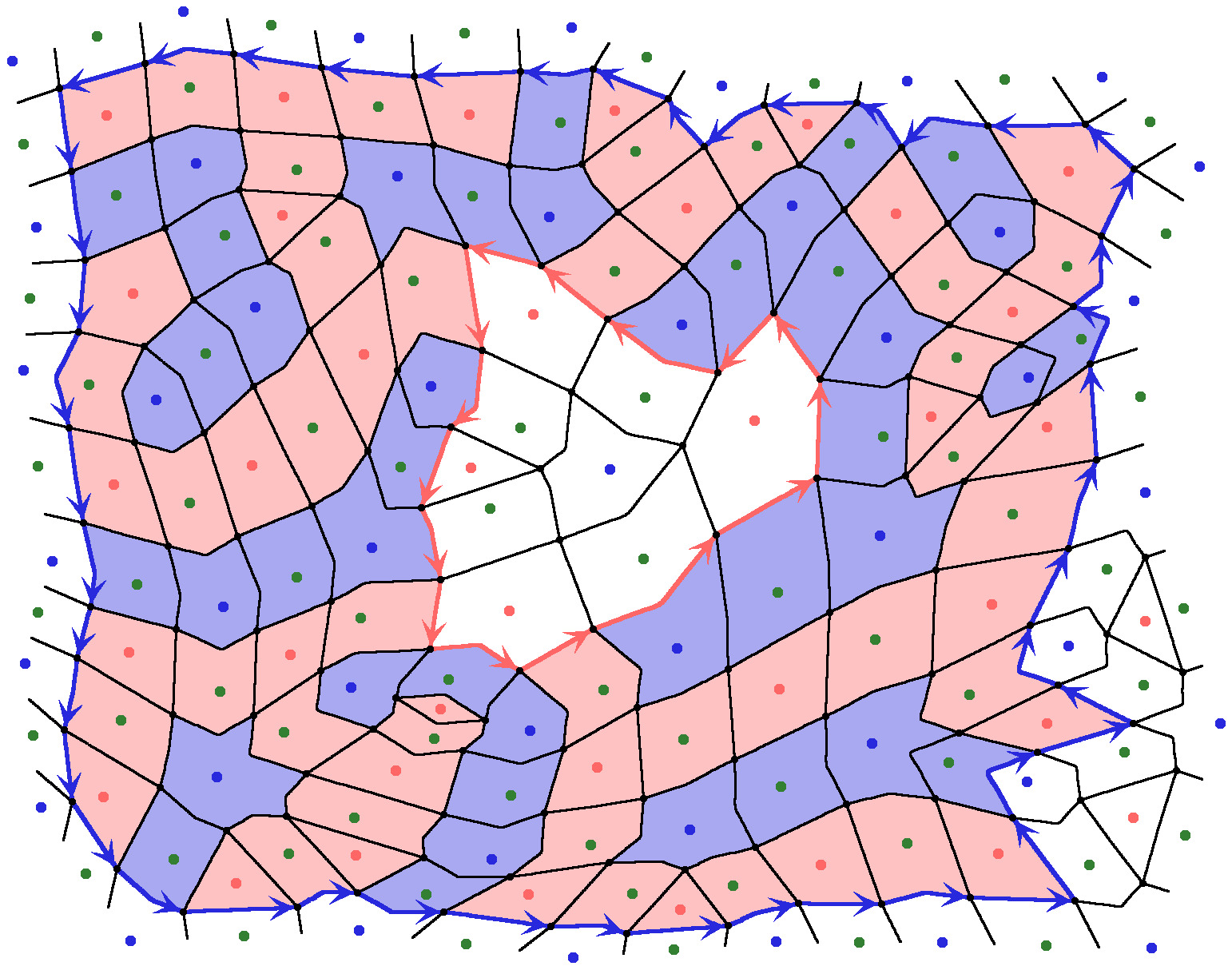}
\caption{For a free proper ribbon-homotopy from the blue ribbon to the red ribbon first choose a~maximal forest of rooted trees on the annulus.
These are the bluish faces.
Then perform $\kappa$-relaxations on the blue ribbon along the bluish faces. What we get is a Hamilton cycle; a closed proper ribbon (not shown) visiting every site of the annulus exactly once. Then we make a circular move along the Hamilton cycle in order to transplant the base point from the
blue ribbon to the red ribbon. Finally, we perform $\lambda$-contractions along the reddish faces.}
\label{fig: annulus}
\end{figure}

The annulus of Figure~\ref{fig: annulus} is obtained by taking the relative complement of a disk within a~larger disk. But this need not be so. The proof of Theorem~\ref{thm: annulus homotopy} nowhere uses that the ``hole'' is a~disk. So it applies to any annulus strip winding around a torus or on some handle of a
higher genus surface.

\section{Ground states}\label{section9}

So far the Kitaev model was treated on a pure algebraic level. Now we turn it into a model in quantum field theory. So we assume that the ground field
$K=\CC$, the field of complex numbers, and the finite-dimensional Hopf algebra $H$ is a C$^*$-Hopf algebra. This means, on the one hand, that $H$ is endowed
with an antilinear involution, called the star operation,\footnote{Having already many stars in this paper we shall denote the star operation, as well as the adjoints of bounded linear operators on a Hilbert space, by $(\cdot)^\dagger$.} satisfying
\[
(gh)^\dag=h^\dag g^\dag,\quad {h\p}^\dag\ot{h\pp}^\dag=\bigl(h^\dag\bigr)\p\ot\bigl(h^\dag\bigr)\pp,\qquad g,h\in H.
\]
Consequently,
\[
1^\dag=1,\qquad \eps\bigl(h^\dag\bigr)=\overline{\eps(h)},\qquad S(h)^\dag=S^{-1}\bigl(h^\dag\bigr),\qquad i^\dag=i.
\]
On the other hand, one requires that $H$ has a faithful star-representation on a Hilbert space, i.e., such that $h^\dag$ is represented by the adjoint of
the operator representing $h$. From now on $H$ denotes a finite-dimensional C$^*$-Hopf algebra. Finite-dimensional C$^*$-Hopf algebras are
semisimple and have $S^2=\id$. So every Hopf algebraic relations used before remain valid. In particular, $H^*$ has a Haar integral $\iota$ and the formula
\[
(g,h):=\bigl\bra\iota,g^\dag h\bigr\ket
\]
is a scalar product making $H$ a Hilbert space and the left regular representation a faithful star representation, $(k,hg)=\bigl(h^\dag k,g\bigr)$.

The dual Hopf algebra $H^*$ is then also a C$^*$-Hopf algebra. The star operation of $H^*$ is given~by
\[
\bigl\bra\varphi^\dag,h\bigr\ket:=\overline{\bigl\bra \varphi,S(h)^\dag\bigr\ket}, \qquad \varphi\in H^*,\quad h\in H.
\]
It follows that the double $\D(H)$ and its dual $\D(H)^*$ are both C$^*$-Hopf algebras with respective star operations
\begin{align*}
(\psi\ot g)^\dag =\psi^\dag\pp\ot g^\dag\pp\cdot\overline{\bigl\bra\psi\ppp,S\bigl(g\p\bigr)\bigr\ket}\,\overline{\bigl\bra\psi\p,g\ppp\bigr\ket},\qquad
(h\ot\varphi)^\dag =h^\dag\ot\varphi^\dag.
\end{align*}

Assume $\Sigma$ is a finite connected OCPM.
Since $\M(\Sigma)$ is a full matrix algebra, it has, up to isomorphisms, only one irreducible representation. The standard choice for the Hilbert space of this representation is $\Hil(\Sigma):=H^{\otimes \Sigma^1}$ which is the tensor product of copies $H_e$ of the Hilbert space $H$ for each edge
$e\in\Sigma^1$.
The action of $\M$ on $\Hil$ is defined by means of an orientation of the edges, i.e., by a section $e\mapsto a_e$ of the function
$\Ou_1\colon \Arr(\Sigma)\to \Sigma^1$. Namely,~$P_a(h)$ and~$Q_a(\varphi)$ act as the identity on each tensor factor $H_e$, $e\neq\Ou_1(a)$, and
for $e=\Ou_1(a)$ the $P_a(h)$ is left multiplication by $h$ and $Q_a(\varphi)$ is $\varphi\sweedl\under$ on $H_e$ if $a=a_e$ and
$P_a(h)$ is right multiplication by $S(h)$ and $Q_a(\varphi)=\under\sweedr S(\varphi)$ on $H_e$ if $a=T_1a_e$; cf.\ (\ref{lambda}) and (\ref{rho}).
In this way the abstract operators $P_a(h)$, $Q_a\bigl(\varphi\bigr)$ become concrete operators on the Hilbert space $\Hil$ such that $P_a(h)^\dag=P_a\bigl(h^\dag\bigr)$
and $Q_a(\varphi)^\dag=Q_a\bigl(\varphi^\dag\bigr)$, where $(\cdot)^\dag$ on the right-hand side is the abstract star operation while on the left-hand side it is the adjoint of an operator
on $\Hil$.

The adjoints of one-step holonomies (\ref{Ophol}) satisfy $\Ophol_{(d)}(\Psi)^\dag=\Ophol_{(d)}\bigl(\Psi^\dag\bigr)$. Therefore, if $\gamma$ is an arbitrary opcurve, then
\begin{equation}
\Ophol_\gamma(\Psi)^\dag=\Ophol_{\gamma^{-1}}\bigl(S_{\D^*}(\Psi)^\dag\bigr),\qquad \Phi\in\D(H)^*.
\end{equation}
But if the curve is a proper ribbon, then we also have
\begin{equation}
\Ophol_\rho(\Psi)^\dag=\Ophol_\rho\bigl(\Psi^\dag\bigr),\qquad \Psi\in\D(H)^*,\quad\text{if}\ \rho\ \text{is proper}.
\end{equation}
In particular, the Gauss' law and flux operators satisfy $G_a(h)^\dag=G_a\bigl(h^\dag\bigr)$ and $F_a(\varphi)^\dag=F_a\bigl(\varphi^\dag\bigr)$. So $A_v$ and $B_f$
are self adjoint idempotents, i.e., projectors, and the Hamiltonian (\ref{hamiltonian}) is self adjoint, too.

The ground states of the Hamiltonian are represented by vectors $\Omega$ in $\Hil$ such that
\begin{equation*}
A_v\Omega=\Omega\qquad\text{and}\qquad B_f\Omega=\Omega\qquad \forall v\in\Sigma^0,\quad f\in\Sigma^2.
\end{equation*}
Following \cite{Kitaev03}, we denote by $\Pee$ the subspace in $\Hil$ of such vacuum vectors $\Omega$ and call it the protected space.
Explicit wave functions of vacuum vectors have been calculated in \cite{BMCA} using a~tensor network formalism.
If $H$ is a group algebra of a finite group $G$, then a basis of $\Pee$ consists of gauge equivalence classes of $G$-valued flat gauge fields on~$\Sigma$~\cite{Cowtan-Majid}. So the dimension of $\Pee$ depends only on the genus of $\Sigma$ through the fundamental group $\pi_1(\Sigma)$.
A~far reaching generalization of the ``protected object'' $\Pee$ appeared recently in~\cite{Hirmer-Meusburger} in which the Kitaev quantum double model is studied over Hopf monoids in monoidal categories.

In terms of expectation values $\omega(M)=(\Omega,\,M\Omega)$, the ground states can be abstractly characterized by the equations
\[
\omega(MA_v)=\omega(M)=\omega(MB_f), \qquad M\in\M,\quad v\in\Sigma^0,\quad f\in\Sigma^2
\]
for the normalized ($\omega(\one)=1$) positive linear functional $\omega\colon\M\to\CC$. Knowing $\omega$, which we do not, is complete information about
the representation on $\Hil$ by the Gelfand--Neumark--Segal construction. So the representation of $\M$ on $\Hil$ may be called the vacuum representation.

The above properties of the vacua $\Omega\in\Pee$ imply
\begin{align}
\label{alpha on vac}
\Ophol_{\alpha_a}(\Phi)\Omega&=\varphi(1)G_a(h) G_a(i)\Omega=\varphi(1)\eps(h)\Omega=\Omega\cdot\eps_{\D^*}(\Phi),\\
\label{gamma on vac}
\Ophol_{\gamma_a}(\Phi)\Omega&=\eps(h)F_a(\varphi)F_a(\iota)\Omega=\eps(h)\varphi(1)\Omega=\Omega\cdot\eps_{\D^*}(\Phi),
\end{align}
hence also the formula expressing gauge invariance of the vacua
\[
\mathbf{D}_a(X)\Omega=\Omega\cdot\eps_D(X),\qquad a\in\Arr(\Sigma),\quad X\in\D(H),\quad \Omega\in\Pee.
\]

\begin{pro}\label{pro: contract on vac}
Let $a\in\Arr(\Sigma)$, $v=\Ou_0(a)$, $f=\Ou_2(a)$ and let $\Omega\in\Hil$ be any vector. Then for all $\Phi\in\D(H)^*$
\begin{alignat}{3}
\label{vertex Omega}
&\Ophol_{\alpha^{\pm 1}_a}(\Phi)\Omega=\Omega\cdot\eps_{\D^*}(\Phi)&&\qquad\text{whenever}\quad A_v\Omega=\Omega,&\\
\label{face Omega}
&\Ophol_{\gamma^{\pm 1}_a}(\Phi)\Omega=\Omega\cdot\eps_{\D^*}(\Phi)&&\qquad\text{whenever}\quad B_f\Omega=\Omega,&\\
\label{kappa Omega}
&\Ophol_{\kappa_a}(\Phi)\Omega=\Ophol_{((T_1a)_0^-)}\Omega&&\qquad\text{whenever}\quad A_v\Omega=\Omega,&\\
\label{lambda Omega}
&\Ophol_{\lambda_a}(\Phi)\Omega=\Ophol_{((T_1a)_2^+)}\Omega&&\qquad\text{whenever}\quad B_f\Omega=\Omega,&\\
\label{kappa-inv Omega}
&\Ophol_{\kappa^{-1}_a}(\Phi)\Omega=\Ophol_{((T_1a)_0^+)}\Omega&&\qquad\text{whenever}\quad A_v\Omega=\Omega,&\\
\label{lambda-inv Omega}
&\Ophol_{\lambda^{-1}_a}(\Phi)\Omega=\Ophol_{((T_1a)_2^-)}\Omega&&\qquad\text{whenever}\quad B_f\Omega=\Omega,&
\end{alignat}
where $\kappa_a$, $\lambda_a$ are the left ribbons defined in \eqref{kappa_a} and \eqref{lambda_a}.
\end{pro}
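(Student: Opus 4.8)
The plan is to deal with the six families of identities in three stages.

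\textbf{The vertex and face equations (\ref{vertex Omega}), (\ref{face Omega}).} These are the statements (\ref{alpha on vac}), (\ref{gamma on vac}) under the weaker hypothesis that only the single projector $A_v$ (resp.\ $B_f$) fixes $\Omega$. I would argue exactly as there: from the Proposition of Section~\ref{sec: holonomy}, $\Ophol_{\alpha_a}(h\ot\varphi)=\varphi(1)G_a(h)$, and recalling $A_v=G_a(\haar)$ together with the two-sided integral property $h\haar=\eps(h)\haar$ one gets, for $A_v\Omega=\Omega$,
\[
G_a(h)\Omega=G_a(h)G_a(\haar)\Omega=G_a(h\haar)\Omega=\eps(h)G_a(\haar)\Omega=\eps(h)\Omega ,
\]
hence $\Ophol_{\alpha_a}(\Phi)\Omega=\eps_{\D^*}(\Phi)\,\Omega$. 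Since $\alpha_a$ is a proper ribbon, the inversion formula (\ref{proper inverse}) gives $\Ophol_{\alpha_a^{-1}}(\Phi)=\Ophol_{\alpha_a}(S_{\D^*}\Phi)$, and (\ref{vertex Omega}) follows from $\eps_{\D^*}\ci S_{\D^*}=\eps_{\D^*}$. Equation (\ref{face Omega}) is identical, using $\Ophol_{\gamma_a}(h\ot\varphi)=\eps(h)F_a(\varphi)$, $B_f=F_a(\iota)$ for the Haar integral $\iota$ of $H^*$, and $\varphi\iota=\varphi(1)\iota$.

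\textbf{The $\kappa$ and $\lambda$ equations (\ref{kappa Omega}), (\ref{lambda Omega}).} The key observation is that the middle arc of $\kappa_a$ is $\alpha_a^{-1}$ with its initial arrow $(a_0^-)$ deleted, so that as paths on $\D(\Sigma)^*$
\[
\kappa_a=((T_1a)_2^+)\,(a_0^+)\,\alpha_a^{-1}\,(a_2^+)
\]
(the inserted $(a_0^+)(a_0^-)$ cancels upon reduction), whence $\Ophol_{\kappa_a}=\Ophol_{((T_1a)_2^+)}\cv\Ophol_{(a_0^+)}\cv\Ophol_{\alpha_a^{-1}}\cv\Ophol_{(a_2^+)}$. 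Now $\Ophol_{\alpha_a^{-1}}(h\ot\varphi)=\varphi(1)G_a(S(h))$, and the exchange relation (\ref{Q G}) lets me commute this $G_a$-factor to the right through the $Q_a$ produced by $\Ophol_{(a_2^+)}$; once the $G_a$-factor is adjacent to $\Omega$, the already-proved (\ref{vertex Omega}) (in the form $G_a(S(h))\Omega=\eps(h)\Omega$) absorbs it. What survives on $\Omega$ is a product of operators all supported on the single edge $e=\Ou_1(a)$ — a $Q_{T_1a}$, a $P_a$, and a $Q_a$ — and, after inserting the coproduct (\ref{D^* comul}) and the presentation identities (\ref{Pbar}), (\ref{Qbar}), this reduces to an identity in the Heisenberg double $\M_e\cong H\pisharp H^*$ asserting equality with $\varphi(1)P_{T_1a}(S(h))=\Ophol_{((T_1a)_0^-)}(\Phi)$. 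Equation (\ref{lambda Omega}) is the same argument with the type $2$ loop: $\lambda_a=((T_1a)_0^-)\,(a_2^-)\,\gamma_a\,(a_0^-)$ as paths, commute the $F_a$-factor of $\Ophol_{\gamma_a}$ past the $P_a$ of $\Ophol_{(a_0^-)}$ by (\ref{P F}), then apply (\ref{face Omega}).

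\textbf{The inverse equations (\ref{kappa-inv Omega}), (\ref{lambda-inv Omega}).} Since $\kappa_a$ and $\lambda_a$ are proper left ribbons, applying (\ref{proper inverse}) to both sides of (\ref{kappa Omega}), (\ref{lambda Omega}) and using it once more on the trivial ribbons $((T_1a)_0^\mp)$, $((T_1a)_2^\pm)$ (trivially proper) turns the equalities into (\ref{kappa-inv Omega}), (\ref{lambda-inv Omega}).

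\textbf{Main obstacle.} The conceptual content is simply that the projector $A_v$ (resp.\ $B_f$) eats the portion of $\kappa_a$ (resp.\ $\lambda_a$) that winds around $v$ (resp.\ $f$), leaving the root arrow; the work is entirely bookkeeping. The delicate point is the coproduct (\ref{D^* comul}) of $\D(H)^*$, which dresses the $H$-component by dual-basis elements: one must follow carefully how $\Phi$ is distributed among the four one-step opholonomies, and verify that once the $G_a$-factor has been absorbed against $A_v\Omega=\Omega$ the remaining edge operators recombine \emph{exactly} into $P_{T_1a}(S(h))$, with no residual central factor. I expect this Heisenberg-double verification — together with getting the various $\sweedl,\sweedr$ actions and antipodes in (\ref{Q G}), (\ref{P F}) placed correctly — to be where the real effort lies.
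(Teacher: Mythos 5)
Your decompositions are the same as the paper's: $\kappa_a$ is split as $((T_1a)_2^+,a_0^+)\,\alpha_a^{-1}\,(a_2^+)$ and $\lambda_a$ as $((T_1a)_0^-,a_2^-)\,\gamma_a\,(a_0^-)$, the projector is absorbed via (\ref{Q G}) resp.\ (\ref{P F}), and the easy cases (\ref{vertex Omega}), (\ref{face Omega}), (\ref{kappa-inv Omega}), (\ref{lambda-inv Omega}) are handled exactly as in the paper. So the strategy is sound and matches.

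However, there is a genuine gap in the step you flag as "bookkeeping." After absorbing the $G_a$-factor against $A_v\Omega=\Omega$, what remains is \emph{not} a bare Heisenberg-double identity. Carrying the dressed coproduct (\ref{D^* comul}) through, the $Q_a$-argument picks up a twist by the Drinfeld element $u^{-1}$ of $\D(H)$, so one is left with $\Ophol_{((T_1a)_2^+,a_0^+)}(\Phi\p)\,\Ophol_{(a_2^+)}(u^{-1}\sweedl\Phi\pp)\Omega$; then one must recognize the opcurve $((T_1a)_2^+,a_0^+,a_2^+,(T_1a)_0^+)$ as a conjugate of the central loop $\beta_a$, whose opholonomy is the scalar $\bra\,\under,S_\D(u)\ket$ by (\ref{edge loop ophol}) and Lemma \ref{lem: central}. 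The result is $\Ophol_{((T_1a)_0^-)}(S_\D(u)u^{-1}\sweedl\Phi)\Omega$, so the "residual central factor" you mention does appear: it is $S_\D(u)u^{-1}\in\D(H)$. For (\ref{kappa Omega}) it does \emph{not} cancel on its own — the equality $\Ophol_{\kappa_a}(\Phi)\Omega=\Ophol_{((T_1a)_0^-)}(\Phi)\Omega$ therefore requires the property $S_\D(u)=u$, which the paper obtains precisely from the C$^*$-Hopf-algebra hypothesis of Section~9. (In the $\lambda$-case the $S_\D(u)$- and $S_\D(u)^{-1}$-factors happen to cancel, so only (\ref{kappa Omega}) needs this input.) Your proposal never invokes the C$^*$-structure, the Drinfeld element, or the central loop $\beta_a$, so it offers no mechanism to kill this factor; without identifying $S_\D(u)=u$ the argument does not close.
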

\begin{proof}
Formulas (\ref{vertex Omega}) and (\ref{face Omega}) can be proven as in (\ref{alpha on vac}) and (\ref{gamma on vac}).
The proof of (\ref{kappa Omega}) goes as follows.
$\kappa_a$ contains $\alpha_a^{-1}$ except one arrow, \smash{$\bigl(T_0^ma\bigr)_0^-=a_0^-$}. So we write its opholonomy as
\[
\Ophol_{\kappa_a}(\Phi)=\Ophol_{((T_1a)_2^+,a_0^+)}\bigl(\Phi\p\bigr)\Ophol_{\alpha_a^{-1}}\bigl(\Phi\pp\bigr)\Ophol_{(a_2^+)}\bigl(\Phi\ppp\bigr).
\]
We leave the first term for a while and concentrate on the action of the rest on the vacuum:
\begin{align*}
\Ophol_{\alpha_a^{-1}}\bigl(\Phi\p\bigr)\Ophol_{(a_2^+)}\bigl(\Phi\pp\bigr)\Omega
&{}=G_a(S\bigl(h\p\bigr))\bigl\bra\xi_i\varphi\p\xi_j,1\bigr\ket \eps\bigl(S(x_j)h\pp x_i\bigr)Q_a\bigl(\varphi\pp\bigr)\Omega\\
&{}=G_a(S(h))Q_a(\varphi)\Omega\eqby{Q G}Q_a\bigl(\varphi\sweedr h\p\bigr)G_a\bigl(S\bigl(h\pp\bigr)\bigr)\Omega\\
&{}=Q_a(\varphi\sweedr h)\Omega,
\end{align*}
where, as usual, $\Phi=h\ot\varphi$ stands for an arbitrary element of $\D(H)^*$.
In order to express this result in terms of $\Phi$, we remark that
\[
u=\xi_i\ot S(x_i)\qquad\text{and}\qquad u^{-1}=\xi_i\ot x_i.
\]
Therefore, \begin{align*}
u^{-1}\sweedl \Phi&=\bigl(h\p\ot\xi_j\varphi\p\xi_k\bigr)\cdot\bigl\bra S(x_k)h\pp x_j\ot\varphi\pp,\xi_i\ot x_i\bigr\ket\\
&=\bigl(h\p\ot\xi_j\varphi\p\xi_k\bigr)\cdot\bigl\bra\varphi\pp,S(x_k)h\pp x_j\bigr\ket\\
&=h\p\ot\varphi\pppp\varphi\p S\bigl(\varphi\pp\bigr)\cdot\bigl\bra\varphi\ppp,h\pp\bigr\ket\\
&=h\p\ot\varphi\sweedr h\pp
\end{align*}
and $\varphi\sweedr h=(\eps\ot\id)\bigl(u^{-1}\sweedl \Phi\bigr)$, so we have
\[
Q_a(\varphi\sweedr h)=\Ophol_{(a_2^+)}\bigl(u^{-1}\sweedl \Phi\bigr).
\]
In this way, we have proven that
\begin{align*}
\Ophol_{\kappa_a}(\Phi)\Omega&=\Ophol_{((T_1a)_2^+,a_0^+)}\bigl(\Phi\p\bigr)\Ophol_{(a_2^+)}\bigl(u^{-1}\sweedl \Phi\pp\bigr)\Omega=\\
&=\Ophol_{((T_1a)_2^+,a_0^+,a_2^+,(T_1a)_0^+)}\bigl(\Phi\p\bigr) \Ophol_{((T_1a)_0^-)}\bigl(u^{-1}\sweedl \Phi\pp\bigr)\Omega.
\end{align*}
Now the opcurve in the first term is a conjugate of the central curve $\beta_a$ which, by (\ref{edge loop ophol}) and Lemma~\ref{lem: central},
we know to have opholonomy equal to $\one_\M\cdot\bra\under,S_\D(u)\ket$. Thus, \[
\Ophol_{\kappa_a}(\Phi)\Omega=\Ophol_{((T_1a)_0^-)}\bigl(S_\D(u)u^{-1}\sweedl \Phi\bigr)\Omega.
\]
For C$^*$-Hopf algebras, $S_\D(u)=u$, therefore (\ref{kappa Omega}) is proven.

The proof of (\ref{lambda Omega}) goes as follows.
We insert $\bigl(a_2^-,a_2^+\bigr)$ into $\lambda_a$ in order to complete the incomplete face loop,
\[
\Ophol_{\lambda_a}(\Phi)=\Ophol_{((T_1a)_0^-,a_2^-)}\bigl(\Phi\p\bigr)\Ophol_{\gamma_a}\bigl(\Phi\pp\bigr)\Ophol_{(a_0^-)}\bigl(\Phi\ppp\bigr).
\]
Next we compute the action of the last two terms on the vacuum by using the (inverse of the) exchange relation (\ref{P F}),
\begin{align*}
\Ophol_{\gamma_a}\bigl(\Phi\p\bigr)\Ophol_{(a_0^-)}\bigl(\Phi\pp\bigr)\Omega
&{}=\eps\bigl(h\p\bigr)F_a\bigl(\xi_i\varphi\p\xi_j\bigr)P_a\bigl(S\bigl(S(x_j)h\pp x_i\bigr)\bigr)\varphi\pp(1)\\
&{}=P_a\bigl((\xi_i\varphi\xi_j)\p\sweedl S(x_i)S(h)x_j\bigr)F_a\bigl((\xi_i\varphi\xi_j)\pp\bigr)\Omega\\
&{}=P_a(\xi_i\varphi\xi_j\sweedl S(x_i)S(h)x_j)\Omega.
\end{align*}
Next we simplify the last expression by proving the identity
\begin{align*}
\xi_i\varphi\xi_j\sweedl S(x_i)S(h)x_j
&{}=\bigl\bra\Phi\p,\eps\ot x_i\bigr\ket\bigl\bra\Phi\pp,\xi_j\ot 1\bigr\ket\cdot(\xi_i\sweedl S(x_j))\\
&{}=\bra\Phi,(\eps\ot x_i)(\xi_j\xi_k\ot 1)\ket S(x_k) \bra\xi_i,S(x_j)\ket\\
&{}=\bra\Phi,\underset{S_\D(R_1)R_2=S_\D(u)}{\underbrace{(\eps\ot S(x_j)(\xi_j\ot 1)}}(\xi_k\ot 1)\ket S(x_k)\\
&{}=S((\id\ot\eps)(\Phi\sweedr S_\D(u))),
\end{align*}
which implies
\[
P_a(\xi_i\varphi\xi_j\sweedl S(x_i)S(h)x_j)=\Ophol_{(a_0^-)}(\Phi\sweedr S_\D(u)).
\]
Putting together and using that $S_\D(u)$ is central, we obtain
\begin{align*}
\Ophol_{\lambda_a}(\Phi)\Omega&=\Ophol_{(T_1a)_0^-,a_2^-)}\bigl(\Phi\p\bigr)\Ophol_{(a_0^-)}\bigl(\Phi\pp\sweedr S_\D(u)\bigr)\Omega\\
&=\Ophol_{(T_1a)_0^-,a_2^-,a_0^-)}\bigl(\Phi\p\bigr) \bigl\bra\Phi\pp,S_\D(u)\bigr\ket\\
&=\Ophol_{(T_1a)_0^-,a_2^-,a_0^-,(T_1a)_2^-)}\bigl(\Phi\p\bigr)\Ophol_{((T_1a)_2^+)}\bigl(S_\D(u)\sweedl\Phi\pp\bigr).
\end{align*}
Now the opcurve in the first term is a conjugate of $\beta_a^{-1}$ therefore it is central and its opholonomy is
$\one_\M\cdot\bigl\bra\under,S_\D(u)^{-1}\bigr\ket$ by Lemma~\ref{lem: central}. This immediately implies (\ref{lambda Omega}).

Since $\kappa_a$, $\lambda_a$ are proper ribbons, (\ref{kappa-inv Omega}) and (\ref{lambda-inv Omega}) for all $\Phi$ are equivalent to (\ref{kappa Omega})
and (\ref{lambda Omega}), respectively, by (\ref{proper inverse}).
\end{proof}

As a generalization of the protected space, let us introduce the spaces $\Pee(F)\subseteq\Hil$ of partial vacua as follows. Consider $F\subseteq\Sigma$ as a
set of faces of $\D(\Sigma)^*$ and let $F^d:=F\cap\Sigma^d$. Then define
\[
\Pee(F):=\bigl\{\Omega\in\Hil\mid A_v\Omega=\Omega,B_f\Omega=\Omega,\ \text{for}\ v\in F^0,\, f\in F^2\bigr\}.
\]
So in particular $\Pee(\Sigma)=\Pee$ and $\Pee(\varnothing)=\Hil$.

\begin{thm}[topological invariance]\label{thm: topological invariance}
For a connected OCPM $\Sigma$, let $\rho_1$, $\rho_2$ be proper ribbons on $\D(\Sigma)^*$ of the same left/right type and let
$F\subset\D(\Sigma)^{*2}$. Then
\begin{enumerate}\itemsep=0pt
\item[$(i)$] if there is a proper ribbon-homotopy $\rho_1\to\rho_2$ of support $F$, then
\[
\Ophol_{\rho_1}(\Phi)\Omega=\Ophol_{\rho_2}(\Phi)\Omega\qquad\text{for all}\quad \Phi\in\D(H)^*,\quad \Omega\in \Pee(F);
\]
\item[$(ii)$] if $\rho_1$, $\rho_2$ are closed and there is a proper free ribbon-homotopy $\rho_1\to\rho_2$ of support $F$ then
\[
\Ophol_{\rho_1}(\Psi)\Omega=\Ophol_{\rho_2}(\Psi)\Omega\qquad\text{for all}\quad \Psi\in\Cocom\D(H)^*,\quad \Omega\in \Pee(F).
\]
\end{enumerate}
\end{thm}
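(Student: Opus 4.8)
The plan is to reduce both parts of the theorem to the elementary moves that make up a (free) proper ribbon-homotopy, and to check that each such move leaves the action on the relevant partial vacuum invariant. Since a proper ribbon-homotopy $\rho_1\to\rho_2$ is by definition a chain $(\rho_0,\dots,\rho_n)$ in which each step $(\rho_i,\rho_{i+1})$ is an elementary contraction/relaxation (and, in part (ii), possibly a circular move), and since every intermediate curve is proper, it suffices to prove the identity $\Ophol_{\rho_i}(\Phi)\Omega=\Ophol_{\rho_{i+1}}(\Phi)\Omega$ for one such move, for all $\Omega\in\Pee(F')$ where $F'$ is the (one- or two-element) support of that move, and then to conclude by induction noting $F'\subseteq F$ so $\Pee(F)\subseteq\Pee(F')$.

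First I would treat part (i). By Definition \ref{def: ribbon curve homotopy}(a) there are four kinds of elementary contractions for left ribbons. Write $\rho=\sigma_1\,\tau\,\sigma_2$ where $\tau$ is the subribbon being modified ($\alpha_a^{-1}$, $\gamma_a$, $\kappa_a$ or $\lambda_a$) and $\rho'=\sigma_1\,\tau'\,\sigma_2$ with $\tau'$ the replacement (the empty curve, or a single arrow $((T_1a)_0^-)$ or $((T_1a)_2^+)$). By multiplicativity of opholonomy under concatenation (valid since all curves here are left ribbons) it is enough to prove $\Ophol_\tau(\Phi)\Omega=\Ophol_{\tau'}(\Phi)\Omega$ for $\Omega\in\Pee(F')$. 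But these are exactly the six identities of Proposition \ref{pro: contract on vac}: vertex contraction is (\ref{vertex Omega}) with the sign $+$ (here the support is the vertex $v=\Ou_0(a)$, so $A_v\Omega=\Omega$); face contraction is (\ref{face Omega}) with the sign $+$ (support the face $f=\Ou_2(a)$); $\kappa$-contraction is (\ref{kappa Omega}) (support the pair $\{v,e\}$ with $e=\Ou_1(a)$, and only the condition $A_v\Omega=\Omega$ is needed); $\lambda$-contraction is (\ref{lambda Omega}) (support $\{f,e\}$, only $B_f\Omega=\Omega$ needed). Relaxations are the inverses of contractions, and by (\ref{proper inverse}) — applicable since $\kappa_a$, $\lambda_a$, $\alpha_a$, $\gamma_a$ are proper ribbons — equality on $\Pee(F')$ for contractions gives equality for relaxations as well (here one uses (\ref{kappa-inv Omega}), (\ref{lambda-inv Omega}), and the $-$ versions of (\ref{vertex Omega}), (\ref{face Omega})). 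For right ribbons the moves involve $\alpha_a,\gamma_a^{-1},\kappa_a^{-1},\lambda_a^{-1}$ and the same six identities cover them after the appropriate sign bookkeeping. This proves (i).

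For part (ii) the only new feature is the circular move: $\rho_{i+1}=C\rho_i$ for a closed proper ribbon. Here I would invoke the cyclic permutation formula (\ref{proper cyclic}): if $\Psi\in\Cocom\D(H)^*$ then $\Ophol_{C\rho}(\Psi)=\Ophol_\rho(\Psi)$ as operators on all of $\M$, a fortiori on $\Pee(F)$. Combined with part (i) applied to each non-circular step — which is legitimate since $\Cocom\D(H)^*\subseteq\D(H)^*$ — the chain of equalities closes up, giving (ii). The restriction to cocommutative $\Psi$ is exactly what (\ref{proper cyclic}) demands and cannot be dropped, since a circular move genuinely changes the base point and hence the operator for general $\Psi$.

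The main obstacle is essentially bookkeeping rather than a deep difficulty: one must be careful that the subribbon $\tau$ really occurs as a \emph{subribbon} (in the sense of the Remark preceding Definition \ref{def: ribbon curve homotopy}), so that the concatenation factorization $\rho=\sigma_1\tau\sigma_2$ is an honest equality of left-ribbon opcurves and $\Ophol_{\sigma_1}\cv\Ophol_\tau\cv\Ophol_{\sigma_2}$ may be used without worrying about reductions; and that in the $\kappa$- and $\lambda$-moves only \emph{one} of the two conditions defining $\Pee(F')$ is actually consumed by Proposition \ref{pro: contract on vac}, which is harmless but should be stated so the support-inclusion argument is clean. The other point requiring a word is that Proposition \ref{pro: contract on vac} is stated for the left ribbons $\kappa_a,\lambda_a$ and their inverses and for $\alpha_a^{\pm1},\gamma_a^{\pm1}$; for right ribbons one uses that a right ribbon-homotopy is built from the inverses of these same curves, so no additional computation is needed. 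Everything else is immediate from the results already established.
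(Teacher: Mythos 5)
Your overall strategy is the paper's: reduce to elementary moves, invoke Proposition \ref{pro: contract on vac} for each move, and handle circular moves via (\ref{proper cyclic}). But there is a genuine gap in the key reduction step, and it is not a bookkeeping issue — it is exactly the point at which ``proper'' earns its place in ``proper ribbon-homotopy.''

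You write $\rho=\sigma_1\tau\sigma_2$ and claim that ``by multiplicativity of opholonomy under concatenation it is enough to prove $\Ophol_\tau(\Phi)\Omega=\Ophol_{\tau'}(\Phi)\Omega$ for $\Omega\in\Pee(F')$.'' Multiplicativity gives only the convolution factorization
\[
\Ophol_\rho(\Phi)\Omega=\Ophol_{\sigma_1}(\Phi\p)\,\Ophol_\tau(\Phi\pp)\,\Ophol_{\sigma_2}(\Phi\ppp)\,\Omega,
\]
in which $\Ophol_{\sigma_2}(\Phi\ppp)$ sits between $\Ophol_\tau(\Phi\pp)$ and $\Omega$. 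Knowing $\Ophol_\tau(\Phi)\Omega=\Ophol_{\tau'}(\Phi)\Omega$ does not let you replace the middle factor unless you first move $\Ophol_\tau(\Phi\pp)$ to act directly on $\Omega$. That step requires $[\Ophol_\tau(\Phi\pp),\Ophol_{\sigma_2}(\Phi\ppp)]=0$, which is the U-separatedness relation (\ref{U-sep commute}) between $\tau$ and $\sigma_2$ — a consequence of $\rho_1$ being \emph{proper} (self-U-separated), not of multiplicativity. After applying Proposition \ref{pro: contract on vac} you must then commute $\Ophol_{\tau'}(\Phi\pp)$ back past $\Ophol_{\sigma_2}(\Phi\ppp)$, which uses properness of $\rho_2$. (Trying instead to argue that $\Ophol_{\sigma_2}(\Phi\ppp)\Omega$ stays in $\Pee(F')$ is not robust: by Corollary \ref{cor: ribop gauge invar} the ribbon operator fails to commute with the $A_v,B_f$ at its own endpoints, and one of those endpoints can share a cell with the support $F'$ of the move.) This two-step commute-apply-commute is the entire content of the paper's proof of part (i); your argument is missing it. Part (ii), modulo this same issue, matches the paper.
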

\begin{proof}
(i) It suffices to consider elementary ribbon-homotopies $(\rho_1,\rho_2)$ in which a subribbon $\sigma_1\subset\gamma\sigma_1\delta=\rho_1$ is replaced
by the subribbon $\sigma_2\subset\gamma\sigma_2\delta=\rho_2$. (So $\sigma_1\mapsto\sigma_2$ is either a vertex or face or $\kappa$- or
$\lambda$-contraction/relaxation.) Then the argument is this
\begin{align*}
\Ophol_{\rho_1}(\Phi)\Omega&=\Ophol_{\gamma}\bigl(\Phi\p\bigr)\Ophol_{\sigma_1}\bigl(\Phi\pp\bigr)\Ophol_{\delta}\bigl(\Phi\ppp\bigr)\Omega= [\rho_1\text{ is proper}]\\
&=\Ophol_{\gamma}\bigl(\Phi\p\bigr)\Ophol_{\delta}\bigl(\Phi\ppp\bigr) \Ophol_{\sigma_1}\bigl(\Phi\pp\bigr)\Omega= [\text{Proposition~\ref{pro: contract on vac}}]\\
&=\Ophol_{\gamma}\bigl(\Phi\p\bigr)\Ophol_{\delta}\bigl(\Phi\ppp\bigr) \Ophol_{\sigma_2}\bigl(\Phi\pp\bigr)\Omega= [\rho_2 \ \text{is proper}]\\
&=\Ophol_{\gamma}\bigl(\Phi\p\bigr)\Ophol_{\sigma_2}\bigl(\Phi\pp\bigr)\Ophol_{\delta}\bigl(\Phi\ppp\bigr)\Omega=\Ophol_{\rho_2}(\Phi)\Omega.
\end{align*}

(ii) follows from (i) if we add the observation that for cocommutative $\Psi$ we can cyclically permute proper ribbons already at the level of operators:
$\Ophol_{\rho_1}(\Psi)=\Ophol_{\rho_2}(\Psi)$ whenever $(\rho_1,\rho_2)$ is a circular move, see (\ref{proper cyclic}).
\end{proof}

\begin{cor}
Let $\gamma$ be a closed proper ribbon, $\rho\colon\bra v_0,f_0\ket\to\bra v_1,f_1\ket$ any proper ribbon and let $\Psi\in\Cocom\D(H)^*$ and $\Phi\in\D(H)^*$.
\begin{enumerate}\itemsep=0pt
\item[$(i)$] If $\Omega\in\Pee$ ,then $\Ophol_\gamma(\Psi)\Omega$ also belongs to $\Pee$ and depends only on the proper free ribbon-homotopy class of $\gamma$.
\item[$(ii)$] If $\gamma$ is properly contractible $($Theorem~$\ref{thm: proper contract})$ and $\Omega\in\Pee$, then
$\Ophol_\gamma(\Psi)\Omega=\Omega\cdot\eps_{\D^*}(\Psi)$.
\item[$(iii)$] If $\Omega\in\Pee$, then $\Ophol_\rho(\Phi)\Omega$ belongs to $\Pee(\Sigma\setminus\{v_0,v_1,f_0,f_1\})$ and depends only on the proper ribbon-homotopy
class of $\rho$.
\end{enumerate}
\end{cor}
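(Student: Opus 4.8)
The plan is to prove Corollary \ref{cor: topological invariance} as a straightforward package of consequences of Theorem \ref{thm: topological invariance}, Corollary \ref{cor: ribop gauge invar}, and Proposition \ref{pro: contract on vac}, together with the existence results for proper ribbon-homotopies in Section 8. The three parts are largely independent, so I would treat them in turn.

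For part (i): given a closed proper ribbon $\gamma$ and $\Omega\in\Pee=\Pee(\Sigma)$, I first want to show $\Ophol_\gamma(\Psi)\Omega\in\Pee$. By Corollary \ref{cor: ribop gauge invar} the operator $\Ophol_\gamma(\Psi)$ commutes with every $A_v$ and $B_f$ except those at the (at most two, but here coinciding, since $\gamma$ is closed) endpoint site of $\gamma$. So for all $v,f$ not at the base site we get $A_v\Ophol_\gamma(\Psi)\Omega=\Ophol_\gamma(\Psi)A_v\Omega=\Ophol_\gamma(\Psi)\Omega$ and likewise for $B_f$. The base-site vertex $v_0$ and face $f_0$ require the extra input that $\Psi$ is cocommutative: then $\Ophol_{\alpha_{a}}(\Psi)$ and $\Ophol_{\gamma_{a}}(\Psi)$ at the base point act on $\Omega\in\Pee$ as $\eps_{\D^*}(\Psi)$ by (\ref{alpha on vac})--(\ref{gamma on vac}), and one checks, using that a cocommutative $\Psi$ lets a type 0 or type 2 face loop at the base site be slid past $\Ophol_\gamma(\Psi)$ (or more directly, that $A_{v_0}$ and $B_{f_0}$ are the Haar-integral specializations of exactly those loop holonomies, which commute with $\Ophol_\gamma(\Psi)$ up to the harmless central/counit factor), that $A_{v_0}$ and $B_{f_0}$ also fix $\Ophol_\gamma(\Psi)\Omega$. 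Hence $\Ophol_\gamma(\Psi)\Omega\in\Pee$. The homotopy-invariance half of (i) is then just Theorem \ref{thm: topological invariance}(ii) applied with $F$ the support of the proper free ribbon-homotopy: since $\Omega\in\Pee\subseteq\Pee(F)$, the two holonomies agree on $\Omega$.

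For part (ii): if $\gamma$ is properly contractible then by the relevant clause of Theorem \ref{thm: proper contract} (or its variant for the other chirality) there is a proper ribbon-homotopy, hence in particular a proper free ribbon-homotopy, from $\gamma$ to a trivial ribbon $(s)$, of some support $F$. For a trivial ribbon the holonomy is $\one_\M\cdot\eps_{\D^*}(\Psi)$ (the empty product in (\ref{Ophol long})), so $\Ophol_{(s)}(\Psi)\Omega=\Omega\cdot\eps_{\D^*}(\Psi)$. Applying Theorem \ref{thm: topological invariance}(ii) with this $F$, and using $\Omega\in\Pee\subseteq\Pee(F)$, gives $\Ophol_\gamma(\Psi)\Omega=\Ophol_{(s)}(\Psi)\Omega=\Omega\cdot\eps_{\D^*}(\Psi)$.

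For part (iii): given a proper ribbon $\rho:\langle v_0,f_0\rangle\to\langle v_1,f_1\rangle$ and $\Omega\in\Pee$, Corollary \ref{cor: ribop gauge invar} says $M=\Ophol_\rho(\Phi)$ commutes with all $A_v$, $B_f$ except for $v\in\{v_0,v_1\}$, $f\in\{f_0,f_1\}$; so $A_v M\Omega=M\Omega$ and $B_f M\Omega=M\Omega$ for all $v\in\Sigma^0\setminus\{v_0,v_1\}$ and $f\in\Sigma^2\setminus\{f_0,f_1\}$, which is exactly the assertion $M\Omega\in\Pee(\Sigma\setminus\{v_0,v_1,f_0,f_1\})$. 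Homotopy invariance is Theorem \ref{thm: topological invariance}(i): if there is a proper ribbon-homotopy $\rho_1\to\rho_2$ (same endpoints, same type) with support $F$, then $F\subseteq\Sigma$, and since the supports occurring in proper ribbon-homotopies between ribbons with endpoint sites $\langle v_0,f_0\rangle,\langle v_1,f_1\rangle$ avoid those four cells (a $\kappa$- or $\lambda$- or vertex/face contraction inside $\rho$ winds around interior cells, not around the terminal vertex/face of the whole ribbon — this is where I should be slightly careful and invoke Definition \ref{def: ribbon curve homotopy}(d) plus properness), we have $F\subseteq\Sigma\setminus\{v_0,v_1,f_0,f_1\}$, so $\Omega\in\Pee\subseteq\Pee(F)$ and Theorem \ref{thm: topological invariance}(i) gives $\Ophol_{\rho_1}(\Phi)\Omega=\Ophol_{\rho_2}(\Phi)\Omega$.

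The main obstacle I anticipate is the base-point (resp. endpoint) bookkeeping in parts (i) and (iii): verifying that $A_{v_0},B_{f_0}$ (and for (iii), that the support $F$ of the homotopy really stays away from the terminal cells) behave as claimed. Everything else is an immediate assembly of cited results, but this point needs the cocommutativity hypothesis in (i)--(ii) and a small argument from the structure of the elementary moves in (iii).
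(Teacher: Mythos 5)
Your overall plan matches the paper's own proof, which is a one‐liner citing Corollary~\ref{cor: ribop gauge invar} and Theorem~\ref{thm: topological invariance} (with an aside about the support of a proper ribbon‐homotopy in (iii)). Parts (ii) and (iii) of your argument are correct and essentially as terse as they need to be. Two small remarks on (iii): the containment $\Omega\in\Pee\subseteq\Pee(F)$ holds for \emph{every} $F\subseteq\Sigma$ since $\Pee=\Pee(\Sigma)$ and $\Pee(F)$ is monotone decreasing in $F$, so you don't actually need the support to avoid $\{v_0,v_1,f_0,f_1\}$ for the homotopy-invariance step to go through — that disjointness is automatic and harmless, but not load-bearing. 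Also, "properly contractible" in (ii) is a definition, not a conclusion of Theorem~\ref{thm: proper contract}; the theorem only tells you when the hypothesis can be verified.

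The genuine soft spot, which you yourself flag, is the base-site clause of part (i). Your parenthetical claim that $A_{v_0}$ and $B_{f_0}$ "commute with $\Ophol_\gamma(\Psi)$ up to the harmless central/counit factor" is not something the paper provides; Theorem~\ref{thm: ribbon-loop commute} requires the face loop $\lambda$ to be disjoint from both endpoints of the ribbon, which fails precisely when $\lambda$ sits at the base site of the closed $\gamma$, and the exchange with $\mathbf{D}$-operators at a shared endpoint is the nontrivial $\D(H)$-action of Theorem~\ref{thm: ribb bim}, not a central correction. The clean argument is the one you gesture at with "sliding the base site," and it should be spelled out: because $\Psi\in\Cocom\D(H)^*$ and $\gamma$ is a proper closed ribbon, equation~(\ref{proper cyclic}) gives $\Ophol_\gamma(\Psi)=\Ophol_{C^k\gamma}(\Psi)$ for every cyclic shift $C^k\gamma$; applying Corollary~\ref{cor: ribop gauge invar} to $C^k\gamma$ kills every $A_v$ and $B_f$ except those at the base site of $C^k\gamma$, so intersecting over $k$ one only has to worry about those $v,f$ shared by \emph{all} sites of $\gamma$. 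By Lemma~\ref{lem: OCSC}(iii) a $T_2$-step always changes the vertex and a $T_0$-step always changes the face, so a closed proper left ribbon whose sites all share a vertex (resp. face) is forced to be an $\alpha_a^{-1}$ (resp. $\gamma_a$); in those degenerate cases Proposition~\ref{pro: contract on vac} (or directly (\ref{alpha on vac})--(\ref{gamma on vac})) gives $\Ophol_\gamma(\Psi)\Omega=\Omega\cdot\eps_{\D^*}(\Psi)\in\Pee$ outright. This closes the gap without appeal to any "up to central factor" commutativity.
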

\begin{proof}
This is a direct consequence of Corollary~\ref{cor: ribop gauge invar} and Theorem~\ref{thm: topological invariance} with the additional remark that
in (iii) the support of any proper ribbon-homotopy $\rho\to\rho'$ is automatically disjoint from $\{v_0,v_1,f_0,f_1\}$.
\end{proof}

The natural question that emerges is whether the proper ribbon-homotopy classes are the same as the ribbon-homotopy classes or the homotopy classes.
If $\Sigma$ is finite and has genus $\mathbf{g}>0$, then the answer is no. While the latter two are infinite groupoids there are only finitely many proper ribbons on a finite $\Sigma$. In addition, proper ribbons cannot be composed so the proper ribbon-homotopy classes need not even form a groupoid.
On the infinite plane, however, one expects that every ribbon-homotopy class contains a proper ribbon so all the three should reduce to the pair groupoid
on the set of sites.

\section{Excited states}\label{section10}

As explained in \cite{Kitaev03}, the holonomy of an open ribbon $\rho\colon s_0\to s_1$ can be thought to create some charge at $s_1$ and the anticharge
at $s_0$. These charges can then be measured by contractible closed ribbon operators winding around~$s_i$. We can now make this precise in the Hopf
algebraic Kitaev model.

\begin{thm}\label{thm: gamma measures what rho creates}
Let $\Sigma$ be a connected OCPM. On $\D(\Sigma)^*$ let $\rho\colon s_0\to s_1$ be a proper left ribbon and $\gamma\colon s\to s$ a closed proper left ribbon.
\begin{enumerate}\itemsep=0pt
\item[$(i)$] We assume that $\gamma$ is properly contractible in the sense of Theorem~$\ref{thm: proper contract}$ so divides $\D(\Sigma)^{*2}$ into two
connected components $L$ and $R$ among which now $L$ is the disk.
\item[$(ii)$] $L$ contains both faces of the site $s_1=\bra v_1,f_1\ket$.
\item[$(iii)$] $R$ contains both faces of the site $s_0=\bra v_0,f_0\ket$.
\end{enumerate}
Then for all partial vacuum vectors $\Omega\in\Pee(L)$
\begin{equation}\label{gamma-rho-Omega}
\Ophol_\gamma(\Psi)\Ophol_\rho(\Phi)\Omega=\Ophol_\rho(\overset{\raisebox{-1pt}{\scriptsize $\leftrightarrow$}}{\Psi}\sweedl\Phi)\Omega
\end{equation}
for all $\Phi\in\D(H)^*$ and $\Psi\in\Cocom\D(H)^*$ where the linear isomorphism $\D(H)^*\to\D(H)$,
$\Psi=k\ot \psi\mapsto\overset{\raisebox{-1pt}{\scriptsize $\leftrightarrow$}}{\Psi}:=\psi\ot k$ maps $\Cocom\D(H)^*$ onto $\Center\D(H)$.
\end{thm}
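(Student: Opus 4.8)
The strategy is to reduce the statement to the interaction of a single \emph{minimal} contractible closed ribbon---a type 0 or a type 2 face loop---with the open ribbon $\rho$, and then invoke the gauge-transformation relations of Theorem \ref{thm: ribb bim} together with the vacuum identities of Proposition \ref{pro: contract on vac}. By Theorem \ref{thm: proper contract} the contractible left ribbon $\gamma$ is properly ribbon-homotopic, with support equal to the contraction disk $L$, to the trivial ribbon $(s)$; but we cannot contract it all the way, since that would kill the charge. Instead the plan is: since $\Psi\in\Cocom\D(H)^*$, Theorem \ref{thm: topological invariance}(ii) lets us freely ribbon-homotope $\gamma$ within $L$ without changing $\Ophol_\gamma(\Psi)\Omega$ for $\Omega\in\Pee(L)$, as long as at the last step we stop at a loop around a single cell $c\in L$ that is incident to the site $s_1$. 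Concretely, contract $\gamma$ using $\kappa$- and $\lambda$-moves along a maximal (dual) tree of $L$ rooted so that the last surviving loop encircles either the vertex $v_1$ or the face $f_1$ of $s_1=\bra v_1,f_1\ket$; assumption (ii) that $L$ contains both faces of $s_1$, combined with the tree-curve calculus of the Lemma preceding Theorem \ref{thm: proper contract}, makes this possible. So it suffices to prove (\ref{gamma-rho-Omega}) when $\gamma$ is a type 0 loop $\alpha_a^{\rev-1}$ around $v_1$ or a type 2 loop $\gamma_a^{\rev-1}$ around $f_1$, with $s(a)=s_1$.

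\textbf{The minimal-loop computation.} For such a minimal loop, $\Ophol_\gamma(\Psi)$ is---up to $S_\M$ and the reversal conventions recorded just after the statement of Proposition preceding Theorem \ref{thm: ribbon-loop commute}---essentially $G_{a}(\,\cdot\,)$ or $F_{a}(\,\cdot\,)$, i.e. a component of the Drinfeld-double embedding $\mathbf{D}_a$ of Lemma \ref{lem: doubles in M}. The idea is then to move $\Ophol_\gamma(\Psi)$ past $\Ophol_\rho(\Phi)$ using the target gauge-transformation relation (\ref{target gauge transf})/(\ref{target gauge rel}) of Theorem \ref{thm: ribb bim}, which applies because $\rho$ is a proper left ribbon with target $s(a)=s_1$. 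That relation converts left multiplication by $\mathbf{D}_a(X)$ into the module action $X\sweedl\Phi$ inside the holonomy; specialized to $X$ running over the two factors $G_a(H)$ and $F_a(H^*)$ and reassembled via the comultiplication of $\D(H)$, it yields $\mathbf{D}_a(X)\,\Ophol_\rho(\Phi)=\Ophol_\rho(X\p\sweedl\Phi)\,\mathbf{D}_a(X\pp)$ for all $X\in\D(H)$. Now apply this with $X$ determined by $\Psi$: writing $\Ophol_\gamma(\Psi)\Omega$ in terms of $\mathbf{D}_a$ (picking up an $S_\M$ and a reversal), push $\mathbf{D}_a$ through $\Ophol_\rho(\Phi)$, and let the trailing $\mathbf{D}_a$-factor act on $\Omega\in\Pee(L)\subseteq\Pee(\{v_1,f_1\})$, where by the gauge-invariance identity $\mathbf{D}_a(X)\Omega=\Omega\cdot\eps_\D(X)$ displayed after (\ref{gamma on vac}) it reduces to a scalar. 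Collecting the counits leaves precisely $\Ophol_\rho\big(\overset{\leftrightarrow}{\Psi}\sweedl\Phi\big)\Omega$ once one checks that the linear map $\Psi=k\ot\psi\mapsto\overset{\leftrightarrow}{\Psi}=\psi\ot k\in\D(H)$ is exactly the identification under which the $S_\M$-and-reversal rewriting of $\Ophol_\gamma(\Psi)$ becomes $\mathbf{D}_a(\overset{\leftrightarrow}{\Psi})$ acting from the side of $s_1$; cocommutativity of $\Psi$ is what guarantees both that $\overset{\leftrightarrow}{\Psi}$ lands in $\Center\D(H)$ (so that the base-point of $\gamma$ is immaterial and Theorem \ref{thm: topological invariance}(ii) was legitimate) and that no ordering ambiguity remains in the iterated coproducts of the loop operator.

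\textbf{Checking the $\overset{\leftrightarrow}{(\,\cdot\,)}$ bookkeeping.} The one genuinely delicate point---and the step I expect to be the main obstacle---is verifying that the composite of (a) the passage $\Ophol_\gamma\leadsto S_\M\circ\Ophol_{\gamma^{\rev}}$ via Proposition \ref{pro: Hol-Ophol}(i), (b) the identification of $S_\M\circ\Ophol_{(\cdot)_0^+,(\cdot)_2^+}$-type loop data with the double embedding $\mathbf{D}_a$ through Lemma \ref{lem: theta}(iv) and the antipode formula (\ref{Sdudo}), and (c) the module-action reshuffling through (\ref{target gauge rel}), really does collapse to the single transposition $k\ot\psi\mapsto\psi\ot k$. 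One handles this by testing against $Y\ot Y'\in\D(H)\ot\D(H)$ as in the proof of Lemma \ref{lem: central}: evaluate $\bra\Psi,\,\cdot\,\ket$ on both sides, use $S_{\D^*}$ and the Heisenberg exchange relations (\ref{QP inv-comm rel}), and exploit $\Psi\in\Cocom\D(H)^*$ to commute past the coproduct; that $\Cocom\D(H)^*$ maps onto $\Center\D(H)$ under the transpose is the observation already recorded before Theorem \ref{thm: ribbon-loop commute} (every $\varphi\in\Cocom H^*$ is $\iota\sweedr z$ for a unique central $z$), applied to $\D(H)$ in place of $H$. Once this identification is pinned down, assembling the type 0 case and the type 2 case into the single formula (\ref{gamma-rho-Omega}), and then transporting back along the ribbon-homotopy of the first paragraph, completes the argument; the right-ribbon analogue, if needed, follows by the duality functor $J^\ast$ of the proof of Theorem \ref{thm: ribb bim}.
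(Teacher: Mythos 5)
Your plan founders at its very first reduction. You propose to use Theorem \ref{thm: topological invariance}(ii) to contract $\gamma$ all the way to a type 0 or type 2 face loop around a \emph{single} cell $c\in\{v_1,f_1\}$. But that theorem requires $\Ophol_\rho(\Phi)\Omega\in\Pee(F)$ for $F$ the support of the homotopy, and the support of a contraction from $\gamma$ (the boundary of $L$) to a loop around $v_1$ alone is $L\setminus\{v_1\}$, which by hypothesis (ii) contains $f_1$. By Corollary \ref{cor: ribop gauge invar} the vector $\Ophol_\rho(\Phi)\Omega$ is precisely \emph{not} fixed by $B_{f_1}$: the ribbon $\rho$ terminates at $\bra v_1,f_1\ket$ and creates an excitation there. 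So $\Ophol_\rho(\Phi)\Omega\notin\Pee(L\setminus\{v_1\})$ and the invariance theorem does not apply. The same obstruction holds symmetrically for contracting to a loop around $f_1$ alone. This is not a technicality: the paper's proof contracts $\gamma$ only as far as the boundary $\delta$ of $\uparrow\! v_1=\{v_1\}\cup\Nb(v_1)$, which still encircles \emph{both} $v_1$ and $f_1$; the support of that homotopy is the annulus $A=L\setminus\uparrow\! v_1$, which is disjoint from $\{v_1,f_1\}$, so that $\Ophol_\rho(\Phi)\Omega\in\Pee(A)$ and the theorem applies.

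A second, related problem: even granting the reduction, the opholonomy of a single type 0 (resp. type 2) face loop is, up to $S_\M$, just $G_a$ (resp.\ $F_a$), depending only on the $H$-component (resp.\ $H^*$-component) of its argument. It is not $\mathbf{D}_a$, and cannot detect the full $\D(H)$-charge. In particular, for $\Psi=k\ot\psi$ it could not possibly produce the operator $\overset{\leftrightarrow}{\Psi}\sweedl(\,\cdot\,)=\psi\ot k\sweedl(\,\cdot\,)$ on the right of \eqref{gamma-rho-Omega}, since that depends on both $k$ and $\psi$. Your step of ``push $\mathbf{D}_a$ through $\Ophol_\rho(\Phi)$ via \eqref{target gauge rel}'' therefore has no loop to attach to; there is no intermediate proper ribbon whose holonomy equals $\mathbf{D}_a(\overset{\leftrightarrow}{\Psi})$. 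The paper's proof does not use Theorem \ref{thm: ribb bim} here at all: after contracting to $\delta$ it additionally deforms $\rho$ so that only its last two arrows enter the disk, and then performs a direct R-matrix exchange calculation using \eqref{ophol exch rel1}--\eqref{ophol exch rel2}, which produces the wedge term $\bra\Phi\ppp,R_2R_{1'}\ket$ whose evaluation against the vacuum is what yields $\overset{\leftrightarrow}{\Psi}$. Finally, the ``bookkeeping'' paragraph you flag as the main obstacle is in fact the entire content of the final part of the argument, and you have left it unverified; in particular, the necessity of $\Psi\in\Cocom\D(H)^*$ surfaces there only because $\overset{\leftrightarrow}{\Psi}$ must be central in $\D(H)$ to slide past the holonomy of $\rho$, a fact the paper proves via the projections $\mathbf{P},\mathbf{P}^\trans$ and the R-matrix intertwining relation, and which you assume rather than establish.
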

\begin{proof}
$L$ is upper semiclosed and contains $v_1$ and $f_1$. The smallest upper semiclosed subcomplex of $\Sigma$ which contains $v_1$ and $f_1$ is nothing but $\uparrow\! v_1$ consisting\footnote{An example of a $\uparrow\! v$ is the hole in the annulus of Figure~\ref{fig: annulus}.}
of $v_1$ and of its vertex neighbourhood $\Nb(v_1)$ (Definition~\ref{def: CPM}). So $\uparrow\! v_1\,\subseteq L$ and its boundary left
ribbon $\delta$ is proper. The set $A$ of cells between $\delta$ and $\gamma$ is an annulus as in Theorem~\ref{thm: annulus homotopy} so there is a proper
free ribbon-homotopy $\gamma\to\delta$ of support $A$. Since $L\supseteq A$, $\Omega\in\Pee(L)\subseteq\Pee(A)$ and by Corollary
\ref{cor: ribop gauge invar} also $\Ophol_\rho(\Phi)\Omega\in \Pee(A)$. Using topological invariance in the sense of Theorem~\ref{thm: topological invariance}\,(ii), we have
\[
\Ophol_\gamma(\Psi)\Ophol_\rho(\Phi)\Omega=\Ophol_\delta(\Psi)\Ophol_\rho(\Phi)\Omega.
\]
Now using topological invariance, this time in the sense of Theorem~\ref{thm: topological invariance}\,(i), of $\Ophol_\rho(\Phi)\Omega$, we can choose
$\rho$ to be as simple inside $\delta$ as possible. By proper ribbon-homotopy supported in $L$, we can deform $\rho$ in such a way that only the last arrow of the new
$\rho$ lies
$$
\parbox{220pt}{
\begin{picture}(250,120)
\linethickness{0.3pt}
\color{black}
\polyline(-10,100)(-15,110)(-5,115)(0,105)(-10,100)
\color{blue}\put(-7.5,107.5){\circle*{3}}
\color{black}\polyline(0,105)(8,95)(15,108)(0,105)
\color{red}\put(7.7,102.7){\circle*{3}}
\color{black}
\polyline(150,65)(165,75)(158,90)(143,90)(135,75)
\color{red}\put(150.5,77.5){\circle*{3}} \put(151,72){$\scriptstyle f_1$}
\color{black}
\polyline(150,65)(138,45)(162,45)(150,65)
\color{blue} \put(150,53){\circle*{3}} \put(151,48){$\scriptstyle v_1$}
\color{black}
\polyline(138,45)(123,55)(135,75)
\color{mzold}\put(150,34){\circle*{3}}
\color{black}
\polyline(138,45)(138,23)(162,23)
\color{mzold}\put(136.5,60){\circle*{3}}
\color{black}
\polyline(150,65)(165,75)(177,55)(162,45)
\color{mzold}\put(163.5,60){\circle*{3}}
\color{black}
\polyline(162,45)(162,23)(177,13)(192,23)(192,45)(177,55)
\polyline(123,55)(123,33)(138,23)
\color{red}\put(130.5,39){\circle*{3}}\put(177,34){\circle*{3}}
\color{black}
\put(59,86){\line(1,-4){6}}
\put(77,83){\line(1,-4){6}}
\polyline(123,55)(83,59)(83,37)(123,33)(123,55)
\polyline(65,62)(83,59)(83,37)(65,62)
\polyline(143,90)(113,105)(105,90)
\polyline(59,86)(55,101)(70,98)(77,83)
\polyline(70,98)(98,105)(105,90)
\polyline(98,105)(113,105)
\polyline(113,105)(148,107)(143,90)
\polyline(148,107)(163,107)(158,90)
\polyline(158,90)(175,92)(182,77)(165,75)
\polyline(163,107)(180,112)(175,92)
\polyline(177,55)(197,60)(212,50)(192,45)
\polyline(182,77)(207,70)(197,60)
\polyline(123,33)(108,18)(123,8)(138,23)
\polyline(65,62)(55,52)(73,27)(83,37)
\polyline(73,27)(88,12)(108,18)
\color{red}
\put(77,52.7){\circle*{3}}
\put(65,92){\circle*{3}}
\put(105.3,100){\circle*{3}}
\color{blue}
\put (95,25.4){\circle*{3}}
\put(104.6,72.4){\circle*{3}}
\put(134.7,100.7){\circle*{3}}
\put(169,100.2){\circle*{3}}
\put(185.6,67.4){\circle*{3}}
\color{mzold}
\put(123,20.5){\circle*{3}}
\put(69,44.5){\circle*{3}}
\put(103,46){\circle*{3}}
\put(71,72.5){\circle*{3}}
\put(87.5,94){\circle*{3}}
\put(124,90){\circle*{3}}
\put(153,98.5){\circle*{3}}
\put(170,83.5){\circle*{3}}
\put(194.5,52.5){\circle*{3}}
\linethickness{0.8pt}
\color{orange}
\polyline(135,75)(123,55)(123,33)(138,23)(162,23)(177,13)(192,23)(192,45)(177,55)(165,75)(158,90)(143,90)(135,75)
\put(177,13){\vector(3,2){10}}\put(182,7){$\delta$}
\color{magenta}
\put(137,60){\oval(180,120)}
\put(227,30){\vector(0,1){3}}\put(235,30){$\gamma$}
\color{cian}
\qbezier(-10,100)(-40,85)(-10,70)
\qbezier(-10,70)(0,65)(10,70)
\put(10,70){\line(3,1){49}}
\put(-10,100){\line(2,1){10}}
\put(10,70){\vector(2,1){3}}
\put(10,60){$\rho$}
\put(135,75){\vector(3,-2){15}}
\put(59,86){\vector(6,-1){18}}
\put(77,83){\vector(4,1){28}}
\put(105,90){\vector(2,-1){30}}
\color{black}
\put(0,105){\circle*{4}} \put(1,110){$\scriptstyle s_0$}
\put(150,65){\circle*{4}} \put(153,62){$\scriptstyle s_1$}
\put(135,75){\circle*{4}} \put(138,74){$\scriptstyle s$}
\put(205,92){$L$}
\put(240,92){$R$}
\end{picture}}
$$
inside $\delta$ and the last two arrows of this $\rho$ form a straight curve intersecting $\delta$ only at a single site. So we can assume
that $\rho$ and $\delta$ have the following form:
\[
\rho=\rho'\bigl((T_0a)_0^-,a_2^+\bigr),\qquad\delta=\bigl(a_0^-\bigr)\delta'\bigl(\bigl(T_2^{-1}a\bigr)_2^+\bigr),
\]
where $a$ is chosen such that $s(a)$ is the intersection point of $\rho$ and $\delta$, hence $s_1=s(T_2a)$. Notice that we have chosen the base point
of $\delta$ to be the intersection point. The $\rho'$ and $\delta'$ are proper ribbons with holonomies that commute with that of all other pieces
of opcurves. Now we can compute the exchange relation of $H_\delta(\Psi)=\Ophol_\delta(\Psi)$ and $H_\rho(\Phi)=\Ophol_\rho(\Phi)$ as follows:
\begin{align*}
H_\delta(\Psi)H_\rho(\Phi) ={}& H_{(a_0^-)}\bigl(\Psi\p\bigr)H_{\delta'}\bigl(\Psi\pp\bigr)H_{((T_2^{-1}a)_2^+)}\bigl(\Psi\ppp\bigr)\\
&{}\times H_{\rho'}\bigl(\Phi\p\bigr)H_{((T_0a)_0^-)}\bigl(\Phi\pp\bigr)H_{(a_2^+)}\bigl(\Phi\ppp\bigr)\\
\eqby{ophol exch rel2}{}&H_{\rho'}\bigl(\Phi\p\bigr)H_{(a_0^-)}\bigl(\Psi\p\bigr)H_{\delta'}\bigl(\Psi\pp\bigr)\\
&{} \times H_{((T_0a)_0^-)}\bigl(R_2\sweedl\Phi\pp\bigr)H_{((T_2^{-1}a)_2^+)}\bigl(R_1\sweedl\Psi\ppp\bigr)H_{(a_2^+)}\bigl(\Phi\ppp\bigr)\\
={}&H_{\rho'}\bigl(\Phi\p\bigr)H_{((T_0a)_0^-)}\bigl(R_2\sweedl\Phi\pp\bigr)H_{(a_0^-)}\bigl(\Psi\p\bigr)H_{(a_2^+)}\bigl(\Phi\ppp\bigr)\\
&{} \times H_{\delta'}\bigl(\Psi\pp\bigr)H_{((T_2^{-1}a)_2^+)}\bigl(R_1\sweedl\Psi\ppp\bigr)\\
\eqby{ophol exch rel1}{}& H_{\rho'}\bigl(\Phi\p\bigr)H_{((T_0a)_0^-)}\bigl(R_2\sweedl\Phi\pp\bigr)H_{(a_2^+)}\bigl(\Phi\ppp\sweedr R_{1'}\bigr)H_{(a_0^-)}\bigl(\Psi\p\sweedr R_{2'}\bigr)\\
&{} \times H_{\delta'}\bigl(\Psi\pp\bigr)H_{((T_2^{-1}a)_2^+)}\bigl(R_1\sweedl\Psi\ppp\bigr)\\
={}&H_{\rho'}\bigl(\Phi\p\bigr)H_{((T_0a)_0^-)}\bigl(\Phi\pp\bigr)\!\cdot\!\bigl\bra\Phi\ppp,R_2R_{1'}\bigr\ket\!\cdot H_{(a_2^+)}\bigl(\Phi\pppp\bigr)\!\cdot H_\delta(R_1\sweedl\Psi\sweedr R_{2'}).
\end{align*}
The result is reminiscent of (\ref{longi-merid}) but now, since the endpoints of $\rho$ and $\delta$ are different, a~term $\bigl\bra\Phi\ppp,R_2R_{1'}\bigr\ket$
is wedged into the holonomy of $\rho$ which forbids to write the result as $H_\rho$ of something times $H_\delta$ of something. Fortunately, letting this
operator acting on $\Omega$ will eliminate the problem. Applying Theorem~\ref{thm: topological invariance}\,(ii) to the properly contractible $\delta$ within the disk $L$
and using that $\Omega\in\Pee(L)$, we get
\begin{align*}
R_2R_{1'}\ot \Ophol_\delta(R_1\sweedl\Psi\sweedr R_{2'})\Omega&{}=R_2R_{1'}\ot \Omega\cdot\eps_{\D^*}(R_1\sweedl\Psi\sweedr R_{2'})\\
&{}=R_2R_{1'}\ot \Omega\cdot\bigl\bra\Psi,R_{2'}R_1\bigr\ket.
\end{align*}
Substituting the definition of the $R$-matrix, we see that $R_2R_{1'}\ot R_{2'}R_1=(\xi_i\ot x_j)\ot(\xi_j\ot x_i)$, therefore
\[
R_2R_{1'} \bra\Psi,R_{2'}R_1\ket=\smash{\overset{\raisebox{-1pt}{\scriptsize $\leftrightarrow$}}{\Psi}}.
\]
Thus, the wedge term is simply \smash{$\bigl\bra\Phi\ppp,\overset{\raisebox{-1pt}{\scriptsize $\leftrightarrow$}}{\Psi}\bigr\ket$}. If we knew that \smash{$\overset{\raisebox{-1pt}{\scriptsize $\leftrightarrow$}}{\Psi}$} is a central element
of $\D(H)$, then the wedge term would be shiftable along $\rho$ until the endpoint and this would prove (\ref{gamma-rho-Omega}).
So it remains to show that \smash{$\overset{\raisebox{-1pt}{\scriptsize $\leftrightarrow$}}{\Psi}\in\Center\D(H)$} if and only if $\Psi\in\Cocom\D(H)^*$.

Let us introduce the projections $\mathbf{P}$ and its transpose $\mathbf{P}^\trans$ by
\begin{alignat*}{3}
&\mathbf{P}\colon \ \D(H)\to\D(H),\qquad && \mathbf{P}(X):= {i_\D}\p XS_\D\bigl({i_\D}\pp\bigr),& \\
&\mathbf{P}^\trans\colon \ \D(H)^*\to\D(H)^*,\qquad && \mathbf{P}^\trans(\Phi):= {i_\D}\p\sweedl \Phi\sweedr S_\D\bigl({i_\D}\pp\bigr),&
\end{alignat*}
where $i_\D:=\iota\ot i$ is the Haar integral of $\D(H)$. $\mathbf{P}$ projects onto $\Center(\D(H))$ and $\mathbf{P}^\trans$ onto $\Cocom\D(H)^*$.
Using the identity $R\cop_\D(X)=\cop^\op(X)R$ of $R$-matrices, we obtain
\begin{align*}
\mathbf{P}(R_2R_{1'})\ot R_{2'}R_1&={i_\D}\p R_2R_{1'}S_\D\bigl({i_\D}\pp\bigr)\ot R_{2'}R_1\\
&={i_\D}\p R_2R_{1'}S_\D\bigl({i_\D}\pppp\bigr)\ot R_{2'} S_\D\bigl({i_\D}\ppp\bigr) {i_\D}\pp R_1\\
&=R_2{i_\D}\pp R_{1'}S_\D\bigl({i_\D}\pppp\bigr)\ot R_{2'} S_\D\bigl({i_\D}\ppp\bigr) R_1{i_\D}\p\\
&=R_2{i_\D}\pp S_\D\bigl({i_\D}\ppp\bigr) R_{1'}\ot S_\D\bigl({i_\D}\pppp\bigr) R_{2'}R_1 {i_\D}\p\\
&=R_2R_{1'}\ot\mathbf{P}(R_{2'}R_1).
\end{align*}
Therefore, \begin{align*}
\overset{\raisebox{-1pt}{\scriptsize $\leftrightarrow$}}{\Psi}\in\Center\D(H)\quad&\Leftrightarrow\quad \mathbf{P}(\overset{\raisebox{-1pt}{\scriptsize $\leftrightarrow$}}{\Psi})=\overset{\raisebox{-1pt}{\scriptsize $\leftrightarrow$}}{\Psi}\\
&\Leftrightarrow\quad \mathbf{P}(R_2R_{1'}) \bra\Psi,R_{2'}R_1\ket=R_2R_{1'}\bra\Psi,R_{2'}R_1\ket\\
&\Leftrightarrow\quad R_2R_{1'}\bra\mathbf{P}^T(\Psi),R_{2'}R_1\ket=R_2R_{1'}\bra\Psi,R_{2'}R_1\ket\\
&\Leftrightarrow\quad \mathbf{P}^\trans(\Psi)=\Psi.
\end{align*}
So \smash{$\overset{\raisebox{-1pt}{\scriptsize $\leftrightarrow$}}{\Psi}$} is central if and only if $\Psi$ is tracial and the theorem is proven.
\end{proof}

Let $D_r$ be fixed irreducible unitary representations of $\D(H)$ for each isomorphism class $r$ of irreps. Then the set of matrix elements \smash{$\bigl\{D_r^{i,j}\bigr\}$}
is a basis of~$\D(H)^*$. For a proper left ribbon $\rho\colon s(a)\to s(b)$ between disjoint sites, the (\ref{source gauge transf}) and (\ref{target gauge transf}) imply that
the vectors
\[
\Lambda_r^{i,j}:=\Ophol_\rho\bigl(D_r^{i,j}\bigr)\Omega \in\Hil
\]
are irreducible $\D(H)$-multiplets in the sense of the equations
\begin{align*}
&\mathbf{D}_c(X)\Lambda_r^{i,k}=\Lambda_r^{i,k}\cdot\eps_\D(X)\qquad\text{if}\quad c\neq a,b,\\
&\mathbf{D}_b(X)\Lambda_r^{i,k}=\Lambda_r^{i,j}\cdot D_r^{j,k}(X),\\
&\mathbf{D}_a(X)\Lambda_r^{i,k}=D_r^{i,j}(S_\D(X))\cdot\Lambda_r^{j,k}
\end{align*}
provided $\Omega\in\Pee$. Notice that $\eps_\D= D_0$ if $r=0$ designates the trivial representation and $D^\mathsf{T}_r\ci S_\D$ is, up to unitary
equivalence, some $D_{\bar r}$ if $r\mapsto \bar r$ designates charge conjugation.

Let $\gamma$ be a closed proper ribbon winding around $s(b)$ but not $s(a)$
as in Theorem~\ref{thm: gamma measures what rho creates}. Let~$\Psi_r$ be the unique cocommutative element for which \smash{$\overset{\raisebox{-1pt}{\scriptsize $\leftrightarrow$}}{\Psi}_r$}
is the minimal central idempotent $e_r=e_r^{i,i}$, where \smash{$\bigl\{e_r^{i,j}\bigr\}$} is the dual basis of \smash{$\bigl\{D_r^{i,j}\bigr\}$}. Then
\[
\Ophol_\gamma(\Psi_q)\Lambda_r^{i,j}=\Ophol_\rho\bigl(e_q\sweedl D_r^{i,j}\bigr)\Omega=\Lambda^{i,j}_q\cdot\delta_{q,r}.
\]
In this sense, the closed ribbon operators $\Ophol_\gamma(\Psi)$ are able to detect the charge $r$ inside the contraction disk of $\gamma$.
This happens in spite of the fact that for whatever closed $\gamma$ for which the ribbon-homotopy class of $\rho$ contains a $\rho'$ disjoint from $\gamma$
the $\Ophol_\gamma(\Psi)$
detects only the trivial charge on \smash{$\Lambda_r^{i,j}$}. One says that the charge of \smash{$\Lambda_r^{i,j}$} can be localized in semi-infinite strings but not
in a finite region.
This kind of charges are characteristic of massive gauge theories \cite{BF}. This is the first evidence of that the Kitaev model is like
a gauge theory. The more precise analysis should chart the superselection sectors similarly to the Doplicher--Haag--Roberts theory
\cite{DHR-I,DHR-II,DHR-III,DHR-IV} but for string localized charges. In the abelian Kitaev model on a square lattice such a program can be carried out \cite{CNN, Naaijkens}
but for arbitrary f.d.\ C$^*$-Hopf algebra $H$ and for arbitrary infinite (planar) OCPM $\Sigma$ it is not yet in sight.

\subsection*{Acknowledgments}

The author wishes to thank the anonymous referees for their thorough reading of the manuscript and for their precious comments which
improved considerably the final version of this paper.


\begin{thebibliography}{99}
\footnotesize\itemsep=0pt

\bibitem{Bombin-Delgado}
Bombin H., Martin-Delgado M., A family of non-{A}belian {K}itaev models on a
 lattice: topological condensation and confinement, \href{https://doi.org/10.1103/PhysRevB.78.115421}{\textit{Phys. Rev.~B}}
 \textbf{78} (2008), 115421, 28~pages, \href{https://arxiv.org/abs/0712.0190}{arXiv:0712.0190}.

\bibitem{BF}
Buchholz D., Fredenhagen K., Locality and the structure of particle states,
 \href{https://doi.org/10.1007/BF01208370}{\textit{Comm. Math. Phys.}} \textbf{84} (1982), 1--54.

\bibitem{BMCA}
Buerschaper O., Mombelli J.M., Christandl M., Aguado M., A hierarchy of
 topological tensor network states, \href{https://doi.org/10.1063/1.4773316}{\textit{J.~Math. Phys.}} \textbf{54}
 (2013), 012201, 46~pages, \href{https://arxiv.org/abs/1007.5283}{arXiv:1007.5283}.

\bibitem{CNN}
Cha M., Naaijkens P., Nachtergaele B., On the stability of charges in infinite
 quantum spin systems, \href{https://doi.org/10.1007/s00220-019-03630-1}{\textit{Comm. Math. Phys.}} \textbf{373} (2020),
 219--264, \href{https://arxiv.org/abs/1804.03203}{arXiv:1804.03203}.

\bibitem{Cowtan-Majid}
Cowtan A., Majid S., Quantum double aspects of surface code models,
 \href{https://doi.org/10.1063/5.0063768}{\textit{J.~Math. Phys.}} \textbf{63} (2022), 042202, 49~pages,
 \href{https://arxiv.org/abs/2107.04411}{arXiv:2107.04411}.

\bibitem{DHR-I}
Doplicher S., Haag R., Roberts J.E., Fields, observables and gauge
 transformations.~{I}, \href{https://doi.org/10.1007/BF01645267}{\textit{Comm. Math. Phys.}} \textbf{13} (1969), 1--23.

\bibitem{DHR-II}
Doplicher S., Haag R., Roberts J.E., Fields, observables and gauge
 transformations.~{II}, \href{https://doi.org/10.1007/BF01645674}{\textit{Comm. Math. Phys.}} \textbf{15} (1969),
 173--200.

\bibitem{DHR-III}
Doplicher S., Haag R., Roberts J.E., Local observables and particle
 statistics.~{I}, \href{https://doi.org/10.1007/BF01877742}{\textit{Comm. Math. Phys.}} \textbf{23} (1971), 199--230.

\bibitem{DHR-IV}
Doplicher S., Haag R., Roberts J.E., Local observables and particle
 statistics.~{II}, \href{https://doi.org/10.1007/BF01646454}{\textit{Comm. Math. Phys.}} \textbf{35} (1974), 49--85.

\bibitem{Hirmer-Meusburger}
Hirmer A.K., Meusburger C., Categorical generalizations of quantum double
 models, \href{https://arxiv.org/abs/2306.05950}{arXiv:2306.05950}.

\bibitem{Jones-Singerman}
Jones G., Singerman D., Maps, hypermaps and triangle groups, in The
 {G}rothendieck Theory of Dessins D'enfants ({L}uminy, 1993), \textit{London
 Math. Soc. Lecture Note Ser.}, Vol. 200, \href{https://doi.org/10.1017/CBO9780511569302.006}{Cambridge University Press},
 Cambridge, 1994, 115--145.

\bibitem{Kitaev03}
Kitaev A.Yu., Fault-tolerant quantum computation by anyons, \href{https://doi.org/10.1016/S0003-4916(02)00018-0}{\textit{Ann.
 Physics}} \textbf{303} (2003), 2--30, \href{https://arxiv.org/abs/quant-ph/9707021}{arXiv:quant-ph/9707021}.

\bibitem{Lando-Zvonkin}
Lando S.K., Zvonkin A.K., Graphs on surfaces and their applications,
 \textit{Encyclopaedia Math. Sci.}, Vol. 141, \href{https://doi.org/10.1007/978-3-540-38361-1}{Springer}, Berlin, 2004.

\bibitem{LR}
Larson R.G., Radford D.E., Semisimple cosemisimple {H}opf algebras,
 \href{https://doi.org/10.2307/2374545}{\textit{Amer.~J. Math.}} \textbf{110} (1988), 187--195.

\bibitem{Meusburger}
Meusburger C., Kitaev lattice models as a {H}opf algebra gauge theory,
 \href{https://doi.org/10.1007/s00220-017-2860-7}{\textit{Comm. Math. Phys.}} \textbf{353} (2017), 413--468,
 \href{https://arxiv.org/abs/1607.01144}{arXiv:1607.01144}.

\bibitem{Meusburger-Wise}
Meusburger C., Wise D.K., Hopf algebra gauge theory on a ribbon graph,
 \href{https://doi.org/10.1142/S0129055X21500161}{\textit{Rev. Math. Phys.}} \textbf{33} (2021), 2150016, 93~pages,
 \href{https://arxiv.org/abs/1512.03966}{arXiv:1512.03966}.

\bibitem{Montgomery}
Montgomery S., Hopf algebras and their actions on rings, \textit{CBMS Reg.
 Conf. Ser. Math.}, Vol.~82, \href{https://doi.org/10.1090/cbms/082}{American Mathematical Society}, Providence, RI,
 1993.

\bibitem{Naaijkens-loc-endo}
Naaijkens P., Localized endomorphisms in {K}itaev's toric code on the plane,
 \href{https://doi.org/10.1142/S0129055X1100431X}{\textit{Rev. Math. Phys.}} \textbf{23} (2011), 347--373, \href{https://arxiv.org/abs/1012.3857}{arXiv:1012.3857}.

\bibitem{Naaijkens}
Naaijkens P., Kitaev's quantum double model from a local quantum physics point
 of view, in Advances in Algebraic Quantum Field Theory, \textit{Math. Phys. Stud.},
 Springer, Cham, 2015, 365--395, \href{https://arxiv.org/abs/1508.07170}{arXiv:1508.07170}.

\bibitem{Sweedler}
Sweedler M.E., Hopf algebras, \textit{Math. Lecture Note Ser.}, W.A.~Benjamin, Inc., New
 York, 1969.

\bibitem{White}
White A.T., Graphs of groups on surfaces. {I}nteractions and models, \textit{North-Holland Math. Stud.},
 Vol.~188, North-Holland Publishing Co., Amsterdam, 2001.

\bibitem{CCY}
Yan B., Chen P., Cui S.X., Ribbon operators in the generalized {K}itaev quantum
 double model based on {H}opf algebras, \href{https://doi.org/10.1088/1751-8121/ac552c}{\textit{J.~Phys.~A}} \textbf{55}
 (2022), 185201, 34~pages, \href{https://arxiv.org/abs/2105.08202}{arXiv:2105.08202}.

\end{thebibliography}

\pdfbookmark[1]{References}{ref}
\LastPageEnding

\end{document}